\newcommand{\I}{\mathrm{i}}
\newtheorem{thm}{Theorem}[section]
\newtheorem{lmm}[thm]{Lemma}
\newtheorem{cor}[thm]{Corollary}
\newtheorem{prop}[thm]{Proposition}
\theoremstyle{definition}
\newcommand{\ee}{\mathbb{E}}
\newcommand{\rr}{\mathbb{R}}
\newcommand{\ve}{\varepsilon}
\newcommand{\zz}{\mathbb{Z}}
\newcommand{\upi}{\underline{\pi}}
\newcommand{\sign}{\operatorname{sign}}
\newcommand{\fpar}[2]{\frac{\partial #1}{\partial #2}}
\newcommand{\ftw}{\mathbb{T}}
\numberwithin{equation}{section}
\renewcommand{\tilde}{\widetilde}
\begin{document}

\title{Universality of deterministic KPZ}
\author{Sourav Chatterjee}
\address{Departments of Mathematics and Statistics, Stanford University}
\email{souravc@stanford.edu}
\thanks{Research partially supported by NSF grant DMS-1855484}
\keywords{KPZ equation, random surface, scaling limit, universality}
\subjclass[2010]{39A12, 60H15, 35R60, 82C41}

%This holds in any dimension, and provides the first rigorous justification for considering KPZ as a universal scaling limit in dimensions higher than one.

%The behavior of growing random surfaces has attracted a lot of attention in probability and statistical physics over the last forty years. In spite of a wealth of mathematical results, most questions remain open in dimensions greater than one. Some of the most important and tantalizing open questions are about scaling limits and fluctuation exponents. 

%The Kardar--Parisi--Zhang (KPZ) equation is believed to be the universal limit of a large class of growing random surfaces. This intuition has been made mathematically precise to some extent for one-dimensional surfaces. Beyond dimension one, there is no rigorous formulation of KPZ universality. Surprisingly, even in the simplest case  where the growth is fully deterministic, there seems to be no result in the literature that establishes the  deterministic version of the KPZ equation as a universal scaling limit. The aim of this article is to fill this gap. 

\begin{abstract}
Consider a deterministically growing surface of any dimension, where the growth at a point is an arbitrary nonlinear function of the heights at that point and its neighboring points. Assuming that this nonlinear function is monotone, invariant under the symmetries of the lattice, equivariant under constant shifts, and twice continuously differentiable,  it is shown that any such growing surface approaches a solution of the deterministic KPZ equation in a suitable space-time scaling limit. %In forthcoming papers, some aspects of the theory will be extended to systems involving randomness. %This provides some degree of rigorous justification for considering KPZ as a universal limit in dimensions bigger than one. 
\end{abstract}

\maketitle

%\tableofcontents

\section{Introduction}\label{intro}
\subsection{Main result}\label{mainresult}
%\section{A general framework for deterministic surface growth}\label{classdef}
Let $d$ be a positive integer.  A $d$-dimensional discrete surface is a function from $\zz^d$ into $\rr$, where the value of the function at a point denotes the height of the surface at that point. We will consider discrete surfaces that evolve over time according to some deterministic local rule, to be made precise below. Let $e_1,\ldots,e_d$ be the standard basis vectors of $\rr^d$. Let $A$ denote the set $\{0, \pm e_1,\pm e_2,\ldots, \pm e_d\}$, consisting of the origin and its $2d$ nearest neighbors in $\zz^d$. Let $B := A\setminus \{0\}$. The sets $A$ and $B$ will be fixed throughout this paper.  Let $\phi:\rr^A\to \rr$ be a function. We will say that the evolution of a deterministically growing $d$-dimensional surface $f:\zz_{\ge 0} \times \zz^d \to \rr$ is driven by the function $\phi$ if for each $t\in \zz_{\ge0}$ and $x\in \zz^d$,
\begin{align}\label{kpzevolve}
f(t+1,x) = \phi((f(t,x+a))_{a\in A}). 
\end{align}
%We will refer the $\phi$ as the {\it driving function} for the surface $f$. 
We assume that $\phi$ has the following set of properties, all of which are quite natural from a physical point of view:
\begin{itemize}
\item {\it Equivariance under constant shifts.} For $u\in \rr^A$ and $c\in \rr$, let $u+c$ denote the vector obtained by adding $c$ to each coordinate of $u$. We assume that $\phi(u+c)=\phi(u)+c$ for each $u$ and $c$. In other words, if the whole surface at time $t$ is raised by a fixed constant $c$, then the surface at time $t+1$ is also raised by the same constant amount. 
\item {\it Monotonicity.} We assume that $\phi$ is monotone increasing. That is, if $u$ dominates $v$ in each coordinate, then $\phi(u)\ge \phi(v)$. Thus, a surface that is higher than another surface everywhere at time $t$ will continue to remain so at time $t+1$.
\item {\it Invariance under lattice symmetries.} We assume that $\phi$ is invariant under the following set of symmetries. Let $u\in \rr^A$, and let $v$ be obtained by swapping $u_{e_i}$ with $u_{e_j}$ and $u_{-e_i}$ with $u_{-e_j}$ for some $i$ and $j$. Then we assume that $\phi(u)=\phi(v)$. Also, if $u$ is obtained by swapping $u_{e_i}$ with $u_{-e_i}$ for some $i$, we assume that $\phi(u)=\phi(v)$. This means that the surface does not prefer to grow differently in different lattice directions.
\item {\it Twice continuous differentiability.} We assume that $\phi$ is twice continuously differentiable. This  regularity condition is important for obtaining a KPZ scaling limit. There are examples where the previous three conditions hold, but the limit is not KPZ because $\phi$ is not $C^2$~\cite{cs21, ks88}. 
\end{itemize}
%The first three assumptions are quite natural from a physical point of view. If the whole surface at time $t$ is raised by a fixed constant $c$, then it is natural that the surface at time $t+1$ is also raised by the same constant amount. Monotonicity is also natural, since a surface that is higher than another surface everywhere at time $t$, is likely to remain so at time $t+1$ if the growth is deterministic. Invariance under lattice symmetries is natural if the surface does not prefer to grow differently in different lattice directions. The $C^2$ assumption is less natural, but it is needed for having a KPZ scaling limit.  
%\section{Main result}\label{univresultsec}
%Now we arrive at the main result of this section, which is that any surface growing according to the equation \eqref{kpzevolve}, where the function  $\phi$ satisfies the four properties listed in Subsection \ref{classdef}, converges to a solution of the deterministic KPZ equation in a suitable scaling limit. Accordingly, take any $\phi$ that is equivariant under constant shifts, monotone, invariant under lattice symmetries, and twice continuously differentiable. 
%Incidentally, the equivariance and monotonicity conditions have appeared previously in a famous work of \citet{bs91} on convergence of approximation schemes for nonlinear second order equations. % (more on this in Section \ref{forthcoming}). 

Later, we will see several examples of $\phi$ that satisfy the above conditions. For now, let us assume that we have a function $\phi$ as above. Let $g:\rr^d \to \rr$ be a Lipschitz function. For each $\ve >0$, define $g_\ve:\zz^d\to \rr$ as 
\[
g_\ve(x) := g(\ve x). 
\]
Let $f_\ve:\zz_{\ge0} \times \zz^d \to \rr$ be the function obtained using the evolution \eqref{kpzevolve} with initial condition $f_\ve(0,x)= g_\ve(x)$ for all $x$. Our goal is to obtain a scaling limit of $f_\ve$ as $\ve \to 0$. % and show that it is a solution of the deterministic KPZ equation. 
To obtain the scaling limit, we need to rescale time by $\ve^{-2}$ and space by $\ve^{-1}$ (that is, parabolic scaling), and also subtract off a large time-dependent `renormalization term'. For $t\in \rr$, let $[t]$ denote the greatest integer $\le t$. For a point $x = (x_1,\ldots,x_d)\in \rr^d$, let $[x]$ denote the vector $([x_1],\ldots,[x_d])$. For $t\in \rr_{\ge 0}$, $x\in \rr^d$, and $\ve>0$, define the rescaled values 
\[
t_\ve := [\ve^{-2} t], \ \ \ x_\ve := [\ve^{-1} x].
\]
Finally, define  $f^{(\ve)}: \rr_{\ge 0} \times \rr^d\to \rr$ as 
\begin{align}\label{fvedef}
f^{(\ve)}(t,x) := f_\ve(t_\ve , x_\ve) - t_\ve \phi(0).
\end{align}
To state the theorem, we need to define two quantities related to $\phi$. For $a\in A$, let $\partial_a \phi(u)$ denote the partial derivative of $\phi$ with respect to $u_a$. By the invariance under lattice symmetries, $\partial_b \phi(0)$ is the same for all $b\in B$. Let this quantity be denoted by $\beta$. Similarly, the second order derivatives $\partial_b^2 \phi(0)$ and $\partial_b \partial_{-b}\phi(0)$ do not depend on the choice $b\in B$. Define 
\[
\gamma:= \partial_b^2\phi(0) - \partial_{b}\partial_{-b} \phi(0).
\] 
The quantities $\beta$ and $\gamma$ will remain fixed throughout the rest of the paper. Note that $\beta$ is nonnegative due to the monotonicity of $\phi$. The following theorem is the main result of this paper.
\begin{thm}[Universality of deterministic KPZ]\label{kpzunivthm}
Let all notations and assumptions be as above. First, suppose that $\beta$ and $\gamma$ are both nonzero.  Then $f^{(\ve)}(t,x)$ converges pointwise on $\rr_{\ge 0}\times \rr^d$ to a function $f$ as $\ve \to0$, where $f(0,x)=g(x)$ for all $x$, and for $t>0$,
\begin{align}\label{kpzf}
f(t,x) &= \frac{\beta}{\gamma}\log \int_{\rr^d} K(t,x-y) e^{\gamma g(y)/\beta} dy,
\end{align}
where $K(t,x) = (4\pi\beta t)^{-d/2} e^{-|x|^2/4\beta t}$. The function $f$ is continuous on $\rr_{\ge0}\times\rr^d$, is infinitely differentiable on $\rr_{>0}\times \rr^d$, and solves the deterministic KPZ equation
\begin{align}\label{kpzfsol}
\partial_t f = \beta \Delta f + \gamma |\nabla f|^2
\end{align}
with initial condition $f(0,x)=g(x)$. Next, suppose that $\beta\ne 0$ and $\gamma=0$. Then $f^{(\ve)}$ converges pointwise to the limit $f$ given by $f(0,x)=g(x)$ and
\begin{align*}
f(t,x) &= \int_{\rr^d} K(t,x-y)g(y) dy
\end{align*}
for $t>0$.  The function $f$ is continuous on $\rr_{\ge0}\times\rr^d$, is infinitely differentiable on $\rr_{>0}\times \rr^d$, and solves the heat equation $\partial_t f = \beta \Delta f$ with initial condition $f(0,x)=g(x)$. Finally, if $\beta=0$, then $\gamma$ must also be zero, and in this case $f^{(\ve)}(t,x)$ converges to $g(x)$ as $\ve \to 0$ for any $t$ and $x$. 
\end{thm}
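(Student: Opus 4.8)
The plan is to exploit the Cole--Hopf transformation at the discrete level and reduce the nonlinear evolution \eqref{kpzevolve} to a near-linear one, then identify the limit of the linearized dynamics as the heat semigroup. The first step is a Taylor expansion of $\phi$ around $0$. Writing $\phi(u) = \phi(0) + \sum_{a\in A}\partial_a\phi(0)u_a + \tfrac12\sum_{a,a'}\partial_a\partial_{a'}\phi(0)u_au_{a'} + o(|u|^2)$ and using equivariance under constant shifts (which forces $\sum_{a\in A}\partial_a\phi(0)=1$ and kills all pure second-order ``constant'' directions, so that in fact the Hessian annihilates the all-ones vector), together with the lattice symmetries (which make $\partial_0\phi(0) = 1 - 2d\beta$ and collapse the second-order data into the single number $\gamma$), one obtains that the increment $\phi(u)-\phi(0)$ behaves, to second order, like a discrete heat-type operator plus a quadratic term whose only surviving contribution after symmetrization is $\gamma\sum_{b\in B\text{ (one per axis)}}(u_{e_i}-u_0)(u_{-e_i}-u_0)$ or an equivalent discrete $|\nabla u|^2$. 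This is the computation that singles out $\beta$ and $\gamma$.

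Next I would introduce $h_\ve := \exp(\gamma f_\ve/\beta)$ (assume $\beta,\gamma\neq0$ for the main case; the other cases follow by taking $\gamma\to0$ or are trivial). The key claim is that $h_\ve$ approximately satisfies a \emph{linear} recursion $h_\ve(t+1,x) \approx \sum_{a\in A} p_a\, h_\ve(t,x+a)$ for a probability kernel $p$ with mean zero and covariance $2\beta I$ per unit time --- this is exactly where twice-differentiability of $\phi$ is essential, since it controls the error in the discrete Cole--Hopf identity. Concretely, one shows $h_\ve(t+1,x) = h_\ve(t,x)\cdot\exp\bigl(\tfrac{\gamma}{\beta}(\phi(v)-\phi(0))\bigr)$ with $v_a = f_\ve(t,x+a)-f_\ve(t,x)$, and because $g$ is Lipschitz the increments $v_a$ are $O(\ve)$, so the Taylor remainder is $O(\ve^3)$ per step while the number of steps is $O(\ve^{-2})$; the accumulated multiplicative error is $1+O(\ve)$. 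After establishing this, $h_\ve$ is (up to negligible error) a discrete heat flow run for $\ve^{-2}t$ steps with spatial spacing $\ve$, and a local CLT / standard convergence of random-walk semigroups to the Gaussian semigroup gives $h^{(\ve)}(t,x) \to \int K(t,x-y)e^{\gamma g(y)/\beta}\,dy$, where $K$ is the heat kernel with diffusivity $\beta$. Undoing the transformation yields \eqref{kpzf}.

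Then I would verify directly that the right-hand side of \eqref{kpzf} solves \eqref{kpzfsol}: since $w(t,x):=\int K(t,x-y)e^{\gamma g(y)/\beta}dy$ solves $\partial_t w = \beta\Delta w$, setting $f = (\beta/\gamma)\log w$ gives $\partial_t f = (\beta/\gamma)(\partial_t w/w) = (\beta/\gamma)(\beta\Delta w/w)$, and $\Delta f = (\beta/\gamma)(\Delta w/w - |\nabla w|^2/w^2)$ while $|\nabla f|^2 = (\beta/\gamma)^2|\nabla w|^2/w^2$, so $\beta\Delta f + \gamma|\nabla f|^2 = (\beta^2/\gamma)(\Delta w/w)$, matching. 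Smoothness on $\rr_{>0}\times\rr^d$ and continuity up to $t=0$ with $f(0,\cdot)=g$ follow from standard heat-kernel regularity plus positivity of $w$ (which holds because $e^{\gamma g/\beta}$ is bounded away from $0$ on compacts and $g$ is Lipschitz, so the Gaussian integral converges and stays positive). For the case $\gamma=0$: expanding $\exp(\gamma f_\ve/\beta)$ is unavailable, but one argues directly that the quadratic term in the Taylor expansion is absent, so $f_\ve$ itself satisfies an approximately linear recursion and converges to the heat-equation solution by the same local-CLT argument. For $\beta=0$: monotonicity forces $\beta\ge0$, and $\beta=0$ combined with the structure of $\phi$ forces $\gamma=0$ as well (the Hessian inequality from monotonicity at a minimum of directional behavior pins it down), after which \eqref{kpzevolve} barely moves the surface on the parabolic time scale --- the increments are $O(\ve^2)$ per step over $O(\ve^{-2})$ steps but with cancellation, giving convergence to $g(x)$.

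The main obstacle I anticipate is the uniform control of the discrete Cole--Hopf error: one must show that the multiplicative errors accumulated over $\ve^{-2}t$ steps stay bounded, which requires an \emph{a priori} bound showing that the discrete increments $f_\ve(t,x+a)-f_\ve(t,x)$ remain $O(\ve)$ uniformly in $t\le \ve^{-2}T$ and not just at $t=0$. This propagation-of-Lipschitz-bound is where monotonicity does real work: one can compare $f_\ve$ to shifted copies of itself (using equivariance under constant shifts to absorb the shift) and invoke monotonicity to conclude that a discrete Lipschitz bound on the initial data is preserved by the evolution. Making this comparison argument precise, and then feeding the resulting gradient bound back into the Taylor remainder estimate with the right powers of $\ve$, is the technical heart of the proof.
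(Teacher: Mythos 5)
Your route --- a discrete Cole--Hopf transformation $h_\ve=e^{\gamma f_\ve/\beta}$ followed by a local CLT for the resulting near-linear recursion --- is genuinely different from the paper's, which never linearizes: it instead writes $f_\ve$ itself via a random-walk Duhamel formula, proves a priori first- \emph{and second-}order discrete smoothness, extracts subsequential limits, identifies the accumulated quadratic corrections as $\gamma|\nabla f|^2$, and closes with a uniqueness argument for the limiting integral equation. Your idea is viable in principle, and there is a real (unstated) cancellation underlying it: writing $v_b=f_\ve(t,x+b)-f_\ve(t,x)$ and expanding both $e^{\gamma\phi(v)/\beta}$ and $\sum_a p_a e^{\gamma v_a/\beta}$ to second order, the two quadratic forms in $v$ differ by $Q(v):=\tfrac{\gamma}{2\beta}\sum_{b,b'}\partial_b\partial_{b'}\phi(0)v_bv_{b'}+\tfrac{\gamma^2}{2}\bigl(\sum_b v_b\bigr)^2-\tfrac{\gamma^2}{2\beta}\sum_b v_b^2$, and one checks using the lattice symmetries that $Q$ vanishes identically on the subspace of \emph{antisymmetric} vectors ($v_{-b}=-v_b$). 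This is what makes the Cole--Hopf linearization work beyond the exactly solvable directed-polymer $\phi$.

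The gap is in your error estimate. You assert the per-step multiplicative error is $O(\ve^3)$ because $v_a=O(\ve)$, but with only the first-order Lipschitz bound (the one you propose to propagate via monotonicity and equivariance) the form $Q(v)$ is a generic quadratic in $O(\ve)$ quantities, hence $O(\ve^2)$ per step, which over $\ve^{-2}t$ steps accumulates to $O(1)$ and destroys the argument. To get $Q(v)=O(\ve^3)$ you must show the symmetric part $\tfrac12(v_b+v_{-b})=\tfrac12\bigl(f_\ve(t,x+b)+f_\ve(t,x-b)-2f_\ve(t,x)\bigr)$ is $o(\ve)$, i.e.\ you need uniform control of \emph{second} discrete spatial derivatives of $f_\ve$ for all $t\le\ve^{-2}T$, not just of first differences. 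This does not follow from the comparison/monotonicity argument (the initial data is only Lipschitz, so second differences are $O(\ve)$ at $t=0$ and only smooth out over time like $1/t$ plus logarithmic corrections); establishing it is precisely the technical core of the paper (Proposition \ref{secondderiv}), which requires the random-walk representation and the Fourier estimates on discrete heat-kernel derivatives that your proposal hopes to bypass. Two further unaddressed points: when $\partial_0\phi(0)=0$ the reference walk is periodic and the local CLT you invoke fails without the parity decomposition of Subsection \ref{alphazero}; and since $g$ is an unbounded Lipschitz function, $e^{\gamma g/\beta}$ grows exponentially, so "accumulated multiplicative error $1+O(\ve)$" needs a weighted-norm or localized version of the stability estimate rather than a plain $L^\infty$ bound.
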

%\subsection{Discussion}
The above result is motivated by a desire to understand the universal nature of KPZ growth of random surfaces, which has been a field of intense activity in probability theory in the last ten years (see Subsection \ref{univlit} below for definitions and a quick survey). The KPZ equation in probability theory has a random component which makes it difficult to understand and study. In fact, as of now, it does not have a satisfactory definition in dimensions greater than one. On the other hand, as shown by Theorem \ref{kpzunivthm}, the deterministic KPZ equation --- a much simpler but well-defined object --- is the universal scaling limit of a large class of deterministically growing surfaces of any dimension. The hope is that this may lead to an eventual understanding of the universal nature of KPZ growth with noise in dimensions greater than one, by somehow `injecting randomness' into the proof of Theorem \ref{kpzunivthm}. This will be partially achieved in the forthcoming paper~\cite{cha21c}, where a kind of `local KPZ growth' will be proven for arbitrary scaling limits, as $\ve \to 0$, of random surfaces growing as 
\begin{align*}%\label{growtheq}
f_\ve (t+1,x) = \phi(f_\ve(t,x+a))_{a\in A},\ve z_{t+1,x}),
\end{align*}
where $z_{t,x}$ are i.i.d.~bounded random variables. This family includes, for example, the familiar model of directed polymers in a random environment.

%Some progress in this direction has been made. 
The proof of Theorem \ref{kpzunivthm} in this manuscript is a bare-hands argument based on a novel random walk representation. A simpler proof of Theorem \ref{kpzunivthm} can be given using the Crandall--Lions theory of viscosity solutions~\cite{cl84, souganidis85} and a general method of \citet{bs91}. Such a proof allows non-smooth $\phi$'s, such as the ones displayed in equations~\eqref{lppphi} and \eqref{maxmin} below, and other extensions. This has been worked out in the companion paper of~\citet{cs21}. The proof in the present paper, however, has been retained due to the independent value of the techniques developed in this paper that help in proving superconcentration of random surfaces in \cite{cha21a}, weak convergence of directed polymers to deterministic KPZ at high temperature in \cite{cha21b}, and the aforementioned local KPZ convergence in the forthcoming paper~\cite{cha21c}. %Moreover, the theory of viscosity solutions is `heavy machinery' developed over many years. %Moreover, being a bare-hands argument, it may be more accessible to many readers who are not familiar with the theory of viscosity solutions, which is a fairly serious undertaking for non-experts. A bare-hands proof may have a better chance of getting generalized to random surfaces.

\subsection{Examples}\label{classex}
Various classes of driving functions satisfy the four properties required for Theorem \ref{kpzunivthm}. Two special  examples where the recursion \eqref{kpzevolve} can be explicitly solved are
\[
\phi(u) = \frac{1}{2d}\sum_{b\in B} u_b  \ \ \text{ and } \ \ 
\phi(u) = \frac{1}{\theta} \log \sum_{b\in B} e^{\theta u_b},
\]
where $\theta$ is an arbitrary positive real number. The first example gives a deterministic version of  Edwards--Wilkinson growth~\cite{ew82}, and  the second one comes from directed polymers~\cite{comets17}.  %It is easy to check that these examples satisfy all four conditions. %Incidentally, the recursion \eqref{kpzevolve} can be explicitly solved when $\phi$ has this form, but this is more or less the only case where that is possible. Taking $\theta \to\infty$ gives another way of smoothly approximating the function displayed in equation \eqref{lppphi}. We will discuss directed polymers in greater detail in Section \ref{kpzconvsec}.

Explicit solution of \eqref{kpzevolve} seems to be impossible in almost any other situation. For instance, it is easy to see that any function of the form
\begin{align}\label{kpzform1}
\phi(u) = u_0 + \frac{1}{2d}\sum_{b\in B} q(u_b-u_0),
\end{align}
where $q$ is a twice continuously differentiable function with $0\le q'\le 1$ everywhere on $\rr$, satisfies  the required conditions. More generally, the conditions are satisfied by any function of the form
\[
\phi(u) = u_0 + c F((u_b - u_0)_{b\in B}),
\]
where $F$ is a nondecreasing symmetric $C^2$ function on $\rr^B$ with bounded derivatives, and $c$ is a sufficiently small constant, so that the derivative of $\phi$ with respect to $u_0$ is everywhere positive. These give large classes of examples that are not explicitly solvable. 

The class of examples becomes even broader if we replace the $C^2$ assumption by the assumption that $\phi$ is a Lipschitz function. For example, then it includes the function
\begin{align}\label{lppphi}
\phi(u) = \max_{a\in A} u_a
\end{align}
which generates a deterministic version of directed last-passage percolation~\cite{joh00}. %, as well as the function
%\begin{align}\label{ballphi}
%\phi(u) = \max\{u_0+1,\max_{b\in B} u_b\},
%\end{align}
%which gives a deterministic version of ballistic deposition~\cite{vold59}. 
The $C^2$ assumption can, however, be reintroduced by convolving these functions with smooth kernels. For example, if $\phi$ satisfies the equivariance, monotonicity and invariance assumptions, and is a Lipschitz function, then for any $\delta>0$, the convolution
\begin{align}\label{phidelta}
\phi_\delta(u) := \int_{\rr^A} \frac{e^{-|u-v|^2/2\delta^2}}{(2\pi \delta^2)^{d/2}} \phi(v) dv
\end{align}
also satisfies those three conditions, and moreover, is infinitely differentiable.  Thus, we can find arbitrarily close approximations to the function displayed in equation~\eqref{lppphi} that satisfy all  four conditions. It is known that without the $C^2$ assumption --- for example for the function displayed in \eqref{lppphi} ---  the scaling limit may not be the deterministic KPZ equation~\cite{cs21, ks88}. %This will be clarified further at the end of the next section.

A natural class of examples come from Glauber dynamics of gradient Gibbs measures with convex potentials. For instance, the growing surface generated by the Glauber dynamics of the SOS model belongs to this class~\cite{caputoetal14} (see also~\cite{hhz95}).  A gradient Gibbs measure is a measure on random surfaces formally represented~as
\[
Z^{-1}\exp\biggl(-\sum_{\substack{x,y\in \zz^d,\, |x-y|=1}} V(f(x)-f(y))\biggr)\prod_{x\in \zz^d} d\lambda(f(x)),
\]
where $\lambda$ is Lebesgue measure on $\rr$,  $V:\rr\to\rr$ is an even function called the potential, and $Z$ is the (formal) normalizing constant.  The Glauber dynamics (Gibbs sampler) for generating from a such a model proceeds by regenerating $f(x)$ from the conditional distribution given $(f(x+b))_{b\in B}$ for a randomly chosen vertex $x$, and repeating ad infinitum. A natural deterministic version of this would be to update $f(x)$ as the conditional expected value of $f(x)$ given $(f(x+b))_{b\in B}$. Explicitly, this gives a growth mechanism of the type displayed in equation \eqref{kpzevolve}, with 
\begin{align}\label{phiubdef}
\phi(u) &= \frac{\int_{-\infty}^\infty t \exp\bigl(-\sum_{b\in B} V(u_b - t)\bigr)dt}{\int_{-\infty}^\infty\exp\bigl(-\sum_{b\in B} V(u_b - t)\bigr) dt}.
\end{align}
% $V(x)\to \infty$ as $|x|\to \infty$. These  properties imply that there is a positive constant $c$ such that $V(x)\ge c|x|$ when $|x|$ is large enough. From this, it is easy to see that $\phi$ is well-defined.
%Twice continuous differentiability follows by moving the derivatives inside the integrals using the dominated convergence theorem and  the properties of $V$.
Suppose that $V$ is convex,  even, twice continuously differentiable, and sufficiently nicely behaved to allow derivatives with respect to $u$ to be moved inside the above integrals. Then $\phi$ is twice continuously differentiable. A simple calculation shows that $\phi$ equivariant under constant shifts. Invariance under lattice symmetries is obvious from the definition. To see monotonicity, note that $\phi(u)$ has no dependence on $u_0$, and for any $b\in B$, 
\begin{align*}
\fpar{\phi}{u_b} &= -\int_{-\infty}^\infty tV'(u_b-t) \rho_u(t)dt \\
&\qquad + \int_{-\infty}^\infty t\rho_u(t)dt \int_{-\infty}^\infty V'(u_b-t)\rho_u(t)dt.
\end{align*}
where $\rho_u$ is the probability density function proportional to $e^{-\sum_{a\in B} V(u_a - t)}$. Since $t\mapsto -V'(u_b-t)$ is a nondecreasing function of $t$, the FKG--Harris inequality implies that the above expression is nonnegative, and hence $\phi$ is monotone. Even though $V$ is not $C^2$ in many models of interest, one can  consider approximations by smoothing the potential. For example, in a continuous variant of the restricted solid-on-solid (RSOS) model, $V$ is given by
\[
V(x) =
\begin{cases}
0 &\text{ if } |x|\le 1,\\
\infty &\text{ if } |x|>1. 
\end{cases}
\]
With this $V$ in \eqref{phiubdef}, an easy computation shows that 
\begin{align}\label{maxmin}
\phi(u) = \frac{1}{2}(\max_{b\in B} u_b+ \min_{b\in B} u_b). 
\end{align}
This satisfies the equivariance, monotonicity and invariance conditions, but is not a $C^2$ function. As before, this can be easily remedied by convolving with a smooth kernel as in \eqref{phidelta}. 
If $V$ is not $C^2$, it is unlikely that the scaling limit is deterministic KPZ. Indeed, as will be demonstrated in the forthcoming paper~\cite{cs21}, the scaling limits of surfaces generated by the driving functions displayed in \eqref{lppphi} and \eqref{maxmin} are solutions of new, unfamiliar PDEs.

%As another example, consider the function $\phi$ displayed in equation \eqref{polymerphi} that comes from the directed polymer model. For this $\phi$, a simple calculation shows that $\beta = 1/2d$ and $\gamma = \theta/2d$. Thus, the generated surface has a deterministic KPZ limit for all positive $\theta$. Note that the coefficient of $|\nabla f|^2$ blows up as $\theta \to\infty$, which is the limit in which we recover the $\phi$ displayed in \eqref{lppphi}.

\subsection{Literature on KPZ in probability theory}\label{univlit}
The KPZ equation was introduced by \citet{kpz86} to describe the growth of a generic randomly growing surface.  Formally, the KPZ equation is given by
\begin{align}\label{kpzeq}
\partial_t f = \beta \Delta f + \gamma |\nabla f|^2 + \kappa \xi,
\end{align}
where $\xi$ is a random field known as space-time white noise, and $\beta$, $\gamma$ and $\kappa$ are real-valued  parameters. %Unlike the deterministic version \eqref{kpzfsol}, it is not clear that the above equation makes sense mathematically. Indeed, the meaning of the KPZ equation has been made completely precise only in dimension one.

The validity of the KPZ heuristic has been a topic of intense investigation in the last ten years. A foundational challenge is to give a rigorous meaning to the KPZ equation. In dimension one, there are now many approaches to solving this problem, such as the Cole--Hopf solution~\cite{bg97}, regularity structures~\cite{Hai13, Hai14}, paracontrolled distributions~\cite{GP17, GIP15}, energy solutions~\cite{gj12, gj14, gjs15, gp18} and renormalization group~\cite{km17}. 

Many one-dimensional discrete processes have been shown to have the KPZ scaling limit or properties indicative of a KPZ limit, such as directed polymers in the intermediate disorder regime~\cite{akq14, acq11, dotsenko10}, polynuclear growth~\cite{ps02}, the weakly asymmetric  exclusion processes~\cite{ss10, dt16}, log-gamma polymers~\cite{bcr13} and Macdonald processes~\cite{BC14}. Many of these results are based on exact formulas derived in prior work, such as \cite{mueller91, bdj99, joh00, cosz14, ss10b, tw08a, tw08b, tw09}. Recently, more exotic objects related to the KPZ equation have been rigorously treated, such as the KPZ fixed point~\cite{mqr17}, the KPZ line ensemble~\cite{ch16}, and the Brownian landscape~\cite{dov18}. All of this is only a small sample of the enormous literature that has grown around rigorous one-dimensional KPZ in the last ten years. For surveys, see~\cite{corwin16, quastel12, qs15}. 

Beyond dimension one much less is known. Very few exact formulas are available~\cite{bf14, ct19, ps97, toninelli18}. Even the meaning of the KPZ equation is unclear in $d\ge2$. There has been some recent progress in making sense of the equation in $d\ge 2$ by regularizing the white noise term and then making either the coefficient of $|\nabla f|^2$ or the coefficient of the noise term tend to zero as the regularization is taken away~\cite{MU18, dgrz20, cd20, csz20, ccm19, ccm20, lz20, gu20}. It turns out that such limits are in fact solutions of the Edwards--Wilkinson equation~\cite{ew82}, which is the KPZ equation~\eqref{kpzeq} with $\gamma=0$. A solution of the KPZ equation with $\gamma \ne 0$ has not yet been constructed in $d\ge 2$. %No discrete model has been shown to converge to a `true' KPZ limit (that is, with $\gamma\ne 0$) in $d\ge 2$. %, and is therefore not `true KPZ'.  %Therefore, the whole picture is quite unclear in dimensions higher than one. 

Going beyond exactly solvable models, there have been some recent efforts towards understanding the universality of the KPZ equation in dimension one. A significant progress was made by \citet{hq18}, who showed that if the $|\nabla f|^2$ term is replaced by a polynomial function of $\nabla f$, and the white noise is replaced by the mollification of itself, the $|\nabla f|^2$ term reappears in a scaling limit as the mollification is removed.  A simpler proof of the result was given by \citet{gp16}, and extensions were recently obtained by \citet{hx19} and \citet{yang20b}. In a different approach to establishing universality, non-integrable models converging to one-dimensional KPZ were exhibited by \citet{dt16} and further developed by \citet{yang20c, yang20a}.

The investigation of the deterministic KPZ equation and its variants as scaling limits of discrete growth models was initiated by \citet{ks88}. This work inspired a large body of follow-up work in physics, especially in modeling  deterministic traffic flows, such as the model of \citet{bml92}. However, not much was done on the rigorous side. The mathematical properties of the deterministic KPZ and related equations have been studied~\cite{ggk03, bsw02}, but convergence to the deterministic KPZ equation has not been rigorously established for any discrete model.% other than the one-dimensional model considered in \cite{ks88}. 

%Incidentally, to elaborate on a point mentioned earlier, the main result of \citet{ks88} shows that our twice differentiability condition on $\phi$ cannot be dropped if we want convergence to deterministic KPZ. They consider the discrete one-dimensional surface evolving according to \eqref{kpzevolve} with 
%\[
%\phi(u) = \max\{u_{-1}, u_0, u_1\}. 
%\]
%This $\phi$ is equivariant under constant shifts, monotone, and invariant under lattice symmetries, but it is not twice differentiable. The scaling limit of this model is not the deterministic KPZ equation, but a variant where $|\nabla f|^2$ is replaced by $|\nabla f|$. 

%There is a large body of literature on approximation schemes for Hamilton--Jacobi equations, with results bearing similarities to Theorem \ref{kpzunivthm}. Some of the classic papers in this area are~\cite{cl84, souganidis85, bs91}. In fact, our equivariance and monotonicity conditions appear --- possibly for the first time --- in the paper~\cite{bs91}. One key difference is that in this literature, the main goal is to design numerical schemes that converge to the continuous equation as the grid size goes to zero. Our goal, on the other hand, is to show that any physically natural discrete growth mechanism leads to the same limiting equation. Another difference is that this literature does not usually discretize space, but only time, whereas we are discretizing both space and time (as is more common in statistical physics). 

%\section{Companion papers}\label{forthcoming}

\section{Proof of the main result}
\subsection{A priori control on first-order roughness}\label{prelimsec}
Let $L$ denote the Lipschitz constant of the initial data $g$. Throughout the remainder of this paper, $C,C_0,C_1,\ldots$ will denote constants that depend only on $d$, $\phi$ and $L$. The values of these constants may change from line to line, or even within a line. We will assume without loss of generality that $\ve\in (0,1)$. 

First, let us show that there is no loss of generality in assuming that $\phi(0)=0$. This will simplify matters a bit by taking off the renormalization term in \eqref{fvedef}. Define $\tilde{\phi}(u) := \phi(u)-\phi(0)$, and let $\tilde{f}_\ve$ be obtained by evolving the surface with $\tilde{\phi}$ instead of $\phi$, but with the same initial data $g_\ve$. 
\begin{lmm}
For any $t\in \zz_{\ge 0}$ and $x\in \zz^d$, 
\[
\tilde{f}_\ve(t,x) = f_\ve(t,x)-t\phi(0).
\]
\end{lmm}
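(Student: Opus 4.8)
The plan is to prove this by induction on $t$, using the equivariance-under-constant-shifts property of $\phi$ and the observation that the two evolutions start from the same initial data. The claim is really a statement that subtracting the per-step drift $\phi(0)$ accumulates linearly in $t$ because the dynamics commute appropriately with adding a constant.

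First I would set up the base case $t=0$: both $\tilde f_\ve(0,x)$ and $f_\ve(0,x)$ equal $g_\ve(x)$ by construction, so $\tilde f_\ve(0,x) = f_\ve(0,x) - 0\cdot\phi(0)$ holds trivially. Then for the inductive step, assume $\tilde f_\ve(t,x) = f_\ve(t,x) - t\phi(0)$ for all $x$. I would write out the evolution for $\tilde f_\ve$ using $\tilde\phi$:
\[
\tilde f_\ve(t+1,x) = \tilde\phi\bigl((\tilde f_\ve(t,x+a))_{a\in A}\bigr) = \phi\bigl((\tilde f_\ve(t,x+a))_{a\in A}\bigr) - \phi(0),
\]
then substitute the inductive hypothesis $\tilde f_\ve(t,x+a) = f_\ve(t,x+a) - t\phi(0)$ for each $a$. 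Since the vector $(f_\ve(t,x+a) - t\phi(0))_{a\in A}$ is obtained from $(f_\ve(t,x+a))_{a\in A}$ by subtracting the constant $t\phi(0)$ from every coordinate, the equivariance property gives
\[
\phi\bigl((f_\ve(t,x+a) - t\phi(0))_{a\in A}\bigr) = \phi\bigl((f_\ve(t,x+a))_{a\in A}\bigr) - t\phi(0) = f_\ve(t+1,x) - t\phi(0),
\]
where the last equality is just the evolution rule for $f_\ve$. Combining, $\tilde f_\ve(t+1,x) = f_\ve(t+1,x) - t\phi(0) - \phi(0) = f_\ve(t+1,x) - (t+1)\phi(0)$, closing the induction.

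There is essentially no obstacle here: the only subtlety worth stating explicitly is that equivariance is being applied with $c = -t\phi(0)$ (a negative shift when $\phi(0)>0$), which is fine since the property is assumed for all real $c$. The result then justifies replacing $\phi$ by $\tilde\phi$ throughout the rest of the argument, since $f^{(\ve)}(t,x) = f_\ve(t_\ve,x_\ve) - t_\ve\phi(0) = \tilde f_\ve(t_\ve,x_\ve)$, so the renormalization term in \eqref{fvedef} simply disappears and we may assume $\phi(0)=0$ without loss of generality.
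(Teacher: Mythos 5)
Your proof is correct and follows essentially the same route as the paper: induction on $t$, with the base case given by the common initial data and the inductive step closed by applying equivariance under constant shifts with $c=-t\phi(0)$. Nothing is missing.
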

\begin{proof}
We will prove the identity by induction on $t$. It is obviously true when $t=0$, since both sides are equal to $g_\ve(x)$. Suppose that it holds for some $t$. Then by equivariance under constant shifts,
\begin{align*}
\tilde{f}_\ve(t+1,x) &= \tilde{\phi}((\tilde{f}_\ve(t,x+a))_{a\in A})= \phi((\tilde{f}_\ve(t,x+a))_{a\in A}) - \phi(0)\\
&= \phi((f_{\ve}(t,x+a))_{a\in A} - t\phi(0)) - \phi(0)\\
&= \phi((f_{\ve}(t,x+a))_{a\in A}) - (t+1)\phi(0)\\
&= f_\ve(t+1,x) - (t+1)\phi(0).
\end{align*}
This completes the proof of the lemma.
\end{proof}
The above lemma shows that we can work with $\tilde{\phi}$ and $\tilde{f}_\ve$ instead of $\phi$ and $f_\ve$. In other words, we can assume, without loss of generality, that $\phi(0)=0$. We will work under this assumption henceforth. We begin with two key observations about the function~$\phi$.
\begin{lmm}\label{derlmm}
For each $u\in \rr^A$ and $a\in A$, $\partial_a\phi(u)\ge 0$, and $\sum_a \partial_a\phi(u)=1$.
\end{lmm}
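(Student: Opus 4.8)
The plan is to prove the two assertions of Lemma~\ref{derlmm} separately, both by reducing to the defining properties of $\phi$ (equivariance under constant shifts and monotonicity), with the observation that the partial derivatives are limits of difference quotients to which those properties apply directly. For the nonnegativity $\partial_a\phi(u)\ge 0$, first I would fix $u\in\rr^A$ and $a\in A$, and for $h>0$ consider $u+h\mathbf{1}_a$, the vector obtained by adding $h$ to the $a$-th coordinate of $u$ and leaving the rest unchanged. Since $u+h\mathbf{1}_a$ dominates $u$ coordinatewise, monotonicity gives $\phi(u+h\mathbf{1}_a)\ge\phi(u)$, so $(\phi(u+h\mathbf{1}_a)-\phi(u))/h\ge 0$; letting $h\to 0^+$ and using the (assumed) differentiability of $\phi$, the limit is exactly $\partial_a\phi(u)\ge 0$.

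For the sum identity $\sum_{a\in A}\partial_a\phi(u)=1$, the key is that adding the constant vector $c\mathbf{1}$ (i.e.\ adding $c$ to every coordinate) is the same as moving a distance $c$ in the direction $\sum_{a\in A}\mathbf{1}_a$. Concretely, define $\psi:\rr\to\rr$ by $\psi(c):=\phi(u+c)$ where $u+c$ is the shift-by-$c$ notation from the equivariance hypothesis. On one hand, equivariance gives $\psi(c)=\phi(u)+c$, so $\psi'(0)=1$. On the other hand, by the chain rule applied to $c\mapsto\phi(u+c\mathbf{1})$, we get $\psi'(0)=\sum_{a\in A}\partial_a\phi(u)\cdot\frac{d}{dc}(u_a+c)\big|_{c=0}=\sum_{a\in A}\partial_a\phi(u)$. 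Equating the two expressions yields the claim. (If one prefers to avoid invoking the chain rule, the same conclusion follows by writing $\phi(u+h\mathbf{1})-\phi(u)$ as a telescoping sum over the coordinates, applying the mean value theorem to each summand, and using continuity of the partials as $h\to 0$; but the chain-rule argument is cleanest.)

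I do not anticipate a genuine obstacle here — this lemma is a direct, short consequence of the hypotheses, and its role is purely to record two normalizations that will be used repeatedly later (nonnegativity of the "weights" $\partial_a\phi$ and the fact that they sum to one, so that the linearization of $\phi$ at any point is an averaging operator, i.e.\ a transition kernel of a random walk). The only point requiring the tiniest care is that $C^2$ (indeed $C^1$) smoothness of $\phi$ is what licenses passing from difference quotients to derivatives and back; this is part of the standing assumptions, so there is nothing to check. I would keep the written proof to just a few lines, presenting the nonnegativity via the one-sided difference quotient and the sum rule via differentiating the identity $\phi(u+c)=\phi(u)+c$ in $c$ at $c=0$.
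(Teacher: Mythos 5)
Your proposal is correct and follows the same route as the paper, which simply observes that the first claim follows from monotonicity and the second from equivariance; you have merely written out the difference-quotient and chain-rule details that the paper leaves implicit. No issues.
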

\begin{proof}
The first claim follows from monotonicity, and the second from equivariance.
\end{proof}
\begin{lmm}\label{maxlmm}
For any $u,v\in \rr^A$, $|\phi(u)-\phi(v)|\le \max_{a\in A} |u_a-v_a|$.
\end{lmm}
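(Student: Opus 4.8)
The plan is to bypass differentiability entirely and argue directly from monotonicity and equivariance under constant shifts. Set $m := \max_{a\in A}|u_a - v_a|$. Then for every $a\in A$ we have $v_a - m \le u_a \le v_a + m$, so in the notation of the equivariance assumption the vector $u$ is dominated coordinatewise by $v+m$ and dominates $v-m$.

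Applying monotonicity of $\phi$ to each of these two coordinatewise comparisons gives $\phi(v-m)\le \phi(u)\le \phi(v+m)$. By equivariance under constant shifts, $\phi(v\pm m) = \phi(v)\pm m$, so this chain becomes $\phi(v)-m\le \phi(u)\le \phi(v)+m$, which is exactly $|\phi(u)-\phi(v)|\le m$, the desired inequality.

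An alternative route, in case one prefers to invoke Lemma \ref{derlmm} and the $C^1$ hypothesis, is to write $\phi(u)-\phi(v) = \int_0^1 \sum_{a\in A}\partial_a\phi\bigl(v+s(u-v)\bigr)(u_a-v_a)\,ds$ by the fundamental theorem of calculus along the segment from $v$ to $u$. For each fixed $s$, Lemma \ref{derlmm} says the weights $\partial_a\phi(v+s(u-v))$ are nonnegative and sum to $1$, so the inner sum is a convex combination of the numbers $u_a-v_a$ and hence has absolute value at most $m$; integrating in $s$ then yields the bound. Either way there is essentially no obstacle here — all the content sits in the two structural properties of $\phi$ already recorded — so I would present the first, regularity-free argument as the proof.
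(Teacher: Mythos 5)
Your proposal is correct, and your primary argument is genuinely different from the paper's. The paper proves Lemma \ref{maxlmm} by the second route you sketch: it writes $\phi(u)-\phi(v)=\int_0^1\nabla\phi(tu+(1-t)v)\cdot(u-v)\,dt$ and invokes Lemma \ref{derlmm} to see that the integrand is a convex combination of the differences $u_a-v_a$, hence bounded by $\max_a|u_a-v_a|$. Your first argument instead compares $u$ with the shifted vectors $v\pm m$ and uses only monotonicity and equivariance under constant shifts; it is both shorter and strictly more general, since it requires no differentiability of $\phi$ at all and so would apply verbatim to the non-smooth examples such as $\phi(u)=\max_{a\in A}u_a$ discussed in Subsection \ref{classex}. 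The paper's gradient computation costs the $C^1$ hypothesis but has the minor advantage of reusing Lemma \ref{derlmm}, which the paper has just established and which is needed again elsewhere (e.g., in defining $\alpha$ and $\beta$ and verifying $\alpha+2d\beta=1$). Either proof is acceptable here; your preference for the regularity-free one is well justified.
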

\begin{proof}
Note that
\begin{align*}
\phi(u)-\phi(v) &= \int_0^1 \nabla \phi(tu + (1-t)v)\cdot (u-v) dt.
\end{align*}
But by Lemma \ref{derlmm},
\begin{align*}
| \nabla \phi(tu + (1-t)v)\cdot (u-v)| &= \biggl|\sum_{a\in A} \partial_a  \phi(tu + (1-t)v) (u_a-v_a)\biggr|\\
&\le \sum_{a\in A} \partial_a  \phi(tu + (1-t)v)|u_a-v_a|\\
&\le \max_{a\in A} |u_a-v_a|.% = |u-v|_\infty.
\end{align*}
This completes the proof of the lemma.
\end{proof}
Lemma \ref{maxlmm} has the following important consequence, which is crucial for all of the subsequent estimates. It says for any $t$, $f_\ve(t,\cdot)$ is Lipschitz on $\zz^d$ under the $\ell^1$ metric with Lipschitz constant $L\ve$. In other words, the roughness of $f_\ve(t,\cdot)$ does not grow with $t$.  The monotonicity and equivariance assumptions are mostly needed for this step. Without a control such as this, it is difficult to ensure that $f_\ve$ has a smooth scaling limit as~$\ve \to 0$.
\begin{lmm}\label{lipschitzlmm}
For any $t$ and any two neighboring points $x$ and $y$, 
\[
|f_\ve(t,x)-f_\ve(t,y)|\le L\ve.
\] 
\end{lmm}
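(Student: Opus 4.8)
The plan is to induct on $t$. For $t=0$ the claim is immediate, since $f_\ve(0,x)=g_\ve(x)=g(\ve x)$ and $g$ is $L$-Lipschitz on $\rr^d$, so for neighboring $x,y\in\zz^d$ we have $|g(\ve x)-g(\ve y)|\le L\,\ve|x-y|=L\ve$. Now assume the bound holds at time $t$; I want to deduce it at time $t+1$. Fix neighboring points $x$ and $y$ in $\zz^d$, say $y=x+e_i$. By the evolution rule \eqref{kpzevolve},
\[
f_\ve(t+1,x)-f_\ve(t+1,y) = \phi\bigl((f_\ve(t,x+a))_{a\in A}\bigr) - \phi\bigl((f_\ve(t,y+a))_{a\in A}\bigr),
\]
so by Lemma \ref{maxlmm} it suffices to bound $\max_{a\in A}|f_\ve(t,x+a)-f_\ve(t,y+a)|$. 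For each fixed $a$, the points $x+a$ and $y+a$ are again neighbors (they differ by $e_i$), so the induction hypothesis gives $|f_\ve(t,x+a)-f_\ve(t,y+a)|\le L\ve$; taking the max over $a\in A$ preserves this bound, and Lemma \ref{maxlmm} then yields $|f_\ve(t+1,x)-f_\ve(t+1,y)|\le L\ve$, completing the induction.

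The key structural point is that the local rule \eqref{kpzevolve} couples the value at $x$ only to values at translates $x+a$, and the set $A$ is translation-invariant, so a displacement between $x$ and $y$ at time $t+1$ is controlled by the \emph{same} displacement applied to all the neighbors at time $t$; this is exactly what makes the one-step Lipschitz estimate of Lemma \ref{maxlmm} propagate without any loss of constant. There is essentially no obstacle here: the monotonicity and equivariance assumptions have already done the real work inside Lemma \ref{derlmm} and hence Lemma \ref{maxlmm} (they are what force $\sum_a\partial_a\phi=1$ with nonnegative summands, i.e. $\phi$ is a ``weighted average'' contraction in the $\ell^\infty$ sense). The remaining argument is a routine induction, and the only thing to be careful about is the book-keeping that $x+a$ and $y+a$ remain nearest neighbors for every $a\in A$ when $x$ and $y$ are — which holds because translation by $a$ is an isometry of $\zz^d$.

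One should also note that the statement as phrased is about adjacent lattice points, which is precisely what feeds into later estimates (the $\ell^1$-Lipschitz bound with constant $L\ve$ for $f_\ve(t,\cdot)$ follows by summing along a lattice path between any two points). If one wanted the $\ell^1$ version directly, the same induction works verbatim with $L\ve$ replaced by $L\ve|x-y|_1$, using that $\max_{a\in A}|f_\ve(t,x+a)-f_\ve(t,y+a)|\le L\ve|x-y|_1$ by the hypothesis at time $t$ (since $|(x+a)-(y+a)|_1=|x-y|_1$); but the nearest-neighbor formulation stated here is all that is needed and is the cleanest to induct on.
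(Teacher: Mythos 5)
Your proof is correct and is essentially identical to the paper's: the same induction on $t$, with the base case from the Lipschitz property of $g$ and the inductive step from Lemma \ref{maxlmm} together with the observation that $x+a$ and $y+a$ remain neighbors. Nothing is missing.
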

\begin{proof}
Note that
\begin{align*}
|f_\ve(0,x)-f_\ve(0,y)| &= |g_\ve(x)-g_\ve(y)|\\
&= |g(\ve x) - g(\ve y)| \le L\ve|x-y|.
\end{align*}
This proves the claim for $t=0$. Suppose that it holds for some $t$. Then by Lemma~\ref{maxlmm}, for any two neighboring points $x$ and $y$, 
\begin{align*}
|f_\ve(t+1,x)-f_\ve(t+1,y)| &= |\phi((f_\ve(t, x+a))_{a\in A}) - \phi((f_\ve(t, y+a))_{a\in A})|\\
&\le \max_{a\in A} |f_\ve(t, x+a)-f_\ve(t,y+a)|.
\end{align*}
But $x+a$ is a neighbor of $y+a$ for each $a$. Thus, $|f_\ve(t,x+a)-f_\ve(t,y+a)|\le L\ve$ for each $a$. This completes the proof.
\end{proof}

%\subsection{Random walk representation}

\subsection{A random walk representation of the surface}\label{rwsec}
The main result of this subsection, stated below, represents the solution of \eqref{kpzevolve} as a formula involving random walk transition probabilities. This solution is not explicit, as it will involve terms that depend on $f_\ve$. We will use it later to obtain an integral equation for the scaling limit, which will then lead to the solution displayed in Theorem \ref{kpzunivthm}.

In addition to the quantities $\beta$ and $\gamma$ that have already been defined, let us also define $\alpha := \partial_0\phi(0)$. Note that by Lemma~\ref{derlmm}, $\alpha$ and $\beta$ are nonnegative, and $\alpha + 2d\beta =1$. Consider a random walk on $\zz^d$, starting at the origin at time $0$, and jumping from $x$ to $x+a$, $a\in A$, with probability $\alpha$ if $a=0$ and $\beta$ if $a\ne 0$. Let $p(t,x)$ be the probability that this random walk is at the point $x$ at time $t$. Then note that for any $t\in \zz_{\ge1}$ and $x\in \zz^d$,
\begin{align}\label{prec}
p(t,x) &= \alpha p(t-1,x) + \beta \sum_{b\in B} p(t-1,x-b).
\end{align}
For each $x\in \zz^d$ and $t\in \zz_{\ge 1}$, let 
\begin{align}\label{htx}
h_\ve(t,x) := f_\ve(t,x) - \alpha f_\ve(t-1,x) - \beta\sum_{b\in B} f_\ve(t-1,x+b). 
\end{align}
The following result represents the function $f_\ve$ in terms of the transition probabilities of the random walk defined above and the functions $h_\ve$ and $g_\ve$. 
\begin{prop}\label{fformlmm}
For any $t\in \zz_{\ge 1}$ and $x\in \zz^d$,
\begin{align*}
f_\ve(t,x) &= \sum_{y\in \zz^d} p(t,x-y) g_\ve(y) + \sum_{0\le s\le t-1} \sum_{y\in \zz^d} p(s, x-y) h_\ve(t-s,y).
\end{align*}
\end{prop}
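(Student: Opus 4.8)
The plan is to prove the formula by induction on $t$, using the defining recursion \eqref{htx} for $h_\ve$ together with the random-walk recursion \eqref{prec} for $p$. The key observation is that \eqref{htx} can be read as an inhomogeneous linear recursion for $f_\ve$: rearranging, $f_\ve(t,x) = \alpha f_\ve(t-1,x) + \beta\sum_{b\in B} f_\ve(t-1,x+b) + h_\ve(t,x)$, and the random walk with transition probabilities $\alpha$ (staying) and $\beta$ (jumping to a neighbor) is precisely the one whose one-step operator appears on the right-hand side. So the claimed formula is just the Duhamel / variation-of-constants representation for this discrete linear evolution, and the proof is a routine verification.

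For the base case $t=1$, the right side reads $\sum_y p(1,x-y) g_\ve(y) + \sum_y p(0,x-y) h_\ve(1,y)$. Since $p(0,z) = \1_{\{z=0\}}$ and $p(1,z) = \alpha\1_{\{z=0\}} + \beta\1_{\{z\in B\}}$, this collapses to $\alpha g_\ve(x) + \beta\sum_{b\in B} g_\ve(x+b) + h_\ve(1,x)$, which equals $f_\ve(1,x)$ by \eqref{htx} with $t=1$ (using $f_\ve(0,\cdot)=g_\ve$). For the inductive step, assume the formula holds at time $t$. I would compute $\alpha f_\ve(t,x) + \beta\sum_{b\in B} f_\ve(t,x+b)$ by substituting the inductive hypothesis into each term, then interchange the (finite — recall each $f_\ve(t,\cdot)$ is supported effectively through the Lipschitz bound, but more simply the sums over $y$ are actually finite for fixed $t$ since $p(t,\cdot)$ and $h_\ve(t,\cdot)$ have the relevant decay/locality) sums so that the operator $\alpha(\cdot)(x) + \beta\sum_b(\cdot)(x+b)$ acts on $p(s,x-y)$ in the $x$ variable; by \eqref{prec} this turns $\alpha p(s,x-y) + \beta\sum_b p(s,x+b-y)$ into $p(s+1,x-y)$. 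This shifts the initial-data term from $p(t,x-y)g_\ve(y)$ to $p(t+1,x-y)g_\ve(y)$, and shifts each Duhamel term $p(s,x-y)h_\ve(t-s,y)$ with $0\le s\le t-1$ to $p(s+1,x-y)h_\ve(t-s,y)$, i.e. reindexing $s' = s+1$ gives terms $p(s',x-y)h_\ve(t+1-s',y)$ for $1\le s'\le t$. Finally I add back the missing $s'=0$ term, namely $\sum_y p(0,x-y)h_\ve(t+1,y) = h_\ve(t+1,x)$, and the total is exactly $\alpha f_\ve(t,x) + \beta\sum_b f_\ve(t,x+b) + h_\ve(t+1,x) = f_\ve(t+1,x)$ by \eqref{htx}.

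The only genuine point requiring care is the interchange of summation over the spatial variable $y$ and the neighbor-average operator in the inductive step; I would justify it by noting that for each fixed $t$ the function $y\mapsto p(t,x-y)$ has finite support (the walk has made at most $t$ steps) and similarly $h_\ve(t-s,\cdot)$ is, for the purposes of the identity, paired against a finitely supported $p$, so all sums in sight are finite and Fubini is trivial. I do not expect any real obstacle here; the proposition is a bookkeeping lemma whose content is entirely in setting up the right random walk, which has already been done in the paragraph preceding the statement.
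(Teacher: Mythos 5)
Your proof is correct and follows essentially the same route as the paper's: induction on $t$, with the base case checked directly from $p(0,\cdot)$, $p(1,\cdot)$, and \eqref{htx}, and the inductive step carried out by substituting the inductive hypothesis into the one-step recursion $f_\ve(t+1,x)=\alpha f_\ve(t,x)+\beta\sum_b f_\ve(t,x+b)+h_\ve(t+1,x)$ and collapsing via \eqref{prec}. The extra remark on the finiteness of the spatial sums (since $p(s,\cdot)$ has finite support) is a harmless bit of added care that the paper leaves implicit.
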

\begin{proof}
The proof is by induction on $t$. First, note that $p(0,0) = 1$, $p(1, 0) = \alpha$ and $p(1,b)=\beta$ for all $b\in B$. Thus, by \eqref{htx},
\begin{align*}
&\sum_{y\in \zz^d} p(1,x-y) g_\ve(y) + \sum_{y\in \zz^d} p(0, x-y) h_\ve(1,y)\\
&= \beta \sum_{b\in B} f_\ve(0, x+b) + \alpha f_\ve (0,x) + h_\ve(1,x) = f_\ve(1,x). 
\end{align*}
This proves the claim for $t=1$. Now suppose that the claim holds up to time $t-1$ for some $t\ge 2$. Then again by \eqref{htx},
\begin{align}
&f_\ve(t,x) = \alpha f_\ve(t-1,x) + \beta\sum_{b\in B} f_\ve(t-1,x+b) + h_\ve(t,x)\notag\\
&= \alpha \sum_{y\in \zz^d} p(t-1,x-y) g_\ve(y) + \alpha\sum_{0\le s\le t-2} \sum_{y\in \zz^d}p(s, x-y) h_\ve(t-1-s,y)\notag\\
&\qquad + \beta \sum_{y\in \zz^d} \sum_{b\in B} p(t-1,x+b-y) g_\ve(y)\notag\\
&\qquad + \beta \sum_{0\le s\le t-2} \sum_{y\in \zz^d} \sum_{b\in B} p(s, x+b-y) h_\ve(t-1-s,y) + h_\ve (t,x). \label{fepsilon}
\end{align}
Notice that by the relation \eqref{prec},
\begin{align*}
\alpha p(t-1,x-y) + \beta\sum_{b\in \zz^d} p(t-1, x+b-y) = p(t,x-y),
\end{align*}
and similarly,
\begin{align*}
\alpha p(s,x-y) + \beta\sum_{b\in B} p(s, x+b-y) = p(s+1,x-y).
\end{align*}
Plugging these into \eqref{fepsilon} gives
\begin{align*}
f_\ve(t,x) &= \sum_{y\in \zz^d} p(t,x-y) g_\ve(y) + \sum_{0\le s\le t-2}\sum_{y\in \zz^d} p(s+1, x-y)h_\ve(t-1-s, y) \\
&\qquad+ h_\ve(t,x)\\
&=  \sum_{y\in \zz^d} p(t,x-y) g_\ve(y) + \sum_{1\le s\le t-1}\sum_{y\in \zz^d} p(s, x-y)h_\ve(t-s, y) \\
&\qquad+ h_\ve(t,x).
\end{align*}
Since $p(0,0)=1$, completes the induction step. 
\end{proof}

%\section{Some basic estimates}\label{basicsec}
\subsection{A priori control on second-order roughness}\label{basicsec}
We now prove a second-order refinement of Lemma \ref{lipschitzlmm}. This lemma shows that $|h_\ve(t,x)|$ is uniformly bounded by a constant times $\ve^2$. We will see later that this effectively gets a control on the second-order smoothness of $f_\ve$, just as Lemma \ref{lipschitzlmm} gives a control on the first-order smoothness. 
\begin{lmm}\label{lip2lmm}
For each $t\in \zz_{\ge 1}$ and $x\in \zz^d$, $|h_\ve (t,x)|\le C\ve^2$. 
\end{lmm}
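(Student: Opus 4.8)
The plan is to estimate $h_\ve(t,x)$ by induction on $t$, exactly paralleling the proof of Lemma~\ref{lipschitzlmm} but now tracking one more order in the Taylor expansion of $\phi$. First I would rewrite $h_\ve(t,x)$ using the definition~\eqref{htx}. Since $f_\ve(t,x)=\phi((f_\ve(t-1,x+a))_{a\in A})$, and since $\phi(0)=0$, a Taylor expansion of $\phi$ around $0$ gives
\[
f_\ve(t,x) = \sum_{a\in A}\partial_a\phi(0)\, f_\ve(t-1,x+a) + R,
\]
where the remainder $R$ is a quadratic form in the increments $f_\ve(t-1,x+a)$ evaluated at $\nabla^2\phi$ at some intermediate point. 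By Lemma~\ref{lipschitzlmm}, $|f_\ve(t-1,x+a)-f_\ve(t-1,x)|\le CL\ve$ for each $a\in A$ (the $\ell^1$ distance from $x+a$ to $x$ is at most $1$), so all the relevant increments relative to the center value $f_\ve(t-1,x)$ are $O(\ve)$. The key cancellation is this: because $\phi$ is equivariant under constant shifts, $\phi(u+c)=\phi(u)+c$, which forces $\sum_a\partial_a\phi(0)=1$ (Lemma~\ref{derlmm}) and, more importantly for the second-order term, it forces the Hessian of $\phi$ to annihilate the constant vector — i.e.\ $\sum_{a}\partial_a\partial_{a'}\phi(u)=0$ for every $a'$ and every $u$. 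Hence the quadratic remainder $R$ can be rewritten purely in terms of the \emph{centered} increments $\delta_a := f_\ve(t-1,x+a)-f_\ve(t-1,x)$, which are all $O(L\ve)$, so $R = O(\ve^2)$ with a constant depending only on $d$, $L$, and $\sup|\nabla^2\phi|$ on a bounded set (the argument of $\nabla^2\phi$ lies in a bounded neighborhood of $0$ uniformly, because $f_\ve(t-1,\cdot)$ is $L\ve$-Lipschitz and $\ve<1$, so after subtracting the center all coordinates are bounded).

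The remaining point is to see that the linear part of the Taylor expansion is \emph{exactly} what $h_\ve$ subtracts off. Indeed $\sum_{a\in A}\partial_a\phi(0) f_\ve(t-1,x+a) = \alpha f_\ve(t-1,x) + \beta\sum_{b\in B} f_\ve(t-1,x+b)$ by the definitions of $\alpha$ and $\beta$ and lattice-symmetry invariance of $\partial_a\phi(0)$, so by~\eqref{htx},
\[
h_\ve(t,x) = f_\ve(t,x) - \alpha f_\ve(t-1,x) - \beta\sum_{b\in B} f_\ve(t-1,x+b) = R = O(\ve^2).
\]
So there is in fact no induction needed beyond invoking Lemma~\ref{lipschitzlmm} (which was itself proved by induction): one Taylor expansion with the Hessian cancellation does the job. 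I would state the Hessian identity $\sum_{a\in A}\partial_a\partial_{a'}\phi\equiv 0$ as a preliminary observation (it follows by differentiating $\phi(u+c)=\phi(u)+c$ twice in $c$, or by differentiating the first-order identity $\sum_a\partial_a\phi\equiv 1$), then write the second-order Taylor remainder in the integral form $R=\int_0^1(1-s)\,\delta^{\mathsf T}\nabla^2\phi(\mathrm{center}+s\,\delta)\,\delta\,ds$ after the substitution that replaces raw values by centered increments, and bound it.

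The main obstacle — really the only subtle point — is making the Hessian cancellation rigorous and using it correctly: one must be careful that the substitution of centered increments for raw values is legitimate inside the remainder term, i.e.\ that $\delta^{\mathsf T}\nabla^2\phi(\xi)\,\delta$ with $\delta_a$ the centered increments genuinely equals the second-order remainder, which works precisely because $\nabla^2\phi(\xi)$ kills the all-ones vector so adding a constant to every coordinate of the increment vector does not change the quadratic form. One also needs the argument $\xi$ of $\nabla^2\phi$ to stay in a fixed compact set so that $\sup|\nabla^2\phi|$ there is a legitimate constant $C$; this is where $C^2$-ness of $\phi$ (continuity of $\nabla^2\phi$, hence boundedness on compacts) and the uniform $L\ve$-Lipschitz bound from Lemma~\ref{lipschitzlmm} combine. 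Everything else is bookkeeping: the number of terms in the quadratic form is $|A|^2=(2d+1)^2$, each increment is at most $CL\ve$ in absolute value, so $|h_\ve(t,x)|\le C\ve^2$ with $C$ depending only on $d$, $\phi$, $L$, as claimed.
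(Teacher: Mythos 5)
Your proposal reaches the right bound by essentially the right mechanism --- a single second-order Taylor expansion of $\phi$, with the quadratic remainder controlled by the $L\ve$-Lipschitz bound of Lemma \ref{lipschitzlmm} --- and you are right that no new induction is needed. The paper organizes the computation differently and more economically: it first invokes equivariance at the level of $\phi$ itself, writing $f_\ve(t,x)=f_\ve(t-1,x)+\phi(u)$ where $u$ is the \emph{already centered} vector with $u_0=0$ and $u_b=f_\ve(t-1,x+b)-f_\ve(t-1,x)$, and only then Taylor expands $\phi$ at $0$ applied to $u$. Since $|u_b|\le L\ve\le L$, the whole expansion takes place inside a fixed compact ball, no identity about the Hessian of $\phi$ is needed, and the linear term $\beta\sum_b u_b$ is matched directly against the definition \eqref{htx} of $h_\ve$. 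Your route instead expands $\phi$ at the raw vector $(f_\ve(t-1,x+a))_{a\in A}$ and recenters \emph{inside the remainder} using $\sum_a\partial_a\partial_{a'}\phi\equiv0$; that identity is correct (differentiate $\sum_a\partial_a\phi\equiv1$ from Lemma \ref{derlmm} in $u_{a'}$), and it does let you replace the raw vector by the centered increments in the quadratic form.

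The one step that does not survive scrutiny as written is the parenthetical claim that ``the argument of $\nabla^2\phi$ lies in a bounded neighborhood of $0$ uniformly.'' In the integral form of the remainder for an expansion at $0$ (or at the constant vector), the Hessian is evaluated at points of the form $s\,v$ (or $c\mathbf{1}+s\,\delta$) where $v$ contains the raw heights and $c=f_\ve(t-1,x)$; these are of size comparable to $|g(\ve x)|$ and are \emph{not} confined to any fixed compact set as $x$ and $t$ vary. The Lipschitz bound of Lemma \ref{lipschitzlmm} controls the centered increments $\delta$, not this evaluation point, and $\phi$ is only assumed $C^2$, so $\nabla^2\phi$ need not be globally bounded; the annihilation identity $\nabla^2\phi(\xi)\mathbf{1}=0$ fixes the quadratic form but says nothing about where the Hessian is evaluated. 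To close the gap you need the companion consequence of equivariance, $\nabla^2\phi(\xi+c\mathbf{1})=\nabla^2\phi(\xi)$ (differentiate $\phi(u+c)=\phi(u)+c$ twice in $u$ for fixed $c$), which translates the evaluation point back into the ball of radius $\sqrt{2d}\,L$ --- or, more simply, center before expanding, as the paper does.
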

\begin{proof}
Let $u\in \rr^A$ be the vector whose coordinates are $u_0=0$ and 
\[
u_b = f_\ve (t-1,x+b)-f_\ve(t-1,x)
\]
for $b\in B$. Then note that by equivariance,
\begin{align}
f_\ve(t,x) &= \phi((f_\ve(t-1,x+a))_{a\in A})\notag \\
&= f_\ve(t-1,x) + \phi(u).\label{ftx}
\end{align}
By Lemma \ref{lipschitzlmm} and the assumption that $\ve <1$, $|u_b|$ is uniformly bounded by $L\ve$ and hence by a constant that does not depend on $\ve$. Therefore, by the twice continuous differentiability of $\phi$, the assumption that $\phi(0)=0$, and Taylor expansion, 
\begin{align*}
\biggl|\phi(u) - \sum_{b\in B} u_b \partial_b \phi(0)\biggr|\le C\sum_{b\in B} u_b^2. 
\end{align*}
Since $|u_b|\le L\ve$ and $\partial_b \phi(0)=\beta$, this gives 
\begin{align}\label{phiu}
\biggl|\phi(u) - \beta \sum_{b\in B} u_b \biggr|\le C\ve^2. 
\end{align}
Finally, note that since $\alpha = 1-2d\beta$, 
\begin{align}
f_\ve(t-1,x) + \beta \sum_{b\in B} u_b &= (1-2d\beta) f_\ve(t-1,x) + \beta \sum_{b\in B} f_\ve(t-1,x+b)\notag \\
&= \alpha f_\ve(t-1,x) + \beta \sum_{b\in B} f_\ve(t-1,x+b). \label{ft1x}
\end{align}
Combining \eqref{ft1x} with the formula \eqref{htx} for $h_\ve(t,x)$, we get 
\begin{align*}
\beta\sum_{b\in B} u_b = f_\ve(t,x)-f_\ve(t-1,x)-h_\ve(t,x). 
\end{align*}
Plugging this into \eqref{phiu} gives
\begin{align*}
|f_\ve(t,x)-f_\ve(t-1,x)-h_\ve(t,x) - \phi(u)| &\le C\ve^2.
\end{align*}
But by \eqref{ftx}, $f_\ve(t,x)-f_\ve(t-1,x)- \phi(u)=0$. This completes the proof of the lemma.
\end{proof}

\subsection{Plan of the proof}\label{sketchsec}
Having stated and proved Lemma~\ref{lipschitzlmm}, Proposition~\ref{fformlmm}, and Lemma~\ref{lip2lmm}, we are now in a position to discuss the general strategy for the proof of Theorem~\ref{kpzunivthm}. The first step is to use the recursive formula for $f_\ve(t,x)$ given in Proposition~\ref{fformlmm}, together with the bound on $h_\ve$ given by Lemma \ref{lip2lmm} and a number of delicate estimates for first- and second-order discrete derivatives of random walk transition probabilities to show that $f_\ve$ has subsequential scaling limits as $\ve\to 0$, all of which are differentiable. 

The next step is to show that for any subsequential scaling limit $f$, the rescaled version of $h_\ve$ converges to $\gamma |\nabla f|^2$. This uses the formula \eqref{htx} of $h_\ve(t,x)$, together with Taylor expansion and the a priori controls given by Lemma~\ref{lipschitzlmm} and Lemma~\ref{lip2lmm}. This step uses the invariance of $\phi$ under lattice symmetries and the fact that $\phi$ is twice continuously differentiable.  Once this is established, Proposition~\ref{fformlmm} will again be used, together with a local central limit theorem for the transition probability $p(t,x)$, to show that $f$ satisfies the integral equation
\begin{align*}
f(t,x) &= \int_{\rr^d} K(t, x -y) g(y)dy \\
&\qquad \qquad +\gamma \int_0^{t} \int_{\rr^d} K(s, x-y) |\nabla f|^2(t-s, y)dy ds,
\end{align*}
where $K$ is the Gaussian kernel from the statement of Theorem \ref{kpzunivthm}. We will then show that this integral equation has a unique solution, given by the Cole--Hopf formula \eqref{kpzf} displayed in Theorem \ref{kpzunivthm}.

\subsection{Random walk transition probabilities}\label{approxsec}
In this subsection we begin to work out the necessary estimates for random walk transition probabilities that are crucial for the carrying out the plan of the proof sketched out in Subsection \ref{sketchsec}. 
We will henceforth assume that $\alpha \ne 0$. The case $\alpha = 0$ needs special treatment due to parity issues, which will be dealt with in Subsection \ref{alphazero}.

Let $\xi$ be a random vector which takes value $b$ with probability $\beta$ for each $b\in B$, and $0$ with probability $\alpha$. Then the characteristic function $\psi$ of $\xi$ is given by
\begin{align*}
\psi(\theta) = \ee(e^{\I \theta\cdot \xi}) = \alpha + 2\beta \sum_{i=1}^d \cos \theta_i
\end{align*}
for $\theta = (\theta_1,\ldots,\theta_d)\in \rr^d$. We are going to use the Fourier inversion formula for expressing the transition probability $p(t,x)$ in terms of the characteristic function $\psi$. This makes it necessary to derive two important estimates for $\psi$, as done below.
\begin{lmm}\label{psi1}
There is a positive constant $C$ depending only on $\alpha$, $\beta$,  and $d$, such that for all $\theta\in [-\pi,\pi]^d$,
$|\psi(\theta)|\le e^{-C|\theta|^2}$.
\end{lmm}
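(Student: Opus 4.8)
The plan is a direct elementary estimate on the cosine sum; there is no deep obstacle here, only a small subtlety at the corners of the cube. First I would use the identity $\psi(0) = \alpha + 2d\beta = 1$ to rewrite
\[
\psi(\theta) = \alpha + 2\beta\sum_{i=1}^d \cos\theta_i = 1 - 2\beta\sum_{i=1}^d (1-\cos\theta_i).
\]
Since $1 - r \le e^{-r}$ for every real $r$, it suffices to produce a constant $C = C(\alpha,\beta,d) > 0$ such that $|\psi(\theta)| \le 1 - C|\theta|^2$ for all $\theta \in [-\pi,\pi]^d$ (any sufficiently small such $C$ works, and the inequality is consistent since $|\psi| \ge 0$).

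For the upper bound on $\psi$ I would invoke the elementary inequality $1 - \cos t \ge \tfrac{2}{\pi^2} t^2$, valid for $|t| \le \pi$ (equivalently $|\sin(t/2)| \ge |t|/\pi$, i.e.\ Jordan's inequality). Applied to each coordinate and using $\beta > 0$, this gives
\[
\psi(\theta) \le 1 - \frac{4\beta}{\pi^2}\sum_{i=1}^d \theta_i^2 = 1 - \frac{4\beta}{\pi^2}|\theta|^2 .
\]

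For the lower bound I would use only $0 \le 1 - \cos\theta_i \le 2$, which yields $\psi(\theta) \ge 1 - 4d\beta = 2\alpha - 1$. Here the standing hypothesis $\alpha \ne 0$ is essential: it forces $2\alpha - 1 > -1$. Since $|\theta|^2 \le d\pi^2$ on the cube, $2\alpha - 1 \ge -1 + \frac{2\alpha}{d\pi^2}|\theta|^2$, hence $-\psi(\theta) \le 1 - \frac{2\alpha}{d\pi^2}|\theta|^2$. Combining the two bounds, $|\psi(\theta)| = \max\{\psi(\theta), -\psi(\theta)\} \le 1 - C|\theta|^2$ with $C := \min\{\tfrac{4\beta}{\pi^2},\, \tfrac{2\alpha}{d\pi^2}\}$, and therefore $|\psi(\theta)| \le e^{-C|\theta|^2}$.

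The only point requiring care is the behavior of $\psi$ near the corners of $[-\pi,\pi]^d$, where $\psi$ takes negative values; this is precisely where $\alpha \ne 0$ enters, and it is genuinely needed, since if $\alpha = 0$ then $\psi(\pi,\dots,\pi) = -1$ and no bound of the asserted form can hold. Away from that issue the argument is just a one-line Taylor-type estimate, so I do not anticipate any substantial obstacle.
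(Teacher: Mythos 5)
Your proof is correct, and it establishes the same intermediate inequality as the paper, namely $|\psi(\theta)|\le 1-C|\theta|^2$ on the cube followed by $1-r\le e^{-r}$; but the way you get the quadratic bound is genuinely different. The paper writes $\psi(\theta)=\frac{1}{d}\sum_{i=1}^d(\alpha+2\beta d\cos\theta_i)$, applies the triangle inequality to reduce everything to the one-variable function $x\mapsto|\alpha+2\beta d\cos x|$, and then gets the bound $|\alpha+2\beta d\cos x|\le 1-Cx^2$ by a soft compactness argument: the ratio $(1-|\alpha+2\beta d\cos x|)/x^2$ extends continuously to $[-\pi,\pi]$ and is strictly positive there, hence bounded below. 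You instead bound $\psi$ and $-\psi$ separately: Jordan's inequality $1-\cos t\ge \tfrac{2}{\pi^2}t^2$ gives the upper bound, and the crude estimate $\psi\ge 2\alpha-1>-1$ combined with $|\theta|^2\le d\pi^2$ gives the lower bound, yielding the explicit constant $C=\min\{\tfrac{4\beta}{\pi^2},\tfrac{2\alpha}{d\pi^2}\}$. Your version buys an explicit constant and avoids the (harmless but non-constructive) continuity argument; the paper's version requires no special trigonometric inequality and isolates the role of $\alpha>0$ in the single statement that $|\alpha+2\beta d\cos x|<1$ away from $0$. Both arguments correctly identify that positivity of $\alpha$ is what saves the corners of the cube, where $\psi$ would equal $-1$ if $\alpha=0$, and indeed the paper must rework this lemma (Lemma \ref{psi1b}) in the periodic case $\alpha=0$.
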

\begin{proof}
Since $\alpha>0$, $\beta>0$, and $\alpha+2\beta d = 1$, it is easy to see that the map $x\mapsto |\alpha + 2\beta d \cos x|$ equals $1$ at $x=0$ and is strictly less than $1$ everywhere else in the closed interval $[-\pi,\pi]$. From this, and the behavior of cosine near zero, it follows that the function
\begin{align*}
w(x) := 
\begin{cases}
(1- |\alpha + 2\beta d \cos x|)/x^2 &\text{ if } x\in [-\pi,\pi]\setminus \{0\},\\
\beta d  &\text{ if } x=0
\end{cases}
\end{align*}
is continuous and nonzero everywhere on $[-\pi, \pi]$. Thus, there is a positive constant $C$, depending only on $\alpha$, $\beta$ and $d$, such that $w(x)\ge C$ for all $x\in [-\pi,\pi]$. In other words, $|\alpha + 2\beta d \cos x| \le 1-Cx^2$  
for all $x\in [-\pi, \pi]$. Therefore, for any $\theta \in [-\pi,\pi]^d$, 
\begin{align*}
|\psi(\theta)| &\le \frac{1}{d}\sum_{i=1}^d |\alpha +2\beta d \cos\theta_i|\\
&\le\frac{1}{d} \sum_{i=1}^d (1-C\theta_i^2) =  1- \frac{C|\theta|^2}{d}. 
\end{align*}
The proof is completed by applying the inequality $1-x \le e^{-x}$. 
\end{proof}
\begin{lmm}\label{psi2}
For any $\theta \in \rr^d$ and $t\in \zz_{\ge 1}$, 
\[
|\psi(\theta)^t - e^{-\beta t |\theta|^2}|\le Ct|\theta|^4. 
\]
\end{lmm}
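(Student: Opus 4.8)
The plan is to estimate $|\psi(\theta)^t - e^{-\beta t|\theta|^2}|$ by first comparing $\psi(\theta)$ with the single Gaussian factor $e^{-\beta|\theta|^2}$, and then using the telescoping identity $a^t - b^t = \sum_{j=0}^{t-1} a^j b^{t-1-j}(a-b)$ together with the bounds $|\psi(\theta)| \le 1$ and $e^{-\beta|\theta|^2} \le 1$ to transfer the single-step estimate to the $t$-step estimate. So the heart of the matter is a pointwise comparison of $\psi(\theta)$ and $e^{-\beta|\theta|^2}$, valid for all $\theta \in \rr^d$, with an error of order $|\theta|^4$.

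First I would record the elementary one-variable facts. Since $\cos x = 1 - x^2/2 + O(x^4)$, we have $\psi(\theta) = \alpha + 2\beta\sum_i \cos\theta_i = 1 - \beta|\theta|^2 + r(\theta)$ where $|r(\theta)| \le C|\theta|^4$; this follows from $|\cos x - 1 + x^2/2| \le x^4/24$ on all of $\rr$ (the Taylor remainder bound, which holds globally because the fourth derivative of cosine is bounded by $1$), summed over the $d$ coordinates. Next, using $|e^{-s} - 1 + s| \le s^2/2$ for $s \ge 0$ with $s = \beta|\theta|^2$, we get $|e^{-\beta|\theta|^2} - 1 + \beta|\theta|^2| \le C|\theta|^4$. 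Combining these two and the triangle inequality yields $|\psi(\theta) - e^{-\beta|\theta|^2}| \le C|\theta|^4$ for all $\theta \in \rr^d$.

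Now I would assemble the $t$-step bound. Write $a = \psi(\theta)$ and $b = e^{-\beta|\theta|^2}$. We have $|b| \le 1$ always, and $|a| \le 1$ always since $\psi(\theta) = \alpha + 2\beta\sum_i\cos\theta_i$ is a convex combination (weights $\alpha$ and $\beta,\dots,\beta$ summing to $1$, all nonnegative) of numbers in $[-1,1]$. Hence from the telescoping identity,
\begin{align*}
|\psi(\theta)^t - e^{-\beta t|\theta|^2}| = \biggl|\sum_{j=0}^{t-1} a^j b^{t-1-j}(a-b)\biggr| \le \sum_{j=0}^{t-1} |a-b| \le t\,|a-b| \le Ct|\theta|^4,
\end{align*}
which is the claimed inequality.

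The only mild subtlety — and the step I would be most careful about — is making sure the comparison $|\psi(\theta) - e^{-\beta|\theta|^2}| \le C|\theta|^4$ is genuinely global in $\theta$ rather than only valid near the origin. This is fine: the cosine Taylor remainder bound $|\cos x - 1 + x^2/2| \le x^4/24$ and the exponential bound $|e^{-s}-1+s| \le s^2/2$ both hold on all of $\rr$ (resp. $[0,\infty)$) with no restriction, so no separate large-$|\theta|$ argument is needed. One should just note that when $|\theta|$ is large the statement is only interesting when $t|\theta|^4$ is still small, but the inequality is true regardless.
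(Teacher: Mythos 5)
Your proposal is correct and follows essentially the same route as the paper: the same global Taylor bounds $|\cos x - 1 + x^2/2|\le x^4/24$ and $|e^{-s}-1+s|\le s^2/2$ give the one-step comparison $|\psi(\theta)-e^{-\beta|\theta|^2}|\le C|\theta|^4$, and the paper then invokes the unit-disk inequality $|a^t-b^t|\le t|a-b|$, which is exactly what your telescoping identity proves. No gaps.
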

\begin{proof}
By Taylor expansion, 
\[
\biggl|\cos x - 1 + \frac{x^2}{2}\biggr| \le \frac{x^4}{24}
\]
for all $x\in \rr$. By this, and the identity $\alpha + 2\beta d=1$, we have
\begin{align*}
|\psi(\theta) - 1 +\beta |\theta|^2| &= \biggl|2\beta \sum_{i=1}^d  \biggl(\cos \theta_i - 1 + \frac{\theta_i^2}{2}\biggr)\biggr|\\
&\le  2\beta \sum_{i=1}^d \biggl|\cos \theta_i - 1 + \frac{\theta_i^2}{2}\biggr|\\
&\le \frac{\beta \sum_{i=1}^d \theta_i^4}{12} \le C|\theta|^4. 
\end{align*}
On the other hand,  again by Taylor expansion,
\begin{align*}
|e^{-x} -  1 + x| \le \frac{x^2}{2} 
\end{align*}
for all $x\ge 0$. Combining this with the preceding display, we get
\begin{align*}
|\psi(\theta)- e^{-\beta |\theta|^2}| &\le C|\theta|^4.
\end{align*}
Finally, note that the absolute values of both $\psi(\theta)$ and $e^{-\beta |\theta|^2}$ are both bounded above by $1$. Since $|a-b|^t \le t|a-b|$ for any two complex numbers $a$ and $b$ in the unit disk and any positive integer $t$, the claim follows. 
\end{proof}
Now note that by Fourier inversion, we have 
\begin{align}
p(t,x) &= (2\pi)^{-d}\int_{[-\pi, \pi]^d} e^{-\I \theta \cdot x} \psi(\theta)^t d\theta\notag\\
&= (2\pi)^{-d} t^{-d/2}\int_{[-\pi\sqrt{t}, \pi\sqrt{t}]^d} e^{-\I t^{-1/2}\eta \cdot x}(\psi(t^{-1/2}\eta ))^t d\eta,\label{finv}
\end{align}
where the second line was obtained by the change of variable $\eta = \sqrt{t}\theta$. We will now simplify this formula to get successively simpler approximations $p_1$, $p_2$ and $p_3$ for $p$, and bound the approximation errors using Lemmas \ref{psi1} and \ref{psi2}. First, let 
\[
p_1(t,x) := (2\pi)^{-d} t^{-d/2}\int_{|\eta|\le \log t} e^{-\I t^{-1/2}\eta \cdot x}(\psi(t^{-1/2}\eta ))^t d\eta.
\]
Then we have the following bound on the difference between $p$ and $p_1$.
\begin{lmm}\label{pp1lmm}
For any $t\in \zz_{\ge 1}$ and $x\in \zz^d$, $|p(t,x)-p_1(t,x)|\le C_1 e^{-C_2 (\log t)^2}$. 
\end{lmm}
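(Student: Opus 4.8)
The plan is to recognize $p(t,x)-p_1(t,x)$ as nothing more than the high-frequency tail of the Fourier-inversion integral \eqref{finv}. Since $\log t\le \pi\sqrt t$ for every $t\in\zz_{\ge1}$, the ball $\{|\eta|\le \log t\}$ is contained in the cube $[-\pi\sqrt t,\pi\sqrt t]^d$ over which \eqref{finv} integrates, so subtracting the definition of $p_1$ from \eqref{finv} shows that $p(t,x)-p_1(t,x)$ equals $(2\pi)^{-d}t^{-d/2}$ times the integral of $e^{-\I t^{-1/2}\eta\cdot x}\,(\psi(t^{-1/2}\eta))^t$ over the annular region $\{\eta\in[-\pi\sqrt t,\pi\sqrt t]^d:\ |\eta|>\log t\}$.

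The key estimate is then handed to us directly by Lemma \ref{psi1}. On this region we have $t^{-1/2}\eta\in[-\pi,\pi]^d$, so $|\psi(t^{-1/2}\eta)|\le e^{-C|t^{-1/2}\eta|^2}=e^{-C|\eta|^2/t}$, and therefore $|\psi(t^{-1/2}\eta)|^t\le e^{-C|\eta|^2}$. Using $|e^{-\I t^{-1/2}\eta\cdot x}|=1$ and $t^{-d/2}\le 1$ for $t\ge1$, I would conclude that $|p(t,x)-p_1(t,x)|$ is bounded above by a constant (depending only on $d$) times the Gaussian tail integral $\int_{|\eta|>\log t}e^{-C|\eta|^2}\,d\eta$.

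It then remains to bound this tail by $C_1e^{-C_2(\log t)^2}$, which is standard: split $e^{-C|\eta|^2}=e^{-C|\eta|^2/2}\cdot e^{-C|\eta|^2/2}$, bound the first factor by $e^{-C(\log t)^2/2}$ on $\{|\eta|>\log t\}$, and integrate the second over all of $\rr^d$ to get a finite constant. This yields the claim with $C_2=C/2$ and $C_1$ depending only on $C$ and $d$; enlarging $C_1$ so that $C_1\ge1$ handles the finitely many small values of $t$ where $\log t$ is near $0$ (there the right-hand side is bounded below by a positive constant, while $p$ and $p_1$ are trivially bounded in absolute value). I do not anticipate any genuine obstacle here — the estimate is a routine tail bound. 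The one point worth keeping in mind is that Lemma \ref{psi1} must be applied on the full frequency cube $[-\pi\sqrt t,\pi\sqrt t]^d$ appearing in \eqref{finv}; this is precisely why the change of variables there was arranged so that, after rescaling, $\eta$ ranges over exactly that cube and the pointwise bound on $|\psi|^t$ is available throughout the region of integration.
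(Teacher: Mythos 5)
Your proposal is correct and follows essentially the same route as the paper: both identify $p-p_1$ as the high-frequency tail of the Fourier integral \eqref{finv} and control it with Lemma \ref{psi1}. The only (immaterial) difference is in the last step — the paper bounds $|\psi(t^{-1/2}\eta)|^t$ by the constant $e^{-C(\log t)^2}$ and multiplies by the polynomial volume of the integration region, whereas you retain the Gaussian $e^{-C|\eta|^2}$ and use the standard tail-splitting estimate; both give the stated bound.
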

\begin{proof}
Take any $\eta$ with absolute value bigger than $\log t$. Then by Lemma \ref{psi1}, 
\begin{align*}
|\psi(t^{-1/2} \eta)| &\le e^{-Ct^{-1}|\eta|^2} \le e^{-Ct^{-1}(\log t)^2}. 
\end{align*}
Thus,
\begin{align*}
|p(t,x)-p_1(t,x)|&\le C t^{-d/2}\int_{\log t < |\eta| \le \pi \sqrt{td}} |\psi(t^{-1/2} \eta)|^t d\eta \\
&\le  Ct^{-d/2}\int_{\log t < |\eta| \le \pi \sqrt{td}} e^{-C(\log t)^2} d\eta\\
&\le C_1 e^{-C_2 (\log t)^2},
\end{align*}
where in the last line we used the fact that any polynomial in $t$ is rendered irrelevant by the presence of $e^{-C_2(\log t)^2}$. This completes the proof of the lemma.
\end{proof}
Next, define
\[
p_2(t,x) :=(2\pi)^{-d} t^{-d/2}\int_{|\eta|\le \log t} e^{-\I t^{-1/2}\eta \cdot x - \beta |\eta|^2} d\eta,
\]
and finally, let 
\begin{align}
p_3(t,x) &:= (2\pi)^{-d}t^{-d/2}\int_{\rr^d} e^{-\I t^{-1/2}\eta \cdot x - \beta |\eta|^2} d\eta\notag\\
&= (4\pi \beta t)^{-d/2} e^{-|x|^2/4\beta t}.\label{p3form}
\end{align}
The following lemma shows that $p_2$ is close to $p_3$. 
\begin{lmm}\label{p2p3lmm}
For any $t\in \zz_{\ge 1}$ and $x\in \zz^d$, $|p_2(t,x)-p_3(t,x)|\le C_1 e^{-C_2(\log t)^2}$.
\end{lmm}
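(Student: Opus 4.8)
The plan is to bound $|p_2(t,x)-p_3(t,x)|$ by estimating the difference of the two integrals, which differ only in the region of integration: $p_3$ integrates $e^{-\I t^{-1/2}\eta\cdot x - \beta|\eta|^2}$ over all of $\rr^d$, while $p_2$ integrates the same expression only over $\{|\eta|\le \log t\}$. Hence
\begin{align*}
|p_2(t,x)-p_3(t,x)| \le (2\pi)^{-d} t^{-d/2} \int_{|\eta|>\log t} e^{-\beta|\eta|^2} d\eta,
\end{align*}
using that $|e^{-\I t^{-1/2}\eta\cdot x}| = 1$. The task then reduces to bounding a Gaussian tail integral.

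First I would note that for $t=1$ the statement is trivial (for instance the left side is at most a constant), so we may assume $t\ge 2$, hence $\log t > 0$. For the tail integral, I would use a crude bound such as $e^{-\beta|\eta|^2} \le e^{-\beta|\eta|^2/2} e^{-\beta(\log t)^2/2}$ valid on $\{|\eta|>\log t\}$, pull the factor $e^{-\beta(\log t)^2/2}$ out of the integral, and bound $\int_{\rr^d} e^{-\beta|\eta|^2/2} d\eta$ by a constant depending only on $\beta$ and $d$. This gives
\begin{align*}
|p_2(t,x)-p_3(t,x)| \le C t^{-d/2} e^{-\beta(\log t)^2/2} \le C e^{-\beta(\log t)^2/2},
\end{align*}
since $t^{-d/2}\le 1$. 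Absorbing $\beta/2$ into the constant $C_2$ and renaming the prefactor $C_1$ yields the claimed bound $|p_2(t,x)-p_3(t,x)|\le C_1 e^{-C_2(\log t)^2}$, exactly in the form used for Lemma~\ref{pp1lmm} and Lemma~\ref{p2p3lmm} elsewhere.

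This lemma is genuinely routine — there is no real obstacle. The only points requiring any care are: (i) handling small $t$ separately so that $\log t$ is positive and the exponential decay is meaningful, and (ii) making sure the constants $C_1, C_2$ depend only on the allowed quantities ($\alpha$, $\beta$, $d$), which is clear since nothing else enters. The uniformity in $x$ is immediate because $x$ appears only through a modulus-one oscillatory factor that we discard. If one prefers to avoid the explicit Gaussian tail estimate, an equivalent route is to observe that $p_3(t,x) = (2\pi)^{-d}t^{-d/2}\int_{\rr^d} e^{-\I t^{-1/2}\eta\cdot x - \beta|\eta|^2}d\eta$ and that the complementary region $\{|\eta|>\log t\}$ contributes at most $(2\pi)^{-d}t^{-d/2}$ times the total Gaussian mass outside a ball of radius $\log t$, which decays faster than any power of $t$ — again rendering the polynomial factor $t^{-d/2}$ irrelevant, just as in the proof of Lemma~\ref{pp1lmm}.
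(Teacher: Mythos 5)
Your proof is correct and follows essentially the same route as the paper: both reduce the difference to the Gaussian tail integral $Ct^{-d/2}\int_{|\eta|>\log t}e^{-\beta|\eta|^2}d\eta$ and then observe it is superpolynomially small. The only cosmetic difference is that the paper evaluates the tail via polar coordinates while you split off a factor $e^{-\beta(\log t)^2/2}$; both are standard and your handling of the $t=1$ case and the uniformity in $x$ is fine.
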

\begin{proof}
Using polar coordinates, we have
\begin{align*}
|p_2(t,x)-p_3(t,x)| &\le Ct^{-d/2}\int_{|\eta|> \log t} e^{-\beta |\eta|^2} d\eta\\
&= Ct^{-d/2}\int_{\log t}^\infty r^{d-1} e^{-\beta r^2} dr.
\end{align*}
It is not hard to see that the last expression is bounded by $C_1 e^{-C_2(\log t)^2}$. 
\end{proof}
Note that we have not yet established the closeness of $p_1$ and $p_2$. This will be done in the next subsection.

\subsection{Smoothness of transition probabilities}\label{transsec}
The primary goal of this subsection is to get upper bounds on the first- and second-order discrete derivatives of $p$.  The plan is to show that these can be approximated by the first- and second-order derivatives of $p_3$. The controls given by Lemma \ref{pp1lmm} and Lemma \ref{p2p3lmm} are sufficient to relate the derivatives of $p$ with those of $p_1$, and the derivatives of $p_2$ with those of $p_3$. Approximating the derivatives of $p_1$ with those of $p_2$ require more delicate control, which we will now obtain.

For any function $w:\zz^d \to \rr$, and any $x,y,\in \zz^d$, define
\[
\delta_y w(x) := w(x+y)-w(x).
\]
For any $y$ and $z$, the discrete derivative operators $\delta_y$ and $\delta_z$ commute, since
\begin{align*}
\delta_y\delta_z w(x) &= \delta_z w(x+y) - \delta_z w(x)\\
&= w(x+y+z)-w(x+y) - (w(x+z)-w(x))\\
&= w(x+y+z) - w(x+z) - (w(x+y)-w(x)) = \delta_z\delta_y w(x).
\end{align*}
The following lemma shows that $p_1$ and $p_2$ are close, as are their first- and second-order discrete derivatives. The exact orders of the error bounds are important for subsequent estimates.
\begin{lmm}\label{p1p2lmm}
For any $t\in \zz_{\ge 1}$ and $x,y,z\in \zz^d$,
\begin{align*}
|p_1(t,x)-p_2(t,x)|&\le C_1 t^{-(d+2)/2}(\log t)^{C_2},\\
|\delta_yp_1(t,x)-\delta_yp_2(t,x)|&\le C_1|y| t^{-(d+3)/2}(\log t)^{C_2},\\
|\delta_y\delta_zp_1(t,x)-\delta_y\delta_zp_2(t,x)|&\le C_1|y||z| t^{-(d+4)/2}(\log t)^{C_2}.
\end{align*}
\end{lmm}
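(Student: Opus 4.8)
The plan is to handle all three estimates at once by differencing the Fourier integral representations of $p_1$ and $p_2$. Writing
\[
p_1(t,x)-p_2(t,x) = (2\pi)^{-d}t^{-d/2}\int_{|\eta|\le \log t} e^{-\I t^{-1/2}\eta\cdot x}\bigl[(\psi(t^{-1/2}\eta))^t - e^{-\beta|\eta|^2}\bigr]\,d\eta,
\]
I would apply Lemma~\ref{psi2} with $\theta = t^{-1/2}\eta$, which gives $|(\psi(t^{-1/2}\eta))^t - e^{-\beta|\eta|^2}|\le C\,t\cdot t^{-2}|\eta|^4 = Ct^{-1}|\eta|^4$ uniformly on the domain of integration. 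Since $\int_{|\eta|\le \log t}|\eta|^4\,d\eta \le C(\log t)^{d+4}$, multiplying by the prefactor $t^{-d/2}$ and the factor $t^{-1}$ yields the first bound with $C_2 = d+4$.

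For the discrete derivatives, the key observation is that $\delta_y$ acts on the $x$-variable of the integrand by inserting the factor $(e^{-\I t^{-1/2}\eta\cdot y}-1)$, and, since $\delta_y$ and $\delta_z$ commute (as shown in the excerpt), $\delta_y\delta_z$ inserts the product $(e^{-\I t^{-1/2}\eta\cdot y}-1)(e^{-\I t^{-1/2}\eta\cdot z}-1)$. Using $|e^{\I w}-1|\le|w|$ we bound $|e^{-\I t^{-1/2}\eta\cdot y}-1|\le t^{-1/2}|\eta||y|$, so each discrete difference contributes one extra power of $t^{-1/2}$ and one extra power of $|\eta|$ (together with a factor $|y|$, respectively $|z|$) under the integral sign. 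Carrying these factors through the same computation replaces the integrand bound $t^{-1}|\eta|^4$ by $t^{-3/2}|\eta|^5|y|$ and $t^{-2}|\eta|^6|y||z|$ respectively; integrating $|\eta|^5$ and $|\eta|^6$ over $\{|\eta|\le\log t\}$ costs only $(\log t)^{d+5}$ and $(\log t)^{d+6}$, and combining with the prefactor $t^{-d/2}$ gives exactly $C_1|y|t^{-(d+3)/2}(\log t)^{C_2}$ and $C_1|y||z|t^{-(d+4)/2}(\log t)^{C_2}$.

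The argument is essentially bookkeeping, and I do not expect a serious obstacle; the only point requiring care is pinning down the exact powers of $t$, since the successive improvements from $t^{-(d+2)/2}$ to $t^{-(d+3)/2}$ to $t^{-(d+4)/2}$ are used in later estimates. These improvements come precisely from the $t^{-1/2}$ produced by each discrete difference, which is itself an artifact of the parabolic change of variables $\eta=\sqrt{t}\,\theta$ used to define $p_1$ and $p_2$. One should also note that every polylogarithmic factor here arises from integrating a fixed power of $|\eta|$ over the ball of radius $\log t$, hence is $(\log t)^{O(1)}$ and can be absorbed into a single $(\log t)^{C_2}$, and that Lemma~\ref{psi2} is applicable on the full range $|\eta|\le \log t$ since it is valid for all $\eta\in\rr^d$.
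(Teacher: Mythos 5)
Your proposal is correct and follows essentially the same route as the paper: bound the difference of the integrands via Lemma~\ref{psi2} (giving $Ct^{-1}|\eta|^4$ after the substitution $\theta=t^{-1/2}\eta$), observe that each discrete difference inserts a factor $(e^{-\I t^{-1/2}\eta\cdot y}-1)$ bounded by $t^{-1/2}|\eta||y|$, and integrate over the ball $|\eta|\le\log t$. The bookkeeping of the powers of $t$ and the polylogarithmic factors matches the paper's computation exactly.
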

\begin{proof}
By Lemma \ref{psi2}, 
\begin{align*}
|(\psi(t^{-1/2}\eta ))^t - e^{-\beta |\eta|^2}| &\le C t^{-1} |\eta|^4. 
\end{align*}
Thus,
\begin{align*}
|p_1(t,x)-p_2(t,x)| &\le Ct^{-d/2}\int_{|\eta|\le \log t} |(\psi(t^{-1/2}\eta ))^t - e^{-\beta |\eta|^2}|  d\eta\\
&\le Ct^{-d/2-1} \int_{|\eta|\le \log t} |\eta|^4 d\eta\\
&\le C t^{-d/2-1}(\log t)^{d+4}.
\end{align*}
This proves the first inequality in the statement of the lemma. Next, note that
\begin{align*}
\delta_yp_1(t,x) = (2\pi)^{-d}t^{-d/2}\int_{|\eta|\le \log t}  (e^{-\I t^{-1/2}\eta \cdot y} - 1)e^{-\I t^{-1/2}\eta \cdot x}(\psi(t^{-1/2}\eta ))^t d\eta
\end{align*}
and 
\begin{align*}
\delta_yp_2(t,x) &= (2\pi)^{-d}t^{-d/2}\int_{|\eta|\le \log t} (e^{-\I t^{-1/2}\eta \cdot y} - 1) e^{-\I t^{-1/2}\eta \cdot x - \beta |\eta|^2} d\eta.
\end{align*}
By the inequality $|e^{-\I a}-1|\le |a|$ for $a\in \rr$, this gives
\begin{align*}
|\delta_yp_1(t,x) - \delta_yp_2(t,x)| &\le t^{-(d+1)/2} |y| \int_{|\eta|\le \log t} |\eta| |(\psi(t^{-1/2}\eta ))^t - e^{-\beta |\eta|^2}|  d\eta.
\end{align*}
Now proceeding as before, we get the second inequality. Similarly, noting that
\begin{align*}
&\delta_y\delta_zp_1(t,x) \\
&= (2\pi)^{-d}t^{-d/2}\int_{|\eta|\le \log t}  (e^{-\I t^{-1/2}\eta \cdot y} - 1)(e^{-\I t^{-1/2}\eta \cdot z} - 1)\\
&\qquad \qquad \qquad \qquad \qquad \qquad \cdot e^{-\I t^{-1/2}\eta \cdot x}(\psi(t^{-1/2}\eta ))^t d\eta
\end{align*}
and 
\begin{align*}
&\delta_y\delta_zp_2(t,x) \\
&= (2\pi)^{-d}t^{-d/2}\int_{|\eta|\le \log t} (e^{-\I t^{-1/2}\eta \cdot y} - 1)(e^{-\I t^{-1/2}\eta \cdot z} - 1) e^{-\I t^{-1/2}\eta \cdot x - \beta |\eta|^2} d\eta,
\end{align*}
and proceeding just as above, we get the third inequality in the statement of the lemma.
\end{proof}
Now we have all the necessary ingredients for getting bounds on the derivatives of $p$ using bounds on the derivatives of $p_3$. The following lemma gives the bounds for $p_3$. 
\begin{lmm}\label{p3lmm}
Take any $t\in \zz_{\ge 1}$ and $x,y,z\in \zz^d$. Let 
\begin{align*}
a &:= \min\{|x|, |x+y|\},\\
b &:= \min\{|x|, |x+y|, |x+z|, |x+y+z|\}.
\end{align*}
Then 
\begin{align*}
|\delta_yp_3(t,x)|&\le C_1 e^{-C_2 a^2/t} t^{-(d+1)/2} |y|,\\
|\delta_y\delta_zp_3(t,x)|&\le C_1e^{-C_2b^2/t} t^{-(d+1)/2}\min\{|y|, |z|, t^{-1/2}|y||z|\}.
%&\qquad \cdot (e^{-C_2|x+y+z|^2/t} + e^{-C_2|x+y|^2/t} + e^{-C_2|x+z|^2/t} + e^{-C_2|x|^2/t}).
\end{align*}
\end{lmm}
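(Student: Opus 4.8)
The plan is to estimate the discrete derivatives of the explicit Gaussian $p_3(t,x) = (4\pi\beta t)^{-d/2} e^{-|x|^2/4\beta t}$ by comparing them to the continuous derivatives of the Gaussian density $K(t,\cdot)$, using the mean value theorem, and then to bound the relevant Gaussian derivatives by inspection. First I would write $\delta_y p_3(t,x) = p_3(t,x+y) - p_3(t,x) = \int_0^1 \nabla p_3(t, x+sy)\cdot y\, ds$, where $\nabla$ denotes the spatial gradient. Since $\nabla p_3(t,z) = -\frac{z}{2\beta t} p_3(t,z)$, we get $|\nabla p_3(t,z)| \le C t^{-(d+1)/2} \cdot \frac{|z|}{\sqrt t} e^{-|z|^2/4\beta t}$, and the scalar function $r \mapsto r e^{-r^2/8\beta t}$ is bounded by $C\sqrt t$, so in fact $|\nabla p_3(t,z)| \le C t^{-(d+1)/2} e^{-|z|^2/8\beta t}$. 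Along the segment $z = x+sy$, $s\in[0,1]$, the squared norm $|x+sy|^2$ is a convex function of $s$, hence is minimized somewhere in $[0,1]$; but its value at the two endpoints $s=0$ and $s=1$ are $|x|^2$ and $|x+y|^2$, and by convexity the minimum over $[0,1]$ is at least... — here one must be slightly careful, since the unconstrained minimizer of a convex quadratic can lie inside $[0,1]$ and be much smaller than both endpoints. The correct observation is that $|x+sy| \ge \tfrac12 \min\{|x|,|x+y|\}$ is \emph{false} in general; instead one uses that at least one of the two endpoints is within distance $|y|$ of any interior point, so $|x+sy| \ge \min\{|x|,|x+y|\} - |y| = a - |y|$, and then splits into the cases $a \le 2|y|$ (where the desired bound $e^{-C_2 a^2/t}$ is trivially $\ge$ a constant, and the unadorned bound $C t^{-(d+1)/2}|y|$ already follows from $|\delta_y p_3| \le |y|\sup|\nabla p_3|$) and $a > 2|y|$ (where $a - |y| > a/2$, so $e^{-|x+sy|^2/8\beta t} \le e^{-a^2/32\beta t}$). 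This yields the first inequality with $C_2 = 1/32\beta$.

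For the second inequality I would iterate the same idea: $\delta_y\delta_z p_3(t,x) = \int_0^1\int_0^1 (y^\top \hess p_3(t, x+sy+s'z)\, z)\, ds\, ds'$, where $\hess p_3(t,w) = \bigl(\frac{ww^\top}{4\beta^2 t^2} - \frac{I}{2\beta t}\bigr) p_3(t,w)$. Thus $|y^\top \hess p_3(t,w) z| \le C t^{-(d+1)/2} |y||z| \bigl(\frac{|w|^2}{t^2} + \frac1t\bigr) t^{(d+1)/2} \cdot t^{-(d+1)/2} e^{-|w|^2/4\beta t}$; more cleanly, $\bigl(\frac{|w|^2}{4\beta^2 t^2} + \frac{1}{2\beta t}\bigr) e^{-|w|^2/4\beta t} \le C t^{-1} e^{-|w|^2/8\beta t}$, since $\rho := |w|^2/t$ satisfies $(\rho/t) e^{-\rho/4\beta} \le C t^{-1}$ after absorbing $\rho e^{-\rho/8\beta}\le C$. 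This gives the pointwise bound $|y^\top\hess p_3(t,w) z|\le C t^{-(d+3)/2}|y||z|\, e^{-|w|^2/8\beta t}$, and integrating over $w = x+sy+s'z$ — whose norm is, by the same endpoint-plus-shift argument as before, at least $b - |y| - |z|$ — produces, after the same case split on whether $b$ is small or large compared to $|y|+|z|$, the bound $C_1 e^{-C_2 b^2/t}\, t^{-(d+3)/2}|y||z|$. Finally, to obtain the $\min\{|y|,|z|,t^{-1/2}|y||z|\}$ form rather than just $t^{-1/2}|y||z|$ (writing $t^{-(d+3)/2}|y||z| = t^{-(d+1)/2}\cdot t^{-1}|y||z|$), note that one can alternatively bound $\delta_y\delta_z p_3 = \delta_y(\delta_z p_3)$ by $|y|\sup|\nabla(\delta_z p_3)|$ and apply the first inequality's estimate to $\delta_z p_3$ — which has the same form as $p_3$'s gradient bound — yielding $C t^{-(d+1)/2} e^{-C_2 b^2/t}|z|$ (and symmetrically with $y,z$ swapped). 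Taking the minimum of the three resulting bounds gives exactly the claimed expression.

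The main obstacle, and the only genuinely non-routine point, is the geometric bookkeeping of how the exponential Gaussian weight decays along the segment (or the rectangle) of interpolation: naively one wants $\min_{s}|x+sy| \gtrsim \min\{|x|,|x+y|\}$, but as noted this can fail, so the argument must be organized around the harmless case-split ``interpolation point close to an endpoint vs.\ far," absorbing the loss $|y|$ (resp.\ $|y|+|z|$) into the constant $C_2$ in the exponent. Everything else is the mechanical computation of Gaussian derivatives and the elementary one-variable bounds $r^k e^{-r^2} \le C_k$. I would therefore present the $\delta_y$ case in full to establish the template and then indicate that the $\delta_y\delta_z$ case follows by the identical mechanism applied to the Hessian, plus the observation above that interpreting $\delta_y\delta_z$ as $\delta_y\circ\delta_z$ (or $\delta_z\circ\delta_y$) and invoking the already-proven first-derivative bound yields the $|y|$ and $|z|$ alternatives in the minimum.
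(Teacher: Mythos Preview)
Your approach --- mean value theorem applied to the rescaled Gaussian $u(q)=e^{-|q|^2}$, together with the observation that every derivative of $u$ is a polynomial times $u$ and hence satisfies $|v(q)|\le C_1 e^{-C_2|q|^2}$ --- is exactly the paper's approach. You are also right that the only non-routine point is controlling the exponential weight along the interpolating segment, and you correctly note that $\min_{s\in[0,1]}|x+sy|$ need not be comparable to $a=\min\{|x|,|x+y|\}$. (The paper in fact glosses over exactly this: it asserts that the maximum of $|v|$ on the segment $[q,q']$ is at most $C_1 e^{-C_2\min\{|q|^2,|q'|^2\}}$, which is the very statement you flag as problematic.)

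However, your fix has a gap. In the case $a\le 2|y|$ you claim $e^{-C_2 a^2/t}$ is ``trivially $\ge$ a constant'', but $a\le 2|y|$ alone does not bound $a^2/t$: nothing prevents $|y|\gg\sqrt t$. The clean repair is to first dispose of the regime $|y|\ge\sqrt t$ via the crude triangle-inequality bound
\[
|\delta_y p_3(t,x)| \le p_3(t,x)+p_3(t,x+y) \le C\,t^{-d/2}e^{-a^2/4\beta t} \le C\,|y|\,t^{-(d+1)/2}e^{-a^2/4\beta t},
\]
the last step using $|y|/\sqrt t\ge 1$. Once $|y|<\sqrt t$, your case split is valid: $a\le 2|y|$ now forces $a^2/t<4$, so $e^{-C_2 a^2/t}$ really is bounded below and the unadorned bound $C|y|t^{-(d+1)/2}$ suffices. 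The same preliminary reduction (to $\max\{|y|,|z|\}<\sqrt t$, using the $|y|$- and $|z|$-bounds you already have) is needed for the $t^{-1/2}|y||z|$ branch of the second estimate. Finally, for those $|y|$- and $|z|$-branches themselves, the simplest route is not MVT on $\delta_z p_3$ but rather the triangle inequality $|\delta_y\delta_z p_3(t,x)|\le |\delta_z p_3(t,x+y)|+|\delta_z p_3(t,x)|$ followed by two applications of the first inequality; this is also what the paper does.
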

\begin{proof}
%Throughout this proof, $K$ will denote any quantity that can be bounded above by $C_1 e^{-C_2 b^2}$.
% Without loss of generality, suppose that $|y|\le |z|$.  
Let $u(q) := e^{-|q|^2}$ for $q\in \rr^d$, so that 
\[
p_3(t,x) = (4\pi\beta t)^{-d/2} u((4\beta t)^{-1/2} x). 
\]
Let $v$ be any derivative of $u$, of any order. Clearly, $v(q)$ is some polynomial in $q$ times $u(q)$, and therefore $|v(q)|\le C_1 e^{-C_2|q|^2}$ for all $q\in \rr^d$, where $C_1$ and $C_2$ are some positive constants depending on $v$. In particular, the maximum value of $|v|$ on a line joining two points $q$ and $q'$ is bounded above by $C_1 e^{-C_2\min\{|q|^2,|q'|^2\}}$. This gives
\begin{align*}
|\delta_y p_3(t,x)| &=  (4\pi\beta t)^{-d/2} |u((4\beta t)^{-1/2} (x+y)) -  u((4\beta t)^{-1/2} x)|\notag \\
&\le C_1t^{-(d+1)/2} |y| e^{-C_2a^2/t}.%\label{delyp}
\end{align*}
This proves the first claim of the lemma. For the second claim, let us assume without loss of generality that $|y|\le |z|$. Note that by the above inequality,
\begin{align}
|\delta_y \delta_z p_3(t,x)| &\le |\delta_y p_3(t,x+z)| + |\delta_y p_3(t,x)|\notag \\
&\le C_1 e^{-C_2b^2/t}t^{-(d+1)/2} |y|.\label{p31}
\end{align}
%and similarly,
%\begin{align*}
%|\delta_y \delta_z p_3(t,x)| &\le C_1 e^{-C_2b^2/t}t^{-(d+1)/2} |z|.
%\end{align*}
On the other hand, by Taylor approximation and the above observation about the derivatives of $u$,
\begin{align*}%\label{delyp3}
&|\delta_yp_3(t,x) - (4\pi \beta t)^{-d/2} (4\beta t)^{-1/2}y\cdot \nabla u((4\beta t)^{-1/2} x)|\\
 &\le C_1t^{-(d+2)/2}|y|^2 e^{-C_2\min\{|x+y|^2, |x|^2\}/t}, 
\end{align*}
and the same inequality holds with $x$ replaced by $x+z$ on both sides. This gives
\begin{align}
|\delta_y \delta_zp_3(t,x)| &= |\delta_yp_3(t,x+z)-\delta_yp_3(t,x)| \notag \\
&\le Ct^{-(d+1)/2} |y| |\nabla u((4\beta t)^{-1/2} (x+z)) - \nabla u((4\beta t)^{-1/2} x)| \notag \\
&\qquad + C_1 e^{-C_2b^2/t} t^{-(d+2)/2}|y|^2\notag \\
&\le C_1 e^{-C_2b^2/t}  t^{-(d+2)/2} (|y||z|+|y|^2)\notag \\
&\le C_1 e^{-C_2b^2/t}  t^{-(d+2)/2} |y||z|,\label{p32}
\end{align}
where we used $|y|\le |z|$ in the last step. Combining \eqref{p31} and \eqref{p32} completes the proof of the second claim.
\end{proof}
Finally, we need the following crude bound on $p(t,x)$ to deal with values of $x$ whose norms are `too large'.
%Lastly, we need the following crude bound on $p(t,x)$.
\begin{lmm}\label{plmm}
For any $t\in \zz_{\ge 1}$ and $x\in \zz^d$, 
\begin{align*}
p(t,x) &\le C_1 e^{-C_2|x|^2/t}. 
\end{align*}
\end{lmm}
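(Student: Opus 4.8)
\textbf{Proof proposal for Lemma~\ref{plmm}.} The plan is to obtain the Gaussian tail bound $p(t,x)\le C_1 e^{-C_2|x|^2/t}$ directly from the Fourier inversion formula \eqref{finv}, using a standard exponential-shift (``tilting'') argument on the characteristic function $\psi$, and handling small $t$ and small $|x|$ separately so that the estimate is uniform.

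First I would dispose of the trivial ranges. For $|x|\le t$ the bound is vacuous after enlarging $C_1$, since $p(t,x)\le 1 = e^0 \ge C_1^{-1} e^{-C_2 |x|^2/t}$ once $|x|^2/t\le |x|\le$ (anything), so it suffices to treat $|x|> t$ — equivalently $|x|/t > 1$. Likewise a finite set of small $t$ can be absorbed into the constants by crude bounds, so one may assume $t$ is large enough that $\log t \ge 1$ etc. The main case is thus $|x|$ large relative to $t$.

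For the main estimate I would use the analyticity of $\psi$. The random walk increment $\xi$ is bounded ($|\xi|\le 1$), so the moment generating function $M(\lambda) := \E e^{\lambda\cdot\xi} = \alpha + 2\beta\sum_{i=1}^d \cosh\lambda_i$ is finite for all $\lambda\in\rr^d$, and by independence of the $t$ steps of the walk, $\E e^{\lambda\cdot S_t} = M(\lambda)^t$ where $S_t$ is the walk position at time $t$. By Markov's inequality applied coordinatewise (choosing $\lambda$ parallel to $-x$, say $\lambda = -s x/|x|$ for $s>0$), one gets the Chernoff bound
\begin{align*}
p(t,x) \le \P(\lambda\cdot S_t \ge \lambda\cdot x) \le e^{-\lambda\cdot x} M(\lambda)^t = e^{-s|x|}\, M(-sx/|x|)^t,
\end{align*}
and summing the analogous bounds over the $2d$ sign/coordinate choices shows $p(t,x)\le C\, e^{-s|x|} M(-sx/|x|)^t$ for every $s>0$ (or more simply apply the one-sided bound in the direction of the largest coordinate of $x$). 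Now I expand $M$ near $0$: since $\cosh\lambda_i = 1 + \lambda_i^2/2 + O(\lambda_i^4)$ and $\alpha+2\beta d = 1$, we have $M(\lambda) = 1 + \beta|\lambda|^2 + O(|\lambda|^4) \le e^{2\beta|\lambda|^2}$ for $|\lambda|\le \lambda_0$, with $\lambda_0$ depending only on $\alpha,\beta,d$. Hence for $s\le\lambda_0$,
\begin{align*}
p(t,x) \le C\exp\bigl(-s|x| + 2\beta s^2 t\bigr).
\end{align*}
Optimizing, choose $s := \min\{\lambda_0,\ |x|/(4\beta t)\}$. If $|x|/(4\beta t)\le \lambda_0$ this gives $p(t,x)\le C\exp(-|x|^2/(8\beta t))$, which is of the required form. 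If instead $|x|/(4\beta t) > \lambda_0$, take $s=\lambda_0$ to get $p(t,x)\le C\exp(-\lambda_0|x| + 2\beta\lambda_0^2 t) \le C\exp(-\lambda_0|x|/2)$ using $t < |x|/(4\beta\lambda_0)$; and since in this regime $|x|/t$ is bounded above by $4\beta\lambda_0$, we have $|x| \ge c\,|x|^2/t$ for $c = 1/(4\beta\lambda_0)$, so $e^{-\lambda_0|x|/2}\le e^{-C_2|x|^2/t}$. In both subcases the bound $p(t,x)\le C_1 e^{-C_2|x|^2/t}$ holds with constants depending only on $\alpha,\beta,d$.

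The only mildly delicate point — the ``main obstacle'' such as it is — is bookkeeping the case split so that a single pair of constants $C_1,C_2$ works uniformly over all $t\ge 1$ and all $x\in\zz^d$ (in particular making sure the passage between $e^{-cs|x|}$-type bounds and $e^{-C_2|x|^2/t}$-type bounds is valid precisely in the regime where $|x|/t$ is bounded, and that the complementary regime $|x|\le t$ is handled by the trivial bound $p\le 1$). Everything else is the textbook Chernoff/large-deviations computation for a bounded-increment random walk. Alternatively, one could avoid the MGF entirely and shift the contour in \eqref{finv} into the complex domain $\theta\mapsto\theta + \I s x/|x|$, using that $\psi$ extends to an entire function; the estimates are identical, but the probabilistic Chernoff argument is cleaner.
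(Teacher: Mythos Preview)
Your approach via the Chernoff/MGF bound is essentially the same as the paper's --- the paper simply invokes Hoeffding's inequality (the packaged Chernoff bound for bounded increments) applied to a single coordinate of the walk, namely the one where $|x_i|$ is largest, and then uses $x_i^2 \ge |x|^2/d$. So there is no genuine methodological difference.

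However, your case analysis contains two real errors.

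First, the opening paragraph has the trivial range backwards. The claim that ``for $|x|\le t$ the bound is vacuous'' is false: from $|x|\le t$ you only get $|x|^2/t \le |x|$, which is unbounded, so $C_1 e^{-C_2|x|^2/t}$ can be arbitrarily small while $p(t,x)$ need not be. The correct triviality runs the other way: since each step of the walk moves at most one unit in $\ell^1$, one has $|S_t|_2 \le |S_t|_1 \le t$, so $p(t,x)=0$ whenever $|x|>t$. The interesting regime is precisely $|x|\le t$.

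Second, in the sub-case $|x|/(4\beta t)>\lambda_0$ you assert that ``$|x|/t$ is bounded above by $4\beta\lambda_0$'' and use this to convert $e^{-\lambda_0|x|/2}$ into $e^{-C_2|x|^2/t}$. But the hypothesis gives $|x|/t > 4\beta\lambda_0$, a \emph{lower} bound, not an upper bound; with only a lower bound on $|x|/t$ the conversion fails, since $|x|^2/t$ can then be much larger than $|x|$. The repair is to combine with the first observation: in this sub-case either $|x|>t$ (so $p(t,x)=0$) or $4\beta\lambda_0 t < |x|\le t$, and in the latter range $|x|^2/t\le |x|$, whence $e^{-\lambda_0|x|/2}\le e^{-(\lambda_0/2)\,|x|^2/t}$.

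With these two fixes the argument goes through. The paper's one-line invocation of Hoeffding on the dominant coordinate sidesteps all of this bookkeeping.
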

\begin{proof}
Take any $t\in \zz_{\ge 1}$ and $x = (x_1,\ldots,x_d)\in \zz^d$. Without loss of generality, suppose that $|x_1|\ge |x_i|$ for all $i$. Consider the evolution of the first coordinate of our random walk. At each time step, the first coordinate increases by one with probability $\beta$, decreases by one with probability $\beta$, and remains the same with probability $1-2\beta$. Thus, it evolves like a sum of i.i.d.~$\{-1,0,1\}$-valued random variables. It is now easy to see by Hoeffding's inequality for sums of i.i.d.~bounded random variables that at time $t$, the probability that the first coordinate equals $x_1$ is at most $C_1 e^{-C_2x_1^2/t}$. Since $|x_1|\ge |x_i|$ for all $i$, we have $x_1^2\ge |x|^2/d$. This completes the proof of the lemma. 
\end{proof} 

We are now ready to obtain the required estimates for the first- and second-order discrete derivatives of $p$. Proposition \ref{delprop} gives the bound on first-order derivatives, and following that, Proposition \ref{pprop} gives the bound on second order derivatives.
\begin{prop}\label{delprop}
Take any $t\in \zz_{\ge 1}$ and $x,y\in \zz^d$. Let $a := \min\{|x|, |x+y|\}$. Then we have
\begin{align*}
|\delta_yp(t,x)|&\le C_1 e^{-C_2 a^2/t} t^{-(d+1)/2} |y| + C_1t^{-(d+3)/2}(\log t)^{C_2}|y|. % + C_1 e^{-C_2(\log t)^2}.
\end{align*}
Moreover, we also have $|\delta_y p(t,x)|\le C_1 e^{-C_2a^2/t}$. 
\end{prop}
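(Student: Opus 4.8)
The plan is to push the successive approximations $p_1,p_2,p_3$ of $p$ built in Subsections~\ref{approxsec} and~\ref{transsec} through the discrete derivative $\delta_y$, and then to read off the decay of $\delta_y p_3$ from Lemma~\ref{p3lmm}. If $y=0$ both sides of the asserted inequalities vanish, so I may assume $y\in\zz^d\setminus\{0\}$, and in particular $|y|\ge 1$; this elementary lower bound will be used several times to soak up lower-order error terms. The \emph{moreover} assertion is immediate: $|\delta_y p(t,x)|\le p(t,x)+p(t,x+y)$ by the triangle inequality and the nonnegativity of $p$, and Lemma~\ref{plmm} bounds the two summands by $C_1 e^{-C_2|x|^2/t}$ and $C_1 e^{-C_2|x+y|^2/t}$; since $a=\min\{|x|,|x+y|\}$, the larger of these is $C_1 e^{-C_2 a^2/t}$, giving $|\delta_y p(t,x)|\le C_1 e^{-C_2 a^2/t}$.

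For the main inequality I would split according to the size of $t$. Fix a constant $t_0$, depending only on $d$, large enough that $e^{-C_2(\log t)^2}t^{(d+3)/2}\le C_1$ and $(\log t)^{C_2}\ge 1$ for all $t>t_0$. If $1\le t\le t_0$, the \emph{moreover} bound already gives $|\delta_y p(t,x)|\le C_1 e^{-C_2 a^2/t}$, and since $t^{-(d+1)/2}|y|$ is bounded below by a constant depending only on $d$ (using $t\le t_0$ and $|y|\ge1$), this is $\le C_1 e^{-C_2 a^2/t}t^{-(d+1)/2}|y|$ after enlarging $C_1$, so the first term of the claimed bound alone handles this range. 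If $t>t_0$, I would write
\[
\delta_y p=(\delta_y p-\delta_y p_1)+(\delta_y p_1-\delta_y p_2)+(\delta_y p_2-\delta_y p_3)+\delta_y p_3,
\]
all terms evaluated at $(t,x)$, and estimate the four pieces separately: the first and third are each at most $2C_1 e^{-C_2(\log t)^2}$ by applying Lemma~\ref{pp1lmm}, respectively Lemma~\ref{p2p3lmm}, at the two points $x$ and $x+y$; the second is at most $C_1|y|t^{-(d+3)/2}(\log t)^{C_2}$ by the second inequality of Lemma~\ref{p1p2lmm}; and the last is at most $C_1 e^{-C_2 a^2/t}t^{-(d+1)/2}|y|$ by Lemma~\ref{p3lmm}, whose parameter $a$ is exactly the one in the statement.

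It remains to merge the two $e^{-C_2(\log t)^2}$ contributions into the stated form. For $t>t_0$ the choice of $t_0$ gives $e^{-C_2(\log t)^2}\le C_1 t^{-(d+3)/2}(\log t)^{C_2}$, and multiplying by $|y|\ge1$ shows each such contribution is dominated by the second term $C_1 t^{-(d+3)/2}(\log t)^{C_2}|y|$ of the asserted bound. Adding up the four estimates then yields $|\delta_y p(t,x)|\le C_1 e^{-C_2 a^2/t}t^{-(d+1)/2}|y|+C_1 t^{-(d+3)/2}(\log t)^{C_2}|y|$, completing the proof. I do not expect a genuine obstacle: everything is assembled from lemmas already in hand, and the only points needing a little care are the small-$t$ regime, where the Fourier truncations $p_1,p_2$ degenerate (they vanish identically once $\log t\le 0$) so one must fall back on the crude bound of Lemma~\ref{plmm}, and the routine bookkeeping of constants used to combine the heterogeneous error terms and to exploit $|y|\ge 1$.
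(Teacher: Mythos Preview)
Your proposal is correct and follows essentially the same approach as the paper: the paper also decomposes $\delta_y p$ through the chain $p\to p_1\to p_2\to p_3$ using Lemmas~\ref{pp1lmm}, \ref{p1p2lmm}, \ref{p2p3lmm}, and \ref{p3lmm}, absorbs the $e^{-C_2(\log t)^2}$ errors into the $|y|t^{-(d+3)/2}(\log t)^{C_2}$ term via $|y|\ge 1$, and obtains the \emph{moreover} statement from Lemma~\ref{plmm} and the triangle inequality. The only cosmetic difference is that the paper singles out $t=1$ as the trivial small-time case rather than introducing a threshold $t_0$.
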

\begin{proof}
The bounds hold trivially if $y=0$. So let us assume that $y\ne 0$. Again, the bounds hold trivially if $t=1$. So let us also assume that $t\ge 2$. By Lemma \ref{pp1lmm} and the triangle inequality,
\begin{align*}
&|\delta_yp(t,x)-\delta_yp_1(t,x)| \\
&\le |p(t,x+y)-p_1(t,x+y)| + |p(t,x)-p_1(t,x)|\\
&\le C_1 e^{-C_2(\log t)^2}.
\end{align*}
Similarly, by Lemma \ref{p2p3lmm},
\[
|\delta_y p_2(t,x)-\delta_y p_3(t,x)| \le C_1 e^{-C_2(\log t)^2}.
\]
Next, by Lemma \ref{p1p2lmm},
\begin{align*}
|\delta_y p_1(t,x)-\delta_y p_2(t,x)|&\le C_1|y|t^{-(d+3)/2}(\log t)^{C_2}.
\end{align*}
Combining these with the bound on $|\delta_y p_3(t,x)|$ from Lemma \ref{p3lmm}, and observing that $ C_1|y|t^{-(d+3)/2}(\log t)^{C_2}$ dominates $C_1 e^{-C_2(\log t)^2}$ since $y\ne 0$ and $t\ge 2$,  proves the first claim of the proposition. The second claim follows  by Lemma~\ref{plmm} and the triangle inequality.
\end{proof}

\begin{prop}\label{pprop}
Take any $t\in \zz_{\ge 1}$ and $x,y,z\in \zz^d$. Let 
\[
b := \min\{|x|, |x+y|, |x+z|, |x+y+z|\}.
\]
Then we have
\begin{align*}
|\delta_y\delta_zp(t,x)|&\le C_1e^{-C_2b^2/t}t^{-(d+1)/2}\min\{|y|, |z|, t^{-1/2}|y||z|\}\\
&\qquad + C_1|y||z| t^{-(d+4)/2}(\log t)^{C_2}. % +  C_1 e^{-C_2(\log t)^2}.
\end{align*}
Moreover, we also have $|\delta_y \delta_zp(t,x)|\le C_1 e^{-C_2b^2/t}$. 
\end{prop}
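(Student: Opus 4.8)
The plan is to follow the proof of Proposition~\ref{delprop} almost verbatim, pushing the same telescoping argument through one additional discrete difference. First I would dispose of the degenerate cases: if $y=0$ or $z=0$ then $\delta_y\delta_z p\equiv 0$ and both inequalities are trivial, and if $t=1$ both bounds hold trivially for suitable constants, since $p(1,\cdot)$ is supported on $A$ and bounded by $1$. So from now on assume $y,z\ne 0$ and $t\ge 2$.

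For the main estimate I would run the chain $p\to p_1\to p_2\to p_3$, applying $\delta_y\delta_z$ at every stage. Writing $\delta_y\delta_z w(x)=w(x+y+z)-w(x+y)-w(x+z)+w(x)$ and invoking Lemma~\ref{pp1lmm} at each of the four points gives $|\delta_y\delta_z p(t,x)-\delta_y\delta_z p_1(t,x)|\le C_1 e^{-C_2(\log t)^2}$; the same reasoning with Lemma~\ref{p2p3lmm} gives $|\delta_y\delta_z p_2(t,x)-\delta_y\delta_z p_3(t,x)|\le C_1 e^{-C_2(\log t)^2}$; the third inequality of Lemma~\ref{p1p2lmm} gives $|\delta_y\delta_z p_1(t,x)-\delta_y\delta_z p_2(t,x)|\le C_1|y||z|t^{-(d+4)/2}(\log t)^{C_2}$; and the second claim of Lemma~\ref{p3lmm} supplies the genuine main term $|\delta_y\delta_z p_3(t,x)|\le C_1 e^{-C_2 b^2/t}t^{-(d+1)/2}\min\{|y|,|z|,t^{-1/2}|y||z|\}$. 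Summing these four estimates and observing, exactly as in Proposition~\ref{delprop}, that for $y,z\ne 0$ (so $|y|,|z|\ge 1$) and $t\ge 2$ the term $C_1|y||z|t^{-(d+4)/2}(\log t)^{C_2}$ dominates the super-polynomially small errors $C_1 e^{-C_2(\log t)^2}$, yields the asserted bound. For the ``moreover'' claim I would apply the crude pointwise bound of Lemma~\ref{plmm} to each of the four terms of $\delta_y\delta_z p(t,x)$: each argument has norm at least $b$, so each term is at most $C_1 e^{-C_2 b^2/t}$ and hence the sum is at most $4C_1 e^{-C_2 b^2/t}$, of the required form after relabeling the constant.

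There is no substantive obstacle; the whole argument is bookkeeping on top of Lemmas~\ref{pp1lmm}--\ref{plmm}. The only points deserving a little care are that the ``$\min$'' over the four point-norms entering the principal term is precisely $b$ (which is already built into Lemma~\ref{p3lmm}), and that the $e^{-C_2(\log t)^2}$ errors are correctly absorbed into the $t^{-(d+4)/2}(\log t)^{C_2}$ remainder — routine since $t^{(d+4)/2}e^{-C_2(\log t)^2}$ is bounded on $[2,\infty)$.
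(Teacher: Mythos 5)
Your proposal is correct and follows essentially the same route as the paper's own proof: the chain $p\to p_1\to p_2\to p_3$ via Lemmas~\ref{pp1lmm}, \ref{p1p2lmm}, \ref{p2p3lmm}, the main term from Lemma~\ref{p3lmm}, absorption of the $e^{-C_2(\log t)^2}$ errors using $y,z\ne 0$ and $t\ge 2$, and Lemma~\ref{plmm} for the crude bound.
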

\begin{proof}
The bound holds trivially if either $y=0$ or $z=0$, because in that case, $\delta_y \delta_zp(t,x)=0$. So let us assume that $y$ and $z$ are both nonzero. Then, the bound holds trivially if $t=1$. So let us further assume that $t\ge 2$. By Lemma \ref{pp1lmm} and the triangle inequality,
\begin{align*}
&|\delta_y\delta_z p(t,x)-\delta_y\delta_zp_1(t,x)| \\
&\le |p(t,x+y+z)-p_1(t,x+y+z)| + |p(t,x+y)-p_1(t,x+y)| \\
&\qquad + |p(t,x+z)-p_1(t,x+z)| + |p(t,x)-p_1(t,x)|\\
&\le C_1 e^{-C_2(\log t)^2}.
\end{align*}
Similarly, by Lemma \ref{p2p3lmm},
\[
|\delta_y\delta_z p_2(t,x)-\delta_y\delta_zp_3(t,x)| \le C_1 e^{-C_2(\log t)^2}.
\]
Next, by Lemma \ref{p1p2lmm},
\begin{align*}
|\delta_y\delta_zp_1(t,x)-\delta_y\delta_zp_2(t,x)|&\le C_1|y||z| t^{-(d+4)/2}(\log t)^{C_2}.
\end{align*}
Combining these with the bound on $|\delta_y\delta_z p_3(t,x)|$ from Lemma \ref{p3lmm}, and the observation that $C_1|y||z| t^{-(d+4)/2}(\log t)^{C_2}$ dominates $C_1 e^{-C_2(\log t)^2}$ since $y$ and $z$ are nonzero and $t\ge 2$, proves the first claim of the proposition. The second one is a consequence of Lemma \ref{plmm} and the triangle inequality.
\end{proof}

We now have the necessary bounds for spatial derivatives of $p$. We will also need bounds for first-order derivatives with respect to $t$, and space-times mixed derivatives. For $r\in \zz_{\ge 0}$ and a function $w:\zz_{\ge 0} \times \zz^d\to \rr$, define the temporal derivative
\[
\delta'_rw(t,x) := w(t+r,x)-w(t,x).
\]
Our goal is to get bounds on quantities like $\delta_r'p(t,x)$ and $\delta_r'\delta_y p(t,x)$. The first step, as before, is to get the corresponding bounds for $p_3$ instead of $p$. 
\begin{lmm}\label{p3lmm2}
For any two integers $t\ge r\ge1$, and any $x\in \zz^d$, 
\begin{align*}
|\delta_r'p_3(t,x)| &\le C_1 e^{-C_2|x|^2/t} rt^{-(d+2)/2}.%\min\{C_1 r t^{-(d+2)/2}e^{-C_2|x|^2/t}, \\
%&\qquad \qquad C_1 (t+r)^{-d/2} e^{-C_2|x|^2/(t+r)} + C_1 t^{-d/2} e^{-C_2|x|^2/t}\}.
\end{align*}
\end{lmm}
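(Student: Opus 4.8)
The plan is to write the temporal difference as an integral of the time derivative of $p_3$ and then bound that derivative pointwise. Since $p_3$ is smooth in $t>0$, by the fundamental theorem of calculus,
\[
\delta_r' p_3(t,x) = p_3(t+r,x) - p_3(t,x) = \int_t^{t+r} \partial_s p_3(s,x)\, ds,
\]
so it suffices to obtain a bound of the form $|\partial_s p_3(s,x)| \le C_1 e^{-C_2|x|^2/s}\, s^{-(d+2)/2}$ for all $s\ge 1$, and then use $t\le s\le t+r\le 2t$ (which holds because $r\le t$) to replace every occurrence of $s$ by $t$, at the cost of harmless constants, before integrating over an interval of length $r$.

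For the pointwise bound on $\partial_s p_3$, I would differentiate $\log p_3(s,x) = -\tfrac{d}{2}\log(4\pi\beta s) - \tfrac{|x|^2}{4\beta s}$ to get
\[
\partial_s p_3(s,x) = p_3(s,x)\left(-\frac{d}{2s} + \frac{|x|^2}{4\beta s^2}\right)
= (4\pi\beta s)^{-d/2} e^{-|x|^2/4\beta s}\left(-\frac{d}{2s} + \frac{|x|^2}{4\beta s^2}\right).
\]
The first term is immediately bounded in absolute value by $C\, s^{-(d+2)/2} e^{-|x|^2/4\beta s}$. For the second term, the key elementary inequality is $u e^{-u} \le C e^{-u/2}$ for $u\ge 0$, applied with $u = |x|^2/4\beta s$; this turns $\tfrac{|x|^2}{4\beta s}e^{-|x|^2/4\beta s}$ into $C e^{-|x|^2/8\beta s}$, so the second term is bounded by $C\, s^{-(d+2)/2} e^{-|x|^2/8\beta s}$. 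Combining, $|\partial_s p_3(s,x)| \le C_1 e^{-C_2 |x|^2/s}\, s^{-(d+2)/2}$.

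Finally, for $s$ in the integration range we have $s\ge t\ge 1$, hence $s^{-(d+2)/2}\le t^{-(d+2)/2}$, and $s\le 2t$, hence $e^{-C_2|x|^2/s}\le e^{-(C_2/2)|x|^2/t}$. Plugging these into the integral gives
\[
|\delta_r' p_3(t,x)| \le \int_t^{t+r} C_1 e^{-C_2|x|^2/s}\, s^{-(d+2)/2}\, ds \le C_1 e^{-C_2|x|^2/t}\, r\, t^{-(d+2)/2},
\]
which is the claimed bound (with relabeled constants). There is no real obstacle here; the only point requiring a moment's care is absorbing the polynomial factor $|x|^2/s^2$ into the Gaussian via $u e^{-u}\le C e^{-u/2}$, and keeping track of the constraint $r\le t$ so that $s$ stays comparable to $t$ throughout the interval.
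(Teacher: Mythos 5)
Your proof is correct and follows essentially the same route as the paper: both compute the continuous $t$-derivative of $p_3$, absorb the polynomial factor $|x|^2/s$ into the Gaussian (the paper writes the bound as $C_1 t^{-(d+2)/2}(1+t^{-1}|x|^2)e^{-C_2|x|^2/t}\le C_1 t^{-(d+2)/2}e^{-C_2|x|^2/t}$, which is your $ue^{-u}\le Ce^{-u/2}$ step), and use $r\le t$ together with the mean value theorem / fundamental theorem of calculus over $[t,t+r]$. No gaps.
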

\begin{proof}
%As in the proof of Lemma \ref{p3lmm}, let $u(q) := e^{-|q|^2}$, so that 
%\[
%p_3(t,x) = (4\pi \beta t)^{-d/2} u((4\beta t)^{-1/2} x). 
%\]
Treating $t$ as a continuous variable in the formula \eqref{p3form} for $p_3$, let $p_3'$ denote its derivative with respect to $t$. Then note that 
\begin{align*}
|p_3'(t,x)| &\le C_1 t^{-(d+2)/2} (1+t^{-1}|x|^2) e^{-C_2|x|^2/t}\\
&\le C_1 t^{-(d+2)/2} e^{-C_2|x|^2/t}. 
\end{align*}
Since $r\le t$, the maximum value of this derivative in the time interval $[t,t+r]$ is bounded by $C_1 t^{-(d+2)/2} e^{-C_2|x|^2/t}$. By the mean value theorem, this completes the proof of the lemma. %This shows that
%\[
%|\delta_r' p_3(t,x)| = |p_3(t+r,x)-p_3(t,x)|\le C_1r t^{-(d+2)/2} e^{-C_2|x|^2/t},
%\]
%completing the proof of the lemma.
%On the other hand, we also trivially have
%\begin{align*}
%|\delta_r' p_3(t,x)|  &\le p_3(t+r,x)+p_3(t,x)\\
%&\le C_1 (t+r)^{-d/2} e^{-C_2|x|^2/(t+r)} + C_1 t^{-d/2} e^{-C_2|x|^2/t}. 
%\end{align*}
%Taking the minimum of the two bounds obtained above gives the required bound for  $|\delta_r' p_3(t,x)|$. 
\end{proof}
\begin{lmm}\label{p3lmm3}
Take any two integers $t\ge r\ge 1$, and any $x,y\in \zz^d$. Let $a := \min\{|x|, |x+y|\}$. Then 
\begin{align*}
|\delta_r'\delta_yp_3(t,x)|&\le C_1 e^{-C_2 a^2/t} |y| r t^{-(d+3)/2}. %\min\{t^{-(d+3)/2}|y|r, \\
%&\qquad (t+r)^{-(d+1)/2} |y| + t^{-(d+1)/2} |y|\}.
\end{align*}
\end{lmm}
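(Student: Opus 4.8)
The plan is to mirror the argument used for Lemma \ref{p3lmm2}: treat $t$ as a continuous variable, take one derivative in $t$ to peel off the factor $r$ via the mean value theorem, and then estimate the resulting first-order spatial discrete derivative of $p_3'$ by the same method as in the proof of Lemma \ref{p3lmm}. Concretely, first I would write
\[
\delta_r'\delta_y p_3(t,x) = \delta_y p_3(t+r,x) - \delta_y p_3(t,x),
\]
observe that $s\mapsto \delta_y p_3(s,x)$ is smooth with derivative $\delta_y p_3'(s,x)$ (where $p_3'$ is the $t$-derivative of $p_3$, as in Lemma \ref{p3lmm2}), and apply the mean value theorem: there is some $s\in[t,t+r]$ with $|\delta_r'\delta_y p_3(t,x)|\le r\,|\delta_y p_3'(s,x)|$. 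Since $r\le t$ we have $s\in[t,2t]$, so it suffices to bound $|\delta_y p_3'(s,x)|$ for $s$ in this range and then compare the resulting bound back to $t$.

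Next, exactly as in the proof of Lemma \ref{p3lmm2}, a direct computation from \eqref{p3form} gives
\[
p_3'(s,x) = (4\pi\beta s)^{-d/2}\Bigl(-\frac{d}{2s}+\frac{|x|^2}{4\beta s^2}\Bigr)e^{-|x|^2/4\beta s} = s^{-(d+2)/2}\, v\bigl((4\beta s)^{-1/2}x\bigr),
\]
where $v(q) := (4\pi\beta)^{-d/2}\bigl(-\tfrac d2+|q|^2\bigr)e^{-|q|^2}$ is of the ``polynomial times $e^{-|q|^2}$'' type considered in the proof of Lemma \ref{p3lmm}; the same is true of every derivative of $v$, so $|\nabla v(q)|\le C_1 e^{-C_2|q|^2}$ for all $q$, and the maximum of $|\nabla v|$ on the segment joining two points $q,q'\in\rr^d$ is bounded by $C_1 e^{-C_2\min\{|q|^2,|q'|^2\}}$. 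Applying the mean value inequality to $v$ with $q=(4\beta s)^{-1/2}x$ and $q'=(4\beta s)^{-1/2}(x+y)$, so that $|q-q'|=(4\beta s)^{-1/2}|y|$ and $\min\{|q|^2,|q'|^2\}=a^2/(4\beta s)$, yields
\[
|\delta_y p_3'(s,x)| \le s^{-(d+2)/2}\,|v(q')-v(q)| \le C_1 s^{-(d+3)/2}|y|\,e^{-C_2 a^2/s}.
\]

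Finally, using $t\le s\le 2t$ to replace $s^{-(d+3)/2}$ by a constant multiple of $t^{-(d+3)/2}$ and $e^{-C_2 a^2/s}$ by $e^{-C_2 a^2/(2t)}$ (after adjusting $C_2$), and multiplying by the factor $r$ coming from the mean value theorem, I obtain $|\delta_r'\delta_y p_3(t,x)|\le C_1 e^{-C_2 a^2/t}\,|y|\,r\,t^{-(d+3)/2}$, which is the claim. I do not expect a genuine obstacle here: the only points needing a little care are verifying that the polynomial-times-Gaussian structure of $p_3$ is preserved under one $t$-derivative together with one spatial gradient (so the estimate from Lemma \ref{p3lmm} applies verbatim to $v$), and handling the range $s\in[t,2t]$ so that bounds at $s$ are comparable to those at $t$ — both of which are done exactly as in Lemma \ref{p3lmm2}.
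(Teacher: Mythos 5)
Your proof is correct and follows essentially the same route as the paper: treat $t$ as continuous, bound the temporal increment by $r$ times the sup of the $t$-derivative of $\delta_y p_3$ on $[t,t+r]$, and estimate that derivative's spatial difference using the polynomial-times-Gaussian structure and the mean value inequality. The only cosmetic difference is that you package the $t$-derivative as a single function $v(q)=(4\pi\beta)^{-d/2}(-\tfrac d2+|q|^2)e^{-|q|^2}$, whereas the paper keeps the two product-rule terms (involving $u$ and $q\cdot\nabla u$) separate before bounding each.
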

\begin{proof}
%Throughout this proof, $K$ will denote any quantity that can be bounded above by $C_1 e^{-C_2 b^2}$.
In this proof we will use $K$ as shorthand for $C_1 e^{-C_2a^2/t}$. Fixing $x$ and $y$, let 
\[
w(t):= \delta_yp_3(t,x)
\]
and let $w'$ denote its derivative with respect to $t$, considering $t$ as a continuous parameter. Let $u(q) := e^{-|q|^2}$ and $v(q) := q\cdot \nabla u(q)$ for $q\in \rr^d$. Then
\begin{align*}
w'(t) &= -\frac{d}{2}(4\pi \beta)^{-d/2} t^{-(d+2)/2} (u((4\beta t)^{-1/2} (x+y)) - u((4\beta t)^{-1/2} x))\\
&\qquad -\frac{1}{2} (4\pi \beta t)^{-d/2} t^{-1} (v((4\beta t)^{-1/2} (x+y)) - v((4\beta t)^{-1/2} x)).
\end{align*}
It is not hard to see that $|\nabla v(q)|\le C_1 e^{-C_2 |q|^2}$ for all $q$. Thus,
\begin{align*}
|v((4\beta t)^{-1/2} (x+y)) - v((4\beta t)^{-1/2} x)| &\le Kt^{-1/2}|y|.
\end{align*}
Similarly, 
\begin{align*}
|u((4\beta t)^{-1/2} (x+y)) - u((4\beta t)^{-1/2} x)|&\le Kt^{-1/2}|y|.
\end{align*}
Combining, we get
\begin{align*}
|w'(t)| &\le Kt^{-(d+3)/2}|y|.
\end{align*}
Since $|w(t+r)-w(t)|\le r \max_{t\le s\le t+r} |w'(s)|$ and $r\le t$, this shows that
\begin{align*}
|\delta_r' \delta_yp_3(t,x)| &\le Kt^{-(d+3)/2}|y|r.
\end{align*}
This completes the proof of the lemma.
%On the other hand, for any $t$ and $x$, Lemma \ref{p3lmm} gives
%\begin{align*}
%|\delta_y p_3(t,x)| &\le Kt^{-(d+1)/2} |y|,
%\end{align*}
%and therefore
%\begin{align*}
%|\delta_r' \delta_yp_3(t,x)|  &\le |\delta_yp_3(t+r,x)|  + | \delta_yp_3(t,x)| \\
%&\le C_1e^{-C_2b^2/(t+r)}(t+r)^{-(d+1)/2} |y| + C_1e^{-C_2b^2/t}t^{-(d+1)/2} |y|.
%\end{align*}
%The proof is completed by combining the two bounds.
\end{proof}
Now that we have the required bounds for the derivatives of $p_3$, we can use them to deduce the corresponding bounds for the derivatives of $p$. 
\begin{prop}\label{plmm2}
For any two integers $t\ge r\ge1$, and any $x\in \zz^d$, 
\begin{align*}
|\delta_r'p(t,x)| &\le C_1e^{-C_2|x|^2/t} rt^{-(d+2)/2} + C_1 t^{-(d+2)/2}(\log t)^{C_2}.%\min\{ r t^{-1}, 1 \}\\
%&\qquad +  C_1 t^{-(d+2)/2}(\log (t+r))^{d+4}  + C_1 e^{-C_2(\log t)^2}.
\end{align*}
Moreover, we also have $|\delta_r'p(t,x)|\le C_1 e^{-C_2|x|^2/t}$. 
\end{prop}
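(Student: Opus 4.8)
The plan is to reuse, essentially verbatim, the three-step approximation scheme of Propositions~\ref{delprop} and~\ref{pprop}, with the temporal difference $\delta'_r$ playing the role previously played by the spatial differences $\delta_y,\delta_y\delta_z$. First I would dispose of the trivial case $t=1$, which forces $r=1$: here $|\delta'_r p(t,x)|\le p(2,x)+p(1,x)$, and this is bounded by $C_1 e^{-C_2|x|^2}$ via Lemma~\ref{plmm} after shrinking $C_2$ (using $e^{-C_2|x|^2/2}\le e^{-(C_2/2)|x|^2}$). So from now on I assume $t\ge 2$.

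The one point requiring a little care, compared with the spatial case, is that $p_1$ and $p_2$ carry a $t$-dependent Fourier cutoff $|\eta|\le \log t$, so that $\delta'_r p_1(t,x)$ and $\delta'_r p_2(t,x)$ involve two different cutoffs. This is harmless because $t\le t+r\le 2t$: hence $(t+r)^{-(d+2)/2}\le t^{-(d+2)/2}$, and $\log(t+r)\le \log 2+\log t\le 2\log t$ for $t\ge 2$, so all the approximation error bounds evaluated at time $t+r$ are controlled by their values at time $t$ up to a multiplicative constant. Applying the triangle inequality $|\delta'_r w(t,x)|\le |w(t+r,x)|+|w(t,x)|$ to $w=p-p_1$, $w=p_1-p_2$ and $w=p_2-p_3$, and feeding in Lemmas~\ref{pp1lmm}, \ref{p1p2lmm} and~\ref{p2p3lmm} (each evaluated at both $t$ and $t+r$), I would obtain
\begin{align*}
|\delta'_r p(t,x)-\delta'_r p_1(t,x)|&\le C_1 e^{-C_2(\log t)^2},\\
|\delta'_r p_1(t,x)-\delta'_r p_2(t,x)|&\le C_1 t^{-(d+2)/2}(\log t)^{C_2},\\
|\delta'_r p_2(t,x)-\delta'_r p_3(t,x)|&\le C_1 e^{-C_2(\log t)^2}.
\end{align*}
I would then invoke Lemma~\ref{p3lmm2}, which directly gives $|\delta'_r p_3(t,x)|\le C_1 e^{-C_2|x|^2/t}\,r\,t^{-(d+2)/2}$, and combine everything by the triangle inequality, absorbing the two $e^{-C_2(\log t)^2}$ terms into $C_1 t^{-(d+2)/2}(\log t)^{C_2}$ — legitimate for $t\ge 2$, since $e^{-C_2(\log t)^2}$ decays faster than any negative power of $t$. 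This yields the first displayed bound of the proposition.

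For the second claim I would simply use Lemma~\ref{plmm} together with the triangle inequality: $|\delta'_r p(t,x)|\le p(t+r,x)+p(t,x)\le C_1 e^{-C_2|x|^2/(t+r)}+C_1 e^{-C_2|x|^2/t}\le C_1 e^{-C_2|x|^2/(2t)}$, using $t+r\le 2t$, and then rename $C_2/2$ as $C_2$. There is no genuine obstacle here: all of the analytic content already resides in Lemmas~\ref{pp1lmm}--\ref{p3lmm2}, and the only thing to watch is the elementary bookkeeping with the $t$-dependent integration domains, which is entirely taken care of by the sandwich $t\le t+r\le 2t$.
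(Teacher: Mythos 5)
Your proposal is correct and follows essentially the same route as the paper: the three-step comparison $p \to p_1 \to p_2 \to p_3$ via Lemmas \ref{pp1lmm}, \ref{p1p2lmm}, \ref{p2p3lmm}, the bound on $|\delta_r' p_3|$ from Lemma \ref{p3lmm2}, and Lemma \ref{plmm} with the triangle inequality for the second claim. Your explicit remark about the $t$-dependent cutoffs being controlled by $t\le t+r\le 2t$ is a point the paper leaves implicit, but it changes nothing in substance.
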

\begin{proof}
Since $1\le r\le t$, it is easy to see that the case $t=1$ is trivial. So let us assume that $t\ge 2$. First, note that by Lemma \ref{pp1lmm}, 
\begin{align*}
|\delta_r'p(t,x)-\delta_r'p_1(t,x)| &\le |p(t+r,x)-p_1(t+r,x)| + |p(t,x)-p_1(t,x)|\\
&\le C_1 e^{-C_2(\log t)^2}.
\end{align*}
Similarly, by Lemma \ref{p2p3lmm}, 
\[
|\delta_r'p_2(t,x)-\delta_r'p_3(t,x)| \le C_1 e^{-C_2(\log t)^2}.
\]
Next, by Lemma \ref{p1p2lmm},
\begin{align*}
&|\delta_r'p_1(t,x)-\delta_r'p_2(t,x)| \\
&\le |p_1(t+r,x)-p_2(t+r,x)| + |p_1(t,x)-p_2(t,x)|\\
&\le C_1 t^{-(d+2)/2}(\log t)^{C_2}. % + C (t+r)^{-(d+2)/2}(\log (t+r))^{d+4}. 
\end{align*}
Combining the above inequalities with the bound on $|\delta_r'p_3(t,x)|$ from Lemma \ref{p3lmm2}, and the observation that $C_1 t^{-(d+2)/2}(\log t)^{C_2}$ dominates $C_1 e^{-C_2(\log t)^2}$ since $t\ge 2$, we  get the first inequality in the statement of the lemma. The second one follows from Lemma \ref{plmm} and the triangle inequality.
\end{proof}
\begin{prop}\label{plmm3}
Take any two integers $t\ge r\ge 1$, and any $x, y\in \zz^d$. Let $a:=\min\{|x|, |x+y|\}$. Then  
\begin{align*}
|\delta_r'\delta_yp(t,x)| &\le C_1e^{-C_2a^2/t} t^{-(d+3)/2}r|y| +  C_1 |y| t^{-(d+3)/2}(\log t)^{C_2}.
\end{align*}
Moreover, we also have $|\delta_r'\delta_yp(t,x)|\le C_1 e^{-C_2a^2/t}$. 
\end{prop}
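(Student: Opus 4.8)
The plan is to mirror the proof of Proposition \ref{plmm2}, decomposing $p$ into its successive approximations $p_1$, $p_2$, $p_3$ and controlling each resulting error term, with the one genuinely new input being the bound on $\delta_r'\delta_y p_3$ supplied by Lemma \ref{p3lmm3}. First I would dispose of the trivial cases: if $y=0$ then $\delta_y p\equiv 0$ and both bounds are immediate; and if $t=1$ then $r=1$, and the asserted bounds reduce to crude estimates that hold by inspection. So from now on assume $y\ne 0$ and $t\ge 2$.

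Next, writing $\delta_r'\delta_y w(t,x)=\delta_y w(t+r,x)-\delta_y w(t,x)$, I would use the triangle inequality to split
\[
\delta_r'\delta_y p=\delta_r'\delta_y(p-p_1)+\delta_r'\delta_y(p_1-p_2)+\delta_r'\delta_y(p_2-p_3)+\delta_r'\delta_y p_3
\]
and estimate the four pieces in turn. The first piece is a combination of four terms of the form $p(\cdot)-p_1(\cdot)$, each bounded by $C_1 e^{-C_2(\log t)^2}$ via Lemma \ref{pp1lmm}, so $|\delta_r'\delta_y(p-p_1)(t,x)|\le C_1 e^{-C_2(\log t)^2}$; likewise $|\delta_r'\delta_y(p_2-p_3)(t,x)|\le C_1 e^{-C_2(\log t)^2}$ by Lemma \ref{p2p3lmm}. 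For the second piece, $\delta_r'\delta_y(p_1-p_2)(t,x)=\delta_y(p_1-p_2)(t+r,x)-\delta_y(p_1-p_2)(t,x)$, and the second inequality of Lemma \ref{p1p2lmm} bounds the summand at time $s\in\{t,t+r\}$ by $C_1|y|\,s^{-(d+3)/2}(\log s)^{C_2}$; since $r\le t$ we have $t\le t+r\le 2t$, so $s^{-(d+3)/2}(\log s)^{C_2}\le C\,t^{-(d+3)/2}(\log t)^{C_2}$ for $t\ge 2$, giving $|\delta_r'\delta_y(p_1-p_2)(t,x)|\le C_1|y|\,t^{-(d+3)/2}(\log t)^{C_2}$. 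The fourth piece is handled directly by Lemma \ref{p3lmm3}: $|\delta_r'\delta_y p_3(t,x)|\le C_1 e^{-C_2 a^2/t}|y|\,r\,t^{-(d+3)/2}$.

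Combining these four bounds via the triangle inequality yields
\[
|\delta_r'\delta_y p(t,x)|\le C_1 e^{-C_2 a^2/t}|y|\,r\,t^{-(d+3)/2}+C_1|y|\,t^{-(d+3)/2}(\log t)^{C_2}+C_1 e^{-C_2(\log t)^2},
\]
and since $y\ne 0$ forces $|y|\ge 1$ and $t\ge 2$, the term $C_1|y|\,t^{-(d+3)/2}(\log t)^{C_2}$ absorbs the $C_1 e^{-C_2(\log t)^2}$ term after enlarging $C_1$, which gives the first asserted inequality. The final ``moreover'' bound $|\delta_r'\delta_y p(t,x)|\le C_1 e^{-C_2 a^2/t}$ follows by expanding $\delta_r'\delta_y p$ as an alternating sum of four values of $p$, applying the crude bound of Lemma \ref{plmm} to each (using $|x|,|x+y|\ge a$ and $t+r\le 2t$ so that each term is $\le C_1 e^{-C_2 a^2/(2t)}$), and the triangle inequality.

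I do not expect a genuine obstacle here: all the analytic content is already packaged in Lemmas \ref{pp1lmm}, \ref{p1p2lmm}, \ref{p2p3lmm} and \ref{p3lmm3}, and what remains is bookkeeping. The only points needing a moment's care are (i) tracking the time-shift $t\mapsto t+r$ and invoking $r\le t$ to transfer bounds at time $t+r$ back to bounds at time $t$, and (ii) the routine observation that a negative power of $t$ times a power of $\log t$ dominates $e^{-C_2(\log t)^2}$ on $t\ge 2$ once $C_1$ is taken large enough.
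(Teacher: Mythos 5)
Your proposal is correct and follows essentially the same route as the paper: the same decomposition $p=(p-p_1)+(p_1-p_2)+(p_2-p_3)+p_3$, the same invocations of Lemmas \ref{pp1lmm}, \ref{p1p2lmm}, \ref{p2p3lmm}, \ref{p3lmm3} for the four pieces, the same absorption of the $e^{-C_2(\log t)^2}$ terms, and the same use of Lemma \ref{plmm} for the second bound. The only cosmetic difference is that you spell out the $t\le t+r\le 2t$ bookkeeping explicitly, which the paper leaves implicit.
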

\begin{proof}
The bounds hold trivially if $y=0$. So let us assume that $y\ne 0$. Then again, the bounds are trivial if $t=1$. So let us also assume that $t\ge 2$. First, note that by Lemma \ref{pp1lmm}, 
\begin{align*}
&|\delta_r'\delta_yp(t,x)-\delta_r'\delta_yp_1(t,x)| \\
&\le |p(t+r,x+y)-p_1(t+r,x+y)| + |p(t,x+y)-p_1(t,x+y)|\\
&\qquad + |p(t+r, x)-p_1(t+r,x)| + |p(t,x)-p_1(t,x)|\\
&\le C_1 e^{-C_2(\log t)^2}.
\end{align*}
Similarly, by Lemma \ref{p2p3lmm}, 
\[
|\delta_r'\delta_yp_2(t,x)-\delta_r'\delta_yp_3(t,x)| \le C_1 e^{-C_2(\log t)^2}.
\]
Next, by Lemma \ref{p1p2lmm},
\begin{align*}
&|\delta_r'\delta_yp_1(t,x)-\delta_r'\delta_yp_2(t,x)| \\
&\le |\delta_yp_1(t+r,x)-\delta_yp_2(t+r,x)| + |\delta_yp_1(t,x)-\delta_yp_2(t,x)|\\
&\le C_1|y|t^{-(d+3)/2}(\log t)^{C_2}. % + C (t+r)^{-(d+2)/2}(\log (t+r))^{d+4}. 
\end{align*}
Combining the above inequalities with the bound on $|\delta_r'\delta_yp_3(t,x)|$ from Lemma \ref{p3lmm3}, and the observation that $C_1|y|t^{-(d+3)/2}(\log t)^{C_2}$ dominates $C_1 e^{-C_2(\log t)^2}$ since $y\ne 0$ and $t\ge 2$, we get the first inequality in the statement of the lemma. The second one follows from Lemma~\ref{plmm} and the triangle inequality.
\end{proof}

\subsection{Smoothness of the discrete surface}\label{surfacederivsec}
In this subsection we will derive bounds on the first- and second-order discrete derivatives of $f_\ve$ using the bounds on the derivatives of $p$ obtained in the previous subsection, the recursive formula from Proposition~\ref{fformlmm}, and the uniform bound on $h_\ve$ from Lemma \ref{lip2lmm}. Our first result gives a bound on second-order spatial derivatives of $f_\ve$. 
\begin{prop}\label{secondderiv}
Take any $t\in \zz_{\ge1}$ and any $x,y,z\in \zz^d$ with $|y|\le |z|$. Then 
\begin{align*}
&|\delta_y \delta_z f_\ve(t,x)| \\
&\le C_1 \ve^2 |y| |z|\log (2+t/|z|^2) \\
&\qquad + C_1(1+\ve|x| +\ve|y|+\ve|z|+\ve t^{1/2})t^{-1/2}|y|\min\{1, t^{-1/2}|z|\} \notag\\
&\qquad + C_1(1+\ve |x| +\ve|y|+\ve|z|+ \ve t^{1/2}\log t) |y||z| t^{-2}(\log t)^{C_2}.
\end{align*}
\end{prop}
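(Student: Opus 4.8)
The plan is to combine the recursive representation of $f_\ve$ from Proposition~\ref{fformlmm} with the derivative bounds for $p$ from Propositions~\ref{delprop} and \ref{pprop}, and the $O(\ve^2)$ bound on $h_\ve$ from Lemma~\ref{lip2lmm}. Applying the second-order discrete derivative $\delta_y\delta_z$ to the formula in Proposition~\ref{fformlmm} gives
\[
\delta_y\delta_z f_\ve(t,x) = \sum_{w\in\zz^d} \delta_y\delta_z p(t, x-w)\, g_\ve(w) + \sum_{0\le s\le t-1}\sum_{w\in\zz^d} \delta_y\delta_z p(s, x-w)\, h_\ve(t-s, w),
\]
where the discrete derivatives act on the spatial variable of $p$. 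So there are two terms to estimate: the ``initial data'' term and the ``source'' term. First I would handle the initial data term. Since $g$ is Lipschitz with constant $L$, we have $g_\ve(w) = g_\ve(x) + O(L\ve|x-w|)$, and the constant part $g_\ve(x)$ is killed because $\sum_w \delta_y\delta_z p(t,x-w) = 0$ (the discrete derivatives of a probability distribution sum to zero). Hence this term is bounded by $C L\ve \sum_w |\delta_y\delta_z p(t,x-w)|\,|x-w|$, and I would plug in the bound from Proposition~\ref{pprop}, split the sum over $|x-w|$ according to whether $b=\min\{\ldots\}$ is comparable to $|x-w|$ or much smaller, and carry out the resulting Gaussian-type sums; these produce the factors $t^{-1/2}|y|\min\{1,t^{-1/2}|z|\}$ and the polynomially-corrected $|y||z|t^{-2}(\log t)^{C_2}$ term, each multiplied by $\ve$ times a linear-in-$|x|,|y|,|z|,t^{1/2}$ factor coming from the $|x-w|$ weight.

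Next I would handle the source term. Using $|h_\ve(t-s,w)|\le C\ve^2$ uniformly (Lemma~\ref{lip2lmm}), this term is bounded by $C\ve^2\sum_{0\le s\le t-1}\sum_w |\delta_y\delta_z p(s,x-w)|$. For each fixed $s$, Proposition~\ref{pprop} gives $\sum_w |\delta_y\delta_z p(s,x-w)| \le C s^{-(d+1)/2}\min\{|y|,|z|,s^{-1/2}|y||z|\}\cdot \sum_w e^{-C_2 b^2/s} + C|y||z| s^{-(d+4)/2}(\log s)^{C_2}\cdot \sum_w 1$; the first Gaussian sum over $w$ contributes $Cs^{d/2}$, so the first piece is $\le C s^{-1/2}\min\{|y|,|z|, s^{-1/2}|y||z|\}\le C s^{-1}|y||z|$ (for $s\ge |z|^2$) or $\le Cs^{-1/2}|z|$ (for $s\le |z|^2$), and summing over $s$ from $1$ to $t$ gives the logarithmic factor $\log(2+t/|z|^2)$ times $|y||z|$; the polynomially small piece requires cutting off the $\sum_w 1$ using instead the second (non-summable over all $w$, but decaying) bound $C_1 e^{-C_2 b^2/s}$ from Proposition~\ref{pprop} combined with the large-$|x-w|$ crude bound, but more simply one uses the $\min$ of the two estimates in Proposition~\ref{pprop}, yielding after summation a contribution of order $\ve^2|y||z|$ times $(\log t)^{C_2}$, which is dominated by the stated $\ve^2|y||z|\log(2+t/|z|^2)$ term (for the sizes of $|z|$ that matter) or absorbed into it. Care is needed for the small-$s$ terms, in particular $s=0$ which is handled by the $h_\ve(t,x)$ summand separately ($|\delta_y\delta_z p(0,\cdot)|$ is supported near the origin and bounded, contributing $O(\ve^2)$ which is negligible).

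The main obstacle I anticipate is the bookkeeping in the source term: getting precisely the factor $\log(2+t/|z|^2)$ rather than a cruder $\log t$. This comes from carefully splitting the sum over $s$ at $s\sim |z|^2$: for $s\lesssim |z|^2$ one uses $\min\{|y|,|z|,\ldots\}\le |y|$, so $\sum_{s\le |z|^2} s^{-1}|y|\cdot |z| $... actually one must use $\sum_{s\le |z|^2} s^{-(d+1)/2}|y|\cdot s^{d/2} = |y|\sum_{s\le|z|^2} s^{-1/2}\sim |y||z|$, a bounded (non-logarithmic) contribution; for $s\gtrsim |z|^2$ one uses the $s^{-1/2}|y||z|$ bound in the $\min$, giving $|y||z|\sum_{|z|^2\le s\le t} s^{-1}\sim |y||z|\log(t/|z|^2)$. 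Assembling these two regimes yields exactly $|y||z|\log(2+t/|z|^2)$. The other delicate point is tracking the various linear-in-$(\ve|x|,\ve|y|,\ve|z|,\ve\sqrt t)$ prefactors through the weighted sums in the initial-data term; here one uses that in the Gaussian sum $\sum_w e^{-C_2 b^2/s}|x-w|$, the weight $|x-w|$ is at most $|x| + |y| + |z| + (\text{the part of } x-w \text{ that the Gaussian controls, of size } O(\sqrt s))$, which is why $\ve\sqrt t$ (and for the polynomial tail, $\ve\sqrt t\log t$) appears. These are all routine once the splitting is set up, but must be done with some care to land exactly on the stated bound.
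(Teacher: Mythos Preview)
Your proposal is correct and follows essentially the same approach as the paper: apply $\delta_y\delta_z$ to the formula in Proposition~\ref{fformlmm}, bound the source term via Lemma~\ref{lip2lmm} and Proposition~\ref{pprop} with the key split of the $s$-sum at $s\sim|z|^2$ to produce the $\log(2+t/|z|^2)$ factor, and bound the initial-data term using the Lipschitz growth of $g_\ve$ together with both bounds from Proposition~\ref{pprop} on the set where $b\lesssim\sqrt{t}\log t$ and its complement. The only cosmetic difference is that you center by subtracting $g_\ve(x)$ (using $\sum_w\delta_y\delta_z p(t,x-w)=0$) to get the weight $\ve|x-w|$, whereas the paper uses $|g_\ve(w)|\le C(1+\ve|w|)$ directly; your version is equally valid and yields the stated bound.
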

\begin{proof}
If $y=0$ or $z=0$, the left side is zero and hence the bound holds trivially. So let us assume that $y$ and $z$ are nonzero. 
By Proposition \ref{fformlmm}, we get
%\begin{align*}
%f(t,x) &= \sum_{w\in \zz^d} p(t,x-w) g(w) + \sum_{0\le s\le t-1} \sum_{w\in \zz^d} p(s, x-w) h(t-s,w).
%\end{align*}
%Thus,
\begin{align}
\delta_y\delta_zf_\ve (t,x) &= \sum_{w\in \zz^d} \delta_y\delta_zp(t,x-w) g_\ve(w) \notag\\
&\qquad + \sum_{0\le s\le t-1} \sum_{w\in \zz^d} \delta_y\delta_z p(s, x-w) h_\ve(t-s,w).\label{propeq1}
\end{align}
Take any $2\le s\le t-1$ (which exists only if $t\ge 3$; otherwise, we do not have to worry about such $s$). By Lemma \ref{lip2lmm}, $|h_\ve(t-s,w)|\le C\ve^2$ for all $w$. Thus,
\begin{align}
\biggl| \sum_{w\in \zz^d} \delta_y\delta_z p(s, x-w) h_\ve(t-s,w)\biggr| &\le C\ve^2 \sum_{w\in \zz^d} |\delta_y\delta_z p(s, x-w)|.\label{wineq0}
\end{align}
Let $D$ be the set of all $w$ such that at least one of the quantities $|x-w|$, $|x-w+y|$, $|x-w+z|$ and $|x-w+y+z|$ is less than $\sqrt{s} \log s$.  Then by the second bound from Proposition \ref{pprop}, we get
\begin{align}
\sum_{w\notin D} |\delta_y\delta_z p(s, x-w)|&\le C_1\sum_{|v|\ge \sqrt{s}\log s} e^{-C_2|v|^2/s}\notag \\
&\le C_1 e^{-C_2(\log s)^2}. \label{wineq1}
\end{align}
Fixing $x$, $y$ and $z$, define
\[
q(w) := \min\{|x-w|, |x-w+y|, |x-w+z|, |x-w+y+z|\}
\]
for $w\in \zz^d$. Now, it is not hard to see that for any $C$, 
\begin{align}\label{ecc}
\sum_{v\in \zz^d} e^{-C|v|^2/s} \le C' s^{d/2}
\end{align}
where $C'$ depends on $C$ and $d$. This implies that
\begin{align*}
&\sum_{w\in \zz^d} e^{-Cq(w)^2/s} \\
&\le \sum_{w\in \zz^d} (e^{-C|x-w|^2/s} + e^{-C|x-w+y|^2/s} + e^{-C|x-w+z|^2/s} + e^{-C|x-w+y+z|^2/s})\\
&= 4\sum_{w\in \zz^d} e^{-C|w|^2/s} \le C' s^{d/2},
\end{align*}
where $C'$ depends on $C$ and $d$. Also, note that $|D|\le Cs^{d/2}(\log s)^d$. Therefore, the first bound from Proposition \ref{pprop} gives 
\begin{align}
\sum_{w\in D} |\delta_y\delta_z p(s, x-w)| &\le C_1 s^{-(d+1)/2}|y|\min\{1, s^{-1/2}|z|\}\sum_{w\in D} e^{-C_2q(w)^2/s}\notag \\
&\qquad + C_1 |y||z|s^{-(d+4)/2} (\log s)^{C_2} |D| \notag \\
&\le C_1 s^{-1/2}|y|\min\{1, s^{-1/2}|z|\}\notag \\
&\qquad + C_1 |y||z|s^{-2} (\log s)^{C_2}. \label{wineq2}
\end{align}
Combining \eqref{wineq0}, \eqref{wineq1} and \eqref{wineq2}, and the assumption that $y$ and $z$ are nonzero (which allows the bound from \eqref{wineq2} to dominate the bound from \eqref{wineq1}), we have
\begin{align*}
&\biggl| \sum_{w\in \zz^d} \delta_y\delta_z p(s, x-w) h_\ve(t-s,w)\biggr| \\
&\le  C_1\ve^2 |y| s^{-1/2}\min\{1, s^{-1/2}|z|\}  + C_1\ve^2 |y||z|s^{-2} (\log s)^{C_2}.
\end{align*}
Consequently,
\begin{align*}
&\biggl|\sum_{2\le s\le t-1} \sum_{w\in \zz^d} \delta_y\delta_z p(s, x-w) h_\ve (t-s,w)\biggr|\\
&\le  C_1\ve^2|y|\sum_{2\le s\le t-1} s^{-1/2}\min\{1, s^{-1/2}|z|\} + C_1 \ve^2 |y||z|. 
\end{align*}
Now note that
\begin{align*}
\sum_{2\le s\le t-1} s^{-1/2}\min\{1, s^{-1/2}|z|\}  &\le \sum_{2\le s\le |z|^2} s^{-1/2} + \sum_{|z|^2< s\le t-1} s^{-1}|z|\\
&\le   
\begin{cases}
C_1 |z|\log(t/|z|^2) &\text{ if } |z|< \sqrt{t},\\
C_1 |z| &\text{ if } |z|\ge \sqrt{t}. 
\end{cases}
\end{align*}
Lastly, note that by the uniform bound on $h_\ve$ from Lemma \ref{lip2lmm}, the contributions from $s=0$ and $s=1$ in \eqref{propeq1} are bounded by $C\ve^2$. Combining these observations, we get
\begin{align}
&\biggl|\sum_{0\le s\le t-1} \sum_{w\in \zz^d} \delta_y\delta_z p(s, x-w) h_\ve(t-s,w)\biggr| \notag\\
&\le C_1 \ve^2 |y| |z|\log (2+t/|z|^2). \label{propeq2}
\end{align}
This takes care of the second term on the right side of \eqref{propeq1}. Let us now turn our attention to the first term. By the Lipschitz property of $g$, we have that 
\begin{align}\label{gsize}
|g_\ve(w)|\le C(1+\ve |w|)
\end{align}
for all $w\in \zz^d$. Let $D$ be as before, with $s$ replaced  by $t$. Then by the second bound from Proposition \ref{pprop}, the bound \eqref{gsize}, and the assumption that $\ve <1$, we have
\begin{align}
\biggl|\sum_{w\notin D} \delta_y\delta_z p(t, x-w)g_\ve(w) \biggr|&\le C_1 \sum_{|v|\ge \sqrt{t}\log t} (1+\ve |x|+\ve|v|)e^{-C_2|v|^2/t}\notag \\
&\le C_1(1+\ve |x|) e^{-C_2(\log t)^2}. \label{dineq1}
\end{align}
Now recall the inequality \eqref{ecc}. Analogous to that, it is also easy to show that 
\begin{align}\label{ecc2}
\sum_{v\in \zz^d} |v|e^{-C|v|^2/t} &\le C' t^{(d+1)/2}.
\end{align}
Let the function $q$ be defined as before. Using  the first bound from Proposition~\ref{pprop}, the inequalities \eqref{gsize}, \eqref{ecc}, and \eqref{ecc2}, and the facts that $|y|\le |z|$ and $|D|\le Ct^{d/2}(\log t)^d$, we get
\begin{align}
&\biggl|\sum_{w\in D} \delta_y\delta_z p(t, x-w)g_\ve(w) \biggr|\notag\\
&\le C_1t^{-(d+1)/2}\min\{|y|, |z|, t^{-1/2}|y||z|\}\sum_{w\in D} (1+\ve|w|)e^{-C_2q(w)^2/t}\notag \\
&\qquad + C_1|y||z| t^{-(d+4)/2}(\log t)^{C_2} \sum_{w\in D} (1+\ve|w|)\notag \\
&\le C_1(1+\ve|x| +\ve|y|+\ve|z|+ \ve t^{1/2})t^{-1/2}|y|\min\{1, t^{-1/2}|z|\} \notag\\
&\qquad + C_1 (1+\ve |x|+\ve|y|+\ve|z| + \ve t^{1/2}\log t)|y||z| t^{-2}(\log t)^{C_2}. \label{dineq2}
\end{align}
Since $y$ and $z$ are nonzero, $t^{-1/2}|y|\min\{1, t^{-1/2}|z|\}$ dominates $e^{-C_2(\log t)^2}$. Thus, combining \eqref{dineq1} and \eqref{dineq2}, we get
\begin{align}
&\biggl|\sum_{w\in \zz^d} \delta_y\delta_z p(t, x-w)g_\ve(w) \biggr|\notag \\
&\le C_1(1+\ve|x| +\ve|y|+\ve|z|+ \ve t^{1/2})t^{-1/2}|y|\min\{1, t^{-1/2}|z|\} \notag\\
&\qquad + C_1(1+\ve |x| +\ve|y|+\ve|z|+ \ve t^{1/2}\log t) |y||z| t^{-2}(\log t)^{C_2}. \label{propeq3}
\end{align}
Combining \eqref{propeq1}, \eqref{propeq2} and \eqref{propeq3} completes the proof.
\end{proof}
The next result gives a bound for the first-order temporal derivatives of $f_\ve$.  The scheme of the proof is just as before, using the recursive formula for $f_\ve$ from Proposition \ref{fformlmm} and the bounds on the temporal derivatives of $p$ from the previous subsection.
\begin{prop}\label{timeprop1}
For any two integers $t\ge r\ge 1$ and any $x\in \zz^d$,
\begin{align*}
|\delta_r'f_\ve(t,x)| &\le  C_1\ve^2 r(1+\log(t/r)) + C_1\ve^2(\log t)^{C_2} \\
&\qquad + C_1 rt^{-1} (1+\ve |x|+\ve t^{1/2})  \\
&\qquad + C_1 (1+\ve |x| + \ve t^{1/2}\log t) t^{-1}(\log t)^{C_2}.
\end{align*}
\end{prop}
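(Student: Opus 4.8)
The plan is to mirror exactly the proof scheme of Proposition~\ref{secondderiv}, but with the second-order spatial discrete derivative $\delta_y\delta_z$ replaced by the first-order temporal discrete derivative $\delta_r'$, and using the temporal bounds for $p$ from Proposition~\ref{plmm2} in place of Proposition~\ref{pprop}. Starting from the representation in Proposition~\ref{fformlmm}, applying $\delta_r'$ and using the identity $\sum_y p(s,x-y)=1$ to handle the newly-appearing initial-condition terms, one writes
\begin{align*}
\delta_r' f_\ve(t,x) &= \sum_{w} \delta_r' p(t,x-w) g_\ve(w) \\
&\qquad + \sum_{0\le s\le t-1}\sum_w \delta_r'p(s,x-w)h_\ve(t-s+r,w) \\
&\qquad - \sum_{0\le s\le r-1}\sum_w p(s,x-w)h_\ve(t-s,w),
\end{align*}
the precise bookkeeping of which index ranges shift under $f_\ve(t+r,x)-f_\ve(t,x)$ being the one slightly fiddly point; the last sum has $r$ terms and is bounded by $C_1\ve^2 r$ using Lemma~\ref{lip2lmm}.

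For the $h_\ve$-sum, I would split at $s$ small versus $s$ large just as in Proposition~\ref{secondderiv}: for $2\le s\le t-1$ use Lemma~\ref{lip2lmm} to pull out $|h_\ve|\le C\ve^2$, then bound $\sum_w |\delta_r'p(s,x-w)|$ by splitting the lattice into the region $D$ where $q(w):=\min\{|x-w|\}$ is small (of size $\le Cs^{d/2}(\log s)^d$) and its complement. On $D^c$ the second (crude exponential) bound of Proposition~\ref{plmm2} gives $C_1e^{-C_2(\log s)^2}$; on $D$ the first bound $|\delta_r'p(s,x-w)|\le C_1 e^{-C_2q(w)^2/s} rs^{-(d+2)/2} + C_1 s^{-(d+2)/2}(\log s)^{C_2}$ combined with $\sum_w e^{-Cq(w)^2/s}\le C's^{d/2}$ (inequality~\eqref{ecc}) yields $C_1 r s^{-1} + C_1 s^{-1}(\log s)^{C_2}$. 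Summing this over $2\le s\le t-1$ produces the $\log(t/r)$ factor — one must be a bit careful because the $rs^{-1}$ term only wants to be summed down to $s\sim r$, giving $r\log(t/r)$, whereas below that one falls back on the crude bound giving the extra $r$; together these account for the $C_1\ve^2 r(1+\log(t/r))$ and $C_1\ve^2(\log t)^{C_2}$ terms, with the $s=0,1$ contributions absorbed into $C\ve^2$.

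For the first (initial-condition) term, use $|g_\ve(w)|\le C(1+\ve|w|)$ as in~\eqref{gsize}, split at $D$ (now with $s$ replaced by $t$), and apply Proposition~\ref{plmm2} together with~\eqref{ecc} and its weighted analogue~\eqref{ecc2}; the region $D^c$ gives $C_1(1+\ve|x|)e^{-C_2(\log t)^2}$ and the region $D$ gives $C_1 rt^{-1}(1+\ve|x|+\ve t^{1/2}) + C_1(1+\ve|x|+\ve t^{1/2}\log t)t^{-1}(\log t)^{C_2}$, exactly the last two lines of the claimed bound. I expect the main obstacle to be purely organizational rather than conceptual: getting the index-shift bookkeeping in the $h_\ve$-sum right (so that no stray terms are lost and the extra length-$r$ tail is correctly identified), and tracking the $s$-summation carefully enough to see that the $r s^{-1}$ contribution genuinely produces $r(1+\log(t/r))$ and not something worse. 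Everything else is a direct transcription of the proof of Proposition~\ref{secondderiv} with the temporal-derivative estimates substituted in.
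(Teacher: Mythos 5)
Your proposal follows the paper's proof essentially verbatim: the same starting decomposition via Proposition~\ref{fformlmm}, the same split of the $h_\ve$-sum at $s\sim r$ (crude bound $\sum_w|\delta_r'p(s,x-w)|\le 2$ below, Proposition~\ref{plmm2} plus the $D$/$D^c$ spatial split and inequality \eqref{ecc} above, yielding $r\log(t/r)$ plus a $(\log t)^{C_2}$ tail), and the same treatment of the initial-condition term via \eqref{gsize}. The only slip is the stated identity: matching terms with equal $h_\ve$-argument gives
\begin{align*}
\delta_r'f_\ve(t,x) &= \sum_{w} \delta_r'p(t,x-w)\, g_\ve(w) + \sum_{0\le s\le t-1} \sum_{w} \delta_r'p(s, x-w)\, h_\ve(t-s,w)\\
&\qquad + \sum_{0\le s\le r-1} \sum_{w}p(s, x-w)\, h_\ve(t+r-s, w),
\end{align*}
i.e.\ the $h_\ve$-argument in the $\delta_r'p$-sum is $t-s$ rather than $t-s+r$, and the length-$r$ remainder carries a plus sign with argument $t+r-s$; since your estimates use only the uniform bound $|h_\ve|\le C\ve^2$ and $\sum_w p(s,x-w)=1$, this correction changes nothing downstream.
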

\begin{proof}
By Proposition \ref{fformlmm}, 
\begin{align}
\delta_r'f_\ve(t,x) &= \sum_{w\in \zz^d} \delta_r'p(t,x-w) g_\ve(w) \notag \\
&\qquad + \sum_{0\le s\le t-1} \sum_{w\in \zz^d} \delta_r'p(s, x-w) h_\ve(t-s,w)\notag \\
&\qquad + \sum_{0\le s\le r-1} \sum_{w\in \zz^d}p(s, x-w) h_\ve(t+r-s, w). \label{prop2eq0}
\end{align}
Recall that by Lemma \ref{lip2lmm}, $|h_\ve(s,w)|\le C\ve^2$ for all $s$ and $w$. Therefore, 
\begin{align}
&\biggl|\sum_{0\le s\le r-1} \sum_{w\in \zz^d}p(s, x-w) h_\ve (t+r-s, w)\biggr| \notag\\
&\le C\ve^2 \sum_{0\le s\le r-1}   \sum_{w\in \zz^d}p(s, x-w)= C\ve^2 r. \label{prop2eq1}
\end{align}
Similarly, we have 
\begin{align}
&\biggl|\sum_{0\le s\le r} \sum_{w\in \zz^d} \delta_r'p(s, x-w) h_\ve(t-s,w)\biggr|\notag \\
&\le  C\ve^2 \sum_{0\le s\le r} \sum_{w\in \zz^d} (p(s+r, x-w)  + p(s,x-w)) \notag\\
&\le C\ve^2 r.\label{cve1}
\end{align}
Finally, take any $r+1\le s\le t-1$. (If $t\le r+1$, then no such $s$ exists and we do not have to worry about this case.) Let $D := \{w:|x-w|\le  \sqrt{s}\log s\}$. Then by the second bound from Lemma \ref{plmm2},
\begin{align*}
\biggl| \sum_{w\notin D} \delta_r'p(s, x-w) h_\ve(t-s,w)\biggr| &\le C_1\ve^2 \sum_{w\notin D} e^{-C_2|x-w|^2/s}\\
&\le C_1 \ve^2 e^{-C_2(\log s)^2}. 
\end{align*}
But, by the first bound from Proposition \ref{plmm2} and the inequality \eqref{ecc},
\begin{align*}
&\biggl| \sum_{w\in D} \delta_r'p(s, x-w) h_\ve(t-s,w)\biggr|\\
&\le C_1 \ve^2\sum_{w\in D}  r s^{-(d+2)/2}e^{-C_2|x-w|^2/s}  +  C_1\ve^2 |D| s^{-(d+2)/2}(\log s)^{C_3}\\
&\le C_1 \ve^2 r s^{-1} + C_1 \ve^2 s^{-1}(\log  s)^{C_2}. 
\end{align*}
Adding the last two displays, we get that for any $r+1\le s\le t-1$, 
\begin{align}
\biggl| \sum_{w\in\zz^d} \delta_r'p(s, x-w) h_\ve(t-s,w)\biggr| \le  C_1 \ve^2 r s^{-1} + C_1 \ve^2 s^{-1}(\log  s)^{C_2}. \label{cve2}
\end{align}
Combining \eqref{cve1} and \eqref{cve2}, we get
\begin{align}
&\biggl|\sum_{0\le s\le t-1} \sum_{w\in \zz^d} \delta_r'p(s, x-w) h_\ve(t-s,w)\biggr| \notag \\
&\le C_1 \ve^2 r(1+ \log (t/r)) + C_1 \ve^2 (\log t)^{C_2}.\label{prop2eq2}
\end{align}
Next, let $D:=\{w: |x-w|\le  \sqrt{t}\log t\}$. Then  by the second bound from Proposition~\ref{plmm2} and the facts that $r\le t$ and $|g_\ve(w)|\le C(1+\ve|w|)$,  we get
\begin{align*}
\biggl| \sum_{w\notin D} \delta_r'p(t,x-w) g_\ve(w)\biggr| &\le C_1 \sum_{w\notin D} (1+\ve|w|) e^{-C_2|x-w|^2/t} \\
&\le C_1(1+\ve|x|) e^{-C_2(\log t)^2}.
\end{align*}
Similarly, by the first bound from Proposition \ref{plmm2}, 
\begin{align*}
&\biggl| \sum_{w\in D} \delta_r'p(t,x-w) g_\ve(w)\biggr| \\
&\le C_1\sum_{w\in D} (1+\ve|w|) r t^{-(d+2)/2}e^{-C_2|x-w|^2/t}\\
&\qquad +  C_1 t^{-(d+2)/2}(\log t)^{C_2}\sum_{w\in D} (1+\ve|w|)\\
&\le C_1 rt^{-1} (1+\ve |x|+\ve t^{1/2}) \\
&\qquad + C_1 (1+\ve |x| + \ve t^{1/2}\log t) t^{-1}(\log t)^{C_2}. 
\end{align*}
Since $t^{-1}(\log t)^{C_1}$ dominates $e^{-C_2(\log t)^2}$, combining the last two displays gives
\begin{align}
&\biggl| \sum_{w\in \zz^d} \delta_r'p(t,x-w) g_\ve(w)\biggr|\notag \\
 &\le C_1 rt^{-1} (1+\ve |x|+\ve t^{1/2}) \notag \\
&\qquad + C_1 (1+\ve |x| + \ve t^{1/2}\log t) t^{-1}(\log t)^{C_2}. \label{prop2eq3}
\end{align}
Combining \eqref{prop2eq0}, \eqref{prop2eq1}, \eqref{prop2eq2} and \eqref{prop2eq3} completes the proof.
\end{proof}
The next result gives bounds for mixed space-time derivatives of $f_\ve$. Again, the proof uses Proposition \ref{fformlmm} and the bounds on mixed derivatives of $p$ from the previous subsection.
\begin{prop}\label{timeprop2}
For any two integers $t\ge r\ge 1$ and  any $x,y\in \zz^d$,
\begin{align*}
|\delta_r'\delta_yf_\ve(t,x) | &\le  C_1 \ve^2 \sqrt{r}|y| + C_1 \ve^2 |y| r^{-1/2}(\log t)^{C_2} \\
&\qquad + C_1 rt^{-3/2}|y| (1+\ve |x|+\ve|y| + \ve t^{1/2}) \notag \\
&\qquad + C_1|y| (1+\ve |x| + \ve|y| + \ve t^{1/2}\log t) t^{-3/2}(\log t)^{C_2}.
\end{align*}
\end{prop}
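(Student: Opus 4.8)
The plan is to follow the scheme of Propositions \ref{secondderiv} and \ref{timeprop1}, using the recursive formula of Proposition \ref{fformlmm} together with the random walk derivative bounds of Propositions \ref{delprop} and \ref{plmm3}. We may assume $y\ne0$ (both sides vanish otherwise) and $t\ge2$ (the case $t=1$ forces $r=1$ and is immediate from Lemma \ref{lipschitzlmm}). Applying $\delta_y$ to the decomposition \eqref{prop2eq0} gives
\begin{align*}
\delta_r'\delta_yf_\ve(t,x) &= \sum_{w\in \zz^d} \delta_r'\delta_yp(t,x-w) g_\ve(w) + \sum_{0\le s\le t-1} \sum_{w\in \zz^d} \delta_r'\delta_yp(s, x-w) h_\ve(t-s,w) \\
&\qquad + \sum_{0\le s\le r-1} \sum_{w\in \zz^d}\delta_yp(s, x-w) h_\ve(t+r-s, w),
\end{align*}
and I would bound the three terms in turn, using throughout $|h_\ve|\le C\ve^2$ (Lemma \ref{lip2lmm}), $|g_\ve(w)|\le C(1+\ve|w|)$, and the volume and Gaussian-sum estimates $|D|\le Cs^{d/2}(\log s)^d$, $\sum_w e^{-Ca^2/s}\le C's^{d/2}$, $\sum_w ae^{-Ca^2/s}\le C's^{(d+1)/2}$ already used in the proof of Proposition \ref{secondderiv}.

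For the third term I would first record the $\ell^1$ bound $\sum_w|\delta_yp(s,x-w)|\le C_1 s^{-1/2}|y|$ for $s\ge1$, obtained by summing the two bounds of Proposition \ref{delprop}: the Gaussian factor is controlled away from the box $\{w:\min(|x-w|,|x-w+y|)\le\sqrt s\log s\}$, while the non-decaying term is summed over the box using its volume bound. Since the $s=0$ contribution is at most $2C\ve^2\le C\ve^2|y|$, the third term is at most $C\ve^2|y|(1+\sum_{1\le s\le r-1}s^{-1/2})\le C_1\ve^2\sqrt r|y|$. For the second term I would split the $s$-sum at $s=r$. For $0\le s\le r$ I write $\delta_r'\delta_yp(s,\cdot)=\delta_yp(s+r,\cdot)-\delta_yp(s,\cdot)$ and use the same $\ell^1$ bound, getting a contribution of order $\ve^2|y|(1+\sum_{1\le s\le r}(s^{-1/2}+(s+r)^{-1/2}))\le C_1\ve^2\sqrt r|y|$. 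For $r+1\le s\le t-1$ I use Proposition \ref{plmm3}: outside the box the crude bound $|\delta_r'\delta_yp(s,x-w)|\le C_1e^{-C_2a^2/s}$ gives $\le C_1\ve^2e^{-C_2(\log s)^2}$, while inside the box the main bound contributes $C_1\ve^2|y|(rs^{-3/2}+s^{-3/2}(\log s)^{C_2})$; summing over $s$ and using $\sum_{s>r}s^{-3/2}\le Cr^{-1/2}$ produces $C_1\ve^2\sqrt r|y|+C_1\ve^2|y|r^{-1/2}(\log t)^{C_2}$.

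For the first term I would split $\zz^d$ into $D:=\{w:\min(|x-w|,|x-w+y|)\le\sqrt t\log t\}$ and its complement. On the complement, $|\delta_r'\delta_yp(t,x-w)|\le C_1e^{-C_2a^2/t}$ together with $|g_\ve(w)|\le C(1+\ve|w|)$ yield $\le C_1(1+\ve|x|)e^{-C_2(\log t)^2}$, which is dominated by the terms in the claim since $y\ne0$ and $r\ge1$. On $D$ I apply the main bound of Proposition \ref{plmm3}, use $|w|\le|x|+|y|+\sqrt t\log t$ there, the volume bound $|D|\le Ct^{d/2}(\log t)^d$, and the two Gaussian sums above, which gives a contribution of $C_1rt^{-3/2}|y|(1+\ve|x|+\ve|y|+\ve t^{1/2})+C_1|y|(1+\ve|x|+\ve|y|+\ve t^{1/2}\log t)t^{-3/2}(\log t)^{C_2}$. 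Adding the three contributions gives the stated inequality. No real obstacle arises; the only points needing a little care are the degenerate small-$s$ terms (especially $s=0$, where $\delta_yp(0,\cdot)$ is a difference of two indicator functions) and remembering to invoke Proposition \ref{delprop} rather than Proposition \ref{plmm3} when $s\le r$, since the latter requires $s\ge r$. The rest is the exponent bookkeeping familiar from Propositions \ref{secondderiv} and \ref{timeprop1}.
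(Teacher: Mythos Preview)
Your proposal is correct and follows essentially the same route as the paper's proof: the same three-term decomposition from Proposition \ref{fformlmm}, the same split of the middle sum at $s=r$ with Proposition \ref{delprop} handling $s\le r$ and Proposition \ref{plmm3} handling $s>r$, and the same $D$-versus-complement splitting for the $g_\ve$ term. The only cosmetic difference is that the paper writes out the $\sum_w|\delta_yp(s,x-w)|$ estimate in full (treating $s=0,1$ separately) rather than packaging it as the $\ell^1$ bound you state, and records $(1+\ve|x|+\ve|y|)e^{-C_2(\log t)^2}$ rather than $(1+\ve|x|)e^{-C_2(\log t)^2}$ for the complement piece of the $g_\ve$ term; neither affects the argument.
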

\begin{proof}
The bound holds trivially if $y=0$. So let us assume that $y\ne 0$. By Proposition \ref{fformlmm}, 
\begin{align}
\delta_r'\delta_yf_\ve(t,x) &= \sum_{w\in \zz^d} \delta_r'\delta_yp(t,x-w) g_\ve(w) \notag \\
&\qquad + \sum_{0\le s\le t-1} \sum_{w\in \zz^d} \delta_r'\delta_yp(s, x-w) h_\ve(t-s,w)\notag \\
&\qquad + \sum_{0\le s\le r-1} \sum_{w\in \zz^d}\delta_y p(s, x-w) h_\ve(t+r-s, w). \label{prop3eq0}
\end{align}
By Lemma \ref{lip2lmm}, %$|h_\ve(s,w)|\le C\ve^2$ for all $s$ and $w$. Therefore 
\begin{align*}
&\biggl|\sum_{0\le s\le r-1} \sum_{w\in \zz^d}\delta_y p(s, x-w) h_\ve(t+r-s, w)\biggr| \\
&\le C\ve^2 \sum_{0\le s\le r-1}   \sum_{w\in \zz^d}|\delta_y p(s, x-w)|. 
\end{align*}
Take any $2\le s\le r-1$. Let $D$ be the set of all $w$ such the minimum of $|x-w|$ and $|x+y-w|$ is less than $\sqrt{s}\log s$. Then by the second bound from Proposition~\ref{delprop}, we get
\begin{align*}
\sum_{w\notin D}|\delta_y p(s, x-w)| &\le C_1 e^{-C_2(\log s)^2}. 
\end{align*}
On the other hand, by the first bound from Proposition \ref{delprop} and the inequality \eqref{ecc}, we have
\begin{align*}
\sum_{w\in D}|\delta_y p(s, x-w)| &\le  C_1  s^{-(d+1)/2} |y| \sum_{w\in D} (e^{-C_2 |x-w|^2/s}+ e^{-C_2 |x+y-w|^2/s}) \\
&\qquad + C_1|y|s^{-(d+3)/2}(\log s)^{C_3}|D|\\
&\le C_1 s^{-1/2}|y| + C_1|y|s^{-3/2}(\log s)^{C_2}. 
\end{align*}
Combining the last three displays, we get
\begin{align*}
\biggl|\sum_{2\le s\le r-1} \sum_{w\in \zz^d}\delta_y p(s, x-w) h_\ve(t+r-s, w)\biggr| &\le C \ve^2 \sqrt{r} |y|. 
\end{align*}
Since the contributions from the $s=0$ and $s=1$ terms amount to at most $C\ve^2$, we have
\begin{align}
\biggl|\sum_{0\le s\le r-1} \sum_{w\in \zz^d}\delta_y p(s, x-w) h_\ve(t+r-s, w)\biggr| &\le C \ve^2 \sqrt{r} |y|. \label{prop3eq1}
\end{align}
Similarly,
\begin{align}
&\biggl|\sum_{0\le s\le r} \sum_{w\in \zz^d} \delta_r'\delta_yp(s, x-w) h_\ve(t-s,w)\biggr|\notag\\
&\le C\ve^2\sum_{0\le s\le r} \sum_{w\in \zz^d} (|\delta_yp(s+r, x-w)| + |\delta_y p(s,x-w)|)\notag\\
&\le C\ve^2 \sqrt{r} |y|.\label{pcve1}
\end{align}
Finally, take any $r+1\le s\le t-1$. As before, let $D$ be the set of all $w$ such the minimum of $|x-w|$ and $|x+y-w|$ is less than $\sqrt{s}\log s$. Then by Lemma \ref{lip2lmm} and the second bound from Proposition \ref{plmm3},
\begin{align*}
\biggl| \sum_{w\notin D} \delta_r'\delta_yp(s, x-w) h_\ve(t-s,w)\biggr| &\le C_1 \ve^2 e^{-C_2(\log s)^2}. 
\end{align*}
On the other hand, by the first bound from Proposition \ref{plmm3} and the inequality \eqref{ecc},
\begin{align*}
&\biggl| \sum_{w\in D} \delta_r'\delta_yp(s, x-w) h_\ve (t-s,w)\biggr|\\
&\le C_1 \ve^2|y|\sum_{w\in D}  r s^{-(d+3)/2}(e^{-C_2|x-w|^2/s}+e^{-C_2|x+y-w|^2/s}) \\
&\qquad +  C_1\ve^2|y| |D| s^{-(d+3)/2}(\log s)^{C_3}  \\
%&\le C_1 \ve^2 r\sum_{w\in D}  s^{-(d+2)/2}e^{-C_2|x-w|^2/s} + C_1 \ve^2 |y|s^{-3/2}(\log  s)^{C_3} \\
&\le C_1 \ve^2 r|y| s^{-3/2} + C_1 \ve^2 |y| s^{-3/2}(\log  s)^{C_2}. 
\end{align*}
Adding the last two displays and using the assumption that $y\ne 0$, we get that for any $r+1\le s\le t-1$, 
\begin{align}
&\biggl| \sum_{w\in\zz^d} \delta_r'\delta_yp(s, x-w) h_\ve (t-s,w)\biggr| \notag \\
&\le  C_1 \ve^2 r |y|s^{-3/2} + C_1 \ve^2 |y|s^{-3/2}(\log  s)^{C_2}. \label{pcve2}
\end{align}
Combining \eqref{pcve1} and \eqref{pcve2}, we get
\begin{align}
&\biggl|\sum_{0\le s\le t-1} \sum_{w\in \zz^d} \delta_r'\delta_yp(s, x-w) h_\ve(t-s,w)\biggr| \notag \\
&\le C_1 \ve^2 \sqrt{r}|y| + C_1 \ve^2 |y| r^{-1/2}(\log t)^{C_2}.\label{prop3eq2}
\end{align}
Next, let $D$ be the set of all $w$ such the minimum of $|x-w|$ and $|x+y-w|$ is less than $\sqrt{t}\log t$. Then  by the second bound from Proposition~\ref{plmm3} and the facts that $r\le t$ and $|g_\ve(w)|\le C(1+\ve|w|)$ for all $w$,  we get
\begin{align*}
&\biggl| \sum_{w\notin D} \delta_r'\delta_yp(t,x-w) g_\ve(w)\biggr| \\
&\le C_1 \sum_{w\notin D} (1+\ve|w|) ( e^{-C_2|x-w|^2/t}+ e^{-C_2|x+y-w|^2/t}) \\
&\le C_1(1+\ve|x|+\ve |y|) e^{-C_2(\log t)^2}.
\end{align*}
Similarly, by the first bound from Proposition \ref{plmm3}, 
\begin{align*}
&\biggl| \sum_{w\in D} \delta_r'\delta_yp(t,x-w) g_\ve(w)\biggr| \\
&\le C_1|y|\sum_{w\in D} (1+\ve|w|) r t^{-(d+3)/2}( e^{-C_2|x-w|^2/t}+ e^{-C_2|x+y-w|^2/t})\\
&\qquad +  C_1 |y|t^{-(d+3)/2}(\log t)^{C_3}\sum_{w\in D} (1+\ve|w|)\\
&\le C_1 rt^{-3/2}|y| (1+\ve |x|+\ve|y| + \ve t^{1/2}) \\
&\qquad + C_1|y| (1+\ve |x| + \ve|y| + \ve t^{1/2}\log t) t^{-3/2}(\log t)^{C_2}. 
\end{align*}
Combining the last two displays, and observing that $|y| t^{-3/2}$  dominates $e^{-C(\log t)^2}$, we get
\begin{align}
&\biggl| \sum_{w\in \zz^d} \delta_r'\delta_yp(t,x-w) g_\ve(w)\biggr| \notag\\
&\le C_1 rt^{-3/2}|y| (1+\ve |x|+\ve|y| + \ve t^{1/2}) \notag \\
&\qquad + C_1|y| (1+\ve |x| + \ve|y| + \ve t^{1/2}\log t) t^{-3/2}(\log t)^{C_2}. \label{prop3eq3}
\end{align}
Combining \eqref{prop3eq0}, \eqref{prop3eq1}, \eqref{prop3eq2} and \eqref{prop3eq3}  completes the proof.
\end{proof}
We need one more estimate, to deal with situations where $t$ is small. In this case, $f_\ve(t,x)$ is expected to be close to $g_\ve(x)$. The following result makes this precise. 
\begin{prop}\label{bdrycont}
For any $t\in \zz_{\ge 1}$ and any $x\in \zz^d$,
\[
|f_\ve(t,x)-g_\ve(x)|\le C\ve \sqrt{t} + C\ve^2 t.
\]
\end{prop}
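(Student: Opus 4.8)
The plan is to combine the random walk representation of Proposition~\ref{fformlmm} with the uniform bound $|h_\ve|\le C\ve^2$ from Lemma~\ref{lip2lmm}. Applying Proposition~\ref{fformlmm} and using that $\sum_{y\in\zz^d}p(t,x-y)=1$, one rewrites
\begin{align*}
f_\ve(t,x)-g_\ve(x) &= \sum_{y\in\zz^d} p(t,x-y)\bigl(g_\ve(y)-g_\ve(x)\bigr) \\
&\qquad + \sum_{0\le s\le t-1}\sum_{y\in\zz^d} p(s,x-y)\,h_\ve(t-s,y),
\end{align*}
and I would estimate the two terms separately.

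For the double sum, I would invoke Lemma~\ref{lip2lmm} to bound each $|h_\ve(t-s,y)|$ by $C\ve^2$, and then use $\sum_{y}p(s,x-y)=1$ for every $0\le s\le t-1$ to conclude that this term has absolute value at most $C\ve^2 t$. For the first term, I would use the Lipschitz property of $g$: since $g_\ve(y)-g_\ve(x)=g(\ve y)-g(\ve x)$, its absolute value is at most $L\ve\,|x-y|$, so the first term is bounded by $L\ve\sum_{y}p(t,x-y)\,|x-y|=L\ve\,\E|S_t|$, where $S_t$ denotes the position at time $t$ of the random walk from Subsection~\ref{rwsec}. Writing $S_t$ as a sum of $t$ i.i.d.\ mean-zero increments of squared Euclidean norm at most $1$ gives $\E|S_t|^2\le t$, hence $\E|S_t|\le\sqrt t$ by Jensen's inequality, so the first term is at most $L\ve\sqrt t\le C\ve\sqrt t$. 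Adding the two estimates yields the claim.

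Every step here is elementary, so I expect no real obstacle; the only point needing a (trivial) computation is the $O(t)$ bound on $\E|S_t|^2$, which is immediate from the independence and mean-zero property of the increments. (One could instead extract this from the Gaussian tail bound of Lemma~\ref{plmm}, but that would be unnecessary.)
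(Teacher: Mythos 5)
Your proposal is correct and follows the same route as the paper: apply Proposition~\ref{fformlmm}, bound the $h_\ve$ double sum by $C\ve^2 t$ via Lemma~\ref{lip2lmm} and $\sum_y p(s,x-y)=1$, and bound the initial-data term by $C\ve\,\E|S_t|\le C\ve\sqrt t$ using the Lipschitz property of $g$. The paper compresses the last step into the phrase ``simple properties of random walks''; your second-moment/Jensen computation is exactly the intended justification.
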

\begin{proof}
By Proposition \ref{fformlmm},
\begin{align*}
|f_\ve(t,x)-g_\ve(x)| &\le \sum_{y\in \zz^d} p(t,x-y) |g_\ve(y) -g_\ve(x)| \\
&\qquad + \sum_{0\le s\le t-1} \sum_{y\in \zz^d} p(s, x-y) |h_\ve(t-s,y)|.
\end{align*}
By the Lipschitz property of $g$ and simple properties of random walks,
\begin{align*}
\sum_{y\in \zz^d} p(t,x-y) |g_\ve(y) -g_\ve(x)|  &\le C\ve \sum_{y\in \zz^d} p(t,x-y) |x-y| \\
&\le C\ve \sqrt{t}. 
\end{align*}
On the other hand, by Lemma \ref{lip2lmm},
\begin{align*}
\sum_{0\le s\le t-1} \sum_{y\in \zz^d} p(s, x-y) |h_\ve(t-s,y)| &\le C\ve^2 \sum_{0\le s\le t-1} \sum_{y\in \zz^d} p(s, x-y) \\
&= C\ve^2 t. 
\end{align*}
Combining the last two displays completes the proof.
\end{proof}

\subsection{Existence of subsequential limits}\label{subseqsec}
We will now use the estimates from the previous subsection to show that the rescaled and renormalized function $f^{(\ve)}$ converges pointwise to a limit function along a subsequence. In fact, any subsequence will have a further subsequence that converges. Moreover, we will show that the convergence is uniform on compact sets. In the next subsection, we will show that the convergence happens also at a finer scale, ensuring convergence of discrete spatial derivatives to the derivatives of the limit. 

In this subsection and in all subsequent subsections, we will use the following conventions that were introduced in Subsection \ref{mainresult}. For $t\in \rr$, let $[t]$ denote the greatest integer $\le t$. For a point $x = (x_1,\ldots,x_d)\in \rr^d$, let $[x]$ denote the vector $([x_1],\ldots,[x_d])$. For $t\in \rr_{\ge 0}$, $x\in \rr^d$, and $\ve>0$, let
\[
t_\ve := [\ve^{-2} t], \ \ \ x_\ve := [\ve^{-1} x].
\] 
Since we are working under the assumption that $\phi(0)=0$, we have $f^{(\ve)}(t,x) = f_\ve(t_\ve, x_\ve)$. 
The key idea, unsurprisingly, is to show that the family $\{f^{(\ve)}\}_{\ve >0}$ is equicontinuous. This consists of the following two lemmas. 
\begin{lmm}\label{contlmm1}
Take any $t\in \rr_{\ge0}$ and $x,y\in\rr^d$. Then
\begin{align*}
\limsup_{\ve \to 0} |f^{(\ve)}(0,x) - f^{(\ve)}(t,y)| \le C|x-y| + Ct.
\end{align*}
\end{lmm}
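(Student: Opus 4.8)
The plan is to estimate $|f^{(\ve)}(0,x) - f^{(\ve)}(t,y)|$ by splitting it into a purely spatial part and a purely temporal part, using the triangle inequality:
\[
|f^{(\ve)}(0,x) - f^{(\ve)}(t,y)| \le |f_\ve(0,x_\ve) - f_\ve(0,y_\ve)| + |f_\ve(0,y_\ve) - f_\ve(t_\ve,y_\ve)|.
\]
The first term is controlled directly by Lemma~\ref{lipschitzlmm} (applied along a lattice path of length $\|x_\ve - y_\ve\|_1$), which gives a bound of $L\ve\|x_\ve - y_\ve\|_1$; since $x_\ve = [\ve^{-1}x]$ and $y_\ve = [\ve^{-1}y]$, we have $\ve\|x_\ve-y_\ve\|_1 \le |x-y|_1 + O(\ve) \le C|x-y| + C\ve$, so this piece contributes $C|x-y|$ in the limit.

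For the temporal term, the natural tool is Proposition~\ref{bdrycont}, which says $|f_\ve(s,z) - g_\ve(z)| \le C\ve\sqrt{s} + C\ve^2 s$ for any $s \ge 1$ and $z$. Applying this with $s = t_\ve = [\ve^{-2}t]$ and $z = y_\ve$ gives $|f_\ve(t_\ve, y_\ve) - g_\ve(y_\ve)| \le C\ve\sqrt{t_\ve} + C\ve^2 t_\ve \le C\sqrt{t} + Ct$ (using $\ve\sqrt{t_\ve} \le \ve\cdot\ve^{-1}\sqrt{t} = \sqrt{t}$ and $\ve^2 t_\ve \le t$). But $g_\ve(y_\ve) = f_\ve(0,y_\ve)$, so this is exactly $|f_\ve(0,y_\ve) - f_\ve(t_\ve,y_\ve)| \le C\sqrt{t} + Ct$. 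One should handle the edge case $t_\ve = 0$ (i.e.\ $t < \ve^2$) separately, where the temporal term vanishes outright; as $\ve \to 0$ this case only occurs for $t = 0$, where the bound is trivial anyway.

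Combining the two estimates and taking $\limsup_{\ve\to0}$ makes the $O(\ve)$ error terms disappear, leaving
\[
\limsup_{\ve\to0}|f^{(\ve)}(0,x) - f^{(\ve)}(t,y)| \le C|x-y| + C\sqrt{t} + Ct \le C|x-y| + Ct',
\]
but wait---the statement claims a bound of $C|x-y| + Ct$, not $C|x-y| + C\sqrt{t}$. Here I expect the resolution is that the lemma is meant to be read with the understanding that these continuity estimates are only needed (and only claimed) for $t$ in a bounded range, or that the constant $C$ absorbs a $\sqrt{t} \le C(1+t)$ type bound on compacts; more likely the intended reading is that for the equicontinuity argument one works on a fixed compact time interval $[0,T]$, on which $\sqrt{t} \le \sqrt{T}$ is itself $O(1)$ and $C\sqrt{t} + Ct \le C_T(|x-y| + t)$ fails at $t$ near $0$\,---\,so the genuinely correct statement carries a $\sqrt{t}$. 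The main (and really only) subtlety in the proof is thus getting the temporal exponent right and confirming that Proposition~\ref{bdrycont} indeed yields the stated form; the rest is a routine application of Lemma~\ref{lipschitzlmm}, Proposition~\ref{bdrycont}, and the elementary inequalities relating $t_\ve,x_\ve$ to $t,x$. Everything else\,---\,the two triangle-inequality splits and the passage to the limit\,---\,is mechanical.
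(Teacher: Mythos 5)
Your argument is exactly the paper's proof: the same triangle-inequality split, with Lemma~\ref{lipschitzlmm} controlling the spatial increment and Proposition~\ref{bdrycont} controlling the temporal one. You are also right about the exponent: since $\ve\sqrt{t_\ve}\le\sqrt{t}$, the proof genuinely yields $C|x-y|+C\sqrt{t}+Ct$, and the paper's own final step ``$C\ve\sqrt{t_\ve}+C\ve^2 t_\ve\le Ct$'' is a slip for $t<1$ (indeed $\sqrt{t}$ is the correct order, as the heat semigroup applied to Lipschitz data shows). This is harmless: the lemma is invoked in Proposition~\ref{subseqprop} only with $t=0$ and, via the inequalities \eqref{cont0}--\eqref{cont1}, to get continuity as $(t,y)\to(0,x)$, for which $C\sqrt{t}+Ct\to0$ is all that is needed.
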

\begin{proof}
By Lemma \ref{lipschitzlmm},
\begin{align}
|f^{(\ve)}(0,x)-f^{(\ve)}(0,y)|  &= |f_\ve(0, x_\ve) - f_\ve(0, y_\ve)| \notag\\
&\le C \ve | x_\ve -  y_\ve|\notag\\
&\le C|x-y|. \label{cont0}
\end{align}
On the other hand, by Proposition \ref{bdrycont},
\begin{align}
|f^{(\ve)}(0,y) - f^{(\ve)}(t,y)| &= |f_\ve(0, y_\ve) - f_\ve(t_\ve, y_\ve)|\notag\\
&=  |g_\ve(y_\ve) - f_\ve(t_\ve, y_\ve)|\notag\\
&\le C\ve \sqrt{t_\ve} + C\ve^2 t_\ve\le C t.\label{cont1}
\end{align}
The proof is completed by adding the two inequalities. 
\end{proof}
\begin{lmm}\label{contlmm2}
Take any real numbers $s> 0$ and $t\in [s,2s)$, and any $x,y\in \rr^d$. Then
\begin{align*}
&\limsup_{\ve \to 0} |f^{(\ve)}(s,y) - f^{(\ve)}(t,x)| \\
&\le C|x-y| + C(t-s) \biggl(1+\log \frac{s}{t-s}+ \frac{1+|x|+\sqrt{s}}{s}\biggr),
\end{align*}
where the second term is interpreted as zero if $s=t$.
%Take any $(t,x)\in \rr_{\ge0}\times \rr^d$. Then there are positive numbers $K(t,x)$ and $\ve_0$ depending on $(t,x)$, such that if $\ve \le \ve_0$ and $s\in [t/2, 3t/2]$, then for any $y\in\rr^d$, 
%\begin{align*}
%|f^{(\ve)}(t,x)-f^{(\ve)}(s,y)| &\le C|x-y| + K(t,x) |t-s|(1+ |\log |t-s||). 
%\end{align*}
\end{lmm}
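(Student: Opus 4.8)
The plan is to split the space-time increment $f^{(\ve)}(s,y)-f^{(\ve)}(t,x)$ into a purely spatial part $f^{(\ve)}(s,y)-f^{(\ve)}(s,x)$, handled by the first-order Lipschitz bound of Lemma~\ref{lipschitzlmm}, and a purely temporal part $f^{(\ve)}(s,x)-f^{(\ve)}(t,x)$, handled by the temporal-derivative estimate of Proposition~\ref{timeprop1}. Since we are working under the assumption $\phi(0)=0$, we have $f^{(\ve)}(s,y)=f_\ve(s_\ve,y_\ve)$, $f^{(\ve)}(s,x)=f_\ve(s_\ve,x_\ve)$, and $f^{(\ve)}(t,x)=f_\ve(t_\ve,x_\ve)$, so all three quantities are genuine values of the discrete surface.

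For the spatial part, chain Lemma~\ref{lipschitzlmm} along an $\ell^1$-geodesic in $\zz^d$ joining $x_\ve$ to $y_\ve$, which gives $|f_\ve(s_\ve,x_\ve)-f_\ve(s_\ve,y_\ve)|\le L\ve\,\|x_\ve-y_\ve\|_1$. Since $\|x_\ve-y_\ve\|_1\le \ve^{-1}\|x-y\|_1+d$, this is at most $C|x-y|+C\ve$, so $\limsup_{\ve\to0}|f^{(\ve)}(s,y)-f^{(\ve)}(s,x)|\le C|x-y|$.

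For the temporal part, set $r:=t_\ve-s_\ve\ge 0$. If $t=s$ then $r=0$ and the two points coincide, so there is nothing to prove; assume $t>s$. Then $r\ge \ve^{-2}(t-s)-1\to\infty$, so $r\ge 1$ once $\ve$ is small; and $r\le \ve^{-2}(t-s)+1$ together with $s_\ve\ge \ve^{-2}s-1$ shows that the strict inequality $t<2s$ forces $r\le s_\ve$ once $\ve$ is small enough. Hence Proposition~\ref{timeprop1} applies with $(t,r,x)$ replaced by $(s_\ve,r,x_\ve)$, bounding $|f^{(\ve)}(t,x)-f^{(\ve)}(s,x)|=|\delta_r'f_\ve(s_\ve,x_\ve)|$ by its four-term expression. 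Passing to the limit using $\ve^2 s_\ve\to s$, $\ve^2 r\to t-s$, $\ve|x_\ve|\to|x|$, $s_\ve/r\to s/(t-s)$, and $\log s_\ve=O(\log(1/\ve))$: the term $C_1\ve^2 r(1+\log(s_\ve/r))$ tends to $C(t-s)\bigl(1+\log\tfrac{s}{t-s}\bigr)$, the term $C_1 r s_\ve^{-1}(1+\ve|x_\ve|+\ve s_\ve^{1/2})$ tends to $C\tfrac{t-s}{s}(1+|x|+\sqrt s)$, and the two remaining terms each carry a surplus power of $\ve$ (up to a polylogarithmic factor) and therefore vanish. Adding this to the spatial bound yields the claimed inequality.

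The only subtle point is the verification that $1\le r\le s_\ve$ holds for all sufficiently small $\ve$; this is precisely where the hypothesis $t\in[s,2s)$ with \emph{strict} upper endpoint, and the use of $\limsup_{\ve\to0}$ rather than a bound uniform in $\ve$, are essential. Everything else is routine bookkeeping: confirming that the error terms in Proposition~\ref{timeprop1} disappear in the limit while the main terms converge to the stated expression.
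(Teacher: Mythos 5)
Your proposal is correct and follows essentially the same route as the paper: the same decomposition into a spatial increment controlled by Lemma~\ref{lipschitzlmm} and a temporal increment $\delta_{r_\ve}'f_\ve(s_\ve,x_\ve)$ controlled by Proposition~\ref{timeprop1}, with the same verification that $1\le r_\ve\le s_\ve$ for small $\ve$ (using $t<2s$) and the same limit computation $\ve^2 r_\ve\to t-s$, $\ve^2 s_\ve\to s$, $\ve|x_\ve|\to|x|$.
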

\begin{proof}
By Lemma \ref{lipschitzlmm},
\begin{align}
|f^{(\ve)}(s,y)-f^{(\ve)}(s,x)|  &= |f_\ve(s_\ve, y_\ve) - f_\ve(s_\ve, x_\ve)| \notag\\
&\le C \ve | y_\ve - x_\ve|\notag \\
&\le C|x-y|. \label{conteq1}
\end{align}
If $s=t$, this completes the proof. So let us assume that $s<t$.  Let 
\[
r_\ve := t_\ve - s_\ve. %, \ \  s_\ve := [\ve^{-2} s], \ \ t_\ve := [\ve^{-2} t], \ \ x_\ve := [\ve^{-1} x].
\]
Since $0< t-s < s$, $r_\ve$ must be less than $s_\ve$ and bigger than $0$ when $\ve$ is small enough. Therefore by Proposition \ref{timeprop1}, 
\begin{align}
&|f^{(\ve)}(s,x)-f^{(\ve)}(t,x)| = |\delta_{r_\ve} f_\ve(s_\ve, x_\ve)|\notag \\
&\le C_1\ve^2 r_\ve(1+\log(s_\ve/r_\ve)) + C_1\ve^2(\log t_\ve)^{C_2} \notag \\
&\qquad + C_1 r_\ve s_\ve^{-1} (1+\ve |x_\ve|+\ve s_\ve^{1/2}) \notag \\
&\qquad + C_1 (1+\ve |x_\ve| + \ve s_\ve^{1/2}\log s_\ve) s_\ve^{-1}(\log s_\ve)^{C_2}.\label{conteq2}
%&\le C_1(t-s) \biggl(1+\log \frac{s}{t-s}\biggr) + C_1\frac{t-s}{s} (1+|y|+\sqrt{s})\\
%&\qquad + C_1(1+|y|+\sqrt{s}\log s)s^{-1}\ve^2|\log (s/\ve^2) |^{C_2}.
\end{align}
Note that $\ve^2 r_\ve \to t-s$, $\ve^2 s_\ve \to s$, and $\ve |x_\ve|\to |x|$ as $\ve \to 0$. From this, it is easy to compute the limit of the right side in the above display as $\ve \to 0$. This completes the proof. 
\end{proof}
We are now ready to prove the main result of this subsection. 
\begin{prop}\label{subseqprop}
Given any sequence $\ve_n\to 0$, there is a subsequence along which $f^{(\ve)}$ converges pointwise everywhere on $\rr_{\ge0} \times \rr^d$. Moreover, the convergence is guaranteed to be uniform on compact subsets of $\rr_{\ge 0}\times \rr^d$. Any such limit $f$ is continuous and satisfies $f(0,x)=g(x)$ for all $x$. 
\end{prop}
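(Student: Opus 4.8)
The plan is to deduce Proposition~\ref{subseqprop} from the two equicontinuity lemmas (Lemma~\ref{contlmm1} and Lemma~\ref{contlmm2}) by an Arzel\`a--Ascoli argument, followed by a diagonal extraction over a countable dense set. First I would note that Lemma~\ref{lipschitzlmm} and Proposition~\ref{bdrycont} give, for any fixed compact box $[0,T]\times[-R,R]^d$, a uniform bound: $|f^{(\ve)}(t,x)| \le |f^{(\ve)}(0,x)| + |f^{(\ve)}(0,x) - f^{(\ve)}(t,x)| \le |g(0)| + C R + CT + o(1)$ as $\ve\to0$, using that $f^{(\ve)}(0,x) = f_\ve(0,x_\ve) = g_\ve(x_\ve) = g(\ve x_\ve)$ and that $\ve x_\ve \to x$ with $g$ Lipschitz. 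So $\{f^{(\ve)}\}$ is uniformly bounded on compacts for $\ve$ small.

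The next step is to upgrade the two lemmas to genuine (asymptotic) equicontinuity on each compact set. Combining Lemma~\ref{contlmm1} and Lemma~\ref{contlmm2} via the triangle inequality, and chaining finitely many applications of Lemma~\ref{contlmm2} on dyadic time intervals when $t$ and $s$ are not already in a ratio-$2$ window, one gets: for every compact $Q\subset \rr_{\ge0}\times\rr^d$ and every $\eta>0$ there is $\rho>0$ such that $\limsup_{\ve\to0}|f^{(\ve)}(s,y)-f^{(\ve)}(t,x)| < \eta$ whenever $(s,y),(t,x)\in Q$ and $|s-t|+|x-y|<\rho$. The only subtlety is the $\log\frac{s}{t-s}$ factor and the $s^{-1}$ factors in Lemma~\ref{contlmm2}, which blow up near $s=0$; there one falls back on Lemma~\ref{contlmm1}, which controls the distance directly to time $0$ with the clean bound $C|x-y|+Ct$, so small times are handled separately and the bad factors never actually appear in the regime where they would be singular. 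I would make this explicit by splitting $Q$ into $\{t\le \delta\}$ and $\{t\ge\delta\}$ for a threshold $\delta$ depending on $\eta$.

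Having asymptotic equicontinuity and uniform boundedness, the extraction is standard. Fix a countable dense set $D\subset \rr_{\ge0}\times\rr^d$ (say points with rational coordinates). By uniform boundedness and a diagonal argument, pass to a subsequence of $\ve_n$ along which $f^{(\ve)}$ converges at every point of $D$; call the limit $f$ on $D$. Asymptotic equicontinuity forces the convergence on $D$ to be ``uniformly Cauchy'' on compacts, so $f$ extends to a continuous function on all of $\rr_{\ge0}\times\rr^d$ and $f^{(\ve)}\to f$ uniformly on compacts along the subsequence; in particular the convergence holds pointwise everywhere. Finally, $f(0,x)=\lim_\ve f^{(\ve)}(0,x) = \lim_\ve g(\ve x_\ve) = g(x)$ since $\ve x_\ve\to x$ and $g$ is continuous.

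I expect the main obstacle to be bookkeeping rather than conceptual: turning the ``$\limsup$ as $\ve\to0$'' statements of the two lemmas into a usable uniform-over-$Q$ modulus of continuity, and in particular handling the chaining of Lemma~\ref{contlmm2} across many dyadic scales so that the accumulated constant stays controlled, while simultaneously keeping $s$ bounded away from $0$ (where Lemma~\ref{contlmm2}'s bound degenerates) by deferring to Lemma~\ref{contlmm1}. Care is also needed to confirm that $\ve^2 s_\ve\to s$, $\ve^2 r_\ve\to t-s$, $\ve|x_\ve|\to|x|$ uniformly as $(s,t,x)$ range over a compact set, so that the $\limsup$ bounds are uniform there; this is elementary from the definitions of $t_\ve,x_\ve$ but must be stated.
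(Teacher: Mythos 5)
Your proposal is correct and follows essentially the same route as the paper: uniform bounds from Proposition~\ref{bdrycont}, asymptotic equicontinuity from Lemmas~\ref{contlmm1} and~\ref{contlmm2} (with small times handled via Lemma~\ref{contlmm1}), diagonal extraction over a countable dense set, and an Arzel\`a--Ascoli-type upgrade to continuity and local uniform convergence. The only cosmetic difference is that the dyadic chaining you anticipate is not actually needed, since for nearby points with $s$ bounded away from zero the constraint $t\in[s,2s)$ of Lemma~\ref{contlmm2} is automatically satisfied.
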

\begin{proof}
Take any $t\in \rr_{\ge 0}$ and $x\in \rr^d$. By Proposition \ref{bdrycont} and the Lipschitz property of $g$, 
\begin{align}
|f^{(\ve)}(t,x)| &= |f_\ve(t_\ve, x_\ve)| \notag \\
&\le |g_\ve(x_\ve)|+ C\ve \sqrt{t_\ve} + C\ve^2 t_\ve\notag \\
&\le C(1+|x|+\sqrt{t} + t). \label{fenk}
\end{align}
In particular, $|f^{(\ve)}(t,x)|$ is bounded by a number that does not depend on $\ve$. Thus, given any sequence $\ve_n\to 0$, we can find (by a diagonal argument) a subsequence $\ve_{n_k}$ along which $f^{(\ve_{n_k})}(t,x)$ converges to a limit $f(t,x)$ for every $t\in \mathbb{Q}_{\ge 0}$ and $x\in \mathbb{Q}^d$, where $\mathbb{Q}$ is the set of rational numbers. %In the following, we will pretend as if the convergence takes place as $\ve \to 0$. 

Now take any $x\in  \rr^d$ and any $\delta>0$. Find $y\in \mathbb{Q}^d$ such that $|x-y| <\delta$. Then by Lemma \ref{contlmm1},
\begin{align*}
\limsup_{k\to \infty} |f^{(\ve_{n_k})}(0,x)-f(0,y)| \le C\delta. 
\end{align*}
In particular,
\[
\limsup_{k\to\infty} f^{(\ve_{n_k})}(0,x) - \liminf_{k\to\infty} f^{(\ve_{n_k})}(0,x) \le C\delta.
\]
Since this holds for any $\delta$, we see that $f^{(\ve_{n_k})}(0,x)$ converges to a limit as $k\to\infty$. Moreover, by \eqref{fenk}, the sequence is bounded. Hence the limit is finite. Let us call it $f(0,x)$. 

Next, take any $t >0$ and $x\in \rr^d$. Take any $s\in (t/2,t]\cap \mathbb{Q}$ and $y\in \mathbb{Q}^d$. Then by Lemma \ref{contlmm2},
\begin{align*}
&\limsup_{k\to \infty} |f(s,y) - f^{(\ve_{n_k})}(t,x)| \\
&\le C|x-y| + C(t-s) \biggl(1+\log \frac{s}{t-s}+ \frac{1+|x|+\sqrt{s}}{s}\biggr).
\end{align*}
The right side can be made arbitrarily small by bringing $y$ close to $x$ and $s$ close to $t$. Then by the same argument as before, we conclude that 
\[
f(t,x):=\lim_{k\to\infty} f^{(\ve_{n_k})}(t,x)
\]
exists and is finite.  Continuity of $f$ follows from Lemmas \ref{contlmm1} and \ref{contlmm2}. Proposition \ref{bdrycont} shows that $f(0,x)=g(x)$. Lastly, it is easy to deduce using the inequalities \eqref{conteq1} and \eqref{conteq2} from the proof of Lemma \ref{contlmm2} that if $(t_{n_k}, x_{n_k})\to (t,x)\in \rr_{>0}\times \rr^d$, then $f^{(\ve_{n_k})}(t_{n_k},x_{n_k}) \to f(t,x)$. If $(t_{n_k}, x_{n_k})\to (0,x)\in \{0\}\times \rr^d$, the same deduction can be made using the inequalities \eqref{cont0} and \eqref{cont1} from the proof of Lemma \ref{contlmm1}. It is a standard exercise to deduce uniform convergence on compact sets from these facts.
\end{proof}

\subsection{Differentiability of subsequential limits}\label{diffsec}
The goal of this subsection is to show that if $f^{(\ve)}$ converges to $f$ along a subsequence, then $f$ is differentiable in $x$ (for any fixed $t$), and the first-order discrete derivatives of $f^{(\ve)}$ converge pointwise to the corresponding derivatives of $f$. Moreover, the derivatives are continuous in $t$ and $x$. As in the previous subsection, the results of this subsection are also based on the estimates from Subsection \ref{surfacederivsec}. 

Let $e_1,\ldots, e_d$ be the standard basis vectors of $\rr^d$. For $t\in \zz_{\ge0}$ and $x\in \zz^d$, define the discrete derivative 
\[
D_i f_\ve(t,x) := \frac{f_\ve(t,x+e_i)-f_\ve(t, x)}{\ve}. 
\]
Also, for $t\in \rr_{\ge0}$ and $x\in \rr^d$, define
\[
D_i f^{(\ve)}(t,x) := D_i f_\ve(t_\ve, x_\ve). 
\]
Let $Df^{(\ve)}(t,x)$ denote the vector $(D_i f^{(\ve)}(t,x))_{1\le i\le d}$. We need several lemmas about the behaviors of these discrete derivatives. The first lemma shows that they are uniformly bounded. 
\begin{lmm}\label{lip3lmm}
For each $1\le i\le d$,  $|D_if_\ve|$ and $|D_i f^{(\ve)}|$ are bounded by $L$ everywhere.
\end{lmm}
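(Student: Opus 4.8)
The statement is an immediate consequence of the a priori Lipschitz bound already established in Lemma~\ref{lipschitzlmm}. The plan is simply to apply that lemma to the two neighboring lattice points $x$ and $x+e_i$.

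First I would observe that for any $t\in\zz_{\ge0}$ and $x\in\zz^d$, the points $x$ and $x+e_i$ are neighbors in $\zz^d$, so Lemma~\ref{lipschitzlmm} gives $|f_\ve(t,x+e_i)-f_\ve(t,x)|\le L\ve$. Dividing both sides by $\ve>0$ yields
\[
|D_if_\ve(t,x)| = \frac{|f_\ve(t,x+e_i)-f_\ve(t,x)|}{\ve}\le L,
\]
which is the first claim. For the second claim, recall that by definition $D_if^{(\ve)}(t,x)=D_if_\ve(t_\ve,x_\ve)$, where $t_\ve\in\zz_{\ge0}$ and $x_\ve\in\zz^d$; hence the bound $|D_if^{(\ve)}(t,x)|\le L$ follows from the bound on $|D_if_\ve|$ just proven, evaluated at $(t_\ve,x_\ve)$.

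There is essentially no obstacle here: the lemma is a restatement, in rescaled-derivative language, of the uniform roughness control from Lemma~\ref{lipschitzlmm}, and is recorded separately only because this normalized form is what will be used repeatedly in the subsequent arguments about convergence of $Df^{(\ve)}$.
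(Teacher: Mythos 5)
Your proof is correct and is exactly the paper's argument: the paper simply remarks that this lemma is a restatement of Lemma~\ref{lipschitzlmm}, and your write-up spells out the same one-line deduction (divide the Lipschitz bound for the neighbors $x$ and $x+e_i$ by $\ve$, then evaluate at $(t_\ve,x_\ve)$ for the rescaled version). Nothing further is needed.
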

\begin{proof}
This is just a restatement of Lemma \ref{lipschitzlmm}. 
\end{proof}
The next lemma shows that the discrete derivatives have a certain degree of smoothness. 
\begin{lmm}\label{discderiv}
For any $1\le i, j\le d$, $t\in \zz_{\ge 1}$, $x\in \zz^d$, and $k\in \zz\setminus\{0\}$, we have 
\begin{align*}
|D_i f_\ve(t, x + ke_j) - D_if_\ve(t,x)|  \le J(\ve, k,t,x),
\end{align*}
where
\begin{align*}
&J(\ve, k,t,x) \\
&:= C_1 \ve |k|\log (2+t/k^2)  \notag\\
&\qquad + C_1\ve^{-1}(1+\ve|x| +\ve|k|+ \ve t^{1/2})t^{-1/2}\min\{1, t^{-1/2}|k|\} \notag\\
&\qquad + C_1\ve^{-1} (1+\ve |x| + \ve|k| + \ve t^{1/2}\log t) |k|t^{-2}(\log t)^{C_2}
\end{align*}
for some suitable constants $C_1$ and $C_2$ that depend only on $\phi$, $L$ and $d$.
\end{lmm}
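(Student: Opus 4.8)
The plan is to recognize the left-hand side as a rescaled second-order mixed discrete derivative of $f_\ve$ and then quote Proposition~\ref{secondderiv} directly. Unwinding the definition of $D_i$, and using the commutativity of the discrete difference operators $\delta_y$ noted earlier in Subsection~\ref{transsec}, one checks the identity
\[
D_i f_\ve(t, x+ke_j) - D_i f_\ve(t,x) = \frac{1}{\ve}\,\delta_{e_i}\delta_{ke_j} f_\ve(t,x),
\]
since the numerator $f_\ve(t,x+ke_j+e_i) - f_\ve(t,x+e_i) - f_\ve(t,x+ke_j) + f_\ve(t,x)$ is precisely $\delta_{e_i}\delta_{ke_j} f_\ve(t,x)$.

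Because $k \ne 0$ we have $|ke_j| = |k| \ge 1 = |e_i|$, so Proposition~\ref{secondderiv} applies with $y = e_i$ and $z = ke_j$, the hypothesis $|y| \le |z|$ being met. The remaining step is purely bookkeeping: substitute $|y| = 1$, $|z| = |k|$, and $|z|^2 = k^2$ into the three terms of that bound, divide through by $\ve$, and absorb the stray $\ve|y| = \ve$ terms appearing inside the factors $(1 + \ve|x| + \ve|y| + \ve|z| + \cdots)$ into the corresponding $\ve|z| = \ve|k|$ terms — legitimate since $|k| \ge 1$, at the cost of doubling a constant. What results is exactly $J(\ve, k, t, x)$.

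There is no genuine obstacle here: the content of the lemma is entirely contained in Proposition~\ref{secondderiv}, and this statement merely repackages that bound for the specific second differences $\delta_{e_i}\delta_{ke_j} f_\ve$ that will be needed in the forthcoming differentiability and convergence arguments. The only two points requiring a moment's care are verifying the mixed-difference identity above and checking the orientation $|e_i| \le |ke_j|$ so that Proposition~\ref{secondderiv} applies as stated.
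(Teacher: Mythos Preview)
Your proposal is correct and matches the paper's own proof essentially line for line: the paper also rewrites the left-hand side as $\ve^{-1}\delta_{e_i}\delta_{ke_j} f_\ve(t,x)$ and then invokes Proposition~\ref{secondderiv}. Your added remarks about checking $|e_i|\le |ke_j|$ and absorbing the $\ve|y|=\ve$ term into $\ve|k|$ are details the paper leaves implicit, but they are exactly the right bookkeeping.
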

\begin{proof}
Note that
\begin{align*}
D_i f_\ve(t, x + ke_j) - D_if_\ve(t,x) &= \ve^{-1}(\delta_{e_i} f_\ve(t, x+ke_j) - \delta_{e_i} f_\ve(t,x))\\
&= \ve^{-1} \delta_{e_i} \delta_{ke_j} f_\ve(t,x). 
\end{align*}
The desired bound now follows from Lemma \ref{secondderiv}. 
\end{proof}
The function $J$ defined in the above lemma describes an important remainder term. We need the following fact about it.
\begin{lmm}\label{jlemma}
For any positive integers $k$ and $t$, any $\ve\in (0,1)$, and any $x\in \zz^d$,
\begin{align*}
\sum_{j=1}^k J(\ve,j,t,x) &\le kJ(\ve, k,t,x) + C\ve k^2. 
\end{align*}
%where $C$ depends only on $\phi$, $L$ and $d$.
\end{lmm}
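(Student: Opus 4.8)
The plan is to split $J(\ve,j,t,x)$ into its three constituent summands and bound each contribution to $\sum_{j=1}^k$ separately. Write $J(\ve,j,t,x)=J_1(j)+J_2(j)+J_3(j)$ with $t,x$ held fixed, where
\[
J_1(j):=C_1\ve\, j\log(2+t/j^2),
\]
\[
J_2(j):=C_1\ve^{-1}(1+\ve|x|+\ve j+\ve t^{1/2})\,t^{-1/2}\min\{1,t^{-1/2}j\},
\]
\[
J_3(j):=C_1\ve^{-1}(1+\ve|x|+\ve j+\ve t^{1/2}\log t)\,j\,t^{-2}(\log t)^{C_2}.
\]
The terms $J_2$ and $J_3$ are products of finitely many factors, each of which is nonnegative and nondecreasing in $j$ on $\{1,\dots,k\}$: for $J_2$ the affine factor $1+\ve|x|+\ve j+\ve t^{1/2}$ and the clipped factor $\min\{1,t^{-1/2}j\}$, and for $J_3$ the affine factor together with $j$ itself. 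Hence $J_2(j)\le J_2(k)$ and $J_3(j)\le J_3(k)$ for every $1\le j\le k$, so $\sum_{j=1}^k J_2(j)\le kJ_2(k)$ and $\sum_{j=1}^k J_3(j)\le kJ_3(k)$, contributing nothing to the error term.

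The term $J_1$ is the only one requiring real work, since $j\mapsto j\log(2+t/j^2)$ is not monotone. First I would record the elementary inequality $2+t/j^2\le (k/j)^2(2+t/k^2)$, valid for $1\le j\le k$ because the difference of the two sides equals $2\big((k/j)^2-1\big)\ge 0$. Taking logarithms gives $\log(2+t/j^2)\le \log(2+t/k^2)+2\log(k/j)$, so
\[
\sum_{j=1}^k j\log(2+t/j^2)\ \le\ \log(2+t/k^2)\sum_{j=1}^k j\ +\ 2\sum_{j=1}^k j\log(k/j).
\]
Now $\sum_{j=1}^k j=k(k+1)/2\le k^2$, and from $\log u\le u-1$ one gets $j\log(k/j)\le j\cdot(k-j)/j=k-j$, hence $\sum_{j=1}^k j\log(k/j)\le\sum_{j=1}^k(k-j)\le k^2/2$. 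This yields $\sum_{j=1}^k j\log(2+t/j^2)\le k^2\log(2+t/k^2)+k^2$, i.e. $\sum_{j=1}^k J_1(j)\le kJ_1(k)+C_1\ve k^2$.

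Adding the three estimates gives $\sum_{j=1}^k J(\ve,j,t,x)\le k\big(J_1(k)+J_2(k)+J_3(k)\big)+C_1\ve k^2=kJ(\ve,k,t,x)+C\ve k^2$, which is the assertion (the constant depends only on $C_1$, hence on $\phi$, $L$, and $d$). The main, and essentially only, obstacle is the non-monotone logarithmic term $J_1$; everything there rests on the comparison $2+t/j^2\le(k/j)^2(2+t/k^2)$ and the consequent bound $\sum_{j=1}^k j\log(k/j)\le k^2/2$, after which the remaining two terms fall to bare monotonicity.
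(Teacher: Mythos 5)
Your proof is correct and follows essentially the same route as the paper: the paper adds $2C_1\ve k\log k$ to make $J$ monotone (equivalently, your inequality $\log(2+t/j^2)\le \log(2+t/k^2)+2\log(k/j)$), treats the remaining terms by their monotonicity in $j$, and absorbs the correction via $\sum_{j=1}^k\log(k/j)\le Ck$, which is the same $O(\ve k^2)$ error you obtain from $\sum_{j=1}^k j\log(k/j)\le k^2/2$. Your version is simply a more explicit, term-by-term rendering of the same argument.
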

\begin{proof}
Let $C_1$ be the constant displayed in the definition of $J$. From the definition of $J$, it is immediate that for any $\ve$, $t$ and $x$, the function
\[
J(\ve, k, t,x) + 2C_1\ve|k|\log |k|
\]
is increasing in $|k|$. Thus, for any positive integers $k\ge j\ge 1$, 
\[
J(\ve, j,t,x) \le J(\ve, k, t,x) + 2C_1\ve k\log(k/j).
\]
The proof is completed by observing that $\sum_{j=1}^k\log(k/j) \le Ck$ for some universal constant $C$. 
\end{proof}
\begin{lmm}\label{discderiv2}
For any $1\le i\le d$, $t\in \zz_{\ge 1}$, $x\in \zz^d$, and $k\in \zz\setminus\{0\}$, %we have 
\begin{align*}
|f_\ve(t, x + ke_i) - f_\ve(t,x) - k\ve D_if_\ve(t,x)|  \le \ve |k| J(\ve, k,t,x) + C\ve^2k^2. 
%&\le C_1 \ve^2 k^2\log (2+t/k^2)  + C_1(1+\ve|x| +\ve t^{1/2})t^{-1/2}\min\{|k|, t^{-1/2}k^2\} \notag\\
%&\qquad + C_1 (1+\ve |x| + \ve t^{1/2}\log t) k^2t^{-2}(\log t)^{C_2}.
\end{align*}
\end{lmm}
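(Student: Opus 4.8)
The plan is to reduce the estimate to a telescoping sum of first-order discrete derivatives along the $e_i$-direction and then feed the result into Lemmas~\ref{discderiv} and~\ref{jlemma}. The case $k=0$ is trivial, and I would treat $k\ge 1$ first, the case $k\le-1$ being symmetric. For $k\ge 1$, telescoping and the definition of $D_i$ give
\[
f_\ve(t, x+ke_i) - f_\ve(t,x) = \sum_{j=0}^{k-1}\bigl(f_\ve(t, x+(j+1)e_i) - f_\ve(t, x+je_i)\bigr) = \ve\sum_{j=0}^{k-1} D_i f_\ve(t, x+je_i),
\]
hence
\[
f_\ve(t, x+ke_i) - f_\ve(t,x) - k\ve D_i f_\ve(t,x) = \ve\sum_{j=0}^{k-1}\bigl(D_i f_\ve(t, x+je_i) - D_i f_\ve(t,x)\bigr).
\]
The $j=0$ term vanishes, and for $1\le j\le k-1$ Lemma~\ref{discderiv} (applied with the direction $e_j$ there equal to $e_i$ and shift $j$) bounds the $j$-th summand by $J(\ve,j,t,x)$.

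Taking absolute values, the left side of the claimed inequality is therefore at most $\ve\sum_{j=1}^{k-1}J(\ve,j,t,x)\le \ve\sum_{j=1}^{k}J(\ve,j,t,x)$, and Lemma~\ref{jlemma} turns this into $\ve\bigl(kJ(\ve,k,t,x)+C\ve k^2\bigr)=\ve k\,J(\ve,k,t,x)+C\ve^2 k^2$, which is exactly the asserted bound since $|k|=k$.

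For $k\le-1$ I would telescope in the opposite direction, $f_\ve(t,x+ke_i)-f_\ve(t,x)=-\ve\sum_{j=k}^{-1}D_if_\ve(t,x+je_i)$; since this sum has $|k|$ terms, $k\ve D_if_\ve(t,x)=-\ve\sum_{j=k}^{-1}D_if_\ve(t,x)$, so that $f_\ve(t,x+ke_i)-f_\ve(t,x)-k\ve D_if_\ve(t,x)=-\ve\sum_{j=k}^{-1}\bigl(D_if_\ve(t,x+je_i)-D_if_\ve(t,x)\bigr)$, and each summand is controlled by $J(\ve,j,t,x)=J(\ve,|j|,t,x)$, using that $J(\ve,\cdot,t,x)$ depends on its second argument only through $|\cdot|$ and $(\cdot)^2$. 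Re-indexing $m=|j|$ and applying Lemma~\ref{jlemma} again yields the same bound. The only point requiring care is the sign bookkeeping and the observation that $J$ is even in its second argument, which lets the two cases collapse to the single stated estimate; there is no substantive analytic obstacle, since all the real work is already contained in Lemmas~\ref{discderiv} and~\ref{jlemma}.
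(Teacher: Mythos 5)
Your proof is correct and follows essentially the same route as the paper: telescoping the difference into a sum of $D_i f_\ve(t,x+je_i)-D_i f_\ve(t,x)$, bounding each term by $J(\ve,j,t,x)$ via Lemma~\ref{discderiv}, and summing with Lemma~\ref{jlemma}. The paper simply states that the case $k<0$ is "similar"; your explicit sign bookkeeping and the observation that $J$ is even in its second argument is exactly what makes that work.
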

\begin{proof}
Suppose that $k>0$. Then note that 
\begin{align*}
&f_\ve(t, x + ke_i) - f_\ve(t,x) - k\ve D_i f_\ve(t,x) \\
&=\ve \sum_{j=1}^{k-1} (D_i f_\ve(t, x+je_i) - D_i f_\ve(t,x)). 
\end{align*}
Bounding the absolute value of each term in the above sum using Lemma \ref{discderiv}, we get 
\begin{align*}
|f_\ve(t, x + ke_i) - f_\ve(t,x) - k\ve D_i f_\ve(t,x)| &\le \ve\sum_{j=1}^{k-1} J(\ve, j, t,x). 
\end{align*}
By Lemma \ref{jlemma}, this completes the proof when $k>0$. The proof for $k<0$ is similar, since Lemma \ref{discderiv} works for both positive and negative $k$ in the same way. 
\end{proof}
\begin{lmm}\label{discderiv3}
For any $1\le i\le d$, $t\in \rr_{>0}$, $x\in \rr^d$, and $a \in \rr$,
\begin{align*}
&\limsup_{\ve \to 0} \biggl|\frac{f^{(\ve)}(t, x+a e_i) - f^{(\ve)}(t,x)}{a} - D_i f^{(\ve)}(t,x)\biggr| \le Q(a,t,x),
\end{align*}
where
\begin{align*}
Q(a,t,x) &:= C|a|\log (2+t/a^2)  \\
&\qquad + C(1+|x| +|a|+ t^{1/2})t^{-1/2}\min\{1, t^{-1/2}|a|\}
\end{align*}
for some suitable constant $C$ that depends only on $\phi$, $L$ and $d$.
\end{lmm}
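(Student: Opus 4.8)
The plan is to deduce the estimate from Lemma~\ref{discderiv2} by approximating the continuous shift $ae_i$ by an integer shift, and then to let $\ve\to 0$ while keeping track of how the remainder term $J$ scales under parabolic scaling. We may assume $a\ne 0$, since otherwise the difference quotient is undefined. Write $x_i$ for the $i$-th coordinate of $x$ and set $k_\ve := [\ve^{-1}(x_i+a)] - [\ve^{-1}x_i]$, so that, checking coordinates, $(x+ae_i)_\ve = x_\ve + k_\ve e_i$. Since a floor differs from its argument by less than $1$, one has $|\ve k_\ve - a|\le \ve$; hence $\ve|k_\ve|\to|a|$, $\ve^2 k_\ve^2\to a^2$, and $k_\ve\ne 0$ once $\ve$ is small. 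Also $t_\ve\ge 1$ for small $\ve$ since $t>0$. Applying Lemma~\ref{discderiv2} with $(t,x,k)=(t_\ve,x_\ve,k_\ve)$, and using $f^{(\ve)}(t,x)=f_\ve(t_\ve,x_\ve)$, $f^{(\ve)}(t,x+ae_i)=f_\ve(t_\ve,x_\ve+k_\ve e_i)$ and $D_if^{(\ve)}(t,x)=D_if_\ve(t_\ve,x_\ve)$, we get
\[
\bigl|f^{(\ve)}(t,x+ae_i) - f^{(\ve)}(t,x) - k_\ve\ve\, D_if^{(\ve)}(t,x)\bigr| \le \ve|k_\ve|\,J(\ve,k_\ve,t_\ve,x_\ve) + C\ve^2 k_\ve^2 .
\]

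Next I would divide through by $a$ and use the triangle inequality to replace $\tfrac{k_\ve\ve}{a}D_if^{(\ve)}(t,x)$ by $D_if^{(\ve)}(t,x)$, at the cost of $\bigl|\tfrac{k_\ve\ve}{a}-1\bigr|\,|D_if^{(\ve)}(t,x)|\le (\ve/|a|)\,L$ by the uniform bound of Lemma~\ref{lip3lmm}. This yields
\[
\biggl|\frac{f^{(\ve)}(t,x+ae_i) - f^{(\ve)}(t,x)}{a} - D_if^{(\ve)}(t,x)\biggr| \le \frac{\ve|k_\ve|\,J(\ve,k_\ve,t_\ve,x_\ve)}{|a|} + \frac{C\ve^2 k_\ve^2}{|a|} + \frac{CL\ve}{|a|}.
\]
The last term tends to $0$ and the middle term tends to $C|a|$, so the problem reduces to computing $\limsup_{\ve\to 0}\ve|k_\ve|\,J(\ve,k_\ve,t_\ve,x_\ve)$.

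For this I would use the scaling relations $\ve|x_\ve|\to|x|$, $\ve|k_\ve|\to|a|$, $\ve^2 t_\ve\to t$, $\ve t_\ve^{1/2}\to t^{1/2}$, $t_\ve/k_\ve^2\to t/a^2$ and $t_\ve^{-1/2}|k_\ve|\to t^{-1/2}|a|$, and examine the three summands of $J$ separately. The first summand of $\ve|k_\ve|J$ equals $C_1(\ve|k_\ve|)^2\log(2+t_\ve/k_\ve^2)\to C_1 a^2\log(2+t/a^2)$; the second equals $C_1|k_\ve|(1+\ve|x_\ve|+\ve|k_\ve|+\ve t_\ve^{1/2})\,t_\ve^{-1/2}\min\{1,t_\ve^{-1/2}|k_\ve|\}\to C_1|a|(1+|x|+|a|+t^{1/2})\,t^{-1/2}\min\{1,t^{-1/2}|a|\}$; and the third vanishes, because after multiplying out it carries a factor $k_\ve^2 t_\ve^{-2}(\log t_\ve)^{C_2}$ of size $O(\ve^2(\log(1/\ve))^{C_2})$, which swamps even the slowly growing bracket term $\ve t_\ve^{1/2}\log t_\ve$. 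Dividing the sum of the two surviving limits by $|a|$, adding the leftover $C|a|$ coming from $C\ve^2 k_\ve^2/|a|$, and using $\log(2+t/a^2)\ge\log 2$ to fold that $C|a|$ into the first term, the bound becomes $\le Q(a,t,x)$ after enlarging $C$.

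The only delicate point is the bookkeeping in this last step: one must notice that the Taylor-type error $C\ve^2 k_\ve^2$ does \emph{not} disappear in the parabolic limit but produces a genuine $O(|a|)$ contribution, which then has to be reabsorbed into the first term of $Q$, and one must check that the growing factor $(\log t_\ve)^{C_2}$ in the third summand of $J$ is truly killed by the accompanying power $t_\ve^{-2}$. The remaining steps are routine manipulations with floors and the scaling $t_\ve\sim\ve^{-2}t$, $|k_\ve|\sim\ve^{-1}|a|$.
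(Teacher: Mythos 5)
Your proposal is correct and follows essentially the same route as the paper: approximate the shift $ae_i$ by the integer shift $k_\ve e_i$ with $|\ve k_\ve - a|=O(\ve)$, invoke Lemma~\ref{discderiv2} at $(t_\ve,x_\ve,k_\ve)$, control the replacement of $k_\ve\ve D_if^{(\ve)}$ by $aD_if^{(\ve)}$ via the uniform bound of Lemma~\ref{lip3lmm}, and pass to the limit using $\ve^2t_\ve\to t$, $\ve x_\ve\to x$, $\ve k_\ve\to a$. Your bookkeeping of the surviving terms (including absorbing the $O(|a|)$ contribution from $C\ve^2k_\ve^2/|a|$ into the logarithmic term and killing the third summand of $J$ by the factor $t_\ve^{-2}$) matches the paper's computation.
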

\begin{proof}
Let $k_\ve$ be the integer such that 
\begin{align}\label{integereq}
[\ve^{-1} (x + a e_i)] = x_\ve + k_\ve e_i.
\end{align}
(Note that there is such an integer because $[\ve^{-1}(x+a e_i)]$ and $x_\ve$ agree on  all coordinates other than $i$.) Note also that
\begin{align*}
&f^{(\ve)}(t, x+a e_i) - f^{(\ve)}(t,x) - a D_i f^{(\ve)}(t,x) \\
&= f_\ve(t_\ve, x_\ve + k_\ve e_i) - f_\ve(t_\ve, x_\ve) - a D_i f_\ve (t_\ve,x_\ve).
\end{align*}
Let $p$ be the value of coordinate $i$ of the vector $x_\ve$, and let $q$ be the value of coordinate $i$ of the vector $[\ve^{-1}(x+a e_i)]$. Then on the one hand, $|q - \ve^{-1} (x_i + a)| \le 1$. On the other hand, by \eqref{integereq}, $q = p + k_\ve$. But $|p-\ve^{-1}x_i|\le 1$. Combining these three observations, we get 
\begin{align}\label{keq}
|k_\ve -\ve^{-1}a|\le 2.
\end{align}
Combining this with Lemma \ref{lip3lmm}, we get
\[
\lim_{\ve \to 0} (a-k_\ve \ve) D_i f_\ve(t_\ve, x_\ve) = 0. 
\]
Thus,
\begin{align*}
&\limsup_{\ve \to 0} |f^{(\ve)}(t, x+a e_i) - f^{(\ve)}(t,x) - a D_i f^{(\ve)}(t,x) |\\
&= \limsup_{\ve \to 0} |f_\ve(t_\ve, x_\ve + k_\ve e_i) - f_\ve(t_\ve, x_\ve) - k_\ve \ve D_i f_\ve (t_\ve,x_\ve)|.
\end{align*}
By Lemma \ref{discderiv2},
\begin{align*}
&|f_\ve(t_\ve, x_\ve + k_\ve e_i) - f_\ve(t_\ve,x_\ve) - k_\ve\ve D_if_\ve(t_\ve,x_\ve)| \\
&\le \ve |k_\ve| J(\ve, k_\ve, t_\ve, x_\ve) + C\ve^2k_\ve^2.
%&\le C_1 \ve^2 k_\ve^2\log (2+t_\ve/k_\ve^2)  + C_1(1+\ve|x_\ve| +\ve t_\ve^{1/2})t_\ve^{-1/2}\min\{|k_\ve|, t_\ve^{-1/2}k_\ve^2\} \notag\\
%&\qquad + C_1 (1+\ve |x_\ve| + \ve t_\ve^{1/2}\log t_\ve) k_\ve^2t_\ve^{-2}(\log t_\ve)^{C_2}.
\end{align*}
Note that as $\ve \to 0$, $\ve x_\ve \to x$ and $\ve^2 t_\ve \to t$. Also, by \eqref{keq}, $\ve k_\ve \to a$. Plugging these limits into the right side of the above display, we get the required result.
\end{proof}

Let us now fix a sequence $\ve_n \to0$ such that $f^{(\ve_n)}$ converges pointwise (and uniformly on compact sets) to a limit $f$. Such a sequence exists by Proposition~\ref{subseqprop}. Moreover, the function $f$ is continuous and satisfies $f(0,x)=g(x)$ for all $x$.  We will now show that $f$ is differentiable in $x$, the derivatives are continuous in $x$ and $t$, and the discrete first-order derivatives of $f^{(\ve_n)}$ converge to the corresponding derivatives of $f$. In the next two lemmas, $Q$ denotes the function appearing as the error bound in Lemma \ref{discderiv3}.
\begin{lmm}\label{discderiv4}
For any $1\le i\le d$, $t\in \rr_{>0}$, $x\in \rr^d$, and $a,b \in \rr\setminus \{0\}$,
\begin{align*}
\biggl|\frac{f(t, x+a e_i) - f(t,x)}{a} -\frac{f(t, x+b e_i) - f(t,x)}{b}\biggr| &\le Q(a,t,x) + Q(b,t,x).
%&\le Ca\log (2+t/a^2)  + C(1+|x| + t^{1/2})t^{-1/2}\min\{1/t, a/t^2\}\\
%&\qquad + Cb\log (2+t/b^2)  + C(1+|x| + t^{1/2})t^{-1/2}\min\{1/t, b/t^2\}.
\end{align*}
\end{lmm}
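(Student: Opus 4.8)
The plan is to obtain the inequality for $f$ by passing to the limit in the corresponding approximate inequality for $f^{(\ve_n)}$, using Lemma \ref{discderiv3} to control the two ``difference quotient minus discrete derivative'' errors. The key point is that the discrete derivative $D_i f^{(\ve_n)}(t,x)$ appears with opposite signs when we write the quantity of interest as a difference of two such error terms, so it cancels and we never need to worry about whether $D_i f^{(\ve_n)}(t,x)$ itself converges.

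First I would fix $1\le i\le d$, $t\in\rr_{>0}$, $x\in\rr^d$, and $a,b\in\rr\setminus\{0\}$. Since $f^{(\ve_n)}\to f$ pointwise everywhere on $\rr_{\ge0}\times\rr^d$ (Proposition \ref{subseqprop}), we have $f^{(\ve_n)}(t,x)\to f(t,x)$, $f^{(\ve_n)}(t,x+ae_i)\to f(t,x+ae_i)$, and $f^{(\ve_n)}(t,x+be_i)\to f(t,x+be_i)$ as $n\to\infty$. Hence
\begin{align*}
&\frac{f(t,x+ae_i)-f(t,x)}{a}-\frac{f(t,x+be_i)-f(t,x)}{b}\\
&\qquad=\lim_{n\to\infty}\left(\frac{f^{(\ve_n)}(t,x+ae_i)-f^{(\ve_n)}(t,x)}{a}-\frac{f^{(\ve_n)}(t,x+be_i)-f^{(\ve_n)}(t,x)}{b}\right).
\end{align*}
Next I would insert and subtract $D_if^{(\ve_n)}(t,x)$, writing the expression inside the limit as
\begin{align*}
&\left(\frac{f^{(\ve_n)}(t,x+ae_i)-f^{(\ve_n)}(t,x)}{a}-D_if^{(\ve_n)}(t,x)\right)\\
&\qquad-\left(\frac{f^{(\ve_n)}(t,x+be_i)-f^{(\ve_n)}(t,x)}{b}-D_if^{(\ve_n)}(t,x)\right).
\end{align*}
Taking absolute values, applying the triangle inequality, and then taking $\limsup_{n\to\infty}$, Lemma \ref{discderiv3} (applied once with $a$ and once with $b$, along the sequence $\ve_n\to0$) bounds the two terms by $Q(a,t,x)$ and $Q(b,t,x)$ respectively. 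Since the left side is an actual limit, its absolute value equals this $\limsup$, and we obtain the claimed bound $Q(a,t,x)+Q(b,t,x)$.

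There is no real obstacle here; the only thing to be careful about is that Lemma \ref{discderiv3} is a $\limsup$ statement over \emph{all} $\ve\to0$, so it applies verbatim along the particular subsequence $\ve_n$, and that pointwise convergence of $f^{(\ve_n)}$ at the three points $(t,x)$, $(t,x+ae_i)$, $(t,x+be_i)$ is exactly what Proposition \ref{subseqprop} provides. One should also note $Q(a,t,x)$ and $Q(b,t,x)$ are finite since $t>0$ and $a,b\neq0$, so the statement is nonvacuous.
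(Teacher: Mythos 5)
Your proof is correct and is essentially the paper's argument: both insert and cancel the discrete derivative $D_if^{(\ve)}(t,x)$, apply Lemma \ref{discderiv3} once with $a$ and once with $b$ together with the triangle inequality, and then pass to the limit along the subsequence $\ve_n$ using the pointwise convergence from Proposition \ref{subseqprop}. No gaps.
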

\begin{proof}
Applying Lemma \ref{discderiv3} and dividing both sides by $a$, and then doing  the same with $a$ replaced by $b$, and finally, applying the triangle inequality, we arrive at 
\begin{align*}
&\limsup_{\ve \to 0} \biggl|\frac{f^{(\ve)}(t, x+a e_i) - f^{(\ve)}(t,x)}{a} -\frac{f^{(\ve)}(t, x+b e_i) - f^{(\ve)}(t,x)}{b}\biggr| \\
&\le Q(a,t,x) + Q(b,t,x).
\end{align*}
The proof is now completed by replacing $\ve$ by $\ve_n$ and $\limsup_{\ve \to 0}$ by $\lim_{n\to\infty}$.
\end{proof}

Note that $Q(a,t,x) \to 0$ as $a \to 0$. Therefore, Lemma \ref{discderiv4} implies that as $a\to 0$, the numbers $(f(t, x+a e_i) - f(t,x))/a$ have the Cauchy property, and hence the limit
\[
 \lim_{a\to 0} \frac{f(t, x+a e_i) - f(t,x)}{a}
\] 
exists and is finite. Let us call this limit $\partial_i f(t,x)$, and let us denote the vector $(\partial_i f(t,x))_{1\le i\le d}$ by $\nabla f(t,x)$. We will show below that $f$ is differentiable in $x$ with gradient $\nabla f$, and also that $D f^{(\ve_n)}$ converges to $\nabla f$. The following lemma is the key step in the proof.
\begin{lmm}\label{difflmm}
For any $t\in \rr_{>0}$ and $x\in \rr^d$, and any $1\le i\le d$,
\begin{align*}
\lim_{n\to\infty} D_i f^{(\ve_n)}(t,x) = \partial_i f(t,x). 
\end{align*}
Moreover, for any $a\ne 0$, 
\[
 \biggl|\frac{f(t, x+a e_i) - f(t,x)}{a} -  \partial_i f(t,x)\biggr| \le Q(a,t,x). 
\]
\end{lmm}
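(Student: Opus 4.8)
The plan is to use Lemma~\ref{discderiv3} together with the pointwise convergence $f^{(\ve_n)}\to f$ and the uniform bound from Lemma~\ref{lip3lmm}, extracting the limit of $D_i f^{(\ve_n)}(t,x)$ along subsequences and identifying it by sending the increment to zero. Fix $t>0$, $x\in\rr^d$ and $1\le i\le d$. By Lemma~\ref{lip3lmm} the sequence $\bigl(D_i f^{(\ve_n)}(t,x)\bigr)_{n\ge1}$ takes values in $[-L,L]$, so it suffices, by a standard subsequence argument, to show that every subsequence along which it converges has limit $\partial_i f(t,x)$. So suppose that along some subsequence, which I relabel as $(n_k)$, we have $D_i f^{(\ve_{n_k})}(t,x)\to\ell$ for some $\ell\in[-L,L]$.

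Next I would fix an arbitrary $a\ne 0$ and apply Lemma~\ref{discderiv3}. Since the $\limsup_{\ve\to0}$ there dominates the $\limsup$ along $\ve_{n_k}$, and since pointwise convergence of $f^{(\ve_n)}$ gives
\[
\frac{f^{(\ve_{n_k})}(t,x+a e_i)-f^{(\ve_{n_k})}(t,x)}{a}\longrightarrow\frac{f(t,x+a e_i)-f(t,x)}{a},
\]
passing to the limit in $k$ yields
\[
\biggl|\frac{f(t,x+a e_i)-f(t,x)}{a}-\ell\biggr|\le Q(a,t,x).
\]
Now I would let $a\to 0$: the left-hand difference quotient tends to $\partial_i f(t,x)$ (which exists and is finite by Lemma~\ref{discderiv4} and the remark following it), while $Q(a,t,x)\to0$. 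Hence $\ell=\partial_i f(t,x)$. As this holds for every convergent subsequence, the full sequence converges, giving $\lim_{n\to\infty}D_i f^{(\ve_n)}(t,x)=\partial_i f(t,x)$.

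Finally, for the quantitative estimate, having now established this limit along with $f^{(\ve_n)}(t,x+a e_i)\to f(t,x+a e_i)$ and $f^{(\ve_n)}(t,x)\to f(t,x)$, I would apply Lemma~\ref{discderiv3} once more with $\ve=\ve_n$ and send $n\to\infty$; the expression inside the $\limsup$ converges to $\bigl|\frac{f(t,x+a e_i)-f(t,x)}{a}-\partial_i f(t,x)\bigr|$, which is therefore bounded by $Q(a,t,x)$ for every $a\ne0$. Alternatively, this bound already drops out of the displayed inequality above once $\ell$ has been identified with $\partial_i f(t,x)$. The argument is a soft compactness-and-uniqueness wrapper around the hard estimate already proved in Lemma~\ref{discderiv3}, so there is no real obstacle; the only point to be careful about is that one is entitled to speak of $\partial_i f(t,x)$ at all, which is exactly what the discussion preceding the lemma, via Lemma~\ref{discderiv4}, secures.
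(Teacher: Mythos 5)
Your proposal is correct and takes essentially the same route as the paper: both arguments rest on Lemma~\ref{discderiv3}, the pointwise convergence of $f^{(\ve_n)}$ to $f$, the existence of $\partial_i f(t,x)$ secured by Lemma~\ref{discderiv4}, and the fact that $Q(a,t,x)\to 0$ as $a\to 0$, with the second claim obtained exactly as you describe. The only cosmetic difference is that you identify the limit via a compactness/subsequence argument (using the bound from Lemma~\ref{lip3lmm}), whereas the paper squeezes $\liminf_n D_i f^{(\ve_n)}(t,x)$ and $\limsup_n D_i f^{(\ve_n)}(t,x)$ directly between $\frac{f(t,x+ae_i)-f(t,x)}{a}\mp Q(a,t,x)$ and then lets $a\to 0$.
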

\begin{proof}
By Lemma \ref{discderiv3}, for any $a\ne 0$, 
\begin{align}\label{limftx}
&\limsup_{n \to \infty} \biggl|\frac{f(t, x+a e_i) - f(t,x)}{a} -  D_i f^{(\ve_n)}(t,x)\biggr| \le Q(a,t,x). 
\end{align}
Consequently,
\begin{align*}
\frac{f(t, x+a e_i) - f(t,x)}{a} - Q(a,t,x) &\le \liminf_{n\to\infty}  D_i f^{(\ve_n)}(t,x)\\
&\le   \limsup_{n\to\infty}  D_i f^{(\ve_n)}(t,x)\\
&\le \frac{f(t, x+a e_i) - f(t,x)}{a} + Q(a,t,x).
\end{align*}
Since $\partial_i f(t,x) = \lim_{a\to0} (f(t,x+a e_i)-f(t,x))/a$ and $Q(a,t,x)\to0$ as $a\to 0$, this proves the first claim.  The second claim follows by combining the first claim with the  inequality \eqref{limftx}.
\end{proof} 
%The following proposition is the first main result of this section. 
We now arrive at the main result of this subsection.
\begin{prop}\label{diffcont}
%For any $t>0$, $\nabla f$ is continuous as a function of $x$.
The function $\nabla f : \rr_{>0}\times \rr^d \to \rr^d$ is continuous. Moreover, the function $Df^{(\ve_n)}$ converges to $\nabla f$ uniformly on any compact subset of $\rr_{>0}\times \rr^d$ as $n\to\infty$. In particular, for any sequence of points  $(t_n,x_n)$ converging to a point $(t,x)\in \rr_{>0}\times \rr^d$, we have $Df^{(\ve_n)}(t_n,x_n) \to \nabla f(t,x)$.
\end{prop}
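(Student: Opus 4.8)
\emph{Plan.} Since $D_if^{(\ve_n)}(t,x)\to\partial_i f(t,x)$ pointwise by Lemma~\ref{difflmm}, and $|D_if^{(\ve)}|\le L$ everywhere by Lemma~\ref{lip3lmm}, the proposition will follow from a standard Arzel\`a--Ascoli--type argument once we establish the following \emph{asymptotic equicontinuity} of the family: for every compact $K\subset\rr_{>0}\times\rr^d$ and every $\eta>0$ there exist $\rho>0$ and $\ve^*>0$ such that, for all $1\le i\le d$, all $\ve<\ve^*$, and all $p,q\in K$ with $|p-q|<\rho$, one has $|D_if^{(\ve)}(p)-D_if^{(\ve)}(q)|<\eta$. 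The two ingredients for this are the rescaled form of Lemma~\ref{discderiv} (spatial second-order control) and the rescaled form of Proposition~\ref{timeprop2} (mixed space-time control).

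\emph{Proof of the equicontinuity claim.} Enlarging $K$ if necessary, assume $K=[t_0,t_1]\times\{x:|x|\le R\}$ with $0<t_0<t_1$; it suffices to control a move in one spatial coordinate and a move in time while keeping intermediate points inside a slightly larger compact of the same form. For a spatial move: given $(t,x)\in K$, a direction $e_j$, and $\sigma\in\rr$, let $k_\ve\in\zz$ satisfy $[\ve^{-1}(x+\sigma e_j)]=x_\ve+k_\ve e_j$, so $|k_\ve-\ve^{-1}\sigma|\le2$ by \eqref{keq}, and $D_if^{(\ve)}(t,x+\sigma e_j)-D_if^{(\ve)}(t,x)=D_if_\ve(t_\ve,x_\ve+k_\ve e_j)-D_if_\ve(t_\ve,x_\ve)$; this vanishes if $k_\ve=0$ and otherwise is bounded by $J(\ve,k_\ve,t_\ve,x_\ve)$ via Lemma~\ref{discderiv}. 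Using $\ve^2t_\ve\to t$, $\ve x_\ve\to x$, $\ve k_\ve\to\sigma$ together with $t\in[t_0,t_1]$ and $|x|\le R$ --- exactly the bookkeeping underlying Lemma~\ref{discderiv3} --- one checks that $\sup_{(t,x)\in K}|D_if^{(\ve)}(t,x+\sigma e_j)-D_if^{(\ve)}(t,x)|\le\omega_s(\sigma)+\rho_s(\ve)$, where $\omega_s(\sigma)\to0$ as $\sigma\to0$ and $\rho_s(\ve)\to0$ as $\ve\to0$. For a time move: given $(t,x)\in K$ and small $\tau>0$, put $r_\ve:=(t+\tau)_\ve-t_\ve$; then $\ve^2r_\ve\to\tau$, so for $\ve$ small either $r_\ve=0$ (and the difference below vanishes) or $1\le r_\ve\le t_\ve$, and since time and space shifts commute, $D_if^{(\ve)}(t+\tau,x)-D_if^{(\ve)}(t,x)=\ve^{-1}\delta_{r_\ve}'\delta_{e_i}f_\ve(t_\ve,x_\ve)$. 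Applying Proposition~\ref{timeprop2} with $y=e_i$, multiplying by $\ve^{-1}$, and using $\ve^2r_\ve\to\tau$, $\ve^2t_\ve\to t\in[t_0,t_1]$, $\ve x_\ve\to x$, $|x|\le R$, the leading terms combine into a quantity $\omega_t(\tau)\to0$ (as $\tau\to0$) while the remaining terms, including all the $(\log t_\ve)^{C_2}$ and $\ve t_\ve^{1/2}\log t_\ve$ factors, are $o(1)$ as $\ve\to0$ uniformly over $K$; hence $\sup_{(t,x)\in K}|D_if^{(\ve)}(t+\tau,x)-D_if^{(\ve)}(t,x)|\le\omega_t(\tau)+\rho_t(\ve)$. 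Writing a general displacement $p\to q$ inside $K$ as $d$ successive single-coordinate spatial moves followed by one time move, we obtain $|D_if^{(\ve)}(p)-D_if^{(\ve)}(q)|\le d\,\omega_s(\rho)+\omega_t(\rho)+d\,\rho_s(\ve)+\rho_t(\ve)$ when $|p-q|<\rho$, and choosing first $\rho$ and then $\ve^*$ small proves the claim.

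\emph{Conclusion.} Fix a compact $K$ and $\eta>0$, and let $\rho,\ve^*$ be as in the claim. Letting $n\to\infty$ in $|D_if^{(\ve_n)}(p)-D_if^{(\ve_n)}(q)|<\eta$ and using $D_if^{(\ve_n)}\to\partial_if$ pointwise gives $|\partial_if(p)-\partial_if(q)|\le\eta$ for $p,q\in K$ with $|p-q|<\rho$; since $K,\eta$ are arbitrary, $\partial_if$ and hence $\nabla f$ is continuous on $\rr_{>0}\times\rr^d$. For uniform convergence, cover $K$ by finitely many $\rho$-balls about points $p_1,\dots,p_m\in K$; for $q\in K$ within $\rho$ of $p_l$, write $|D_if^{(\ve_n)}(q)-\partial_if(q)|\le|D_if^{(\ve_n)}(q)-D_if^{(\ve_n)}(p_l)|+|D_if^{(\ve_n)}(p_l)-\partial_if(p_l)|+|\partial_if(p_l)-\partial_if(q)|$, which is $<3\eta$ once $n$ is large enough that $\ve_n<\ve^*$ and $\max_l|D_if^{(\ve_n)}(p_l)-\partial_if(p_l)|<\eta$. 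Thus $Df^{(\ve_n)}\to\nabla f$ uniformly on $K$. Finally, if $(t_n,x_n)\to(t,x)\in\rr_{>0}\times\rr^d$, then $|Df^{(\ve_n)}(t_n,x_n)-\nabla f(t,x)|\le|Df^{(\ve_n)}(t_n,x_n)-\nabla f(t_n,x_n)|+|\nabla f(t_n,x_n)-\nabla f(t,x)|\to0$, the first term by uniform convergence on a fixed compact neighbourhood of $(t,x)$ containing all the $(t_n,x_n)$ and the second by continuity of $\nabla f$.

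\emph{Main obstacle.} The only point requiring care is the assertion that, on the compact $K$, the suprema of $J(\ve,k_\ve,t_\ve,x_\ve)$ and of $\ve^{-1}|\delta_{r_\ve}'\delta_{e_i}f_\ve(t_\ve,x_\ve)|$ split cleanly into a genuine modulus of continuity in the spatial (resp.\ temporal) increment plus a term vanishing as $\ve\to0$ --- that is, that the logarithmic factors and the boundary factors $\ve|x_\ve|$, $\ve t_\ve^{1/2}$, $\ve t_\ve^{1/2}\log t_\ve$ stay under control because $t$ ranges over a compact subinterval of $(0,\infty)$ and $|x|$ stays bounded. For the spatial part this is precisely the computation already carried out in the proof of Lemma~\ref{discderiv3}; for the temporal part it is the analogous, and slightly easier, computation starting from Proposition~\ref{timeprop2}. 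Everything after that is the soft equicontinuity argument above.
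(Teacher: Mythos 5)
Your proposal is correct and follows essentially the same route as the paper: both derive a modulus of continuity for $D_i f^{(\ve)}$ in space from the second-order spatial estimate (Proposition \ref{secondderiv}, via Lemma \ref{discderiv}) and in time from Proposition \ref{timeprop2} with $y=e_i$, pass to the limit using Lemma \ref{difflmm} to get continuity of $\nabla f$, and then deduce uniform convergence on compacts by the standard equicontinuity/finite-cover argument. The only cosmetic differences are that the paper applies Proposition \ref{secondderiv} directly to the increment $z_\ve'=(x+z)_\ve-x_\ve$ rather than splitting into coordinate moves, and states the equivalence of sequential convergence with uniform convergence on compacts as a standard fact rather than spelling out the cover.
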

\begin{proof}
Fix $t>0$ and $1\le i\le d$. Take any $x\in \rr^d$, $z\in \rr^d\setminus\{0\}$, and $\ve>0$. Let 
\[
z_\ve' := (x+z)_\ve - x_\ve.
\]
Then note that 
\begin{align*}
D_i f^{(\ve)}(t,x+z) - D_i f^{(\ve)}(t,x) &= \ve^{-1}(\delta_{e_i} f_\ve(t_\ve, x_\ve+z_\ve') - \delta_{e_i} f_\ve(t_\ve, x_\ve))\\
&= \ve^{-1}\delta_{e_i}\delta_{z_\ve'} f(t_\ve,x_\ve).
\end{align*}
Therefore, by Lemma \ref{secondderiv},
\begin{align}
&|D_i f^{(\ve)}(t,x+z) - D_i f^{(\ve)}(t,x)| \notag\\
&\le C_1 \ve |z_\ve'|\log (2+t_\ve/|z_\ve'|^2) \notag \\
&\qquad + C_1\ve^{-1}(1+\ve|x_\ve| +\ve|z_\ve'|+\ve t_\ve^{1/2})t_\ve^{-1/2}\min\{1, t_\ve^{-1/2}|z_\ve'|\} \notag\\
&\qquad + C_1\ve^{-1}(1+\ve |x_\ve| + \ve|z_\ve'|+\ve t_\ve^{1/2}\log t_\ve) |z_\ve'| t_\ve^{-2}(\log t_\ve)^{C_2}.\label{compact1}
\end{align}
Now note that as $\ve \to0$,  $\ve^2t_\ve \to t$, $\ve x_\ve \to x$ and $\ve z_\ve' \to z$. Thus, replacing $\ve$ by $\ve_n$ in the above display, taking $n\to\infty$, and using Lemma \ref{difflmm}, we get
\begin{align*}
&|\partial_i f(t,x+z)-\partial_i f(t,x)| \\
&\le C |z|\log (2+t/|z|^2) + C(1+|x| +|z|+t^{1/2})t^{-1/2}\min\{1, t^{-1/2}|z|\}.
\end{align*}
Since the right side tends to zero as $z\to0$, this proves the continuity of $\partial_i f(t,x)$ in $x$. Next, take some $r\in (0,t)$, and let 
\[
r_\ve' := (t+r)_\ve - t_\ve.
\]
Then   
\begin{align*}
D_i f^{(\ve)}(t+r,x) - D_i f^{(\ve)}(t,x) &= \ve^{-1}(\delta_{e_i} f_\ve(t_\ve+ r_\ve', x_\ve) - \delta_{e_i} f_\ve(t_\ve, x_\ve))\\
&= \ve^{-1}\delta_{r_\ve'}'\delta_{e_i}f(t_\ve,x_\ve).
\end{align*}
Therefore, by Proposition \ref{timeprop2},
\begin{align}
&|D_i f^{(\ve)}(t+r,x) - D_i f^{(\ve)}(t,x)|\notag\\
&\le C_1 \ve \sqrt{r_\ve'} + C_1 \ve  {r_\ve'}^{-1/2}(\log t_\ve)^{C_2}\notag \\
&\qquad + C_1 \ve^{-1}r_\ve' t_\ve^{-3/2} (1+\ve |x_\ve |+\ve + \ve t_\ve^{1/2}) \notag \\
&\qquad + C_1\ve^{-1}(1+\ve |x_\ve| + \ve + \ve t_\ve^{1/2}\log t_\ve) t_\ve^{-3/2}(\log t_\ve)^{C_2}.\label{compact2}
\end{align}
Replacing $\ve$ by $\ve_n$ and letting $n\to\infty$, we get
\begin{align*}
|\partial_i f(t+r,x) - \partial_i f(t,x)| &\le C\sqrt{r} + Crt^{-3/2}(1+|x|+\sqrt{t}). 
\end{align*}
This proves the continuity of $\partial_i f$ in $t$. 

Next, by the bounds \eqref{compact1} and \eqref{compact2} and the first assertion of Lemma \ref{difflmm}, it is easy to see that if $(t_n,x_n)$ is a sequence of points converging to a point $(t,x)\in \rr_{>0}\times \rr^d$, then $Df^{(\ve_n)}(t_n,x_n)\to \nabla f(t,x)$. It is a standard fact that this is equivalent to uniform convergence on compact sets.%Suppose that $Df^{(\ve_n)}$ does not converge uniformly to $\nabla f$ on some compact set $K\subseteq  \rr_{>0}\times \rr^d$. Then there is a sequence of points $(t_n,x_n)$ in $K$ such that $|Df^{(\ve_n)}(t_n,x_n) - \nabla f(t_n,x_n)|\not\to 0$. Since $K$ is compact, we may assume, by passing to a subsequence if necessary, that $(t_n,x_n)$ converges to a point $(t,x)\in K$. Since $\nabla f$ is continuous, $\nabla f(t_n,x_n)\to \nabla f(t,x)$. Therefore $Df^{(\ve_n)}(t_n,x_n)\not\to \nabla f(t,x)$, contradicting our earlier deduction. This shows that $Df^{(\ve_n)}$ converges to $\nabla f$ uniformly on compact sets. 
\end{proof}
%We now arrive at the second main result of this section.
The following result is an immediate corollary of Proposition \ref{diffcont}.
\begin{cor}\label{diffprop}
For any $t>0$, $f(t,x)$ is continuously differentiable in $x$, with derivative $\nabla f$ defined above. Moreover, $|\nabla f|$ is uniformly bounded by $L\sqrt{d}$. 
\end{cor}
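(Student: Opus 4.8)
The plan is to deduce everything from Proposition~\ref{diffcont} together with the discussion preceding Lemma~\ref{difflmm} and the elementary criterion that a function all of whose first-order partial derivatives exist and are continuous on an open set is continuously (Fréchet) differentiable there. Fix $t>0$. The paragraph before Lemma~\ref{difflmm} already shows that for every $1\le i\le d$ and every $x\in\rr^d$ the limit
\[
\partial_i f(t,x) = \lim_{a\to 0}\frac{f(t,x+ae_i)-f(t,x)}{a}
\]
exists and is finite, and Proposition~\ref{diffcont} shows that the resulting map $x\mapsto\nabla f(t,x)=(\partial_i f(t,x))_{1\le i\le d}$ is continuous on $\rr^d$. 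Applying the classical $C^1$ criterion to $f(t,\cdot)$ then gives that $f(t,\cdot)$ is continuously differentiable with gradient $\nabla f(t,\cdot)$, which is the first assertion of the corollary.

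For the uniform bound, I would argue as follows. By Lemma~\ref{lip3lmm}, $|D_i f^{(\ve)}|\le L$ at every point, for every $\ve>0$. Hence, by the first assertion of Lemma~\ref{difflmm} (or directly via Proposition~\ref{diffcont}), the pointwise limit satisfies
\[
|\partial_i f(t,x)| = \lim_{n\to\infty}|D_i f^{(\ve_n)}(t,x)| \le L
\]
for each $i$, each $t>0$, and each $x\in\rr^d$. Therefore
\[
|\nabla f(t,x)| = \Bigl(\sum_{i=1}^d (\partial_i f(t,x))^2\Bigr)^{1/2} \le (dL^2)^{1/2} = L\sqrt{d},
\]
uniformly in $t$ and $x$.

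There is no real obstacle here: the corollary merely repackages Proposition~\ref{diffcont} (continuity of the partials in $x$) and Lemma~\ref{lip3lmm} (the uniform $\ell^\infty$ bound on the discrete derivatives). The only point worth flagging is the passage from ``all partials exist and are continuous'' to genuine differentiability of $f(t,\cdot)$, i.e.\ that $\nabla f$ is a bona fide derivative rather than just a tuple of directional limits; this is the standard theorem that continuous partial derivatives imply class $C^1$, so it requires no new work beyond citing it.
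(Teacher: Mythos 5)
Your proposal is correct and follows essentially the same route as the paper: both deduce $C^1$ regularity from the existence of the partial derivatives $\partial_i f$ (established before Lemma~\ref{difflmm}) together with their continuity from Proposition~\ref{diffcont} via the standard criterion, and both obtain the bound $L\sqrt{d}$ by combining the uniform bound $|D_if^{(\ve)}|\le L$ from Lemma~\ref{lip3lmm} with the convergence $D_if^{(\ve_n)}\to\partial_i f$ from Lemma~\ref{difflmm}. No gaps.
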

\begin{proof}
By definition, $\partial_i f$ is the partial derivative of $f$ in direction $i$. Since these partial derivatives are continuous by Proposition \ref{diffcont}, it follows that $f(t,\cdot)$ is continuously differentiable for any fixed $t$, and  $\nabla f$ is its gradient. Lemma \ref{lip3lmm} shows that $|D f^{(\ve)}|$ is uniformly bounded by $L\sqrt{d}$ everywhere, for any $\ve$. By Lemma \ref{difflmm}, this shows that $|\nabla f|$ is also bounded by the same quantity. 
\end{proof}

\subsection{Origin of the gradient squared field}\label{gradsec}
As in the previous subsection, let us fix a sequence $\ve_n\to0$ such that $f^{(\ve_n)}$ converges to a limit $f$. 
In this subsection, we will show that a suitably rescaled version of $h_{\ve_n}$ converges to a scalar multiple of $|\nabla f|^2$. Using Proposition \ref{fformlmm}, this will allow us later to derive an integral equation for $f$.

Take any $b, b'\in B$ such that $b\ne b'$ and $b\ne -b'$. Let
\[
\gamma_1 := \partial_b^2 \phi(0), \ \ \gamma_2 := \partial_b\partial_{-b} \phi(0), \ \  \gamma_3 := \partial_b\partial_{b'} \phi(0).
\]
By the invariance of $\phi$ under lattice symmetries, these numbers do not depend on the choices of $b$ and $b'$. Note that the quantity $\gamma$ in the statement of Theorem \ref{kpzunivthm} equals $\gamma_1-\gamma_2$. Define
\begin{align*}
H_\ve(t,x) &:= \frac{\gamma_1}{2}\sum_{b\in B} (\delta_b f_\ve(t-1,x))^2 + \frac{\gamma_2}{2}\sum_{b\in B} \delta_b f_\ve(t-1,x)\delta_{-b}f_\ve(t-1,x) \\
&\qquad + \frac{\gamma_3}{2}\sum_{\substack{b,b'\in B \\ b\ne b', b\ne -b'}} \delta_b f_\ve(t-1,x)\delta_{b'} f_\ve(t-1,x). 
\end{align*}
The following lemma shows that $h_\ve$ is close to $H_\ve$. 
\begin{lmm}\label{hehe}
For any $\ve>0$, $t\in \zz_{\ge 1}$ and $x\in \zz^d$,
\begin{align*}
|h_\ve(t,x) - H_\ve(t,x)| &\le \ve^2F(\ve),
\end{align*}
where $F$ is a function determined solely by $\phi$ (and not depending on $t$ or $x$), such that $F(\ve)\to 0$ as $\ve \to 0$. 
\end{lmm}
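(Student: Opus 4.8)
The plan is to identify $h_\ve(t,x)-H_\ve(t,x)$ with the second-order Taylor remainder of $\phi$ at the origin, and then to control that remainder using the a priori bound $|\delta_b f_\ve(t-1,x)|\le L\ve$ supplied by Lemma~\ref{lipschitzlmm}. First I would recall, exactly as in the proof of Lemma~\ref{lip2lmm}, that if $u\in\rr^A$ is defined by $u_0:=0$ and $u_b:=\delta_b f_\ve(t-1,x)$ for $b\in B$, then by equivariance $f_\ve(t,x)=f_\ve(t-1,x)+\phi(u)$ (see \eqref{ftx}), while $\alpha f_\ve(t-1,x)+\beta\sum_{b\in B}f_\ve(t-1,x+b)=f_\ve(t-1,x)+\beta\sum_{b\in B}u_b$ since $\alpha+2d\beta=1$ (see \eqref{ft1x}). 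Combining these with the definition \eqref{htx} of $h_\ve$ yields the key identity
\[
h_\ve(t,x)=\phi(u)-\beta\sum_{b\in B}u_b .
\]

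Next I would Taylor expand $\phi$ around $0$ to second order. Using $\phi(0)=0$, $\partial_0\phi(0)=\alpha$, $\partial_b\phi(0)=\beta$ for $b\in B$, and $u_0=0$, the constant and linear parts of the expansion contribute exactly $\beta\sum_{b\in B}u_b$, while the quadratic part contributes $\tfrac12\sum_{b,b'\in B}\partial_b\partial_{b'}\phi(0)\,u_bu_{b'}$ (all terms containing $u_0$ vanish). Splitting this double sum according to whether $b'=b$, $b'=-b$, or $b'\notin\{b,-b\}$, and invoking the invariance of $\phi$ under lattice symmetries to replace $\partial_b^2\phi(0)$, $\partial_b\partial_{-b}\phi(0)$, $\partial_b\partial_{b'}\phi(0)$ by $\gamma_1$, $\gamma_2$, $\gamma_3$ respectively, one sees that the quadratic part is precisely $H_\ve(t,x)$ with $\delta_b f_\ve(t-1,x)=u_b$. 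Hence $h_\ve(t,x)-H_\ve(t,x)$ equals the full second-order Taylor remainder of $\phi$ at $0$ evaluated at $u$. Organizing this bookkeeping cleanly — in particular matching the quadratic form with $H_\ve$ — is the only step that needs a little care; it is where the lattice-symmetry hypothesis enters, and everything else is routine.

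Finally I would bound the remainder. Since $\ve\in(0,1)$, Lemma~\ref{lipschitzlmm} gives $|u_b|\le L\ve<L$ for every $b\in B$, so $u$ lies in the fixed compact ball $\bar B\bigl(0,\sqrt{2d}\,L\bigr)\subset\rr^A$, on which $\hess\phi$ is (uniformly) continuous; let $\omega$ be its modulus of continuity there, so that $\omega(r)\to0$ as $r\to0^+$. By Taylor's theorem with Lagrange remainder,
\[
|h_\ve(t,x)-H_\ve(t,x)|\le C\,\omega(|u|)\,|u|^2\le C\,\omega\!\bigl(\sqrt{2d}\,L\ve\bigr)\,2dL^2\ve^2=\ve^2F(\ve),
\]
where $F(\ve):=2dL^2C\,\omega\bigl(\sqrt{2d}\,L\ve\bigr)$ is determined by $\phi$ (together with the fixed constants $d$ and $L$), does not depend on $t$ or $x$, and satisfies $F(\ve)\to0$ as $\ve\to0$. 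This is exactly the asserted estimate. I do not expect any real obstacle here: the content is the algebraic identification of $h_\ve-H_\ve$ with a Taylor remainder, after which the $C^2$ hypothesis and the first-order roughness bound do the rest.
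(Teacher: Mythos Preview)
Your proposal is correct and follows essentially the same approach as the paper: identify $h_\ve(t,x)-H_\ve(t,x)$ with the second-order Taylor remainder of $\phi$ at $0$ evaluated at the vector $u$ with $u_0=0$, $u_b=\delta_b f_\ve(t-1,x)$, and then bound that remainder using $|u_b|\le L\ve$ from Lemma~\ref{lipschitzlmm} and the $C^2$ regularity of $\phi$. If anything, your write-up is more explicit than the paper's, which simply asserts that the quantity inside the absolute value equals $|h_\ve(t,x)-H_\ve(t,x)|$ and that the remainder is $\le C\ve^2 F(\ve)$ without spelling out the modulus-of-continuity step.
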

\begin{proof}
Let $u\in \rr^A$ be the vector whose coordinates are $u_0=0$ and 
\[
u_b = \delta_b f_\ve (t-1,x)
\]
for $b\in B$. Then recall the identity \eqref{ftx}, which says that 
\[
f_\ve(t,x) = f_\ve(t-1,x) + \phi(u).
\]
By Lemma \ref{lipschitzlmm}, $|u_b|\le C\ve$ for all $b$. Since $\phi$ is a $C^2$ function and $\phi(0)=0$, this implies that
\begin{align*}
&\biggl|f_\ve(t,x) - f_\ve(t-1,x) - \sum_{b\in B} u_b\partial_b \phi(0) - \frac{1}{2}\sum_{b,b'\in B} u_bu_{b'}\partial_b\partial_{b'} \phi(0)\biggr| \\
&\le C\ve^2 F(\ve),
\end{align*} 
where $F$ is determined solely by $\phi$, and $F(\ve)\to 0$ as $\ve \to 0$. It is easy to check that the quantity on the left equals $|h_\ve(t,x)-H_\ve(t,x)|$. 
\end{proof}

For $t\in \rr_{>0}$ and $x\in \rr^d$, define
\begin{align*}
h^{(\ve)}(t,x) &:= \ve^{-2} h_\ve(t_\ve, x_\ve),\\
H^{(\ve)}(t,x) &:= \ve^{-2} H_\ve(t_\ve, x_\ve). 
\end{align*}
The following proposition is the main result of this subsection. It shows that $h^{(\ve_n)}$ converges pointwise to $|\gamma|\nabla f|^2$. The first step in the proof is to show that this holds for $H^{(\ve_n)}$, and then use Lemma \ref{hehe} to deduce that it also holds for $h^{(\ve_n)}$. 
\begin{prop}\label{hepprop}
For any sequence $(t_n,x_n)$ converging to a point $(t,x)\in \rr_{>0}\times \rr^d$, we have
\[
\lim_{n\to\infty} h^{(\ve_n)}(t_n,x_n) = \gamma|\nabla f(t,x)|^2.
\]
\end{prop}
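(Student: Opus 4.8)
The plan is to first prove the statement with $h^{(\ve_n)}$ replaced by its smooth proxy $H^{(\ve_n)}$, and then transfer the conclusion to $h^{(\ve_n)}$ by means of Lemma~\ref{hehe}.

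First I would rewrite $H_\ve$ in the single compact form
\[
H_\ve(t,x) = \frac12\sum_{b,b'\in B}\partial_b\partial_{b'}\phi(0)\,\delta_b f_\ve(t-1,x)\,\delta_{b'}f_\ve(t-1,x),
\]
which is valid because, by invariance under lattice symmetries, $\partial_b^2\phi(0)=\gamma_1$, $\partial_b\partial_{-b}\phi(0)=\gamma_2$, and $\partial_b\partial_{b'}\phi(0)=\gamma_3$ whenever $b'\ne\pm b$. The heart of the argument is then to show that each rescaled discrete derivative occurring in $\ve_n^{-2}H_{\ve_n}$ converges. Writing $\sigma(e_i)=1$, $\sigma(-e_i)=-1$, and letting $i(b)$ denote the axis index of $b\in B$, one has $\ve^{-1}\delta_b f_\ve(t-1,x)=D_{i(b)}f_\ve(t-1,x)$ when $\sigma(b)=1$ and $\ve^{-1}\delta_b f_\ve(t-1,x)=-D_{i(b)}f_\ve(t-1,x-e_{i(b)})$ when $\sigma(b)=-1$. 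Evaluated at the rescaled arguments, these are all of the form $D_i f_{\ve_n}\bigl((t_n)_{\ve_n}-1,(x_n)_{\ve_n}+c\bigr)$ for a fixed lattice vector $c$; I would rewrite this as $D_i f^{(\ve_n)}(s_n,y_n)$ with $s_n:=t_n-\ve_n^2$ and $y_n:=x_n+\ve_n c$, so that $(s_n)_{\ve_n}=(t_n)_{\ve_n}-1$, $(y_n)_{\ve_n}=(x_n)_{\ve_n}+c$, and $(s_n,y_n)\to(t,x)\in\rr_{>0}\times\rr^d$. Proposition~\ref{diffcont} then yields $D_i f^{(\ve_n)}(s_n,y_n)\to\partial_i f(t,x)$, so every factor $\ve_n^{-1}\delta_b f_{\ve_n}\bigl((t_n)_{\ve_n}-1,(x_n)_{\ve_n}\bigr)$ converges to $\sigma(b)\,\partial_{i(b)}f(t,x)$.

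Plugging these limits into $H^{(\ve_n)}(t_n,x_n)=\ve_n^{-2}H_{\ve_n}\bigl((t_n)_{\ve_n},(x_n)_{\ve_n}\bigr)$ and splitting the double sum into the cases $b'=b$, $b'=-b$, and $b'\ne\pm b$, I expect to obtain $\gamma_1|\nabla f(t,x)|^2$ from the diagonal (each axis contributing twice), $-\gamma_2|\nabla f(t,x)|^2$ from the anti-diagonal (using $\sigma(b)\sigma(-b)=-1$), and $0$ from the off-diagonal, since for any ordered pair of distinct axes $(i,j)$ one has $\bigl(\sigma(e_i)+\sigma(-e_i)\bigr)\bigl(\sigma(e_j)+\sigma(-e_j)\bigr)=0$. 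Hence $H^{(\ve_n)}(t_n,x_n)\to(\gamma_1-\gamma_2)|\nabla f(t,x)|^2=\gamma|\nabla f(t,x)|^2$. Finally, Lemma~\ref{hehe} gives $\bigl|h^{(\ve_n)}(t_n,x_n)-H^{(\ve_n)}(t_n,x_n)\bigr|=\ve_n^{-2}\bigl|h_{\ve_n}-H_{\ve_n}\bigr|\le F(\ve_n)\to0$ at the relevant point, which combined with the previous line completes the proof.

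The only step requiring care is the second one: correctly matching the $O(1)$ shifts of the lattice arguments of $f_{\ve_n}$ with $O(\ve_n)$ shifts of the arguments of $f^{(\ve_n)}$ so that Proposition~\ref{diffcont} applies (and noting that $(s_n)_{\ve_n}\ge1$ for large $n$, which holds because $t>0$), together with keeping track of the signs $\sigma(b)$ in the final combinatorial reduction — in particular the exact cancellation of the $\gamma_3$ contribution. Neither is a genuine obstacle, but both must be carried out carefully.
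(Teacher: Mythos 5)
Your proposal is correct and follows essentially the same route as the paper: express the rescaled discrete derivatives $\ve_n^{-1}\delta_b f_{\ve_n}$ as values of $D_{i(b)}f^{(\ve_n)}$ at points converging to $(t,x)$, invoke Proposition~\ref{diffcont} to pass to $\sigma(b)\partial_{i(b)}f(t,x)$, observe that the diagonal, anti-diagonal, and off-diagonal parts of the quadratic form contribute $\gamma_1|\nabla f|^2$, $-\gamma_2|\nabla f|^2$, and $0$ respectively, and finish with Lemma~\ref{hehe}. The only difference is cosmetic (writing $H_\ve$ as a single sum over $b,b'$ rather than three separate sums), and your bookkeeping of the argument shifts and signs is accurate.
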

\begin{proof}
For any $t\in \zz_{\ge 1}$ and $x\in \zz^d$, note that
\begin{align*}
\delta_{e_i} f_\ve(t-1,x) &= \ve D_i f_\ve(t-1,x)\\
&= \ve D_if^{(\ve)}(\ve^{2}(t-1), \ve  x).
\end{align*}
Similarly, 
\begin{align*}
\delta_{-e_i} f_\ve(t-1,x) &= -\ve D_i f_\ve(t-1, x-e_i)\\
&= -\ve D_i f^{(\ve)}(\ve^{2}(t-1), \ve(x-e_i)). 
\end{align*}
Next, changing notation, let $(t_n,x_n$) and $(t,x)$ be as in the statement of the proposition. Let $s_n := [\ve_n^{-2} t_n]$ and $y_n := [\ve_n^{-1}x_n]$. Then by the above identities,
\begin{align*}
&H^{(\ve_n)} (t_n,x_n) = \ve_n^{-2}H_{\ve_n}(s_n, y_n)\\
&= \frac{\gamma_1}{2}\sum_{i=1}^d \biggl[(D_i f^{(\ve_n)} (\ve_n^2(s_n-1), \ve_n y_n))^2 \\
&\qquad \qquad + (D_i f^{(\ve_n)}(\ve_n^2(s_n-1), \ve_n (y_n-e_i)))^2\biggr] \\
&\quad - \gamma_2 \sum_{i=1}^d D_i f^{(\ve_n)} (\ve_n^2(s_n-1), \ve_n y_n) D_i f^{(\ve_n)} (\ve_n^2(s_n-1), \ve_n (y_n-e_i))\\
&\quad + \gamma_3 \sum_{1\le i\ne j\le d} D_i f^{(\ve_n)} (\ve_n^2(s_n-1), \ve_n y_n) D_j f^{(\ve_n)} (\ve_n^2(s_n-1), \ve_n y_n)\\
&\quad - \gamma_3 \sum_{1\le i\ne j\le d} D_i f^{(\ve_n)} (\ve_n^2(s_n-1), \ve_n y_n) D_j f^{(\ve_n)} (\ve_n^2(s_n-1), \ve_n (y_n-e_j)).
\end{align*}
Now note that $\ve_n^2s_n \to t$ and $\ve_n y_n \to x$ as $n \to \infty$. Therefore by Proposition~\ref{diffcont}, we have the following limits, for each $i$:
\begin{align*}
&\lim_{n\to\infty} D_i f^{(\ve_n)} (\ve_n^2(s_n-1), \ve_n y_n) = \partial_i f(t,x),\\
&\lim_{n\to\infty} D_i f^{(\ve_n)} (\ve_n^2(s_n-1), \ve_n (y_n-e_i)) = \partial_i f(t,x). 
\end{align*}
Plugging these into the previous display, we get
\[
\lim_{n\to\infty} H^{(\ve_n)}(t_n,x_n) = (\gamma_1-\gamma_2)|\nabla f(t,x)|^2 = \gamma|\nabla f(t,x)|^2. 
\]
But by Lemma \ref{hehe}, the limit of $h^{(\ve_n)}(t_n,x_n)$ must also be the same. 
\end{proof}

\subsection{Duhamel representation for subsequential limits}\label{duhamelsec}
As in the previous two subsections, fix a sequence $\ve_n\to0$ such that $f^{(\ve_n)}$ converges to a limit $f$. In this subsection we will derive an integral equation for $f$. The idea is to use Proposition \ref{hepprop} to show that the recursive equation displayed in Proposition~\ref{fformlmm} yields an integral equation for $f$ in the scaling limit.

Recall the transition probability $p(t,x)$ defined in Subsection \ref{rwsec}. For $(t,x)\in \rr_{\ge0}\times \rr^d$ and $\ve>0$, define
\begin{align*}
p^{(\ve)}(t,x) := \ve^{-d}p(t_\ve, x_\ve).
\end{align*}
Also, recall the Gaussian kernel $K(t,x) = (4\pi \beta t)^{-d/2} e^{-|x|^2/4\beta t}$ defined in the statement of Theorem \ref{kpzunivthm}.  The following result shows that $p^{(\ve)}$ converges to $K$ uniformly on compact sets as $\ve \to 0$.
\begin{lmm}\label{plimlmm}
For any sequence $(t_n,x_n)$ converging to a point $(t,x)\in \rr_{> 0}\times \rr^d$ and any sequence $\ve_n\to 0$, 
\[
\lim_{n \to \infty} p^{(\ve_n)}(t_n,x_n) = K(t,x).
\]
\end{lmm}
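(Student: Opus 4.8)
The plan is to use the Fourier-inversion machinery already developed in Subsections~\ref{approxsec} and~\ref{transsec}, which provides successively simpler approximations $p_1,p_2,p_3$ to $p$, with $p_3(t,x) = (4\pi\beta t)^{-d/2}e^{-|x|^2/4\beta t}$ being exactly the Gaussian kernel. The key observation is that $K(t,x) = p_3(t,x)$ after the parabolic rescaling, so the entire task reduces to showing that the rescaled approximation errors vanish and that the continuous Gaussian is, well, continuous.

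\textbf{Step 1: Reduce to $p_3$.} Write $t_\ve := [\ve^{-2}t_n]$ and $x_\ve := [\ve^{-1}x_n]$, and note $\ve_n^2 t_\ve \to t > 0$ (so $t_\ve \to \infty$) and $\ve_n x_\ve \to x$. By the triangle inequality,
\begin{align*}
|p(t_\ve,x_\ve) - p_3(t_\ve,x_\ve)| &\le |p - p_1| + |p_1 - p_2| + |p_2 - p_3|,
\end{align*}
all evaluated at $(t_\ve,x_\ve)$. By Lemma~\ref{pp1lmm}, Lemma~\ref{p1p2lmm} (first inequality), and Lemma~\ref{p2p3lmm}, this is bounded by $C_1 e^{-C_2(\log t_\ve)^2} + C_1 t_\ve^{-(d+2)/2}(\log t_\ve)^{C_2}$. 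After multiplying by $\ve_n^{-d}$, and using $\ve_n^{-d} \asymp C t_\ve^{d/2}/t^{d/2}$ (since $\ve_n^2 t_\ve \to t$), each term is $O(t_\ve^{-1}(\log t_\ve)^{C_2}) + O(t_\ve^{d/2}e^{-C_2(\log t_\ve)^2}) \to 0$ as $n\to\infty$. Hence $\ve_n^{-d}|p(t_\ve,x_\ve) - p_3(t_\ve,x_\ve)| \to 0$.

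\textbf{Step 2: Identify the rescaled $p_3$ with $K$.} From the explicit formula \eqref{p3form}, $p_3(t_\ve,x_\ve) = (4\pi\beta t_\ve)^{-d/2} e^{-|x_\ve|^2/4\beta t_\ve}$, so
\begin{align*}
\ve_n^{-d} p_3(t_\ve,x_\ve) = (4\pi\beta \,\ve_n^2 t_\ve)^{-d/2}\, e^{-|\ve_n x_\ve|^2/(4\beta \,\ve_n^2 t_\ve)}.
\end{align*}
Since $\ve_n^2 t_\ve \to t$ and $\ve_n x_\ve \to x$, and the map $(s,z)\mapsto (4\pi\beta s)^{-d/2}e^{-|z|^2/4\beta s}$ is continuous on $\rr_{>0}\times\rr^d$, the right side converges to $(4\pi\beta t)^{-d/2}e^{-|x|^2/4\beta t} = K(t,x)$. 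Combining with Step 1 and the definition $p^{(\ve_n)}(t_n,x_n) = \ve_n^{-d}p(t_\ve,x_\ve)$ gives $p^{(\ve_n)}(t_n,x_n) \to K(t,x)$.

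\textbf{Main obstacle.} There is no serious obstacle here; the work was all front-loaded into the transition-probability estimates of Subsections~\ref{approxsec}--\ref{transsec}. The only point requiring a little care is the bookkeeping of powers of $\ve_n$ versus powers of $t_\ve$: one must check that the polynomial-in-$t_\ve$ factor coming from $\ve_n^{-d}$ is indeed dominated by the decaying factors $t_\ve^{-(d+2)/2}(\log t_\ve)^{C_2}$ and $e^{-C_2(\log t_\ve)^2}$, which it is since $t_\ve^{d/2}\cdot t_\ve^{-(d+2)/2} = t_\ve^{-1}\to 0$ and $t_\ve^{d/2}e^{-C_2(\log t_\ve)^2}\to 0$. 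One also notes $t_\ve\to\infty$ is exactly what makes all these estimates usable, and this holds precisely because $t>0$.
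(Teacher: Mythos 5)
Your proof is correct and follows essentially the same route as the paper's: both decompose $p-p_3$ via the chain $p\to p_1\to p_2\to p_3$ using Lemmas \ref{pp1lmm}, \ref{p1p2lmm}, and \ref{p2p3lmm}, verify that the rescaled errors vanish because $t_\ve\to\infty$, and then pass to $K(t,x)$ through the explicit formula \eqref{p3form} and continuity. The only cosmetic difference is that you track the errors in powers of $t_\ve$ while the paper expresses them in powers of $\ve_n$; these are interchangeable since $\ve_n^2 t_\ve \to t > 0$.
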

\begin{proof}
Recall the functions $p_1$, $p_2$, and $p_3$ from Subsection \ref{approxsec}. For $(t,x)\in \rr_{>0} \times \rr^d$, define
\[
p_3^{(\ve)}(t,x) := \ve^{-d} p_3(t_\ve, x_\ve). 
\]
Note that $(t_n)_{\ve_n}\sim \ve_n^{-2} t$ as $n\to\infty$, since $t_n\to t>0$. Thus, by Lemmas \ref{pp1lmm}, \ref{p2p3lmm} and \ref{p1p2lmm}, we have that as $n\to\infty$,
\begin{align*}
p^{(\ve_n)}(t_n,x_n) - p_1^{(\ve_n)}(t_n,x_n) = O(\ve_n^{-d}e^{-C(\log \ve_n)^2}),\\
p_1^{(\ve_n)}(t_n,x_n) - p_2^{(\ve_n)}(t_n,x_n) = O(\ve_n^{-d}\ve_n^{d+2}|\log \ve_n|^C),\\
p_2^{(\ve_n)}(t_n,x_n) - p_3^{(\ve_n)}(t_n,x_n) = O(\ve_n^{-d}e^{-C(\log \ve_n)^2}).
\end{align*}
All of the above tend to zero as $n\to\infty$. Thus, 
\begin{align*}
\lim_{n\to \infty} (p^{(\ve_n)}(t_n,x_n) - p_3^{(\ve_n)}(t_n,x_n)) = 0. 
\end{align*}
On the other hand, from the explicit expression \eqref{p3form} for $p_3$, we have that
\[
\lim_{n\to \infty} p_3^{(\ve_n)}(t_n,x_n) = K(t,x). 
\]
This completes the proof of the lemma.
\end{proof}

Define $g^{(\ve)} : \rr^d \to \rr$ as 
\[
g^{(\ve)}(x) := g_\ve(x_\ve) = g(\ve[\ve^{-1}x]). 
\]
%Clearly, as $\ve \to 0$, $g^{(\ve)} \to g$ pointwise. Also, by the Lipschitz property of $g$, $|g^{(\ve)}(x)|\le C_1 + C_2 |x|$, where $C_1$ and $C_2$ have no dependence on $\ve$. 
Recall the definition of $h^{(\ve)}$ from the previous subsection. The following lemma expresses the recursive formula from Proposition \ref{fformlmm} as an integral involving the functions $p^{(\ve)}$, $g^{(\ve)}$ and $h^{(\ve)}$. 
\begin{lmm}\label{feform}
For any $(t,x)\in \rr_{>0}\times \rr^d$ and $\ve\in (0,\sqrt{t})$,
\begin{align*}
&f^{(\ve)}(t,x) \\
&= \int_{\rr^d} p^{(\ve)}(\ve^2t_\ve, \ve(x_\ve-y_\ve)) g^{(\ve)}(\ve y_\ve)dy \\
&\qquad + \int_0^{\ve^{2}t_\ve} \int_{\rr^d} p^{(\ve)}(\ve^2 s_\ve, \ve(x_\ve-y_\ve)) h^{(\ve)}(\ve^2(t_\ve-s_\ve), \ve y_\ve)dy ds.
\end{align*}
%where $x_\ve := [\ve^{-1}x]$, $y_\ve := [\ve^{-1}y]$, $t_\ve := [\ve^{-2} t]$ and $s_\ve := [\ve^{-2} s]$.
\end{lmm}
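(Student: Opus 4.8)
The plan is to start from the recursion already established in Proposition~\ref{fformlmm} and re-encode its two sums as Lebesgue integrals, exploiting the fact that the various rescaled arguments appearing in the statement collapse back onto the integer points we started with. Since we work under the assumption $\phi(0)=0$, we have $f^{(\ve)}(t,x)=f_\ve(t_\ve,x_\ve)$, and the hypothesis $\ve<\sqrt t$ gives $\ve^{-2}t>1$, hence $t_\ve=[\ve^{-2}t]\ge1$, so Proposition~\ref{fformlmm} applies with $t$ and $x$ replaced by $t_\ve$ and $x_\ve$, yielding
\[
f^{(\ve)}(t,x)=\sum_{k\in\zz^d} p(t_\ve,x_\ve-k)\,g_\ve(k)+\sum_{0\le j\le t_\ve-1}\ \sum_{k\in\zz^d} p(j,x_\ve-k)\,h_\ve(t_\ve-j,k).
\]
It then remains to identify the two terms on the right-hand side of the claimed identity with these two sums.

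First I would record the elementary collapsing identities: for every integer $j\ge0$ one has $(\ve^2 j)_\ve=[\ve^{-2}\ve^2 j]=j$, and for every $w\in\zz^d$ one has $(\ve w)_\ve=[\ve^{-1}\ve w]=w$. Combined with the definitions of $p^{(\ve)}$, $g^{(\ve)}$ and $h^{(\ve)}$, these give $p^{(\ve)}(\ve^2 s_\ve,\ve(x_\ve-y_\ve))=\ve^{-d}p(s_\ve,x_\ve-y_\ve)$, $g^{(\ve)}(\ve y_\ve)=g_\ve(y_\ve)$, and $h^{(\ve)}(\ve^2(t_\ve-s_\ve),\ve y_\ve)=\ve^{-2}h_\ve(t_\ve-s_\ve,y_\ve)$ whenever $s_\ve$ is a nonnegative integer and $x_\ve,y_\ve\in\zz^d$; the last of these is well defined in the relevant range because there $s_\ve\in\{0,\dots,t_\ve-1\}$, so $t_\ve-s_\ve\ge1$.

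The core of the argument is then the measure-theoretic bookkeeping. The map $y\mapsto y_\ve=[\ve^{-1}y]$ is constant on each half-open cube $\prod_{i}[\ve k_i,\ve(k_i+1))$, of volume $\ve^d$, with value $k$; the map $s\mapsto s_\ve=[\ve^{-2}s]$ is constant on each interval $[\ve^2 j,\ve^2(j+1))$, of length $\ve^2$, with value $j$, and the range $s\in[0,\ve^2 t_\ve)$ corresponds exactly to $j\in\{0,\dots,t_\ve-1\}$ (the single endpoint $s=\ve^2 t_\ve$ being null). By the collapsing identities the integrand of the first integral is therefore constant on each such cube, with value $\ve^{-d}p(t_\ve,x_\ve-k)g_\ve(k)$, and the integrand of the double integral is constant on each product of cube and interval, with value $\ve^{-d-2}p(j,x_\ve-k)h_\ve(t_\ve-j,k)$; integrating recovers precisely the two sums above, the volume factors $\ve^d$ and $\ve^2$ cancelling the amplitudes $\ve^{-d}$ and $\ve^{-2}$. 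All the interchanges of summation and integration are justified by absolute convergence, since $|h_\ve|\le C\ve^2$ by Lemma~\ref{lip2lmm}, $g_\ve$ grows at most linearly by the Lipschitz property of $g$, and $p$ has Gaussian tails by Lemma~\ref{plmm}.

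There is no genuinely hard step here; the proof is bookkeeping. The only point demanding care is keeping the three separate rescalings consistent --- $\ve^{-2}$ in time, $\ve^{-1}$ in space, and the amplitude factors $\ve^{-d}$ for $p$ and $\ve^{-2}$ for $h$ --- and verifying that the fibres of the flooring maps have exactly the claimed measures, so that the powers of $\ve$ cancel on the nose.
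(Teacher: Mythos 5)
Your proposal is correct and is essentially the paper's argument: the paper converts the sums of Proposition~\ref{fformlmm} into integrals of floored functions via $\sum_{x\in\zz^d}w(x)=\int_{\rr^d}w([x])\,dx$ and then changes variables $z=\ve y$, $u=\ve^2 s$, which is exactly your direct partition of the rescaled integrals into fibres of the flooring maps of measure $\ve^d$ and $\ve^2$. The collapsing identities, the role of $\ve<\sqrt t$ in ensuring $t_\ve\ge1$, and the cancellation of the amplitude factors all match the paper's proof.
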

\begin{proof}
The map $x\mapsto [x]$ takes cubes of unit volume in $\rr^d$ to elements of $\zz^d$. The map is surjective, and these cubes cover the whole of $\rr^d$. This implies that for any absolutely summable $w:\zz^d\to \rr$,
\begin{align}\label{wxeq}
\sum_{x\in \zz^d} w(x) = \int_{\rr^d} w([x])dx.
\end{align}
Similarly, for any absolutely summable $w:\zz \to \rr$, and any two integers $a<b$,
\[
\sum_{t=a}^{b-1} w(t) = \int_a^b w([t])dt.
\]
The condition  $\ve <\sqrt{t}$ implies that $t_\ve \ge 1$. So, using the above identities and Proposition \ref{fformlmm}, we get
\begin{align*}
&f^{(\ve)}(t,x) = f_\ve(t_\ve,x_\ve) \\
&= \sum_{y\in \zz^d} p(t_\ve,x_\ve-y) g_\ve(y) + \sum_{0\le s\le t_\ve-1} \sum_{y\in \zz^d} p(s, x_\ve-y) h_\ve(t_\ve-s,y)\\
&= \int_{\rr^d} p(t_\ve,x_\ve-[y]) g_\ve([y])dy + \int_0^{t_\ve} \int_{\rr^d} p([s], x_\ve-[y]) h_\ve(t_\ve-[s],[y])dy ds.
\end{align*}
Applying the changes of variable $z = \ve y$ and $u = \ve^2 s$ in the above integrals,  we get 
\begin{align*}
&f^{(\ve)}(t,x) \\
&= \ve^{-d}\int_{\rr^d} p(t_\ve,x_\ve-z_\ve) g_\ve(z_\ve)dz \\
&\qquad + \ve^{-(d+2)}\int_0^{\ve^{2}t_\ve} \int_{\rr^d} p(u_\ve, x_\ve-z_\ve) h_\ve(t_\ve-u_\ve, z_\ve)dz du\\
&= \int_{\rr^d} p^{(\ve)}(\ve^2t_\ve, \ve(x_\ve-z_\ve)) g^{(\ve)}(\ve z_\ve)dz \\
&\qquad + \int_0^{\ve^{2}t_\ve} \int_{\rr^d} p^{(\ve)}(\ve^2 u_\ve, \ve(x_\ve-z_\ve)) h^{(\ve)}(\ve^2(t_\ve-u_\ve), \ve z_\ve)dz du.
\end{align*}
This completes the proof of the lemma.
\end{proof}
Our goal is to take $\ve$ to zero in the integral equation from Lemma \ref{feform}. For that, we need to apply the dominated convergence theorem. The next two lemmas prepare the ground for that.
\begin{lmm}\label{domlmm}
Take any $\delta>0$. There are constants $C_1$ and $C_2$, depending only on $d$ and $\delta$, such that for any $t\in \rr_{>\delta}$, $x,y\in \rr^d$, and $\ve \in (0,\sqrt{\delta})$,
\[
p^{(\ve)}(\ve^2t_\ve,\ve(x_\ve-y_\ve)) \le C_1e^{-C_2|x-y|^2/t}.
\]
\end{lmm}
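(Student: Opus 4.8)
The plan is to reduce the claimed Gaussian bound on the rescaled transition probability back to the crude bound on the \emph{discrete} transition probability $p(t,x)$ furnished by Lemma~\ref{plmm}, namely $p(t,x)\le C_1 e^{-C_2|x|^2/t}$, and then to track how the various roundings $t_\ve=[\ve^{-2}t]$ and $x_\ve=[\ve^{-1}x]$ distort the exponent. By definition, $p^{(\ve)}(\ve^2 t_\ve, \ve(x_\ve-y_\ve)) = \ve^{-d} p(t_\ve, x_\ve-y_\ve)$, so by Lemma~\ref{plmm} we immediately get
\begin{align*}
p^{(\ve)}(\ve^2 t_\ve, \ve(x_\ve-y_\ve)) \le C_1 \ve^{-d} e^{-C_2|x_\ve-y_\ve|^2/t_\ve}.
\end{align*}
The task is therefore to (i) absorb the prefactor $\ve^{-d}$ and (ii) replace $|x_\ve-y_\ve|^2/t_\ve$ by a constant multiple of $|x-y|^2/t$, at the cost of adjusting $C_1,C_2$.

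For step (ii): since $\ve<\sqrt\delta$ and $t>\delta$, we have $t_\ve = [\ve^{-2}t]\ge \ve^{-2}t - 1 \ge \tfrac12 \ve^{-2} t$, hence $1/t_\ve \le 2\ve^2/t$; also $t_\ve\le \ve^{-2}t$, which is the direction needed to bound the prefactor. For the numerator, $x_\ve-y_\ve = [\ve^{-1}x]-[\ve^{-1}y]$ differs from $\ve^{-1}(x-y)$ by a vector of sup-norm at most $1$, so $|x_\ve-y_\ve|\ge \ve^{-1}|x-y| - \sqrt d$. Combining, when $|x-y|\ge 2\sqrt d\,\ve$ we get $|x_\ve-y_\ve|\ge \tfrac12 \ve^{-1}|x-y|$, so $|x_\ve-y_\ve|^2/t_\ve \ge \tfrac14 \ve^{-2}|x-y|^2 \cdot \tfrac{\ve^2}{t} = |x-y|^2/(4t)$ — wait, one must be careful: we need a \emph{lower} bound on $|x_\ve-y_\ve|^2/t_\ve$ to control the exponential from above, and that is exactly what this gives, using $t_\ve\le \ve^{-2}t$. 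So on this range, $e^{-C_2|x_\ve-y_\ve|^2/t_\ve}\le e^{-(C_2/4)|x-y|^2/t}$.

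For step (i) and the complementary range: absorbing $\ve^{-d}$ is the genuinely delicate point, and the natural device is to split off a small fraction of the exponent. Write $e^{-C_2|x_\ve-y_\ve|^2/t_\ve} = e^{-(C_2/2)|x_\ve-y_\ve|^2/t_\ve}\cdot e^{-(C_2/2)|x_\ve-y_\ve|^2/t_\ve}$ and bound the first factor: since $|x_\ve-y_\ve|^2/t_\ve \le \ve^{-2}|x-y|^2\cdot$ (a constant) is \emph{not} what we want here — instead, when $|x-y|\ge \ve |\log \ve|$, say, the first factor is $\le e^{-c(\log\ve)^2}$, which kills $\ve^{-d}$; and when $|x-y|<\ve|\log\ve|$ we have $|x-y|^2/t < \delta^{-1}\ve^2(\log\ve)^2\to 0$, so $e^{-C_2|x-y|^2/t}$ is bounded below by a positive constant and we only need $\ve^{-d}p(t_\ve,x_\ve-y_\ve)\le \ve^{-d}$, which is unbounded — so this crude split fails. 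The correct fix is that $p(t,\cdot)$ also satisfies $p(t,z)\le C t^{-d/2}$ (a uniform bound, e.g.\ from the local CLT, or simply from $p(t,z)\le p_3(t,z)+C_1 e^{-C_2(\log t)^2} \le (4\pi\beta t)^{-d/2}+\cdots$ via Lemmas~\ref{pp1lmm}, \ref{p1p2lmm}, \ref{p2p3lmm}): then $\ve^{-d}p(t_\ve,\cdot)\le C\ve^{-d}t_\ve^{-d/2}\le C\ve^{-d}(\tfrac12\ve^{-2}t)^{-d/2} = C(2/t)^{d/2}\le C'\delta^{-d/2}$, which \emph{is} bounded. So the clean argument is: always $p^{(\ve)}(\ve^2t_\ve,\ve(x_\ve-y_\ve))\le C\delta^{-d/2}$, and separately $\le C_1\ve^{-d}e^{-C_2|x_\ve-y_\ve|^2/t_\ve}$; geometric-mean these two bounds (take the first to the power $1/2$ and the second to the power $1/2$), use the exponent-comparison of step (ii) on the surviving half of the exponent, and absorb $\ve^{-d/2}$ together with the leftover $e^{-(C_2/2)\cdot(\text{small})}$ into the constant after noting the exponent bound $|x_\ve-y_\ve|^2/t_\ve\ge |x-y|^2/(Ct) - C'$ holds on \emph{all} of $\rr^d$ (the $-C'$ handling the boundary layer $|x-y|\lesssim\ve$). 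I expect this exponent-comparison-plus-geometric-mean bookkeeping to be the main obstacle; once the two a priori bounds ($\ve^{-d}$-Gaussian and uniform $\delta^{-d/2}$) are in hand, the rest is routine manipulation of the rounding errors.
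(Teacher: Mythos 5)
Your setup (reduce to the discrete walk, control the roundings via $\tfrac12\ve^{-2}t\le t_\ve\le\ve^{-2}t$ and $|x_\ve-y_\ve|\ge\ve^{-1}|x-y|-C$) matches the paper's, and your instinct to interpolate two a priori bounds is also the right general idea. But the final combination step fails, and not just in its bookkeeping: the two bounds you interpolate — the uniform bound $p^{(\ve)}\le C\delta^{-d/2}$ and the crude Gaussian $p^{(\ve)}\le C_1\ve^{-d}e^{-C_2|x_\ve-y_\ve|^2/t_\ve}$ — are jointly insufficient. Any combination of two upper bounds that you can legitimately use is at least their minimum, and already the minimum is not dominated by $C_1'e^{-C_2'|x-y|^2/t}$ uniformly in $\ve$: take $t=1$ (with $\delta<1$) and $|x-y|^2=(d/C_2)\log(1/\ve)$, so that $\ve^{-d}e^{-C_2|x-y|^2/t}=1$; then $\min\{C\delta^{-d/2},\,\ve^{-d}e^{-C_2|x-y|^2/t}\}$ stays bounded below by a positive constant as $\ve\to0$, while the target $e^{-C_2'|x-y|^2/t}=\ve^{dC_2'/C_2}\to0$. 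Concretely, in your write-up the $\tfrac12$--$\tfrac12$ geometric mean leaves a factor $\ve^{-d/2}$ multiplying $e^{-(C_2/2)|x_\ve-y_\ve|^2/t_\ve}$, and the latter can equal $1$ (e.g.\ $x=y$), so $\ve^{-d/2}$ cannot be ``absorbed into the constant.''

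The missing ingredient is to keep the full local CLT main term rather than discarding its Gaussian factor. The paper writes $\ve^{-d}p(t_\ve,z)\le \ve^{-d}p_3(t_\ve,z)+C_1\ve^{-d}t_\ve^{-(d+2)/2}(\log t_\ve)^{C_2}$ (Lemmas \ref{pp1lmm}, \ref{p1p2lmm}, \ref{p2p3lmm}). The main term $\ve^{-d}p_3(t_\ve,x_\ve-y_\ve)=\ve^{-d}(4\pi\beta t_\ve)^{-d/2}e^{-|x_\ve-y_\ve|^2/4\beta t_\ve}\le C_1t^{-d/2}e^{-C_2|x-y|^2/t}\le C_1\delta^{-d/2}e^{-C_2|x-y|^2/t}$ carries the bounded prefactor \emph{and} the Gaussian decay simultaneously, so it needs no interpolation at all. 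Only the error term, which after rescaling is $C_1\ve^{2}t^{-(d+2)/2}(\log(\ve^{-2}t))^{C_2}$ — note the spare factor $\ve^{2}$ coming from the extra $t_\ve^{-1}$ in the local CLT error — is interpolated against the crude bound $\ve^{-d}e^{-C_3|x-y|^2/t}$ from Lemma \ref{plmm}, using $\min\{a,b\}\le a^{(d+1)/(d+2)}b^{1/(d+2)}$; the weights are chosen precisely so that $\ve^{2(d+1)/(d+2)}\cdot\ve^{-d/(d+2)}=\ve^{1}$, killing the negative power of $\ve$ while keeping a Gaussian factor $e^{-C_4|x-y|^2/t}$. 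Without that spare positive power of $\ve$ in one of the two interpolands, no choice of weights can remove $\ve^{-d}$ and retain the decay, which is exactly where your argument breaks.
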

\begin{proof}
In this proof, $C_1, C_2,\ldots$ will denote constants that may depend only on $d$ and $\delta$. Note that $t_\ve \ge 1$. So, by Lemmas \ref{pp1lmm}, \ref{p2p3lmm} and \ref{p1p2lmm},
\begin{align}
&p^{(\ve)}(\ve^2t_\ve,\ve(x_\ve-y_\ve)) = \ve^{-d} p(t_\ve, x_\ve - y_\ve)\notag \\
&\le \ve^{-d} p_3(t_\ve, x_\ve - y_\ve) + C_1 \ve^{-d} t_\ve^{-(d+2)/2} (\log t_\ve)^{C_2}\notag \\
&= \ve^{-d}(4\pi\beta t_\ve)^{-d/2} e^{-|x_\ve-y_\ve|^2/4\beta t_\ve}+C_1 \ve^{-d} t_\ve^{-(d+2)/2} (\log t_\ve)^{C_2}. \label{pzero}
%&\le p_3^{(\ve)}(t,x) + C_1 \ve^2 t^{-(d+2)/2}(\log (\ve^{-2} t))^{C_2}. 
\end{align}
Now note that since $t> \delta > \ve^2$, we have 
\begin{align}\label{tineq}
\frac{1}{2}\ve^{-2}t \le t_\ve \le \ve^{-2} t.
\end{align}
Also, since $|\ve^{-1}x - x_\ve|\le \sqrt{d}$ and $|\ve^{-1}y-y_\ve|\le \sqrt{d}$, we have
\begin{align}
\ve^{-2}|x - y|^2 &\le 3(|\ve^{-1}x - x_\ve|^2 +  |x_\ve - y_\ve|^2 + |\ve^{-1}y-y_\ve|^2)\notag \\
&\le C_1 + C_2|x_\ve - y_\ve|^2. \label{xyineq}
\end{align}
Using \eqref{tineq} and \eqref{xyineq} in \eqref{pzero}, we get
\begin{align*}
&p^{(\ve)}(\ve^2t_\ve,\ve(x_\ve-y_\ve)) \\
&\le  C_1 t^{-d/2} e^{-C_2|x-y|^2/t} + C_1\ve^2 t^{-(d+2)/2}(\log (\ve^{-2} t))^{C_2}. %\label{ptrouble}
\end{align*}
Similarly, by \eqref{tineq}, \eqref{xyineq}, and Lemma \ref{plmm},
\[
p^{(\ve)}(\ve^2t_\ve,\ve (x_\ve-y_\ve)) \le C_1 \ve^{-d}e^{-C_2|x-y|^2/t}. 
\]
Combining the last two displays, we get
\begin{align}
&p^{(\ve)}(\ve^2t_\ve,\ve (x_\ve-y_\ve))  \notag\\
&\le C_1 t^{-d/2} e^{-C_2|x-y|^2/t}\notag\\
&\qquad + C_1 \min\{\ve^2 t^{-(d+2)/2}(\log (\ve^{-2} t))^{C_2}, \ve^{-d}e^{-C_3|x-y|^2/t}\}. \label{minbd}
\end{align}
Using the inequality $\min\{a,b\}\le a^{(d+1)/(d+2)} b^{1/(d+2)}$, we get
\begin{align}
 &\min\{\ve^2 t^{-(d+2)/2}(\log (\ve^{-2}t))^{C_2}, \ve^{-d}e^{-C_3|x-y|^2/t}\} \notag \\
 &\le\ve  t^{-(d+1)/2} e^{-C_4|x-y|^2/t} (\log (\ve^{-2} t))^{C_5}\le C_6 e^{-C_7|x-y|^2/t},\label{minjust}
\end{align}
where the last inequality holds because 
\[
\ve t^{-(d+1)/2} (\log (\ve^{-2} t))^{C_5} \le \frac{C_8(\log (\ve^{-2} t))^{C_5}}{(\ve^{-2} t)^{1/2}} \le C_9. 
\]
Plugging the bound from \eqref{minjust} into the right side of \eqref{minbd} completes the proof of the lemma.
\end{proof}

\begin{lmm}\label{remlmm}
For any $t\in \rr_{\ge 0}$, $x\in \rr^d$, and $\ve \in (0,1)$,
\[
 \int_{\rr^d} p^{(\ve)}(\ve^2t_\ve, \ve(x_\ve-y_\ve))dy = 1.
\]
\end{lmm}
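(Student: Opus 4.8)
The plan is to reduce this to the statement that $p(s,\cdot)$ is a probability distribution on $\zz^d$ for every $s\in\zz_{\ge 0}$, together with the change-of-variables identity \eqref{wxeq} that was already established in the proof of Lemma~\ref{feform}. First I would observe that since $\ve\in(0,1)$ we have $t_\ve=[\ve^{-2}t]\ge 0$, so $p(t_\ve,\cdot)$ is well-defined; it is the time-$t_\ve$ distribution of the random walk introduced in Subsection~\ref{rwsec}, hence $\sum_{z\in\zz^d}p(t_\ve,z)=1$.

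Next I would unfold the definition $p^{(\ve)}(t,x):=\ve^{-d}p(t_\ve,x_\ve)$. Here the argument of $p^{(\ve)}$ is $(\ve^2 t_\ve,\ve(x_\ve-y_\ve))$, and one checks that $(\ve^2 t_\ve)_\ve=[\ve^{-2}\cdot\ve^2 t_\ve]=t_\ve$ since $t_\ve$ is already an integer, and similarly $(\ve(x_\ve-y_\ve))_\ve=[\ve^{-1}\cdot\ve(x_\ve-y_\ve)]=x_\ve-y_\ve$ since $x_\ve-y_\ve\in\zz^d$. Therefore $p^{(\ve)}(\ve^2 t_\ve,\ve(x_\ve-y_\ve))=\ve^{-d}p(t_\ve,x_\ve-y_\ve)$, where, crucially, $y_\ve=[\ve^{-1}y]$ is a step function of $y$ that is constant on cubes of side $\ve$ and takes every value in $\zz^d$.

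Now I would apply the substitution $z=\ve y$ to turn the $y$-integral into a $z$-integral with Jacobian $\ve^{-d}$, then use $z_\ve=[\ve^{-1}z]$ and the identity \eqref{wxeq} (applied to the absolutely summable function $w(z)=p(t_\ve,x_\ve-z)$, which is absolutely summable because it is a probability mass function):
\begin{align*}
\int_{\rr^d} p^{(\ve)}(\ve^2t_\ve,\ve(x_\ve-y_\ve))\,dy &= \int_{\rr^d}\ve^{-d}p(t_\ve,x_\ve-[\ve^{-1}y])\,dy\\
&= \int_{\rr^d} p(t_\ve, x_\ve-[z])\,dz = \sum_{z\in\zz^d} p(t_\ve, x_\ve-z) = 1.
\end{align*}
This is essentially immediate; the only subtlety is making sure the rescaling conventions compose correctly, i.e.\ that $(\ve^2 t_\ve)_\ve=t_\ve$ and $(\ve w)_\ve=w$ for integer $w$, so that no spurious floor-function error creeps in. There is no real obstacle here — the lemma is a bookkeeping statement that $p^{(\ve)}(\ve^2 t_\ve,\cdot)$ is, after the piecewise-constant extension and the $\ve^{-d}$ normalization, a probability density on $\rr^d$, and it will be used later only to keep track of the constant term when passing to the limit in the Duhamel formula.
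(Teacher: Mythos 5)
Your proof is correct and follows essentially the same route as the paper: unfold the definition $p^{(\ve)}(\ve^2 t_\ve,\ve(x_\ve-y_\ve))=\ve^{-d}p(t_\ve,x_\ve-y_\ve)$, substitute $z=\ve y$, and apply the identity \eqref{wxeq} to reduce the integral to $\sum_{z\in\zz^d}p(t_\ve,x_\ve-z)=1$. The extra checks that $(\ve^2 t_\ve)_\ve=t_\ve$ and $(\ve w)_\ve=w$ for integer $w$ are correct and are left implicit in the paper.
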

\begin{proof}
Note that by \eqref{wxeq},
\begin{align*}
 \int_{\rr^d} p^{(\ve)}(\ve^2t_\ve, \ve(x_\ve-y_\ve)) dy &= \ve^{-d}\int_{\rr^d} p(t_\ve, x_\ve - y_\ve) dy\\
 &= \int_{\rr^d} p(t_\ve, x_\ve - [z]) dz\\
 &= \sum_{z\in \zz^d} p(t_\ve, x_\ve - z) = 1,
\end{align*}
where the last equality holds because $p(t_\ve,\cdot)$ is a probability mass function.
\end{proof}
Finally, we are ready to state and prove the main result of this subsection. 
\begin{prop}\label{limitform}
For any $(t,x)\in \rr_{>0}\times \rr^d$,
\begin{align*}
f(t,x) &= \int_{\rr^d} K(t, x -y) g(y)dy \\
&\qquad +\gamma \int_0^{t} \int_{\rr^d} K(s, x-y) |\nabla f|^2(t-s, y)dy ds.
\end{align*}
\end{prop}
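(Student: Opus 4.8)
The plan is to start from the integral identity for $f^{(\ve)}$ in Lemma~\ref{feform}, set $\ve=\ve_n$, and pass to the limit $n\to\infty$ term by term, using that $f^{(\ve_n)}(t,x)\to f(t,x)$ by the choice of subsequence. Fix $(t,x)\in\rr_{>0}\times\rr^d$ throughout.

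For the first term, for each fixed $y$ one has $\ve_n^2 t_{\ve_n}\to t$ and $\ve_n(x_{\ve_n}-y_{\ve_n})\to x-y$, so Lemma~\ref{plimlmm} gives $p^{(\ve_n)}(\ve_n^2 t_{\ve_n},\ve_n(x_{\ve_n}-y_{\ve_n}))\to K(t,x-y)$, while $g^{(\ve_n)}(\ve_n y_{\ve_n})=g(\ve_n y_{\ve_n})\to g(y)$ by continuity of $g$. By Lemma~\ref{domlmm} with $\delta=t/2$, $p^{(\ve_n)}(\ve_n^2 t_{\ve_n},\ve_n(x_{\ve_n}-y_{\ve_n}))\le C_1 e^{-C_2|x-y|^2/t}$ for all large $n$, and the Lipschitz property of $g$ gives $|g^{(\ve_n)}(\ve_n y_{\ve_n})|\le C(1+|y|)$; the product is integrable over $\rr^d$, so dominated convergence yields $\int_{\rr^d} p^{(\ve_n)} g^{(\ve_n)}\,dy\to\int_{\rr^d} K(t,x-y)g(y)\,dy$.

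The second term is the delicate one, because the Gaussian bound of Lemma~\ref{domlmm} degenerates as its time argument tends to $0$ and so cannot dominate the integrand uniformly down to $s=0$; this is the main obstacle. The remedy is to split the $s$-integral at a small $\delta\in(0,t)$. For $s\in(0,\delta)$, Lemma~\ref{remlmm} (the spatial mass of $p^{(\ve_n)}$ equals $1$) together with the uniform bound $|h^{(\ve_n)}|\le C$ from Lemma~\ref{lip2lmm} bounds this contribution by $C\delta$, uniformly in $n$; likewise $|\gamma|\int_0^\delta\!\int_{\rr^d} K(s,x-y)|\nabla f|^2(t-s,y)\,dy\,ds\le C\delta$ since $|\nabla f|\le L\sqrt d$ (Corollary~\ref{diffprop}) and $\int K(s,\cdot)\,dy=1$. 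For $s\in[\delta,\ve_n^2 t_{\ve_n}]\subseteq[\delta,t]$ and fixed $(s,y)$ with $\delta\le s<t$, we have $\ve_n^2 s_{\ve_n}\to s>0$ and $(\ve_n^2(t_{\ve_n}-s_{\ve_n}),\ve_n y_{\ve_n})\to(t-s,y)\in\rr_{>0}\times\rr^d$, so Lemma~\ref{plimlmm} and Proposition~\ref{hepprop} give the pointwise limit $p^{(\ve_n)}h^{(\ve_n)}\to\gamma K(s,x-y)|\nabla f|^2(t-s,y)$, while Lemma~\ref{domlmm} (with the fixed $\delta$) and $|h^{(\ve_n)}|\le C$ bound the integrand by $C e^{-C_2|x-y|^2/t}$ on $[\delta,t]\times\rr^d$, which is integrable there. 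Dominated convergence then gives $\int_\delta^{\ve_n^2 t_{\ve_n}}\!\int_{\rr^d} p^{(\ve_n)} h^{(\ve_n)}\,dy\,ds\to\gamma\int_\delta^t\!\int_{\rr^d} K(s,x-y)|\nabla f|^2(t-s,y)\,dy\,ds$, the mismatch between $\ve_n^2 t_{\ve_n}$ and $t$ contributing only $O(\ve_n^2)$.

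Assembling these in the identity of Lemma~\ref{feform} and letting $n\to\infty$ yields, for every $\delta\in(0,t)$,
\[
\Bigl| f(t,x) - \int_{\rr^d} K(t,x-y)g(y)\,dy - \gamma\int_\delta^t\!\int_{\rr^d} K(s,x-y)|\nabla f|^2(t-s,y)\,dy\,ds\Bigr| \le C\delta.
\]
Letting $\delta\to0$, and using dominated convergence in $s$ (with dominating function $(L\sqrt d)^2\in L^1([0,t])$) to replace $\int_\delta^t$ by $\int_0^t$, gives the claimed identity. The bulk of the work is the split at $\delta$, which isolates the singularity of the heat kernel at time zero.
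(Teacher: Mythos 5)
Your proposal is correct and follows essentially the same route as the paper: pass to the limit in the identity of Lemma~\ref{feform} via dominated convergence (Lemmas~\ref{plimlmm} and \ref{domlmm}, Proposition~\ref{hepprop}), after splitting the time integral at a small $\delta$ and controlling the $[0,\delta]$ piece on both sides by $C\delta$ using Lemma~\ref{remlmm}, the uniform bound on $h^{(\ve)}$ from Lemma~\ref{lip2lmm}, and the boundedness of $|\nabla f|$ from Corollary~\ref{diffprop}. The paper's proof is organized identically, down to the $O(\ve^2)$ accounting for the mismatch between $\ve^2 t_\ve$ and $t$.
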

\begin{proof}
%Throughout this proof, constants denoted by  $C,C_1,C_2,\ldots$ are allowed to depend on $\delta$, in addition to $\phi$, $L$, and $d$. 
Fix $t$ and $x$. Take any $\delta\in (0,t/2)$. Define
\begin{align*}
&f^{(\ve)}_\delta (t,x) \\
&:= \int_{\rr^d} p^{(\ve)}(\ve^2t_\ve, \ve(x_\ve-y_\ve)) g^{(\ve)}(\ve y_\ve)dy \\
&\qquad + \int_\delta^t\int_{\rr^d} p^{(\ve)}(\ve^2 s_\ve, \ve(x_\ve-y_\ve)) h^{(\ve)}(\ve^2(t_\ve-s_\ve), \ve y_\ve)dy ds.
\end{align*}
Then by Lemma \ref{feform} and the facts that $\ve^2t_\ve \le t$, and $s_\ve \le t_\ve$ whenever $s\le t$, we get
\begin{align*}
&|f^{(\ve)}(t,x)-f^{(\ve)}_\delta(t,x) | \\
&\le \biggl|\int_0^\delta \int_{\rr^d} p^{(\ve)}(\ve^2 s_\ve, \ve(x_\ve-y_\ve)) h^{(\ve)}(\ve^2(t_\ve-s_\ve), \ve y_\ve)dy ds\biggr|\\
&\qquad + \biggl|\int_{\ve^2t_\ve}^t\int_{\rr^d} p^{(\ve)}(\ve^2 s_\ve, \ve(x_\ve-y_\ve)) h^{(\ve)}(\ve^2(t_\ve-s_\ve), \ve y_\ve)dy ds\biggr|.
\end{align*}
By Lemma \ref{lip2lmm}, $|h^{(\ve)}|$ is uniformly bounded by $C$. Therefore using the identity from  Lemma \ref{remlmm} and the above bound, we get
\begin{align}\label{ffdelta}
|f^{(\ve)}(t,x)-f^{(\ve)}_\delta(t,x) |  &\le C\delta + C\ve^2. 
\end{align}
Now, by Lemma \ref{plimlmm}, 
\begin{align*}
\lim_{\ve \to 0} p^{(\ve)}(\ve^2 s_\ve, \ve(x_\ve-y_\ve)) = K(s,x-y)
\end{align*}
for any $s\in \rr_{>0}$ and $x,y\in  \rr^d$. By the continuity of $g$, we have $g^{(\ve)}(\ve y_\ve) \to g(y)$ as $\ve \to 0$. Moreover, by the Lipschitz property of $g$, we have that $|g^{(\ve)}(\ve y_\ve)| \le C_1+C_2|y|$ where $C_1$ and $C_2$ do not depend on $\ve$. Therefore by Lemma \ref{domlmm} and the dominated convergence theorem, we get
\begin{align}\label{ffdelta1}
\lim_{\ve \to 0} \int_{\rr^d} p^{(\ve)}(\ve^2t_\ve, \ve(x_\ve-y_\ve)) g^{(\ve)}(\ve y_\ve)dy = \int_{\rr^d} K(t, x-y) g(y)dy.
\end{align}
Next, by Proposition \ref{hepprop}, we have that for any $t>s>0$ and $y\in \rr^d$,
\begin{align*}
\lim_{n\to\infty} h^{(\ve_n)}(\ve_n^2(t_{\ve_n}-s_{\ve_n}), \ve_n y_{\ve_n}) = \gamma|\nabla f|^2(t-s,y). 
\end{align*}
Moreover, by Lemma \ref{lip2lmm}, $h^{(\ve)}$ is uniformly bounded by a constant that does not depend on $\ve$. Therefore again by Lemma \ref{domlmm} and the dominated convergence theorem, 
\begin{align}
&\lim_{n\to\infty} \int_\delta^t\int_{\rr^d} p^{(\ve_n)}(\ve_n^2 s_{\ve_n}, \ve_n(x_{\ve_n}-y_{\ve_n})) h^{(\ve_n)}(\ve_n^2(t_{\ve_n}-s_{\ve_n}), \ve_n y_{\ve_n})dy ds\notag \\
&=  \gamma\int_\delta^t\int_{\rr^d} K(s, x-y) |\nabla f|^2(t-s,y)dy ds.\label{ffdelta2}
\end{align}
Combining \eqref{ffdelta}, \eqref{ffdelta1} and \eqref{ffdelta2}, we get
\begin{align}
&\biggl|f(t,x)- \int_{\rr^d} K(t, x -y) g(y)dy \notag \\
&\qquad- \gamma\int_\delta^{t} \int_{\rr^d} K(s, x-y) |\nabla f|^2(t-s, y)dy ds\biggr| \le C\delta. \label{duhamel0}
\end{align} 
Next, note that by Corollary \ref{diffprop}, $|\nabla f|^2$ is uniformly bounded by a constant. Also, for any $s$ and $x$,
\[
\int_{\rr^d} K(s, x-y)dy = 1. 
\]
This gives 
\[
\biggl|\int_0^\delta \int_{\rr^d} K(s, x-y) |\nabla f|^2(t-s, y)dy ds\biggr| \le C\delta. 
\]
Combining this with \eqref{duhamel0} gives
\begin{align*}
&\biggl|f(t,x)- \int_{\rr^d} K(t, x -y) g(y)dy \\
&\qquad- \gamma \int_0^{t} \int_{\rr^d} K(s, x-y) |\nabla f|^2(t-s, y)dy ds\biggr| \le C\delta. \end{align*} 
Since $\delta$ is arbitrary, this completes the proof.
\end{proof}

\subsection{Uniqueness of subsequential limit}
We will now show that the integral equation displayed in Proposition \ref{limitform} has at most one solution among all possible subsequential limits of $f^{(\ve)}$. 
\begin{prop}\label{uniqueprop}
There can be at most one function $f:\rr_{>0}\times \rr^d\to \rr$ that satisfies all of the following conditions: 
\begin{enumerate}
%\item $f$ is continuous.
%\item $f(0,\cdot) = g(\cdot)$.
\item For each $t>0$, $f$ is differentiable in $x$.
\item The gradient of $f$ with respect to $x$, denoted by $\nabla f$, is uniformly bounded over $\rr_{>0}\times \rr^d$.
\item For every $(t,x)\in \rr_{>0}\times \rr^d$, 
\begin{align*}
f(t,x) &= \int_{\rr^d} K(t, x -y) g(y)dy \\
&\qquad +\gamma\int_0^{t} \int_{\rr^d} K(s, x-y) |\nabla f|^2(t-s, y)dy ds.
\end{align*}
\end{enumerate}
\end{prop}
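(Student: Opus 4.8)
The plan is a Gronwall-type argument applied to the spatial gradient of the difference of two solutions. Suppose $f_1$ and $f_2$ both satisfy (1)--(3), let $M$ be a common uniform bound for $|\nabla f_1|$ and $|\nabla f_2|$, and set $w := f_1-f_2$ and $\psi := |\nabla f_1|^2 - |\nabla f_2|^2$. Subtracting the two instances of the integral equation in (3), the initial-data term cancels, leaving
\[
w(t,x) = \gamma\int_0^t\int_{\rr^d} K(s,x-y)\,\psi(t-s,y)\,dy\,ds .
\]
Since $\psi = (\nabla f_1+\nabla f_2)\cdot(\nabla f_1-\nabla f_2)$, the bound on the gradients gives the pointwise estimate $|\psi(t,x)|\le 2M\,|\nabla w(t,x)|$; in particular $\Phi(t):=\sup_{x\in\rr^d}|\nabla w(t,x)|$ is finite for every $t>0$, being at most $2M$.

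The next step is to differentiate the displayed formula in $x$. For each fixed $s>0$, the convolution $x\mapsto\int_{\rr^d} K(s,x-y)\,\psi(t-s,y)\,dy$ is $C^1$ with gradient $\int_{\rr^d}\nabla_x K(s,x-y)\,\psi(t-s,y)\,dy$, because $\nabla_x K(s,\cdot)\in L^1(\rr^d)$ with $\int_{\rr^d}|\nabla_z K(s,z)|\,dz = C\,s^{-1/2}$ and $\psi$ is bounded. Since $s^{-1/2}$ is integrable on $(0,t)$, one may also differentiate under the outer integral (dominated convergence for difference quotients), obtaining
\[
\nabla w(t,x) = \gamma\int_0^t\int_{\rr^d}\nabla_x K(s,x-y)\,\psi(t-s,y)\,dy\,ds .
\]
Taking absolute values and inserting $|\psi(t-s,y)|\le 2M\,|\nabla w(t-s,y)|\le 2M\,\Phi(t-s)$ yields the singular integral inequality
\[
\Phi(t)\ \le\ C_3\int_0^t s^{-1/2}\,\Phi(t-s)\,ds\ =\ C_3\int_0^t (t-s)^{-1/2}\,\Phi(s)\,ds ,
\]
with $C_3$ depending only on $\gamma$, $\beta$, $d$, $M$.

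To close, I would iterate this inequality once: substituting the bound for $\Phi(s)$ inside the integral and using the Beta-integral identity $\int_u^t(t-s)^{-1/2}(s-u)^{-1/2}\,ds=\pi$ gives $\Phi(t)\le \pi C_3^{2}\int_0^t\Phi(u)\,du$. Since $\Phi(t)\le C_3\cdot 2M\cdot 2\sqrt{t}\to 0$ as $t\downarrow 0$, the function $G(t):=\int_0^t\Phi$ satisfies $G(0^+)=0$ and $G'\le\pi C_3^{2}G$, hence $G\equiv 0$ and so $\Phi\equiv 0$. Then $\nabla f_1\equiv\nabla f_2$, so $\psi\equiv 0$, and plugging this back into the displayed formula for $w$ gives $w\equiv 0$, i.e.\ $f_1=f_2$.

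The only point demanding care is the differentiation of the Duhamel formula under the integral sign near $s=0$: one must verify that the $L^1$-mass $Cs^{-1/2}$ of $\nabla_x K(s,\cdot)$ is integrable in $s$ near $0$ (it is), and that the linear growth of $g$ causes no trouble in the initial-data term of (3) (it does not, since $\nabla_x K(t,\cdot)$ decays like a Gaussian and $t>0$ is fixed). The remaining ingredients --- the Beta-function computation and the elementary Gronwall step --- are routine.
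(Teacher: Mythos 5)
Your argument is correct and follows essentially the same route as the paper: subtract the two Duhamel formulas, differentiate under the integral sign using $\int_{\rr^d}|\nabla_x K(s,z)|\,dz\le Cs^{-1/2}$, bound $|\,|\nabla f_1|^2-|\nabla f_2|^2\,|$ by a constant times $|\nabla(f_1-f_2)|$ via the uniform gradient bound, and close a weakly singular Gr\"onwall inequality for $\sup_x|\nabla(f_1-f_2)(t,x)|$. The only difference is in the final step of resolving the $(t-s)^{-1/2}$ singularity: you iterate the inequality once and use the Beta integral $\int_u^t(t-s)^{-1/2}(s-u)^{-1/2}\,ds=\pi$, whereas the paper applies H\"older's inequality to obtain a standard Gr\"onwall bound for the cube of the sup-norm; both are routine and valid.
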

\begin{proof}
Let $f_1$ and $f_2$ be two functions satisfying all of the given conditions. Let $u := f_1-f_2$ and $v := |\nabla f_1|^2 - |\nabla f_2|^2$, so that
\begin{align*}
u(t,x) &= \gamma \int_0^t \int_{\rr^d} K(s,x-y) v(t-s,y) dyds. 
\end{align*}
The first step is to show that when $t>0$, the right side of the above equation can be differentiated with respect to $x$ and the derivative can be moved inside the integral to give
\begin{align}\label{udiff}
\nabla u(t,x) &= \gamma \int_0^t \int_{\rr^d} \nabla K(s,x-y) v(t-s,y) dyds.
\end{align}
To prove this, define for each $\delta\in (0,t)$,
\begin{align*}
u_\delta(t,x) &:= \gamma \int_\delta^t \int_{\rr^d} K(s,x-y) v(t-s,y) dyds. 
\end{align*}
Since $v$ is a uniformly bounded function and $s$ is uniformly bounded away from zero in the above integral, it is not difficult to show by the dominated convergence theorem that $u_\delta$ is differentiable and 
\[
\nabla u_\delta(t,x) = \gamma \int_\delta^t \int_{\rr^d} \nabla K(s,x-y) v(t-s,y) dyds.
\]
Again, since $v$ is uniformly bounded, this shows that for any $\delta>\delta'>0$,
\begin{align*}
|\nabla u_\delta(t,x) - \nabla u_{\delta'}(t,x)| &\le C\int_{\delta'}^\delta \int_{\rr^d}|\nabla K(s,x-y)| dyds\\
&\le C\int_{\delta'}^\delta \int_{\rr^d}s^{-1}|x-y|K(s,x-y) dyds\le C\sqrt{\delta}. 
\end{align*}
Thus, $\nabla u_\delta(t,x)$ converges uniformly as $\delta\to0$. By a slight modification of the above display, it is easy to see that the limit equals the right side of \eqref{udiff}. Moreover, by the boundedness of $v$, $u_\delta(t,x)\to u(t,x)$ as $\delta\to 0$. From these observations and the standard condition for convergence of derivatives, we get \eqref{udiff}. 

Next, for each $s$, define 
\[
a(s) := \sup_{y\in \rr^d} |\nabla u(s,y)|, \ \  b(s) := \sup_{y\in \rr^d} |v(s,y)|.
\]
Note that these quantities are finite since $\nabla f_1$ and $\nabla f_2$ are uniformly bounded. Since $\nabla K(s,x-y) = (2\beta s)^{-1} K(s,x-y)  (x-y)$, the identity \eqref{udiff} gives us
\begin{align*}
a(t) &\le C\int_0^t \int_{\rr^d} s^{-1} |x-y|K(s,x-y) b(t-s) dy ds\\
&\le C\int_0^t s^{-1/2} b(t-s) ds = C \int_0^t (t-s)^{-1/2} b(s) ds.
\end{align*}
Now, since $\nabla f_1$ and $\nabla f_2$ are uniformly bounded,
\begin{align*}
|v(s,y)| &= ||\nabla f_1(s,y)|^2 - |\nabla f_2(s,y)|^2|\\
&\le C ||\nabla f_1(s,y)| - |\nabla f_2(s,y)||\\
&\le C|\nabla f_1(s,y)-  \nabla f_2(s,y)| = C|\nabla u(s,y)|. 
\end{align*}
Thus,
\begin{align*}
a(t) &\le C\int_0^t (t-s)^{-1/2} a(s) ds. 
\end{align*}
Fix some $T>0$. Then for any $0\le t\le T$, H\"older's inequality gives
\begin{align*}
a(t) &\le C \biggl(\int_0^t (t-s)^{-3/4} ds\biggr)^{2/3} \biggl(\int_0^t a(s)^3 ds\biggr)^{1/3}\\
&\le CT^{1/6}\biggl(\int_0^t a(s)^3 ds\biggr)^{1/3}. 
\end{align*}
Thus, for every $t\le T$, 
\[
a(t)^3 \le CT^{1/2}\int_0^t a(s)^3 ds. 
\]
Moreover, the function $a$ is uniformly bounded. So by Gr\"onwall's lemma, it now follows that $a(t)=0$ for all $t\in [0,T]$. Since $T$ is arbitrary, this completes the proof. 
\end{proof}

\subsection{The Cole--Hopf solution}
We will now complete the proof of Theorem \ref{kpzunivthm} under the condition that $\alpha \ne 0$. First, suppose that $\beta$ and $\gamma$ are both nonzero.  Let $b:= \gamma/\beta$. For $t>0$ and $x\in\rr^d$, define
\begin{align*}
u(t,x) := \int_{\rr^d} K(t,x-y) e^{bg(y)} dy = \int_{\rr^d} K(t,y) e^{bg(x-y)} dy.
\end{align*}
Then $u$ is strictly positive everywhere. Using the Lipschitzness of $g$ and the dominated convergence theorem, it is not difficult to verify $u$ is infinitely differentiable in $\rr_{>0}\times \rr^d$, continuous on the closure of this domain, and solves the heat equation
\[
\partial_t u = \beta \Delta u
\]
with initial condition $u(0,\cdot) = e^{b g(\cdot)}$. Define
\[
v(t,x) := \frac{1}{b}\log u(t,x).
\]
Then $v$ is also infinitely differentiable in $\rr_{>0}\times \rr^d$, continuous in $\rr_{\ge0} \times \rr^d$, and solves the equation
\begin{align}\label{ptv}
\partial_t v= \frac{\partial_t u}{b u} = \frac{\beta \Delta u}{b u} % = -\frac{\Delta u}{\gamma u}. 
\end{align}
with initial condition $v(0,\cdot) = g(\cdot)$. Moreover, if $g$ is differentiable (in addition to being Lipschitz), then 
\begin{align*}
\nabla v(t,x) &= \frac{\int_{\rr^d} K(t,y) e^{bg(x-y)} \nabla g(x-y)dy}{\int_{\rr^d} K(t,y) e^{bg(x-y)} dy},
\end{align*}
which shows that $|\nabla v|$ is uniformly bounded by $L$ (the Lipschitz constant of $g$). If $g$ is only Lipschitz, and not differentiable, then it is not hard to show that $|\nabla v|$ is still uniformly bounded by $L$, by approximating $g$ with a sequence of differentiable Lipschitz maps. 

Now, from the definition of $v$, an easy calculation gives %using the dominated convergence theorem to moves derivatives inside the integral, we get
\begin{align}\label{deltav}
\Delta v &= \frac{\Delta u}{bu} - \frac{|\nabla u|^2}{bu^2}= \frac{\Delta u}{bu} - b|\nabla v|^2. 
\end{align}
Combining \eqref{ptv} and \eqref{ptv}, we get
\begin{align*}
\partial_t v = \beta \Delta v + \beta b |\nabla v|^2 = \beta \Delta v + \gamma|\nabla v|^2. 
\end{align*}
Next, for $(t,x)\in \rr_{>0}\times \rr^d$, define
\begin{align*}
w_1(t,x) &:= \int_{\rr^d} K(t,x-y) g(y)dy,\\
w_2(t,x) &:= \gamma \int_0^t \int_{\rr^d} K(t-s,x-y) |\nabla v|^2(s,y)dyds,\\
w(t,x) &:= v(t,x) -  w_1(t,x)-w_2(t,x). 
\end{align*}
Since $K$ is a heat kernel and $g$ is sufficiently well-behaved, it is easy to see that $w_1$ satisfies the heat equation
\begin{align*}
\partial_t w_1 =\beta \Delta w_1.
\end{align*}
Similarly, it is not hard to show using standard arguments, and the boundedness and continuity of $|\nabla v|$, that $w_2$ solves
\begin{align*}
\partial_t w_2(t,x) = \gamma |\nabla v|^2(t,x) + \beta \Delta w_2(t,x). 
\end{align*}
Thus,
\begin{align*}
\partial_t w &= \partial_t v - \partial_t w_1 - \partial_t w_2\\
&= (\beta \Delta v + \gamma|\nabla v|^2) - \beta \Delta w_1 - (\gamma|\nabla v|^2 + \beta \Delta w_2)\\
&=\beta \Delta w.  
\end{align*}
Now, $w_1$ extends to a continuous function on $\rr_{\ge0}\times \rr^d$ by defining $w_1(0,x)=g(x)$. By the boundedness of $|\nabla v|$, $w_2$ extends to a continuous function on $\rr_{\ge0}\times \rr^d$ by defining $w_2(0,x)=0$. Lastly, as observed before, $v$ also extends continuously, with $v(0,x)=x$.  Thus, $w$ extends continuously to the boundary with $w(0,x)=0$. From this, and the fact that $w$ solves the heat equation displayed above and is sufficiently well-behaved due to the Lipschitzness of $g$ and the boundedness of $|\nabla v|$, it is now a standard exercise to show that $w(t,x)=0$ for all $t$ and $x$. This is equivalent to saying that $v$ satisfies the integral equation of Proposition~\ref{uniqueprop}. We have already observed that $v$ satisfies the other two conditions of Proposition~\ref{uniqueprop}. Thus, by Proposition \ref{limitform}, every subsequential limit of $f^{(\ve)}$ must equal $v$. It is now easy to argue using Proposition \ref{subseqprop} that $f^{(\ve)}$ converges pointwise to~$v$. This completes the proof of Theorem \ref{kpzunivthm} when $\alpha$, $\beta$ and $\gamma$ are all nonzero.

Next, suppose that only $\alpha$ and $\beta$ are nonzero, but $\gamma=0$. Then define
\[
v(t,x) := \int_{\rr^d} K(t,x-y)g(y)dy. 
\]
Then immediately by Propositions \ref{limitform} and \ref{uniqueprop}, we see that any subsequential limit of $f^{(\ve)}$ must be equal to $v$, and thus conclude that $f^{(\ve)}$ converges pointwise to~$v$.

%Then by the equation $\alpha + 2d\beta=1$, we get that $\alpha=1$. 
Finally, suppose that $\beta =0$. We claim that $\gamma_1$, $\gamma_2$ and $\gamma_3$ must all be zero in this case. Fix some $b\in B$. For any $\delta\in \rr$, let $u^\delta\in \rr^A$ be the vector defined as $u^\delta_a=0$ for $a\in A\setminus\{b\}$ and $u^\delta_b = \delta$. Since $\phi(0) =0$ and $\beta=0$, Taylor expansion gives
\[
\phi(u^\delta) = \frac{1}{2}\gamma_1 \delta^2 + o(\delta^2)
\]
as $\delta \to 0$. But this shows that $\phi(u^\delta)$ is not a monotone function of $\delta$ in a small enough neighborhood of $0$ if $\gamma_1\ne 0$. This violates the monotonicity of $\phi$. Thus, $\gamma_1$ must be zero. Next, define $v^\delta \in \rr^A$ as $v^\delta_b = v^\delta_{-b} =\delta$ and $v^\delta_a=0$ for $a\in A\setminus\{b,-b\}$. Since $\phi(0)=\beta=\gamma_1=0$, Taylor expansion gives
\[
\phi(v^\delta) = \gamma_2 \delta^2 + o(\delta^2)
\]
as $\delta\to 0$. Since $v^\delta$ is monotone increasing in $\delta$, the above expression violates the monotonicity of $\phi$ when $\delta$ is in a small enough neighborhood of zero if $\gamma_2\ne 0$. Thus, $\gamma_2=0$. Finally, take any $b,b'\in B$ such that $b\ne b'$ and $b\ne -b'$. Define $w^\delta \in \rr^A$ as $w^\delta_b = w^\delta_{b'} =\delta$ and $w^\delta_a=0$ for $a\in A\setminus\{b,b'\}$. Since $\phi(0)=\beta =\gamma_1=0$, Taylor expansion gives
\[
\phi(w^\delta) = \gamma_3 \delta^2 + o(\delta^2)
\]
as $\delta \to 0$. Proceeding as before, this yields $\gamma_3=0$.

Since $\beta=\gamma_1=\gamma_2=\gamma_3=0$, we get that for any $u\in \rr^A$ with $u_0=0$, $\phi(u) = o(|u|^2)$ as $|u|\to 0$. Since Lemma \ref{lipschitzlmm} is still valid, this shows that
\begin{align*}
|f_\ve(t,x) - f_\ve(t-1,x)| &= |\phi((f(t-1,x+a) - f(t-1,x))_{a\in A})| \\
&\le \ve^2F(\ve), 
\end{align*}
where $F(\ve)$ is a function of $\ve$ that tends to zero as $\ve \to 0$. Thus, for any $(t,x)\in \rr_{>0}\times \rr^d$,
\begin{align*}
|f^{(\ve)}(t,x)-g(x)| &= |f_\ve(t_\ve,x_\ve)-f_\ve(0,x_\ve)|\\
&\le \sum_{s=1}^{t_\ve} |f_\ve(s, x_\ve)-f_\ve(s-1,x_\ve)|\\
&\le t_\ve \ve^2F(\ve). 
\end{align*}
Since $t_\ve \ve^2\to t$ and $F(\ve)\to0$ as $\ve \to 0$, this shows that $f^{(\ve)}(t,x)\to g(x)$ as $\ve \to 0$. This wraps up the proof of Theorem \ref{kpzunivthm} when $\alpha \ne0$. 

\subsection{The case $\alpha = 0$}\label{alphazero}
We will now deal with the case $\alpha=0$. The main issue is that the random walk described in Subsection \ref{rwsec} is now periodic, which necessitates several changes in various steps of the proof. 
The results of Subsections \ref{prelimsec}, \ref{rwsec} and \ref{basicsec} continue to remain valid. A number of changes need to be made to the results of Subsection \ref{approxsec}. First, Lemma \ref{psi1} needs to be modified. Note that when $\alpha =0$, we have $\beta = 1/2d$, and hence
\[
\psi(\theta) = \ee(e^{\I \theta \cdot \xi}) = \frac{1}{d}\sum_{i=1}^d \cos \theta_i.
\]
Let $\upi$ be the element of $\rr^d$ whose coordinates are all equal to $\pi$. For $\theta \in \rr^d$, define
\begin{align*}
q(\theta) := \min\{|\theta|, |\theta-\upi|\}. 
\end{align*}
The following lemma is the new version of Lemma \ref{psi1}. 
\begin{lmm}\label{psi1b}
There is a positive constant $C$ depending only $d$, such that for all $\theta\in [-\pi/2,3\pi/2]^d$,
$|\psi(\theta)|\le e^{-Cq(\theta)^2}$.
\end{lmm}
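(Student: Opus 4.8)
The plan is to follow the template of the proof of Lemma~\ref{psi1}, but now there are \emph{two} frequencies in the box $[-\pi/2,3\pi/2]^d$ at which $|\psi|$ attains the value $1$, namely $0$ and $\upi$; this is the analytic imprint of the periodicity of the walk. Note that the coordinatewise estimate used for Lemma~\ref{psi1} is not by itself enough here: from $|\psi(\theta)|\le \frac1d\sum_{i=1}^d|\cos\theta_i|$ one only gets $|\psi(\theta)|\le 1-\frac Cd\sum_{i=1}^d\min\{\theta_i^2,(\theta_i-\pi)^2\}$, and the right-hand sum can be much smaller than $q(\theta)^2$ (for instance at $\theta=(0,\pi,0,\ldots,0)$ it is $0$ while $q(\theta)^2=\pi^2$). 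At such $\theta$, however, $\psi(\theta)$ is itself small because of sign cancellation, so the honest statement is best obtained by a direct compactness argument rather than a coordinatewise one.

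Concretely, I would first record that for every $\theta$ one has $|\psi(\theta)|\le \frac1d\sum_{i=1}^d|\cos\theta_i|\le 1$, and that on $[-\pi/2,3\pi/2]^d$ the equality $|\psi(\theta)|=1$ forces $|\cos\theta_i|=1$ for every $i$ with a common sign, hence (since $|\cos x|=1$ on $[-\pi/2,3\pi/2]$ only at $x\in\{0,\pi\}$) forces $\theta\in\{0,\upi\}$. In particular $|\psi|<1$ on the compact set $[-\pi/2,3\pi/2]^d\setminus\{0,\upi\}$. Next, define
\[
W(\theta):=\frac{1-|\psi(\theta)|}{q(\theta)^2}
\]
for $\theta\notin\{0,\upi\}$ (so that $q(\theta)\ne 0$). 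The numerator and denominator are continuous on the box, the denominator vanishes exactly at the interior points $0$ and $\upi$, and there the numerator vanishes too; the crux is to show that $W$ extends continuously across these two points. Near $\theta=0$ one has $\psi(\theta)>0$ and $q(\theta)=|\theta|$ (because $|\theta-\upi|\ge \pi\sqrt d-|\theta|>|\theta|$ for $|\theta|$ small), and writing $1-\psi(\theta)=\frac1d\sum_{i=1}^d(1-\cos\theta_i)$ together with the Taylor bound $|1-\cos x-x^2/2|\le x^4/24$ gives $\bigl|\,1-\psi(\theta)-\tfrac{|\theta|^2}{2d}\bigr|\le\frac{|\theta|^4}{24d}$, so $W(\theta)\to\frac1{2d}$ as $\theta\to 0$. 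The computation at $\upi$ is identical after the substitution $\theta=\upi+\eta$, using $\cos(\pi+\eta_i)=-\cos\eta_i$, so that for $\eta$ small $\psi(\theta)<0$, $|\psi(\theta)|=-\psi(\theta)$, $1-|\psi(\theta)|=\frac1d\sum_{i=1}^d(1-\cos\eta_i)$, and $q(\theta)=|\eta|$.

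Once $W$ is seen to be continuous and everywhere positive on the compact box $[-\pi/2,3\pi/2]^d$, it is bounded below by a constant $C>0$ depending only on $d$; hence $1-|\psi(\theta)|\ge Cq(\theta)^2$ for all $\theta$ in the box, and the lemma follows from the elementary inequality $1-x\le e^{-x}$. I expect the only step that is not a verbatim repetition of the argument for Lemma~\ref{psi1} to be the continuity of $W$ at the two singular frequencies; this is the main (though routine) obstacle, and it is handled by the second-order Taylor expansion of $\psi$ around each of $0$ and $\upi$ indicated above.
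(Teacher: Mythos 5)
Your proof is correct, but it takes a genuinely different route from the paper's. The paper handles the two degenerate frequencies $0$ and $\upi$ by an explicit case analysis: it fixes $\delta$ with $\cos\delta=1-1/2d$, splits the box according to which coordinates lie in $[-\delta,\delta]$, which lie in $[\pi-\delta,\pi+\delta]$, and which lie in neither, and shows that $|\psi|$ is bounded away from $1$ by an explicit constant (e.g.\ $1-\tfrac{3d+1}{2d^2}$) except in the two ``pure'' cases where all coordinates cluster near $0$ or all near $\pi$; in those two cases a coordinatewise quadratic bound on $|\cos|$ gives the $e^{-C|\theta|^2}$ or $e^{-C|\theta-\upi|^2}$ decay. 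You instead run a single global compactness argument on $W(\theta)=(1-|\psi(\theta)|)/q(\theta)^2$, reducing everything to (i) strict inequality $|\psi|<1$ off $\{0,\upi\}$ and (ii) continuity of the extension of $W$ at the two singular points via second-order Taylor expansion. Your diagnosis of why the naive coordinatewise estimate fails (e.g.\ at $\theta=(0,\pi,0,\ldots,0)$) is exactly the right observation, and your equality analysis for $|\psi(\theta)|=1$ --- requiring all $|\cos\theta_i|=1$ \emph{with a common sign} --- is the correct substitute for the paper's mixed-case bound. The trade-off is the usual one: your argument is shorter and conceptually cleaner, but the constant $C$ it produces is not effective, whereas the paper's case analysis yields explicit constants. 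For the purposes of this paper either is fine, since $C$ is never tracked quantitatively downstream.
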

\begin{proof}
%Note that $\cos x\le 1-1/2d$ for all $x\in U\setminus I_1$ and $\cos x \ge -1+1/2d$ for all $x\in U\setminus I_2$. In particular, 
Let $U$ denote the interval $[-\pi/2, 3\pi/2]$. Let $\delta$ be the smallest positive solution of $\cos x = 1-1/2d$. Let $I_1 := [-\delta,\delta]$, $I_2 = [\pi-\delta, \pi+\delta]$ and $I := I_1\cup I_2$. Note that  $|\cos x|\le 1- 1/2d$ for all $x\in U \setminus I$. Take any $\theta = (\theta_1,\ldots,\theta_d)\in U^d\setminus I^d$. Then $\theta_i\notin I$ for at least one $i$, and hence
\[
|\psi(\theta)| \le \frac{1}{d} \biggl(d-1+ 1-\frac{1}{2d}\biggr) = 1- \frac{1}{2d^2}. 
\]
Since $q$ is uniformly bounded in $U^d$, the above inequality shows that  there is a small enough positive constant $C_1$ such that $|\psi(\theta)|\le e^{-C_1q(\theta)^2}$ for all $\theta\in  U^d\setminus I^d$.

Next, take $\theta \in I^d$.  Let $J_1$ be the set of all $j$ such that $\theta_j\in I_1$, and let $J_2$ be the set of all $j$ such that $\theta_j\in I_2$. Suppose that $J_1$ and $J_2$ are both nonempty. Then, since $\cos x \le 1$ for all $x\in I_1$ and $\cos x \le -1+1/2d$ for all $x\in I_2$, we have 
\begin{align*}
\psi(\theta) &\le \frac{|J_1|}{d} + \biggl(-1+\frac{1}{2d}\biggr) \frac{|J_2|}{d}\\
&= \frac{|J_1|-|J_2|}{d} + \frac{|J_2|}{2d^2} \\
&\le \frac{d-2}{d} + \frac{d-1}{2d^2} =  1 - \frac{3d+1}{2d^2},
\end{align*}
and similarly, since $\cos x \ge 1-1/2d$ for all $x\in I_1$ and $\cos x \ge -1$ for all $x\in I_2$,
\begin{align*}
\psi(\theta) &\ge \biggl(1-\frac{1}{2d}\biggr)\frac{|J_1|}{d} -\frac{|J_2|}{d}\\
&= \frac{|J_1|-|J_2|}{d} -\frac{|J_1|}{2d^2}\\
&\ge \frac{2-d}{d} - \frac{d-1}{2d^2} = -1 + \frac{3d+1}{2d^2}. 
\end{align*}
Thus, there is a positive constant $C_2$ such that if the sets $J_1$ and $J_2$ are both nonempty, then $|\psi(\theta)|\le e^{-C_2q(\theta)^2}$. 

Next, suppose that $J_2$ is empty. Then $\theta_i\in I_1$ for each $i$. There is a positive constant $C_3$ such that $|\cos x|\le 1 - C_3x^2$ for all $x\in I_1$. Therefore, in this case,
\begin{align*}
|\psi(\theta)| &\le \frac{1}{d}\sum_{i=1}^d |\cos \theta_i|\\
&\le \frac{1}{d}\sum_{i=1}^d (1-C_3 \theta_i^2)\\
&= 1- C_3 |\theta|^2 \le e^{-C_3|\theta|^2}\le e^{-C_3q(\theta)^2}.  
\end{align*}
Finally, suppose that $J_1$ is empty, so that $\theta_i \in I_2$ for each $i$. Observe that $|\cos x| = |\cos (x-\pi)|\le 1-C_3 (x-\pi)^2$ for all $x\in I_2$. Thus, as above, we have
\[
|\psi(\theta)|\le e^{-C_3|\theta - \upi|^2} \le e^{-C_3q(\theta)^2}. 
\]
So, if $C := \min\{C_1,C_2,C_3\}$, then $|\psi(\theta)|\le e^{-Cq(\theta)^2}$ for all $\theta \in U^d$.
\end{proof}
Lemma \ref{psi2} also needs to be modified. The following is the new version of Lemma \ref{psi2}.
\begin{lmm}\label{psi2b}
For any $\theta \in \rr^d$ and $t\in \zz_{\ge 1}$, we have
\begin{align*}
&|\psi(\theta)^t - e^{-\beta t |\theta|^2}|\le Ct|\theta|^4,\\
&|\psi(\upi + \theta)^t - (-1)^te^{-\beta t |\theta|^2}|\le Ct |\theta|^4. 
\end{align*}
\end{lmm}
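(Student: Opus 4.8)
The plan is to deduce both inequalities from the argument already used in Lemma~\ref{psi2}, together with the elementary observation that $\psi(\upi+\theta)=-\psi(\theta)$, which collapses the second bound onto the first.

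First I would prove the bound $|\psi(\theta)^t-e^{-\beta t|\theta|^2}|\le Ct|\theta|^4$ by repeating the proof of Lemma~\ref{psi2} essentially verbatim. Recall that when $\alpha=0$ we have $\beta=1/2d$ and $\psi(\theta)=\frac1d\sum_{i=1}^d\cos\theta_i$, so the role of the identity $\alpha+2\beta d=1$ is now played by $\frac1d\sum_{i=1}^d\frac{\theta_i^2}{2}=\beta|\theta|^2$. With this substitution the same two Taylor estimates $|\cos x-1+x^2/2|\le x^4/24$ and $|e^{-x}-1+x|\le x^2/2$ (for $x\ge0$) give $|\psi(\theta)-e^{-\beta|\theta|^2}|\le C|\theta|^4$ for every $\theta\in\rr^d$. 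Since both $|\psi(\theta)|$ and $e^{-\beta|\theta|^2}$ have absolute value at most $1$, and $|a^t-b^t|\le t|a-b|$ for any two complex numbers $a,b$ in the closed unit disk, the first claim follows.

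For the second claim the key point is that, since every coordinate of $\upi$ equals $\pi$,
\[
\psi(\upi+\theta)=\frac1d\sum_{i=1}^d\cos(\pi+\theta_i)=-\frac1d\sum_{i=1}^d\cos\theta_i=-\psi(\theta),
\]
hence $\psi(\upi+\theta)^t=(-1)^t\psi(\theta)^t$. Therefore
\[
\bigl|\psi(\upi+\theta)^t-(-1)^te^{-\beta t|\theta|^2}\bigr|=\bigl|\psi(\theta)^t-e^{-\beta t|\theta|^2}\bigr|\le Ct|\theta|^4
\]
by the first claim, and the proof is complete. There is no substantial obstacle here: the lemma is entirely routine once the reflection identity $\psi(\upi+\theta)=-\psi(\theta)$ is noticed, which is precisely the manifestation of the parity (periodicity) of the random walk that forces the $\alpha=0$ case to be handled separately in the first place.
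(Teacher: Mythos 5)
Your proposal is correct and follows essentially the same route as the paper: the first bound is the proof of Lemma~\ref{psi2} verbatim, and the second rests on the reflection identity $\psi(\upi+\theta)=-\psi(\theta)$. The only (cosmetic) difference is that you factor out $(-1)^t$ from $\psi(\upi+\theta)^t$ directly, whereas the paper applies the $t=1$ case of the first bound and then invokes $|a^t-b^t|\le t|a-b|$ with $a=\psi(\upi+\theta)$ and $b=-e^{-\beta|\theta|^2}$; both deductions are valid and equally short.
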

\begin{proof}
The proof of the first inequality is just the same as the proof of Lemma \ref{psi2}. For the second, notice that 
\[
\psi(\upi + \theta) = \frac{1}{d}\sum_{i=1}^d \cos(\pi+\theta_i) = -\frac{1}{d}\sum_{i=1}^d \cos\theta_i = -\psi(\theta).
\]
So, by the first inequality applied with $t=1$, we have
\[
|\psi(\upi+\theta) + e^{-\beta |\theta|^2 }| = |-\psi(\theta) + e^{-\beta|\theta|^2}| \le C|\theta|^4.
\]
But recall that if $a$ and $b$ are complex numbers in the unit disk, and $t$ is a positive integer, then $|a^t - b^t|\le t |a-b|$. Applying this in the above inequality, with $a = \psi(\upi+\theta)$ and $b = -e^{-\beta |\theta|^2 }$, we get the desired result.
\end{proof}

Next, we introduce a different form of the Fourier inversion formula for transition probabilities: 
\begin{align}
p(t,x) &= (2\pi)^{-d}\int_{[-\pi/2, 3\pi/2]^d} e^{-\I \theta \cdot x} \psi(\theta)^t d\theta\notag\\
&= (2\pi)^{-d} t^{-d/2}\int_{[-\pi\sqrt{t}/2, 3\pi\sqrt{t}/2]^d} e^{-\I t^{-1/2}\eta \cdot x}(\psi(t^{-1/2}\eta ))^t d\eta,\label{finv2}
\end{align}
The formula for $p_1$ also needs to be changed. We define
\begin{align*}
p_1(t,x) &:= (2\pi)^{-d} t^{-d/2}\int_{|\eta|\le \log t} e^{-\I t^{-1/2}\eta \cdot x}(\psi(t^{-1/2}\eta ))^t d\eta\\
&\qquad + (2\pi)^{-d} t^{-d/2}\int_{|\eta-\sqrt{t}\upi|\le \log t} e^{-\I t^{-1/2}\eta \cdot x}(\psi(t^{-1/2}\eta ))^t d\eta.
\end{align*}
The following is the new version of Lemma \ref{pp1lmm}. 
\begin{lmm}\label{pp1lmmb}
For any $t\in \zz_{\ge 1}$ and $x\in \zz^d$, $|p(t,x)-p_1(t,x)|\le C_1 e^{-C_2 (\log t)^2}$. 
\end{lmm}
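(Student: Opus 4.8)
The plan is to follow the proof of Lemma~\ref{pp1lmm} essentially verbatim, making only the three substitutions forced by the periodic setting: using the Fourier inversion formula \eqref{finv2} in place of the old one, the new two-piece definition of $p_1$ in place of the old one, and Lemma~\ref{psi1b} in place of Lemma~\ref{psi1}. I would first dispose of $t=1$ trivially: there $\log t=0$, so $p_1(1,\cdot)=0$ and $|p(1,x)-p_1(1,x)|=|p(1,x)|\le 1$, which is below $C_1 e^{-C_2(\log t)^2}$ once $C_1\ge 1$. So from now on assume $t\ge 2$. For such $t$ one has $\log t\le \pi\sqrt t/2$, which guarantees that the two excised balls $\{|\eta|\le\log t\}$ and $\{|\eta-\sqrt t\,\upi|\le\log t\}$ both lie inside the integration domain $[-\pi\sqrt t/2,\,3\pi\sqrt t/2]^d$ of \eqref{finv2}. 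Hence, subtracting the definition of $p_1$ from \eqref{finv2}, one sees that $p(t,x)-p_1(t,x)$ equals the integral of $(2\pi)^{-d}t^{-d/2}e^{-\I t^{-1/2}\eta\cdot x}(\psi(t^{-1/2}\eta))^t$ over the region
\[
R_t := \bigl\{\eta\in[-\pi\sqrt t/2,\,3\pi\sqrt t/2]^d : |\eta|>\log t \ \text{and}\ |\eta-\sqrt t\,\upi|>\log t\bigr\},
\]
with no extra boundary or periodic-wraparound contributions.

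The core of the argument is a pointwise bound on the integrand over $R_t$. Given $\eta\in R_t$, I would set $\theta := t^{-1/2}\eta$, which lies in $[-\pi/2,3\pi/2]^d$, and observe that
\[
q(\theta)=\min\{|\theta|,|\theta-\upi|\}=t^{-1/2}\min\{|\eta|,|\eta-\sqrt t\,\upi|\}>t^{-1/2}\log t.
\]
Lemma~\ref{psi1b} then gives $|\psi(t^{-1/2}\eta)|\le e^{-Cq(\theta)^2}\le e^{-Ct^{-1}(\log t)^2}$, so $|\psi(t^{-1/2}\eta)|^t\le e^{-C(\log t)^2}$. Integrating this bound over $R_t$ and estimating $\mathrm{Vol}(R_t)$ by the volume $(2\pi\sqrt t)^d$ of the full domain yields
\[
|p(t,x)-p_1(t,x)|\le (2\pi)^{-d}t^{-d/2}(2\pi\sqrt t)^d e^{-C(\log t)^2}=e^{-C(\log t)^2},
\]
and since any polynomial in $t$ is swallowed by $e^{-C_2(\log t)^2}$ after decreasing $C_2$, this gives the claimed inequality $|p(t,x)-p_1(t,x)|\le C_1 e^{-C_2(\log t)^2}$.

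I do not expect a real obstacle here: the single new feature of the $\alpha=0$ case is that $|\psi|$ equals $1$ at the two points $0$ and $\upi$ of the shifted fundamental domain $[-\pi/2,3\pi/2]^d$ rather than at $0$ alone, and both the two-piece definition of $p_1$ and the quantity $q(\theta)=\min\{|\theta|,|\theta-\upi|\}$ appearing in Lemma~\ref{psi1b} are tailored precisely to handle this. The only step warranting a moment's care—carried out above—is checking that for $t\ge 2$ the two excised balls sit inside the integration domain of \eqref{finv2}, so that $p-p_1$ is cleanly an integral over $R_t$ and no periodic translates intrude.
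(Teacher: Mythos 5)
Your proof is correct and follows essentially the same route as the paper's: bound $|\psi(t^{-1/2}\eta)|$ outside the two excised balls via Lemma~\ref{psi1b} using $q(t^{-1/2}\eta)\ge t^{-1/2}\log t$, then integrate exactly as in Lemma~\ref{pp1lmm}. The only difference is cosmetic — you handle small $t$ and the domain-containment/disjointness of the two balls explicitly, where the paper simply assumes without loss that $t$ is large enough for the two regions not to intersect.
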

\begin{proof}
Without loss, let us assume that $t$ is so large that the regions $|\eta|\le \log t$ and $|\eta - \sqrt{t}\upi|\le \log t$ do not intersect. Let us take $\eta$ outside the union of these two regions. Then note that 
\begin{align*}
q(t^{-1/2}\eta) &=\min\{|t^{-1/2}\eta|, |t^{-1/2}\eta - \upi|\}\ge t^{-1/2}|\log t|.
\end{align*}
Therefore, by Lemma \ref{psi1b}, 
\begin{align*}
|\psi(t^{-1/2} \eta)| &\le e^{-Cq(t^{-1/2}\eta)^2} \le e^{-Ct^{-1}(\log t)^2}. 
\end{align*}
Now proceeding as in the proof of Lemma \ref{pp1lmm}, we get the required bound.
\end{proof}
For $x= (x_1,\ldots,x_d)\in \rr^d$, define $s(x) := \sum_{i=1}^d x_i$. 
We will say that $x$ is even if $s(x)$ is even, and $x$ is odd if $s(x)$ is odd. We define $p_2(t,x)$ and $p_3(t,x)$ to be twice the values defined in Subsection \ref{approxsec} if $t$ and $x$ have the same parity, and equal to zero if not. With these new definitions, Lemma \ref{p2p3lmm} clearly remains valid. 

Next, we modify the results of Subsection \ref{transsec}. The following is the new version of Lemma~\ref{p1p2lmm}. 
\begin{lmm}\label{p1p2lmmb}
For any $t\ge \zz_{\ge 1}$ and $x,y,z\in \zz^d$ such that $x$ and $t$ have the same parity, and $y$ and $z$ are even, we have
\begin{align*}
|p_1(t,x)-p_2(t,x)|&\le C_1 t^{-(d+2)/2}(\log t)^{C_2},\\
|\delta_yp_1(t,x)-\delta_yp_2(t,x)|&\le C_1|y| t^{-(d+3)/2}(\log t)^{C_2},\\
|\delta_y\delta_zp_1(t,x)-\delta_y\delta_zp_2(t,x)|&\le C_1|y||z| t^{-(d+4)/2}(\log t)^{C_2}.
\end{align*}
\end{lmm}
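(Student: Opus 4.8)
The proof follows the same scheme as Lemma~\ref{p1p2lmm}, with the extra bookkeeping needed because in the $\alpha=0$ case $p_1$ has two pieces --- one concentrated near $\eta=0$ and one near $\eta=\sqrt{t}\,\upi$ --- and $p_2$ (under the parity hypothesis) is $2$ times the single-piece integral from Subsection~\ref{approxsec}. The key point is that the contribution of the second piece of $p_1$ matches, up to the allowed error, a second copy of the first piece, so that the two pieces together reproduce the factor of $2$ in the new definition of $p_2$.

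First I would record the two Taylor estimates from Lemma~\ref{psi2b}: near $\eta=0$ we have $|\psi(t^{-1/2}\eta)^t - e^{-\beta|\eta|^2}| \le Ct^{-1}|\eta|^4$, and near $\eta = \sqrt{t}\,\upi$, writing $\eta = \sqrt{t}\,\upi + \zeta$ with $|\zeta|\le \log t$, we have $|\psi(t^{-1/2}\eta)^t - (-1)^t e^{-\beta|\zeta|^2}| \le Ct^{-1}|\zeta|^4$. Then in the integral defining the second piece of $p_1$, substitute $\eta = \sqrt{t}\,\upi + \zeta$: the phase becomes $e^{-\I t^{-1/2}\eta\cdot x} = e^{-\I \upi\cdot x} e^{-\I t^{-1/2}\zeta\cdot x} = (-1)^{s(x)} e^{-\I t^{-1/2}\zeta\cdot x}$. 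Multiplying by the $(-1)^t$ from the Taylor estimate and using that $x$ and $t$ have the same parity so $(-1)^{s(x)}(-1)^t = 1$, the second piece of $p_1$ equals, up to an error of the stated order, exactly the same integral $(2\pi)^{-d}t^{-d/2}\int_{|\zeta|\le \log t} e^{-\I t^{-1/2}\zeta\cdot x - \beta|\zeta|^2}\,d\zeta$ that the first piece is close to. Summing the two pieces gives twice that integral, which is precisely $p_2(t,x)$ by the new definition (the parity hypothesis guarantees we are in the ``same parity'' case, so $p_2$ is indeed $2$ times the Subsection~\ref{approxsec} value, not zero). The error bound $C_1 t^{-(d+2)/2}(\log t)^{C_2}$ then comes, exactly as in Lemma~\ref{p1p2lmm}, from $\int_{|\eta|\le \log t}|\eta|^4\,d\eta \le C(\log t)^{d+4}$ times the $t^{-d/2-1}$ prefactor.

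For the first-order and second-order discrete-derivative estimates the argument is identical after inserting the factors $(e^{-\I t^{-1/2}\eta\cdot y}-1)$ and $(e^{-\I t^{-1/2}\eta\cdot z}-1)$ into both pieces of $p_1$ and into $p_2$, and using $|e^{-\I a}-1|\le |a|$ to extract $|y|$ and $|z|$; note these multiplier factors are unaffected by the shift $\eta \mapsto \sqrt{t}\,\upi + \zeta$ only because $y$ and $z$ are \emph{even}, so $e^{-\I\upi\cdot y}=e^{-\I\upi\cdot z}=1$ and $e^{-\I t^{-1/2}\eta\cdot y}-1 = e^{-\I t^{-1/2}\zeta\cdot y}-1$ --- this is exactly why the evenness hypotheses on $y,z$ appear in the statement. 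The main obstacle, such as it is, is purely notational: keeping the parity signs $(-1)^t$, $(-1)^{s(x)}$, $e^{-\I\upi\cdot y}$, $e^{-\I\upi\cdot z}$ straight and verifying they cancel precisely under the stated parity hypotheses, so that the factor of $2$ emerges cleanly rather than a sign mismatch leaving a $\pm$ discrepancy. Once that is checked, the quantitative estimates are word-for-word those in the proof of Lemma~\ref{p1p2lmm}.
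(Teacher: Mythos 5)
Your proposal is correct and follows essentially the same route as the paper: shift the second piece of $p_1$ by $\sqrt{t}\,\upi$, apply the two estimates of Lemma~\ref{psi2b}, use the parity of $x$ and $t$ to turn $(-1)^{s(x)}(-1)^t$ into $+1$ so the two pieces sum to the factor $1+(-1)^{s(x)+t}=2$ appearing in the new definition of $p_2$, and use the evenness of $y$ and $z$ to keep the multiplier factors $(e^{-\I t^{-1/2}\eta\cdot y}-1)$ unchanged under the shift. The quantitative error bounds are then exactly those of Lemma~\ref{p1p2lmm}, as you say.
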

\begin{proof}
Note that
\begin{align*}
p_1(t,x) &= (2\pi)^{-d} t^{-d/2}\int_{|\eta|\le \log t} e^{-\I t^{-1/2}\eta \cdot x}(\psi(t^{-1/2}\eta ))^t d\eta\\
&\qquad + (2\pi)^{-d} t^{-d/2}\int_{|\eta|\le \log t} e^{-\I(\upi+ t^{-1/2}\eta )\cdot x}(\psi(\upi + t^{-1/2}\eta ))^t d\eta\\
&= (2\pi)^{-d} t^{-d/2}\int_{|\eta|\le \log t} e^{-\I t^{-1/2}\eta \cdot x}(\psi(t^{-1/2}\eta ))^t d\eta\\
&\qquad + (2\pi)^{-d} t^{-d/2}(-1)^{s(x)}\int_{|\eta|\le \log t} e^{-\I t^{-1/2}\eta \cdot x}(\psi(\upi + t^{-1/2}\eta ))^t d\eta
\end{align*}
By Lemma \ref{psi2b}, 
\begin{align*}
&|(\psi(t^{-1/2}\eta ))^t - e^{-\beta |\eta|^2}| \le C t^{-1} |\eta|^4,\\
&|(\psi(\upi + t^{-1/2}\eta ))^t - (-1)^t e^{-\beta |\eta|^2}| \le C t^{-1} |\eta|^4.
\end{align*}
Lastly, note that
\begin{align*}
p_2(t,x) &=(1+(-1)^{s(x)+t})(2\pi)^{-d} t^{-d/2}\int_{|\eta|\le \log t} e^{-\I t^{-1/2}\eta \cdot x - \beta |\eta|^2} d\eta
\end{align*}
Combining the last three displays, we get
\begin{align*}
|p_1(t,x)-p_2(t,x)| &\le Ct^{-d/2}\int_{|\eta|\le \log t} |(\psi(t^{-1/2}\eta ))^t - e^{-\beta |\eta|^2}|  d\eta\\
&\qquad + Ct^{-d/2}\int_{|\eta|\le \log t} |(\psi(t^{-1/2}\eta ))^t - (-1)^te^{-\beta |\eta|^2}|  d\eta\\
&\le Ct^{-d/2-1} \int_{|\eta|\le \log t} |\eta|^4 d\eta\\
&\le C t^{-d/2-1}(\log t)^{d+4}.
\end{align*}
This proves the first inequality in the statement of the lemma. Next, note that since $y$ is even, we have $e^{-\I \upi\cdot y} =1$. This gives 
\begin{align*}
&\delta_yp_1(t,x) \\
&= (2\pi)^{-d}t^{-d/2}\int_{|\eta|\le \log t}  (e^{-\I t^{-1/2}\eta \cdot y} - 1)e^{-\I t^{-1/2}\eta \cdot x}(\psi(t^{-1/2}\eta ))^t d\eta\\
&\qquad + (2\pi)^{-d}t^{-d/2}(-1)^{s(x)}\int_{|\eta|\le \log t}  (e^{-\I t^{-1/2}\eta \cdot y} - 1)e^{-\I t^{-1/2}\eta \cdot x}\\
&\qquad \qquad \qquad \qquad \qquad \qquad\qquad \qquad \cdot(\psi(\upi + t^{-1/2}\eta ))^t d\eta.
\end{align*}
Moreover, the evenness of $y$ ensures that $t$ and $x$ have the same parity if and only if $t$ and $x+y$ have the same parity. Thus,
\begin{align*}
&\delta_yp_2(t,x) \\
&= (1+(-1)^{s(x)+t})(2\pi)^{-d}t^{-d/2}\int_{|\eta|\le \log t} (e^{-\I t^{-1/2}\eta \cdot y} - 1) e^{-\I t^{-1/2}\eta \cdot x - \beta |\eta|^2} d\eta.
\end{align*}
It is now easy to complete the rest of the proof following the same steps as in the proof of Lemma \ref{p1p2lmmb} and implementing the same modifications as above. 
\end{proof}
As a result of Lemma \ref{p1p2lmmb}, the remaining results of Subsections \ref{transsec} and \ref{surfacederivsec}  remain valid after  the following modifications:
\begin{itemize}
\item In Lemma \ref{p3lmm}, we have to insert the additional condition that $x$ and $t$ have the same parity, and $y$ and $z$ are even. 
\item Lemma \ref{plmm} can remain as it is, with the extra observation that $p(t,x)=0$ when $t$ and $x$ do not have the same parity.
\item Proposition \ref{delprop} remains valid if $y$ is even. We do not need to add the condition and that $x$ and $t$ have the same parity, since $\delta_y p(t,x)=0$ if that is not the case (and $y$ is even). 
\item Similarly, Proposition \ref{pprop} remains valid if $y$ and $z$ are even.
\item In Lemma \ref{p3lmm2}, we need $r$ to be even, and in Lemma \ref{p3lmm3}, we need both $r$ and $y$ to be even.
\item Similarly, in Proposition \ref{plmm2}, we need $r$ to be even, and in Proposition~\ref{plmm3}, we need both $r$ and $y$ to be even.
\item Because of the above changes, we need $y$ and $z$ to be even in Proposition~\ref{secondderiv}. Similarly, we need $r$ to be even in Proposition \ref{timeprop1}, and  both $r$ and $y$ to be even in Proposition \ref{timeprop2}. 
\item Proposition \ref{bdrycont} remains valid without any changes.
\end{itemize}
From Subsection \ref{subseqsec} onwards, we have to be cautious about the definition of $f^{(\ve)}$. We will work with two different definitions, and show that they both converge to the same limit. From that, we will deduce that the original $f^{(\ve)}$ also converges to that limit. 

First, let us define for a real number $a$, 
\[
[a]^0 := 2[a/2].
\]
This map takes the interval $[2k, 2k+2)$ to the number $2k$, for any $k\in \zz$. Next, let
\[
[a]^1 := [a]^0 + 1. 
\]
This map takes $[2k,2k+2)$ to $2k+1$. For $x=(x_1,\ldots,x_d)\in \rr^d$, define
\[
[x]^0 := 
\begin{cases}
([x_1]^0, [x_2],[x_3],\ldots,[x_d])  &\text{ if } [x_2]+\cdots+[x_d] \text{ is even,}\\
([x_1]^1, [x_2],[x_3],\ldots,[x_d]) &\text{ otherwise.}
\end{cases}
\]
Similarly, define
\[
[x]^1 := 
\begin{cases}
([x_1]^1, [x_2],[x_3],\ldots,[x_d])  &\text{ if } [x_2]+\cdots+[x_d] \text{ is even,}\\
([x_1]^0, [x_2],[x_3],\ldots,[x_d]) &\text{ otherwise.}
\end{cases}
\]
Note that the definitions are meant to ensure that $[x]^0$ is always even and $[x]^1$ is always odd.  Note also that for any $x$, $[x]$ is either equal to $[x]^0$ or equal to $[x]^1$. This observation will be useful in the following construction. Define two new versions of $f^{(\ve)}$, called $f^{(\ve,0)}$ and $f^{(\ve, 1)}$, as follows. Let
\begin{align*}
f^{(\ve,0)}(t,x) &:= 
\begin{cases}
f_\ve([\ve^{-2}t], [\ve^{-1}x]^0) &\text{ if $[\ve^{-2}t]$ is even,}\\
f_\ve([\ve^{-2}t], [\ve^{-1}x]^1) &\text{ if $[\ve^{-2}t]$ is odd,}
\end{cases}
\end{align*}
and let
\begin{align*}
f^{(\ve,1)}(t,x) &:= 
\begin{cases}
f_\ve([\ve^{-2}t], [\ve^{-1}x]^1) &\text{ if $[\ve^{-2}t]$ is even,}\\
f_\ve([\ve^{-2}t], [\ve^{-1}x]^0) &\text{ if $[\ve^{-2}t]$ is odd.}
\end{cases}
\end{align*}
We will show that both $f^{(\ve,0)}$ and $f^{(\ve,1)}$ converge pointwise to the same $f$ as $\ve \to 0$. Now note that by the observation from the previous paragraph, we have that for any $t$, $x$, and $\ve$, $f^{(\ve)}(t,x)$ has to be equal to either $f^{(\ve,0)}(t,x)$ or $f^{(\ve,1)}(t,x)$. Thus, it will follow that $f^{(\ve)}(t,x)$ must also be converging to $f(t,x)$ as $\ve \to 0$. 

Let us now prove the pointwise convergence of $f^{(\ve,0)}$ to $f$. The proof for $f^{(\ve,1)}$ is similar. %First, for each $t\in \rr$ define $t_\ve := [\ve^{-2}t]$ as before; but for $x\in \rr^d$, define $x_\ve:= [\ve^{-1}x]^0$ if $t_\ve$ is even $x_\ve := [\ve^{-1}x]^1$ if $t_\ve$ is odd. Then note that for any for any $x,y\in \rr^d$, $x_\ve -y_\ve$ is even. 

First, note that Lemma \ref{contlmm1} remains valid with $f^{(\ve,0)}$ instead of $f^{(\ve)}$. Same is true for Lemma~\ref{contlmm2}, but special care has to be taken because the original Lemma~\ref{contlmm2} uses Proposition~\ref{timeprop1}, and in the modified version of Proposition~\ref{timeprop1}, we need $r$ to be even. The proof of Lemma~\ref{contlmm2} goes through as before when $r_\ve$ is even. However, $r_\ve$ is not guaranteed to be even. If for some $\ve$, $r_\ve$ turns out to be odd, we then proceed as follows. Suppose that $s_\ve$ is even and $t_\ve$ is odd. Let $x_\ve := [\ve^{-1}x]^0$ and $x_\ve' := [\ve^{-1}x]^1$. Then 
\begin{align*}
f^{(\ve)}(s,x) - f^{(\ve)}(t,x) &= f_\ve(s_\ve, x_\ve) - f_\ve(t_\ve, x_\ve'). 
\end{align*}
By Lemma \ref{lip2lmm}, 
\begin{align*}
\biggl|f_\ve(t_\ve, x_\ve') - \frac{1}{2d}\sum_{b\in B} f_\ve(t_\ve-1, x_\ve' + b)\biggr| &= |h_\ve(t_\ve, x_\ve')|\le C\ve^2. 
\end{align*}
Since $t_\ve-1-s_\ve$ is even, we can now use Proposition \ref{timeprop1} and Lemma \ref{lipschitzlmm} to bound the differences $|f_\ve(t_\ve-1, x_\ve' + b) - f_\ve(s_\ve, x_\ve)|$. Together with the above inequality, this suffices to complete the proof of Lemma \ref{contlmm2} for $f^{(\ve,0)}$.  Proposition \ref{subseqprop} for $f^{(\ve,0)}$ follows from this.

In Subsection \ref{diffsec}, we need to change the definitions of $D_i f_\ve$ and $D_i f^{(\ve)}$.  We define
\[
D_i f_\ve(t,x) := \frac{f_\ve(t,x+2e_i)-f_\ve(t, x)}{2\ve}, 
\]
and for $t\in \rr_{\ge0}$ and $x\in \rr^d$, let
\[
D_i f^{(\ve,0)}(t,x) := 
\begin{cases}
D_i f_\ve([\ve^{-2} t], [\ve^{-1} x]^0) &\text{ if $[\ve^{-2}t]$ is even,}\\
D_i f_\ve([\ve^{-2} t], [\ve^{-1} x]^1) &\text{ otherwise.}
\end{cases}
\]
The results of Subsection \ref{diffsec} remain valid after the following modifications (adopting the new definition of $D_i f_\ve$, and replacing $D_i f^{(\ve)}$ by $D_i f^{(\ve,0)}$ everywhere):
\begin{itemize}
\item Lemmas \ref{lip3lmm} and \ref{jlemma} remain as is.
\item Lemmas \ref{discderiv} and \ref{discderiv2} need the additional condition that $k$ is even. 
\item Lemma \ref{discderiv3} remains valid as is, because the number $k_\ve$ in its proof is now guaranteed to be even, so that the preceding two lemmas can be applied. (In the proof, we have to change $x_\ve$ to denote $[\ve^{-1}x]^0$ if $t_\ve$ is even, and $[\ve^{-1}x]^1$ if $t_\ve$ is odd. Similarly, $k_\ve$ has to be defined as the integer such that $[\ve^{-1} (x + a e_i)]^0 = x_\ve + k_\ve e_i$ if $t_\ve$ is even and the integer such that $[\ve^{-1} (x + a e_i)]^1 = x_\ve + k_\ve e_i$ if $t_\ve$ is odd.)
\item Consequently, Lemmas \ref{discderiv4}, \ref{difflmm},  Proposition~\ref{diffcont} and Corollary~\ref{diffprop} remain valid. In the proof of Proposition~\ref{diffcont}, we again run into the slight difficulty --- as in the proof of the new version of Lemma \ref{contlmm2} --- that $r_\ve$ need not be even, so Proposition \ref{timeprop2} may not be directly applicable. The difficulty is overcome easily by the same trick as we used for Lemma \ref{contlmm2}.
\end{itemize}
We need some further caution in Subsection \ref{gradsec}. First, for $a\in A$ and $(t,x)\in \zz_{\ge 0}\times \zz^d$, define
\[
u_{a,\ve}(t,x) := f_\ve(t,x+a) - \frac{1}{2d}\sum_{b\in B} f_\ve(t,x+b). 
\]
Let $u_\ve(t,x)$ denote the vector $(u_{a,\ve}(t,x))_{a\in A}$. Then redefine $H_\ve$ as 
\begin{align*}
H_\ve(t,x) &:= \frac{\gamma_1}{2}\sum_{b\in B} (u_{b,\ve}(t-1,x))^2 + \frac{\gamma_2}{2}\sum_{b\in B} u_{b,\ve}(t-1,x)u_{-b,\ve}(t-1,x) \\
&\qquad + \frac{\gamma_3}{2}\sum_{\substack{b,b'\in B \\ b\ne b', b\ne -b'}} u_{b,\ve}(t-1,x)u_{b',\ve}(t-1,x). 
\end{align*}
We will show that Lemma \ref{hehe} remains valid with this new $H_\ve$ (but the same old $h_\ve$). For that, we need some preparation. 
\begin{lmm}\label{secondzero}
If $\alpha =0$, then we must have that $\partial_0\partial_{a}\phi(0)=0$ for all $a\in A$. 
\end{lmm}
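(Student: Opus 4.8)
The key observation is that, by Lemma~\ref{derlmm}, monotonicity forces $\partial_0\phi(u)\ge 0$ for \emph{every} $u\in\rr^A$, not just at the origin. Since $\alpha = \partial_0\phi(0) = 0$, this means that the $C^1$ function $u\mapsto \partial_0\phi(u)$ attains its global minimum value (namely $0$) at the interior point $u=0$ of $\rr^A$.

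First I would set $\psi_0(u) := \partial_0\phi(u)$, which is well-defined and continuously differentiable on all of $\rr^A$ because $\phi$ is assumed twice continuously differentiable. By Lemma~\ref{derlmm}, $\psi_0(u)\ge 0$ for all $u$, and $\psi_0(0) = \partial_0\phi(0) = \alpha = 0$. Hence $0$ is a global minimizer of $\psi_0$.

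Next, since $\psi_0$ is differentiable and $0$ lies in the interior of its domain, the first-order condition for a minimum gives $\nabla\psi_0(0) = 0$; that is, $\partial_a\psi_0(0) = 0$ for every $a\in A$. Unwinding the definition of $\psi_0$, this says precisely that $\partial_a\partial_0\phi(0) = 0$ for all $a\in A$, which is the claim of the lemma.

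There is essentially no obstacle here: the only point that requires the standing hypotheses is that $\phi\in C^2$ (so that $\psi_0$ is genuinely differentiable and the first-order condition applies) together with the monotonicity input already packaged into Lemma~\ref{derlmm}. One should just be careful to invoke the correct sign convention from Lemma~\ref{derlmm} ($\partial_a\phi\ge 0$ everywhere) rather than only at the origin.
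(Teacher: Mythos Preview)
Your proof is correct and is in fact cleaner than the paper's argument. The paper proceeds by explicit Taylor expansions: first it perturbs only the $0$-coordinate to show $\partial_0^2\phi(0)=0$ from monotonicity of $\phi$ in $u_0$, and then for $a\ne 0$ it perturbs the $0$- and $a$-coordinates simultaneously, compares $\phi(u^{2\delta,\delta'})$ to $\phi(u^{\delta,\delta'})$, and chooses the sign of $\delta'$ to force a contradiction with monotonicity unless $\partial_0\partial_a\phi(0)=0$. Your argument bypasses all of this by observing that Lemma~\ref{derlmm} already gives the global nonnegativity of $u\mapsto\partial_0\phi(u)$, so the vanishing of $\alpha$ makes the origin a global interior minimum of this $C^1$ function, and the first-order condition finishes the job. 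The paper's approach has the mild advantage of being self-contained at the level of $\phi$ itself, but yours extracts more from the already-proved Lemma~\ref{derlmm} and is both shorter and conceptually clearer.
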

\begin{proof}
Take any $a\in A$. First, suppose that $a = 0$. Take any $\delta\in \rr$. Let $u^\delta$ be the vector in $\rr^A$ with $u_0^\delta = \delta$ and $u_b^\delta =0$ for all $b\ne 0$. Then by Taylor expansion and the facts that $\phi(0)=0$ and $\partial_0\phi(0)=\alpha = 0$, we have
\begin{align*}
\phi(u_\delta) := \frac{\delta^2}{2}\partial_0^2 \phi(0) + o(\delta^2)
\end{align*}
as $\delta \to 0$. If $\partial_0^2\phi(0)\ne0$, this is not a monotone function of $\delta$ in a small enough neighborhood of zero. This contradicts the monotonicity of $\phi$. Thus, $\partial_0^2\phi(0)=0$.

Next, suppose that $a\ne 0$. Given $\delta,\delta'\in \rr$, let $u^{\delta,\delta'}\in \rr^A$ be the vector with $u^{\delta,\delta'}_0=\delta$, $u^{\delta, \delta'}_a=\delta'$, and $u^{\delta, \delta'}_b = 0$ if $b\ne0$ and $b\ne a$. Then, since we already showed that $\partial_0^2 \phi(0)=0$, and we are given that $\phi(0)=0$ and $\partial_0\phi(0)=0$, Taylor expansion gives
\begin{align*}
\phi(u^{\delta, \delta'}) &= \delta' \partial_a\phi(0) + \frac{{\delta'}^2}{2}\partial_a^2 \phi(0) + \delta\delta'\partial_0\partial_a \phi(0) + o(\max\{\delta^2, {\delta'}^2\})
\end{align*}
as $\delta, \delta'\to 0$. Thus,
\begin{align*}
\phi(u^{2\delta, \delta'}) - \phi(u^{\delta, \delta'})  &= \delta\delta'\partial_0\partial_a \phi(0) + o(\max\{\delta^2, {\delta'}^2\})
\end{align*}
as $\delta, \delta'\to 0$.
Now choose $\delta' = -\delta \sign(\partial_0\partial_a \phi(0))$. With this choice of $\delta'$, we get
\begin{align*}
\phi(u^{2\delta, \delta'}) - \phi(u^{\delta, \delta'})  &= - \delta^2|\partial_0\partial_a \phi(0)| + o(\delta^2)
\end{align*}
as $\delta \to 0$. Take $\delta$ decreasing to zero. Then the left side is always nonnegative, by the monotone increasing nature of $\phi$; and the right side is negative for small enough $\delta$, unless $\partial_0\partial_a \phi(0)=0$. Thus, $\partial_0\partial_a \phi(0)$ must be zero.
\end{proof}
The following result is our modified version of Lemma \ref{hehe}, that works for the modified $H_\ve$ defined above. 
\begin{lmm}\label{hehe2}
For any $\ve>0$, $t\in \zz_{\ge 1}$ and $x\in \zz^d$,
\begin{align*}
|h_\ve(t,x) - H_\ve(t,x)| &\le \ve^2F(\ve),
\end{align*}
where $F$ is a function determined by $\phi$ (and not depending on $t$ or $x$), such that $F(\ve)\to 0$ as $\ve \to 0$. 
\end{lmm}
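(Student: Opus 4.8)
The plan is to run the same second-order Taylor-expansion argument as in Lemma \ref{hehe}, the only genuinely new ingredients being the centering built into the vectors $u_{a,\ve}$ and the vanishing of the mixed second derivatives $\partial_0\partial_a\phi(0)$ supplied by Lemma \ref{secondzero}. Fix $\ve$, $t\in\zz_{\ge1}$ and $x$, and set $m := \frac{1}{2d}\sum_{b\in B} f_\ve(t-1,x+b)$. Let $w\in\rr^A$ be the vector with coordinates $w_a := u_{a,\ve}(t-1,x) = f_\ve(t-1,x+a) - m$. By equivariance under constant shifts, $f_\ve(t,x) = \phi\bigl((f_\ve(t-1,x+a))_{a\in A}\bigr) = \phi(w+m) = \phi(w) + m$, and since $\alpha = 0$ we have $h_\ve(t,x) = f_\ve(t,x) - \frac{1}{2d}\sum_{b\in B} f_\ve(t-1,x+b) = f_\ve(t,x) - m = \phi(w)$. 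So it suffices to bound $\phi(w) - H_\ve(t,x)$.

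First I would record two elementary facts about $w$. By Lemma \ref{lipschitzlmm}, each $f_\ve(t-1,x+a)$ differs from $f_\ve(t-1,x)$ by at most $L\ve$ (the point $x+a$ being $x$ itself or a neighbour of it), hence so does the average $m$; therefore $|w_a|\le C\ve$ for all $a\in A$, so in particular $w$ lies in a fixed compact set. Moreover, directly from the definition of $m$, $\sum_{b\in B} w_b = \sum_{b\in B} f_\ve(t-1,x+b) - 2d\,m = 0$.

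Next comes the Taylor expansion. Since $\phi$ is $C^2$ with $\phi(0)=0$,
\[
\Bigl|\phi(w) - \sum_{a\in A} w_a\,\partial_a\phi(0) - \tfrac12\sum_{a,a'\in A} w_a w_{a'}\,\partial_a\partial_{a'}\phi(0)\Bigr| \le \tfrac12 |w|^2\,\omega(|w|),
\]
where $\omega(r) := \sup_{|v|\le r}\|D^2\phi(v) - D^2\phi(0)\|\to 0$ as $r\to0$; with $|w|\le C\ve$ the right side is at most $\ve^2 F(\ve)$ for $F(\ve) := \tfrac{C^2}{2}\omega(C\ve)$, which depends only on $\phi$ (and $d$, $L$) and tends to $0$. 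It then remains to identify the two explicit terms. For the linear term, $\partial_0\phi(0) = \alpha = 0$ and $\partial_b\phi(0) = \beta$ for $b\in B$, so $\sum_{a\in A} w_a\,\partial_a\phi(0) = \beta\sum_{b\in B} w_b = 0$ by the centering just noted. For the quadratic term, Lemma \ref{secondzero} forces $\partial_0\partial_a\phi(0) = 0$ for every $a\in A$, so only the pairs with $a,a'\in B$ survive; writing $\partial_b^2\phi(0)=\gamma_1$, $\partial_b\partial_{-b}\phi(0)=\gamma_2$ and $\partial_b\partial_{b'}\phi(0)=\gamma_3$ for $b\ne b'$ with $b\ne -b'$, the half-sum over $A\times A$ collapses term by term to exactly $H_\ve(t,x)$ as defined (the factor $\tfrac12$ being consumed by the symmetrization of the double sum). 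Combining, $|h_\ve(t,x)-H_\ve(t,x)| = |\phi(w)-H_\ve(t,x)| \le \ve^2 F(\ve)$, which is the assertion.

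I do not expect a serious obstacle here: the bounded-Taylor-remainder estimate is identical to the one in Lemma \ref{hehe}, and the index bookkeeping for the quadratic term is routine. The one point that requires care — and the reason the definition of $H_\ve$ had to be modified from the one used when $\alpha\ne0$ — is that for $\alpha=0$ the linear term can no longer be absorbed into $h_\ve$ through the identity $\alpha + 2d\beta = 1$ in the convenient form of Lemma \ref{hehe}; instead one must work with the centered increments $u_{a,\ve}$, so that $\sum_{b\in B} w_b = 0$ annihilates the linear term outright, and one must invoke Lemma \ref{secondzero} to discard precisely those second-order contributions that involve the central coordinate $w_0$.
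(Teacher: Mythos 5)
Your proof is correct and follows essentially the same route as the paper's: both reduce to the identity $h_\ve(t,x)=\phi(u_\ve(t-1,x))$ via equivariance, Taylor-expand $\phi$ to second order using $|u_{a,\ve}|\le C\ve$, kill the linear term by the centering $\sum_{b\in B}u_{b,\ve}=0$ together with $\partial_0\phi(0)=\alpha=0$, and discard the mixed terms involving the central coordinate via Lemma \ref{secondzero} so that the surviving quadratic form is exactly $H_\ve(t,x)$. No gaps.
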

\begin{proof}
Note that
\begin{align*}
h_\ve(t,x) &= f_\ve(t,x) - \frac{1}{2d}\sum_{b\in B} f_\ve(t-1,x+b)\\
&= \phi(u_\ve(t-1,x)). 
\end{align*}
By Lemma \ref{lipschitzlmm}, $|u_{a,\ve}(t,x)|\le C\ve$ for all $a$, $t$ and $x$. We know that $\phi(0)=0$, $\partial_0\phi(0)=0$, $\partial_b \phi(0)=1/2d$ for all $b\in B$, and by Lemma \ref{secondzero}, $\partial_0\partial_a\phi(0)=0$ for all $a\in A$. Thus, by Taylor expansion, 
\begin{align*}
&\biggl|h_\ve(t,x) - \frac{1}{2d}\sum_{b\in B} u_{b,\ve}(t-1,x) \\
&\qquad - \frac{1}{2}\sum_{b,b'\in B} \partial_b\partial_{b'} \phi(0) u_{b,\ve}(t-1,x)u_{b',\ve}(t-1,x)\biggr|\le C\ve^2 F(\ve),
\end{align*}
where $F$ is determined solely by $\phi$, and $F(\ve)\to 0$ as $\ve \to 0$. Now,
\begin{align*}
\frac{1}{2d}\sum_{b\in B} u_{b,\ve}(t-1,x) &= \frac{1}{2d}\sum_{b\in B}\biggl( f_\ve(t-1,x+b) - \frac{1}{2d}\sum_{b'\in B} f_\ve(t-1,x+b')\biggr)\\
&= 0. 
\end{align*}
On the other hand,
\begin{align*}
\frac{1}{2}\sum_{b,b'\in B} \partial_b\partial_{b'} \phi(0) u_{b,\ve}(t-1,x)u_{b',\ve}(t-1,x) &= H_\ve(t,x). 
\end{align*}
Combining the last three displays completes the proof. 
\end{proof}
The next goal is to prove a modification of Proposition \ref{hepprop}. Again, some preparation is needed. We need the following enhancement of Lemma \ref{discderiv2}.
\begin{lmm}\label{discderiv2b}
For any $t\in \zz_{\ge 1}$, $x,y\in \zz^d$ with $y$ even, and any $k\in \zz\setminus\{0\}$,  
\begin{align*}
|f_\ve(t, x + ky) - f_\ve(t,x) - k\delta_yf_\ve(t,x)|  \le C(y)\ve |k| J(\ve, k,t,x) + C(y) \ve^2k^2,
\end{align*}
where $J$ is as in Lemma \ref{discderiv}, and $C(y)$ is a constant that depends only on $\phi$, $L$, $d$ and $y$. 
\end{lmm}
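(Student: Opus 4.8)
The plan is to follow the proof of Lemma~\ref{discderiv2}, with the lattice direction $e_i$ replaced by the even vector $y$ and with Proposition~\ref{secondderiv} (in its $\alpha=0$ form, where the two displacements are required to be even) supplying the bound on second-order differences. We may assume $k>0$; the case $k<0$ is entirely analogous, stepping in the $-y$ direction and using that $-jy$ is again an even vector of norm $j|y|\ge|y|$. For $k>0$, commutativity of the discrete derivatives and telescoping give
\[
f_\ve(t,x+ky)-f_\ve(t,x)-k\,\delta_y f_\ve(t,x)=\sum_{j=1}^{k-1}\delta_{jy}\delta_y f_\ve(t,x),
\]
because $\sum_{j=0}^{k-1}\delta_y f_\ve(t,x+jy)=f_\ve(t,x+ky)-f_\ve(t,x)$ and $\delta_{jy}\delta_y f_\ve(t,x)=\delta_y f_\ve(t,x+jy)-\delta_y f_\ve(t,x)$, the $j=0$ term vanishing. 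Since $s(jy)=j\,s(y)$ is even and $|y|\le|jy|=j|y|$ for $j\ge1$, the $\alpha=0$ version of Proposition~\ref{secondderiv} applies to $\delta_y\delta_{jy}f_\ve(t,x)$ with first displacement $y$ and second displacement $jy$.

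The main work is the termwise estimate
\[
|\delta_y\delta_{jy}f_\ve(t,x)|\le C(y)\,\ve\,J(\ve,j,t,x),
\]
which I would prove by comparing, one term at a time, the right-hand side of Proposition~\ref{secondderiv} (evaluated with norms $|y|$ and $j|y|$) against $\ve$ times the three terms defining $J(\ve,j,t,x)$. The only structural input is that $y\in\zz^d\setminus\{0\}$ forces $|y|\ge1$: this gives $(j|y|)^2\ge j^2$, hence $\log(2+t/(j|y|)^2)\le\log(2+t/j^2)$; it gives $\min\{1,t^{-1/2}j|y|\}\le|y|\min\{1,t^{-1/2}j\}$; and it gives that each of the prefactors $1+\ve|x|+\ve|y|+\ve j|y|+\ve t^{1/2}$ and $1+\ve|x|+\ve|y|+\ve j|y|+\ve t^{1/2}\log t$ is at most $2|y|$ times the corresponding prefactor $1+\ve|x|+\ve j+\ve t^{1/2}$ (resp.\ $1+\ve|x|+\ve j+\ve t^{1/2}\log t$) appearing in $J$. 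Assembling these, every term in the bound for $\delta_y\delta_{jy}f_\ve(t,x)$ is at most a fixed power of $|y|$ times the corresponding term of $\ve J(\ve,j,t,x)$, so one may take $C(y)=2|y|^3$. This bookkeeping, while mechanical, is the one place requiring care, since the bound of Proposition~\ref{secondderiv} is inhomogeneous in the two displacement norms and has an extra $\ve^{-1}$ built into the definition of $J$.

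Finally, summing over $j$ and invoking Lemma~\ref{jlemma} (valid since $\ve\in(0,1)$ and $k,t\ge1$),
\[
\sum_{j=1}^{k-1}|\delta_y\delta_{jy}f_\ve(t,x)|\le C(y)\,\ve\sum_{j=1}^{k}J(\ve,j,t,x)\le C(y)\,\ve\bigl(kJ(\ve,k,t,x)+C\ve k^2\bigr),
\]
which, after absorbing the universal constant $C$ into $C(y)$ and recalling that $J$ depends on its second argument only through its absolute value, is precisely the claimed bound for $k>0$. The case $k<0$ follows from the analogous telescoping identity $f_\ve(t,x+ky)-f_\ve(t,x)-k\,\delta_y f_\ve(t,x)=-\sum_{j=1}^{|k|}\delta_{-jy}\delta_y f_\ve(t,x)$ together with the same per-term estimate applied to $\delta_y\delta_{-jy}f_\ve(t,x)$.
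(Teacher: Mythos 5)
Your proof is correct and follows essentially the same route as the paper's: telescope the difference into $\sum_{j}\delta_{jy}\delta_y f_\ve(t,x)$, bound each term via the even-displacement version of Proposition~\ref{secondderiv} (equivalently, the modified Lemma~\ref{discderiv}) at the cost of a $|y|$-dependent factor, and sum using Lemma~\ref{jlemma}. The only difference is that you carry out explicitly the bookkeeping (using $|y|\ge 1$ to absorb the inhomogeneity in the two displacement norms into $C(y)$) that the paper leaves as "easy to complete," and your index ranges and the $k<0$ identity are accurate.
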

\begin{proof}
Suppose that $k>0$. Then note that 
\begin{align*}
&f_\ve(t, x + ky) - f_\ve(t,x) - k \delta_yf_\ve(x) \\
&= \sum_{j=1}^{k} (\delta_y f_\ve(x+jy) - \delta_y f_\ve(x))\\
&= \sum_{j=1}^{k} \delta_y\delta_{jy} f_\ve(x). 
\end{align*}
Observe  that $y$ and $jy$ are even. It is now easy to complete the proof using a suitable modification of Lemma \ref{discderiv}, and Lemma \ref{jlemma}. The proof for $k<0$ is similar. 
\end{proof}
\begin{lmm}\label{weirdderiv}
Fix some even $y\in \zz^d$. Take any sequence $(t_n,x_n)$ in $\zz_{>0}\times\zz^d$ such that $t_n$ has the same parity as $x_n$ for each $n$, and $(\ve_n^2 t_n, \ve_n x_n) \to (t,x)\in \rr_{>0}\times\rr^d$. Then
\[
\lim_{n\to\infty} \ve_n^{-1}\delta_yf_{\ve_n}(t_n,x_n) = y\cdot \nabla f(t,x).
\]
\end{lmm}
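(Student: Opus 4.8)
The plan is to write $\delta_y f_\ve$, for even $y$, as a telescoping sum of increments of two elementary even types, $\pm 2e_i$ and $\pm(e_a+e_b)$ with $a\neq b$, and to evaluate each increment in the scaling limit using the discrete derivatives $D_i$ (which, in the $\alpha=0$ setting, have step $2e_i$) together with the already-established $\alpha=0$ version of Proposition \ref{diffcont}. The key building block is the following reduction, which I would prove first: if $(s_n,P_n)\in\zz_{>0}\times\zz^d$ satisfies $s(P_n)\equiv s_n\pmod 2$ for each $n$, with $\ve_n^2 s_n\to t>0$ and $\ve_n P_n\to x$, then $D_i f_{\ve_n}(s_n,P_n)\to\partial_i f(t,x)$ for each $i$. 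To see this, set $\xi_n:=\ve_n P_n$. By construction $[\,\cdot\,]^0$ always returns an even vector and $[\,\cdot\,]^1$ an odd one, and they fix even, respectively odd, integer points; since $s(P_n)$ has the same parity as $s_n$, it follows that $D_i f^{(\ve_n,0)}(\ve_n^2 s_n,\xi_n)=D_i f_{\ve_n}(s_n,P_n)$ (using $[\ve_n^{-2}(\ve_n^2 s_n)]=s_n$). As $(\ve_n^2 s_n,\xi_n)\to(t,x)\in\rr_{>0}\times\rr^d$, the $\alpha=0$ form of Proposition \ref{diffcont} gives the claim. In particular, since $\ve_n^{-1}\delta_{2e_i}f_{\ve_n}(s_n,P_n)=2D_i f_{\ve_n}(s_n,P_n)$ and $\ve_n^{-1}\delta_{-2e_i}f_{\ve_n}(s_n,P_n)=-2D_i f_{\ve_n}(s_n,P_n-2e_i)$ (and $P_n-2e_i$ still satisfies the hypotheses), one gets $\ve_n^{-1}\delta_w f_{\ve_n}(s_n,P_n)\to w\cdot\nabla f(t,x)$ for every move $w=\pm 2e_i$ along such a sequence.

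For a diagonal move $w=\pm(e_a+e_b)$ with $a\neq b$ I would use the second-difference identity $\delta_{2w}f_\ve=2\delta_w f_\ve+\delta_w\delta_w f_\ve$. Because $w$ is even and of fixed length, the $\alpha=0$ version of Proposition \ref{secondderiv} applies with $y=z=w$: taking $\ve=\ve_n$, $t=s_n\sim\ve_n^{-2}t$ and $x=P_n$ with $\ve_n|P_n|$ bounded, a direct inspection of its three terms shows that each of them, divided by $\ve_n$, tends to $0$, so $\ve_n^{-1}\delta_w\delta_w f_{\ve_n}(s_n,P_n)\to 0$. On the other hand $2w=2e_a+2e_b$, so $\delta_{2w}f_{\ve_n}(s_n,P_n)=\delta_{2e_a}f_{\ve_n}(s_n,P_n+2e_b)+\delta_{2e_b}f_{\ve_n}(s_n,P_n)$, and the reduction above (applied at the sequences $(s_n,P_n+2e_b)$ and $(s_n,P_n)$, which still satisfy the parity and convergence hypotheses) yields $\ve_n^{-1}\delta_{2w}f_{\ve_n}(s_n,P_n)\to 2(\partial_a f+\partial_b f)(t,x)$. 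Combining the two limits gives $\ve_n^{-1}\delta_w f_{\ve_n}(s_n,P_n)\to(\partial_a+\partial_b)f(t,x)=w\cdot\nabla f(t,x)$.

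To finish I would decompose the given even $y$. Its odd coordinates occur in an even number $2m$, so pairing them yields $y=\sum_{\ell=1}^m(e_{a_\ell}+e_{b_\ell})+\tilde y$ with $\tilde y$ having only even coordinates, whence $\tilde y=\sum_i 2(\tilde y_i/2)e_i$; thus $y=w_1+\cdots+w_r$ where each $w_j$ is one of the two elementary even types and $r$ together with $(w_j)$ depend only on $y$. Telescoping, $\delta_y f_{\ve_n}(t_n,x_n)=\sum_{j=1}^r\delta_{w_j}f_{\ve_n}(t_n,P_j^{(n)})$ with $P_j^{(n)}:=x_n+w_1+\cdots+w_{j-1}$. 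Since each $w_j$ is even, $s(P_j^{(n)})\equiv s(x_n)\equiv t_n\pmod 2$; also $\ve_n^2 t_n\to t>0$ and $\ve_n P_j^{(n)}\to x$ (a bounded, $n$-independent shift of $\ve_n x_n$). Applying the first two paragraphs term by term and summing gives $\ve_n^{-1}\delta_y f_{\ve_n}(t_n,x_n)\to\sum_{j=1}^r w_j\cdot\nabla f(t,x)=y\cdot\nabla f(t,x)$, as desired.

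The hard part is the periodicity forced by $\alpha=0$: single-step increments $\delta_{e_i}f_\ve$ move between the two parity sublattices, which in the limit agree only up to $o(1)$ and not $o(\ve)$, so $\ve^{-1}\delta_{e_i}f_\ve$ has no useful limit and one must never split an increment across sublattices. This is precisely why the diagonal increments cannot be handled by two ordinary steps and instead require the detour through $2e_a+2e_b$ via second differences, and why the estimates of Proposition \ref{secondderiv} for even $y=z$ of bounded length — rather than the cruder first-order bounds — are exactly what is needed to annihilate the remainder $\delta_w\delta_w f_\ve$.
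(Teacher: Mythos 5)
Your proof is correct, but it takes a genuinely different route from the paper's. The paper compares the microscopic increment $\ve_n^{-1}\delta_y f_{\ve_n}(t_n,x_n)$ directly with the macroscopic difference quotient $a^{-1}\bigl(f(t,x+ay)-f(t,x)\bigr)$: it takes $k_n=[a/\ve_n]$ steps of size $y$, controls the discrepancy by $C(y)J(\ve_n,k_n,t_n,x_n)+C(y)\ve_n k_n$ via Lemma \ref{discderiv2b}, passes to the limit in $n$ using the uniform-on-compacts convergence of $f^{(\ve_n,0)}$ from Proposition \ref{subseqprop}, and then sends $a\to0$ so that the error $Q(a,t,x)$ vanishes and the difference quotient converges to $y\cdot\nabla f(t,x)$ by the very definition of $\nabla f$. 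You instead decompose the even vector $y$ into elementary even moves, reduce the axis moves $\pm2e_i$ to the already-established $\alpha=0$ version of Proposition \ref{diffcont} (your observation that $[\,\cdot\,]^0$ and $[\,\cdot\,]^1$ fix the even and odd integer sublattices respectively, so $D_if^{(\ve_n,0)}(\ve_n^2s_n,\ve_n P_n)=D_if_{\ve_n}(s_n,P_n)$ under the parity hypothesis, is exactly right), and dispose of the diagonal moves $e_a+e_b$ by the identity $\delta_{2w}=2\delta_w+\delta_w\delta_w$ together with the even-$y=z$ case of Proposition \ref{secondderiv} to kill the remainder at order $o(\ve_n)$. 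Your route buys a reduction to facts already proved (convergence of the discrete gradients) and never needs the function $Q$ or the limit $a\to0$; its cost is the parity bookkeeping and the separate treatment of diagonal directions, which the paper's single telescoping along multiples of $y$ avoids. Both arguments ultimately rest on the same second-order estimate (Proposition \ref{secondderiv} for even increments), so neither is circular, and yours is a valid alternative.
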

\begin{proof}
Take any $a>0$. Let $k_n := [a/\ve_n]$. By Lemma \ref{discderiv2b}, 
\begin{align*}
&\biggl|\frac{f_{\ve_n}(t_n, x_n + k_ny) - f_{\ve_n}(t_n,x_n)}{k_n\ve_n} - \ve_n^{-1}\delta_yf_{\ve_n}(t_n,x_n)\biggr|  \\
&\le C(y)J(\ve_n, k_n,t_n,x_n) + C(y) \ve_n k_n.
\end{align*}
Now notice that by the uniform convergence assertion of the version of Proposition~\ref{subseqprop} for $f^{(\ve,0)}$, we have
\begin{align*}
\lim_{n\to\infty} f_{\ve_n}(t_n,x_n) &= \lim_{n\to \infty} f^{(\ve_n,0)}(\ve_n^2 t_n, \ve_n x_n) = f(t,x),
\end{align*}
and similarly,
\begin{align*}
\lim_{n\to\infty} f_{\ve_n}(t_n, x_n + k_ny)   &= f(t, x+ay). 
\end{align*}
Also, $k_n\ve_n \to a$. Finally, note that
\begin{align*}
&\lim_{n\to\infty}J(\ve_n, k_n,t_n,x_n) \\
&= C_1 a\log (2+t/a^2)  + C_1(1+|x| +|a|+t^{1/2})t^{-1/2}\min\{1, t^{-1/2}|a|\}.
\end{align*}
Note that this is the quantity $Q(a,t,x)$ defined in Lemma \ref{discderiv3}. Thus, we get
\begin{align*}
\limsup_{n\to\infty} \biggl|\frac{f(t, x+ay)-f(t,x)}{a} - \ve_n^{-1}\delta_yf_{\ve_n}(t_n,x_n)\biggr|\le C(y)Q(a,t,x). 
\end{align*}
Since $Q(a,t,x)\to 0$ and $a^{-1}(f(t, x+ay)-f(t,x)) \to y\cdot \nabla f(t,x)$ as $a\to 0$, this completes the proof of the lemma.
\end{proof}
%For $(t,x)\in \rr_{>0}\times \rr^d$ and $b\in B$, define
%\begin{align*}
%u_b^{(\ve)}(t,x) &=
%\begin{cases}
%\ve^{-1}u_{b,\ve}([\ve^{-2}t], [\ve^{-1}x]^0) &\text{ if $[\ve^{-2}t]$ is even,}\\
%\ve^{-1}u_{b,\ve}([\ve^{-2}t], [\ve^{-1}x]^1) &\text{ if $[\ve^{-2}t]$ is odd.}
%\end{cases}
%\end{align*}
\begin{lmm}\label{uelim}
Take any $b\in B$, and any sequence $(t_n,x_n)$ in $\zz_{>0}\times\zz^d$ such that $t_n$ and $x_n$ have opposite parities for each $n$, and $(\ve_n^2 t_n, \ve_n x_n) \to (t,x)\in \rr_{>0}\times\rr^d$. Then
\begin{align*}
\lim_{n\to \infty} \ve_n^{-1}u_{b,\ve_n}(t_n,x_n) &= b\cdot \nabla f(t,x)
\end{align*}
\end{lmm}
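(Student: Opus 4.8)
The plan is to rewrite $u_{b,\ve}(t,x)$ entirely in terms of discrete derivatives along \emph{even} lattice vectors, so that Lemma~\ref{weirdderiv} applies verbatim. The starting observation is the algebraic identity
\begin{align*}
u_{b,\ve}(t,x)
&= f_\ve(t,x+b) - \frac{1}{2d}\sum_{b'\in B} f_\ve(t,x+b')\\
&= \frac{1}{2d}\sum_{b'\in B}\bigl(f_\ve(t,x+b) - f_\ve(t,x+b')\bigr)
= -\frac{1}{2d}\sum_{b'\in B}\delta_{b'-b}f_\ve(t,x+b),
\end{align*}
valid because $\sum_{b'\in B}1 = 2d$. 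Crucially, each increment $b'-b$ is even: every element of $B$ has coordinate sum $\pm 1$, so $s(b'-b)\in\{-2,0,2\}$.

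First I would fix $b'\in B$ and apply Lemma~\ref{weirdderiv} with the even vector $y := b'-b$ and the shifted sequence $(t_n,\,x_n+b)$. Its parity hypothesis holds: by assumption $t_n$ and $x_n$ have opposite parities and $b$ is odd, so $t_n$ and $x_n+b$ have the same parity; and $\ve_n^2 t_n\to t$ while $\ve_n(x_n+b)=\ve_n x_n+\ve_n b\to x$. Hence
\[
\lim_{n\to\infty}\ve_n^{-1}\delta_{b'-b}f_{\ve_n}(t_n,x_n+b) = (b'-b)\cdot\nabla f(t,x).
\]
Summing over $b'\in B$ and using $\sum_{b'\in B}b' = 0$ together with $\sum_{b'\in B}b = 2d\,b$, so that $\sum_{b'\in B}(b'-b) = -2d\,b$, the identity above yields
\[
\lim_{n\to\infty}\ve_n^{-1}u_{b,\ve_n}(t_n,x_n) = -\frac{1}{2d}\,\bigl(-2d\,b\bigr)\cdot\nabla f(t,x) = b\cdot\nabla f(t,x),
\]
which is the assertion of the lemma.

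I do not expect a genuine obstacle here; the whole content is the parity bookkeeping. The one point that must not be mishandled --- and the reason the lemma is isolated in the $\alpha=0$ section --- is that Lemma~\ref{weirdderiv} (and, behind it, the $f^{(\ve,0)}$ version of Proposition~\ref{subseqprop}) is available only for increments of even coordinate sum. So one should resist expanding $u_{b,\ve}$ directly into the single-coordinate derivatives $\delta_{e_i}f_\ve$, which are odd; writing $u_{b,\ve}$ as an average of the differences $f_\ve(t,x+b)-f_\ve(t,x+b')$ --- all evaluated at points whose parity equals that of $t$ --- keeps every increment even and lets the argument proceed cleanly.
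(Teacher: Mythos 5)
Your proposal is correct and follows essentially the same route as the paper: both rewrite $u_{b,\ve}$ as $\frac{1}{2d}\sum_{b'\in B}\bigl(f_\ve(t,x+b)-f_\ve(t,x+b')\bigr)$, observe that each increment $b-b'$ has even coordinate sum, and invoke Lemma~\ref{weirdderiv} before summing over $b'$ using $\sum_{b'\in B}b'=0$. The only (immaterial) difference is that you express each difference as $-\delta_{b'-b}f_\ve(t,x+b)$, centering at the single point $x+b$, whereas the paper writes it as $\delta_{b-b'}f_\ve(t,x+b')$, centered at $x+b'$; the parity bookkeeping works identically either way.
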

\begin{proof}
Note that for any $\ve$, $t$, $x$ and $b$, 
\begin{align*}
u_{b,\ve}(t,x) &= f_\ve(t,x+b) - \frac{1}{2d}\sum_{b'\in B} f_\ve(t,x+b')\\
&= \frac{1}{2d}\sum_{b'\in B} (f_\ve(t,x+b)-f_\ve(t,x+b'))\\
&= \frac{1}{2d}\sum_{b'\in B} \delta_{b-b'} f_\ve(t, x+b'). 
\end{align*}
Thus, by Lemma \ref{weirdderiv}, we have 
\begin{align*}
\lim_{n\to \infty} \ve_n^{-1}u_{b,\ve_n}(t,x)  &= \frac{1}{2d}\sum_{b'\in B} \lim_{n\to\infty} \ve_n^{-1}\delta_{b-b'} f_{\ve_n}(t_n, x_n+b')\\
&= \frac{1}{2d}\sum_{b'\in B} (b-b')\cdot \nabla f(t,x)\\
&= b\cdot \nabla f(t,x).
\end{align*}
This completes the proof of the lemma.
\end{proof}
\begin{lmm}\label{heppreplmm}
Take any sequence $(t_n,x_n)$ in $\zz_{>0}\times\zz^d$ such that $t_n$ has the same parity as $x_n$ for each $n$, and $(\ve_n^2 t_n, \ve_n x_n) \to (t,x)\in \rr_{>0}\times\rr^d$. Then 
\begin{align*}
\lim_{n\to \infty} \ve_n^{-2}H_{\ve_n}(t_n,x_n) &= \gamma|\nabla f(t,x)|^2. 
\end{align*}
\end{lmm}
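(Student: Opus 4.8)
The plan is to reduce everything to Lemma~\ref{uelim} via a parity bookkeeping observation. Since $H_\ve(t,x)$ is assembled from the quantities $u_{b,\ve}(t-1,x)$, $b\in B$, the first point to record is that whenever $t_n$ and $x_n$ have the same parity, the pair $(t_n-1,x_n)$ has opposite parities; this is precisely the hypothesis under which Lemma~\ref{uelim} is stated. I would also note that $\ve_n^2(t_n-1) = \ve_n^2 t_n - \ve_n^2 \to t$ and $\ve_n x_n \to x$, so $(\ve_n^2(t_n-1),\ve_n x_n)\to (t,x)\in\rr_{>0}\times\rr^d$. Hence Lemma~\ref{uelim} applies and gives, for every $b\in B$,
\[
\lim_{n\to\infty}\ve_n^{-1} u_{b,\ve_n}(t_n-1,x_n) = b\cdot\nabla f(t,x).
\]

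Next I would write $\ve_n^{-2} H_{\ve_n}(t_n,x_n)$, using the definition of the modified $H_\ve$, as a finite sum of products of the form $\bigl(\ve_n^{-1}u_{b,\ve_n}(t_n-1,x_n)\bigr)\bigl(\ve_n^{-1}u_{b',\ve_n}(t_n-1,x_n)\bigr)$, weighted by $\gamma_1/2$, $\gamma_2/2$, $\gamma_3/2$. Since each factor converges, each product converges to $(b\cdot\nabla f(t,x))(b'\cdot\nabla f(t,x))$, and therefore
\[
\lim_{n\to\infty}\ve_n^{-2} H_{\ve_n}(t_n,x_n) = \frac{\gamma_1}{2}\sum_{b\in B}(b\cdot\nabla f)^2 + \frac{\gamma_2}{2}\sum_{b\in B}(b\cdot\nabla f)((-b)\cdot\nabla f) + \frac{\gamma_3}{2}\sum_{\substack{b,b'\in B\\ b\ne b',\, b\ne -b'}}(b\cdot\nabla f)(b'\cdot\nabla f),
\]
where $\nabla f$ abbreviates $\nabla f(t,x)$.

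Finally I would evaluate the right-hand side using $B=\{\pm e_1,\ldots,\pm e_d\}$ and $\nabla f = (\partial_1 f,\ldots,\partial_d f)$. In the first sum, $b=e_i$ and $b=-e_i$ each contribute $(\partial_i f)^2$, so $\sum_{b\in B}(b\cdot\nabla f)^2 = 2|\nabla f|^2$ and the first term equals $\gamma_1|\nabla f|^2$. In the second sum, $(b\cdot\nabla f)((-b)\cdot\nabla f) = -(b\cdot\nabla f)^2$, so the second term equals $-\gamma_2|\nabla f|^2$. In the third sum only pairs $b,b'$ lying along distinct axes survive, and for each ordered pair of distinct axes the four sign choices $b=\pm e_i$, $b'=\pm e_j$ cancel in pairs, so the third term vanishes. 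Adding up yields $(\gamma_1-\gamma_2)|\nabla f(t,x)|^2 = \gamma|\nabla f(t,x)|^2$, which is the claim. There is no real obstacle: the only subtlety is the parity check that makes Lemma~\ref{uelim} applicable, and the remaining computation is the same lattice-symmetry cancellation that already appears in the proof of Proposition~\ref{hepprop}.
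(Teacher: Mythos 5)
Your proposal is correct and follows essentially the same route as the paper: reduce to Lemma~\ref{uelim} applied at $(t_n-1,x_n)$ (which has the opposite-parity form required there), pass to the limit in each product in the definition of $H_{\ve_n}$, and then use the sign cancellations over $B$ to collapse the sum to $(\gamma_1-\gamma_2)|\nabla f(t,x)|^2=\gamma|\nabla f(t,x)|^2$. Your explicit parity bookkeeping and the axis-by-axis cancellation of the $\gamma_3$ term are exactly the (implicit and explicit, respectively) steps in the paper's argument.
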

\begin{proof}
Note that 
\begin{align*}
&\ve_n^{-2}H_{\ve_n}(t_n,x_n) \\
&:= \frac{\gamma_1}{2}\sum_{b\in B} (\ve_n^{-1}u_{b,\ve_n}(t_n-1,x_n))^2 \\
&\qquad + \frac{\gamma_2}{2}\sum_{b\in B} (\ve_n^{-1}u_{b,\ve_n}(t_n-1,x_n))(\ve_n^{-1}u_{-b,\ve_n}(t_n-1,x_n)) \\
&\qquad + \frac{\gamma_3}{2}\sum_{\substack{b,b'\in B \\ b\ne b', b\ne -b'}} (\ve_n^{-1}u_{b,\ve_n}(t_n-1,x_n))(\ve_n^{-1}u_{b',\ve_n}(t_n -1,x_n)). 
\end{align*}
Applying Lemma \ref{uelim} to each term, we get
\begin{align*}
\lim_{n\to\infty} \ve_n^{-2}H_{\ve_n}(t_n,x_n) &= \frac{\gamma}{2}\sum_{b\in B} (b\cdot \nabla f(t,x))^2\\
&\qquad + \frac{\gamma_3}{2}\sum_{\substack{b,b'\in B \\ b\ne b', b\ne -b'}}(b\cdot \nabla f(t,x)) (b'\cdot \nabla f(t,x)). 
\end{align*}
Now note that for any $b\in B$, the sum of all $b'\in B\setminus\{b,-b\}$ is zero. Also, note that 
\[
\sum_{b\in B} (b\cdot \nabla f(t,x))^2 = 2|\nabla f(t,x)|^2.
\]
This completes the proof of the lemma.
\end{proof}
Next, we turn to modifying the results of Subsection \ref{duhamelsec}. For each even $x\in \zz^d$, there is a region of volume $2$ in $\rr^d$ that maps to $x$ under the map $y\mapsto [y]^0$. These regions form a partition of $\rr^d$. From this, it follows that for any $w:\zz^d \to \rr$, 
\begin{align*}
\sum_{\substack{x\in \zz^d,\\ x\textup{ even}}} w(x) &= \frac{1}{2}\int_{\rr^d} w([x]^0) dx.
\end{align*}
Similarly,
\begin{align*}
\sum_{\substack{x\in \zz^d,\\ x\textup{ odd}}} w(x) &= \frac{1}{2}\int_{\rr^d} w([x]^1) dx,
\end{align*}
whenever the sums are absolutely convergent. 
For $(t,x)\in \rr_{\ge 0}\times \rr^d$, define 
\begin{align*}
p^{(\ve,0)}(t,x) := 
\begin{cases}
\frac{1}{2}\ve^{-d}p([\ve^{-2} t], [\ve^{-1} x]^0) &\text{ if $[\ve^{-2} t]$ is even,}\\
\frac{1}{2}\ve^{-d}p([\ve^{-2} t], [\ve^{-1} x]^1) &\text{ if $[\ve^{-2} t]$ is odd.}
\end{cases} 
\end{align*}
Also, let 
\begin{align*}
h^{(\ve,0)}(t,x) := 
\begin{cases}
\ve^{-2}h_\ve([\ve^{-2} t], [\ve^{-1} x]^0) &\text{ if $[\ve^{-2} t]$ is even,}\\
\ve^{-2} h_\ve([\ve^{-2} t], [\ve^{-1} x]^1) &\text{ if $[\ve^{-2} t]$ is odd.}
\end{cases} 
\end{align*}
\begin{prop}\label{hepprop2}
Take any sequence $(t_n,x_n) \to (t,x)\in \rr_{>0}\times\rr^d$. Then 
\begin{align*}
\lim_{n\to \infty} h^{(\ve_n)}(t_n,x_n) &= \gamma|\nabla f(t,x)|^2. 
\end{align*}
\end{prop}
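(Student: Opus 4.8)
The plan is to run the two-step argument from the proof of Proposition~\ref{hepprop}, but with the $\alpha=0$ substitutes in place of the ingredients used there. Write $s_n:=[\ve_n^{-2}t_n]$ and $y_n:=[\ve_n^{-1}x_n]$, so that $h^{(\ve_n)}(t_n,x_n)=\ve_n^{-2}h_{\ve_n}(s_n,y_n)$, and note that since $t_n\to t>0$ and $x_n\to x$ we have $\ve_n^2(s_n-1)\to t$, $\ve_n y_n\to x$, and $s_n-1\ge1$ for all large $n$. By Lemma~\ref{hehe2}, $\ve_n^{-2}\bigl|h_{\ve_n}(s_n,y_n)-H_{\ve_n}(s_n,y_n)\bigr|\le F(\ve_n)\to0$, so it suffices to show $\ve_n^{-2}H_{\ve_n}(s_n,y_n)\to\gamma|\nabla f(t,x)|^2$.

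Since $H_\ve(s,y)$ is a fixed quadratic form in the numbers $u_{b,\ve}(s-1,y)$, $b\in B$, this reduces to proving
\[
\lim_{n\to\infty}\ve_n^{-1}u_{b,\ve_n}(s_n-1,y_n)=b\cdot\nabla f(t,x)\qquad\text{for each }b\in B.
\]
Granting this, one passes to the limit in the expression for $\ve_n^{-2}H_{\ve_n}(s_n,y_n)$ exactly as in the proof of Lemma~\ref{heppreplmm}: using $(-b)\cdot\nabla f=-(b\cdot\nabla f)$ the $\gamma_1$ and $\gamma_2$ terms combine into $\tfrac12(\gamma_1-\gamma_2)\sum_{b\in B}(b\cdot\nabla f(t,x))^2=\gamma|\nabla f(t,x)|^2$ (since $\sum_{b\in B}(b\cdot\nabla f)^2=2|\nabla f|^2$), while the $\gamma_3$ term vanishes because $\sum_{b'\in B\setminus\{b,-b\}}b'=0$.

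It remains to prove the displayed limit, and for this it is enough to check that every subsequence admits a further subsequence along which it holds. Fix a subsequence; by passing to a further subsequence we may assume that the parities of $s_n-1$ and $y_n$ stand in a fixed relation, i.e.\ either $s_n-1$ and $y_n$ have opposite parities for all $n$, or they have the same parity for all $n$. In the first case Lemma~\ref{uelim} applies verbatim to the sequence $(s_n-1,y_n)$ and gives the claim. In the second case we repeat the proof of Lemma~\ref{uelim} with one change: writing $u_{b,\ve}(s-1,y)=\tfrac1{2d}\sum_{b'\in B}\delta_{b-b'}f_\ve(s-1,y+b')$, the vector $b-b'$ is even and, because $s_n-1$ now has parity opposite to that of $y_n+b'$, we invoke the analogue of Lemma~\ref{weirdderiv} for $f^{(\ve,1)}$ in place of the one for $f^{(\ve,0)}$; this analogue holds by the identical proof, using the $f^{(\ve,1)}$ versions of Proposition~\ref{subseqprop} and Lemma~\ref{difflmm}. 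It yields $\ve_n^{-1}\delta_{b-b'}f_{\ve_n}(s_n-1,y_n+b')\to(b-b')\cdot\nabla f(t,x)$, and summing over $b'\in B$ and dividing by $2d$ gives $\ve_n^{-1}u_{b,\ve_n}(s_n-1,y_n)\to b\cdot\nabla f(t,x)$. Either way the subsequential limit is as required, completing the proof.

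The only genuinely new point compared with the $\alpha\ne0$ case is the parity bookkeeping in the last paragraph: because $s_n=[\ve_n^{-2}t_n]$ and $y_n=[\ve_n^{-1}x_n]$ carry no controlled parity, one needs the conclusion of Lemma~\ref{uelim} for both parity relations between its arguments, and hence needs the $f^{(\ve,1)}$ counterparts of the results of Subsection~\ref{diffsec} alongside the $f^{(\ve,0)}$ ones; this is where I expect the (mild) difficulty to lie. Everything else is a direct transcription of the proof of Proposition~\ref{hepprop}.
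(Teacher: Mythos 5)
Your overall reduction (Lemma \ref{hehe2} to pass from $h$ to $H$, then the limit of the quadratic form via the limits of $\ve_n^{-1}u_{b,\ve_n}$) is exactly the paper's, but you have read the statement as being about the unadjusted rescaling $\ve_n^{-2}h_{\ve_n}([\ve_n^{-2}t_n],[\ve_n^{-1}x_n])$. In the $\alpha=0$ section the object actually defined and used is $h^{(\ve,0)}$, whose spatial argument is $[\ve^{-1}x]^0$ or $[\ve^{-1}x]^1$ chosen to \emph{match the parity of} $[\ve^{-2}t]$; this is also the quantity that appears in the modified Duhamel derivation that follows. For that object the hypothesis of Lemma \ref{heppreplmm} (same parity of the time and space arguments) holds automatically, so the proposition really is immediate from Lemmas \ref{hehe2} and \ref{heppreplmm}, and your entire parity case analysis is unnecessary.

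For the stronger statement you set out to prove, the second branch of your case analysis has a genuine gap. When $s_n-1$ and $y_n$ have the same parity, the points $(s_n-1,\,y_n+b')$ lie in the spacetime parity class tracked by $f^{(\ve,1)}$, and the ``$f^{(\ve,1)}$ analogue of Lemma \ref{weirdderiv}'' you invoke would deliver convergence of $\ve_n^{-1}\delta_{b-b'}f_{\ve_n}(s_n-1,y_n+b')$ to $(b-b')\cdot\nabla \tilde f(t,x)$, where $\tilde f$ is a subsequential limit of $f^{(\ve_n,1)}$ --- not of $f^{(\ve_n,0)}$. At this stage of the argument the function $f$ is, by construction, a subsequential limit of $f^{(\ve_n,0)}$ only; the dynamics with $\alpha=0$ preserves the spacetime parity $s(x)+t \bmod 2$, so the two sublattice surfaces are only weakly coupled, and the identification $\tilde f=f$ is obtained in the paper only at the very end, after both $f^{(\ve,0)}$ and $f^{(\ve,1)}$ are shown to satisfy the integral equation of Proposition \ref{limitform} and Proposition \ref{uniqueprop} is applied. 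Using it here would be circular. So either restrict the proposition to $h^{(\ve,0)}$ (and symmetrically $h^{(\ve,1)}$), where the parity bookkeeping disappears, or supply an independent argument identifying the subsequential limits of $f^{(\ve_n,1)}$ with $f$ before this point.
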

\begin{proof}
This follows immediately from Lemma \ref{hehe2} and Lemma \ref{heppreplmm}.
\end{proof}
Take any $(t,x)\in \rr_{>0}\times \rr^d$. Define $t_\ve := [\ve^{-2}t]$. If $t_\ve$ is even, let $x_\ve := [\ve^{-1}x]^0$. Otherwise, let $x_\ve:= [\ve^{-1}x]^1$. Then note that $t_\ve$ and $x_\ve$ have the same parity, and $f^{(\ve,0)}(t,x) = f_\ve(t_\ve, x_\ve)$. Thus, by Proposition \ref{fformlmm},
\begin{align*}
&f^{(\ve)}(t,x) = f_\ve(t_\ve,x_\ve) \\
&= \sum_{y\in \zz^d} p(t_\ve,x_\ve-y) g_\ve(y) + \sum_{0\le s\le t_\ve-1} \sum_{y\in \zz^d} p(s, x_\ve-y) h_\ve(t_\ve-s,y).
\end{align*}
Suppose that $t_\ve$ is even. Then $x_\ve$ is also even, and hence $p(s, x_\ve-y) =0$ if  $s$ and $y$ do not have the same parity. Thus, if $s$ is even, then
\begin{align*}
&\sum_{y\in \zz^d} p(s, x_\ve-y) h_\ve(t_\ve-s,y) \\
&= \sum_{\substack{y\in \zz^d,\\ y \text{ even}}} p(s, x_\ve-y) h_\ve(t_\ve-s,y)\\
&= \frac{1}{2}\int_{\rr^d} p(s, x_\ve - [y]^0) h_\ve(t_\ve -s, [y]^0) dy\\
&= \ve^{d+2}\int_{\rr^d} p^{(\ve,0)}(\ve^2 s, \ve(x_\ve-[y]^0))h^{(\ve,0)}(\ve^2(t_\ve-s), \ve [y]^0)dy.
\end{align*}
One gets a similar expression for odd $s$, with $[y]^1$ instead of $[y]^0$. Thus, if we define $\chi(s) := 0$ if $[s]$ is even and $\chi(s):=1$ if $[s]$ is odd, then 
\begin{align*}
 &\sum_{0\le s\le t_\ve-1} \sum_{y\in \zz^d} p(s, x_\ve-y)\\
 &= \ve^{d+2}\int_0^{t_\ve}\int_{\rr^d} p^{(\ve,0)}(\ve^2 [s], \ve(x_\ve-[y]^{\chi(s)}))h^{(\ve,0)}(\ve^2(t_\ve-[s]), \ve [y]^{\chi(s)})dyds\\
 &= \int_0^{\ve^2t_\ve}\int_{\rr^d} p^{(\ve,0)}(\ve^2[\ve^{-2}u], \ve(x_\ve-[\ve^{-1} z]^{\chi(\ve^{-2}u)}))\\
 &\qquad \qquad \qquad \qquad \qquad \cdot h^{(\ve,0)}(\ve^2(t_\ve-[\ve^{-2}u]), \ve [\ve^{-1}z]^{\chi(
\ve^{-2}u)})dzdu.
\end{align*}
Now proceeding as in the proof of Proposition \ref{limitform}, using Lemma \ref{hehe2} and Proposition \ref{hepprop2} instead of Lemma \ref{hehe} and Proposition \ref{hepprop}, and suitable modifications of Lemma \ref{domlmm} and Lemma \ref{remlmm}, we complete the proof of Proposition \ref{limitform} for any subsequential limit of $f^{(\ve,0)}$. The rest of the proof of Theorem \ref{kpzunivthm} now proceeds as before. 

\section*{Acknowledgements}
I thank Ivan Corwin, Persi Diaconis, Chiranjib Mukherjee, Panagiotis Souganidis, and Lexing Ying for helpful comments and references.

\end{document}